\numberwithin{equation}{section}
\theoremstyle{plain}
\newtheorem{theorem}{Theorem}[section]
\newtheorem{lemma}[theorem]{Lemma}
\newtheorem{corollary}[theorem]{Corollary}
\newtheorem{question}[theorem]{Question}
\newtheorem*{theorem*}{Theorem}
\newtheorem*{theoremintroa}{Theorem \ref{thm:charofdensityifsfcplus}}
\newtheorem*{theoremintroc}{Theorem \ref{thm:finitarymultsyndgivesaddthickinfields}}
\newtheorem*{theoremintrob}{Corollary \ref{cor:omegaismultsyndetic}}
\newtheorem*{theoremintrod}{Corollary \ref{cor:dioappomega}}
\newtheorem*{theoremintroe}{Theorem \ref{thm:relativeszeminsubsemigroupsofn} (Special Case)}
\newtheorem*{theoremintrof}{Corollary \ref{cor:geoarithmeticinfpx}}
\theoremstyle{definition}
\newtheorem{definition}[theorem]{Definition}
\newtheorem{remark}[theorem]{Remark}
\newtheorem{example}[theorem]{Example}
\newtheorem{examples}[theorem]{Examples}
\newcommand{\defeq}{\vcentcolon=}
\newcommand{\sfc}{\text{(SFC)}}
\newcommand{\sfcplus}{\text{(SFC+)}}
\newcommand{\crich}{combinatorially rich}
\newcommand{\al}{\alpha}
\newcommand{\ga}{\gamma}
\newcommand{\be}{\beta}
\newcommand{\eps}{\epsilon}
\newcommand{\denlet}{\delta}
\newcommand{\irratnum}{\xi}
\newcommand{\R}{\mathbb{R}}
\newcommand{\T}{\mathbb{T}}
\newcommand{\Q}{\mathbb{Q}}
\newcommand{\Z}{\mathbb{Z}}
\newcommand{\N}{\mathbb{N}}
\newcommand{\NPsi}{\N_\Psi}
\newcommand{\F}{\mathbb{F}}
\newcommand{\one}{\mathbbm{1}}
\newcommand{\multif}{\mathfrak{F}}
\newcommand{\underf}{F}
\newcommand{\multih}{\mathfrak{H}}
\newcommand{\arbclass}{\mathcal{X}}
\newcommand{\syndetic}{\mathcal{S}}
\newcommand{\thick}{\mathcal{T}}
\newcommand{\pws}{\mathcal{PS}}
\newcommand{\central}{\mathcal{C}}
\newcommand{\density}{\mathcal{D}}
\newcommand{\crichclass}{\mathcal{CR}}
\newcommand{\ip}{\text{IP}}
\newcommand{\ipr}{\text{IP}_{r}}
\newcommand{\ipnaught}{\ip_{\text{0}}}
\newcommand{\ipclass}{\mathcal{IP}}
\newcommand{\iprclass}{\mathcal{IP}_{r}}
\newcommand{\ipnaughtclass}{\ipclass_{\text{0}}}
\newcommand{\ipset}{\text{IP}}
\newcommand{\finiteproducts}{\text{FP}}
\newcommand{\finitesums}{\text{FS}}
\newcommand{\folner}{F{\o}lner}
\newcommand{\dstar}{d^*}
\newcommand{\sfcdstar}{d_{\textsc{F{\o}}}^*}
\newcommand{\tone}{\cdot}
\newcommand\restr[2]{{ \left.\kern-\nulldelimiterspace #1 \right|_{#2}}}
\newcommand{\mean}{\lambda}
\newcommand{\subsets}[1]{\mathcal{P}({#1})}
\newcommand{\finitesubsets}[1]{\mathcal{P}_f({#1})}
\newcommand{\seminplus}{(\N,+)}
\newcommand{\semintimes}{(\N,\cdot)}
\newcommand{\seminring}{(\N,+,\cdot)}
\newcommand{\semizplus}{(\Z,+)}
\newcommand{\semizring}{(\Z,+,\cdot)}
\newcommand{\semisplus}{(S,+)}
\newcommand{\semistimes}{(S,\cdot)}
\newcommand{\semisring}{(S,+,\cdot)}
\newcommand{\semittimes}{(T,\cdot)}
\newcommand{\semirtimes}{(R,\cdot)}
\newcommand{\semirplus}{(R,+)}
\newcommand{\semigplus}{(G,+)}
\newcommand{\semihplus}{(H,+)}
\newcommand{\matdz}{M_d(\Z)}
\newcommand{\gldq}{GL_d(\Q)}
\newcommand{\matM}{\mathbf{M}}
\newcommand{\matm}{M}
\newcommand{\fstar}{\F^*}
\newcommand{\multevenn}{E_{\N}}
\newcommand{\multoddn}{O_{\N}}
\newcommand{\Endset}{\mathcal{E}}
\newcommand{\Endelt}{\psi}
\newcommand{\normlq}{N_{L/\Q}}
\newcommand{\intringl}{\mathcal{O}_L}
\author[V. Bergelson]{Vitaly Bergelson}
\thanks{The first author gratefully acknowledges the support of the NSF under grant DMS-1500575.}
\address{Department of Mathematics, The Ohio State University, 231 W. 18th Ave., Columbus, OH 43210}
\email{vitaly@math.osu.edu}
\author[D.\ Glasscock]{Daniel Glasscock}
\address{Mathematical Sciences Department \\ University of Massachusetts Lowell\\
Lowell, MA, USA}
\email{daniel\textunderscore glasscock@uml.edu}
\keywords{Additive and multiplicative combinatorial largeness in semirings, van der Waerden, Szemer\'edi, and Hales-Jewett-type theorems, piecewise syndetic sets, central sets, IP sets, additive and multiplicative upper Banach density, combinatorial configurations in dense subsets of semigroups}
\subjclass[2010]{05D10}
\title[Interplay between additive and multiplicative largeness]{on the interplay between additive and multiplicative largeness and its combinatorial applications}
\begin{document}

\begin{abstract}
Many natural notions of additive and multiplicative largeness arise from results in Ramsey theory. In this paper, we explain the relationships between these notions for subsets of $\N$ and in more general ring-theoretic structures. We show that multiplicative largeness begets additive largeness in three ways and give a collection of examples demonstrating the optimality of these results.  We also give a variety of applications arising from the connection between additive and multiplicative largeness. For example, we show that given any $n, k \in \N$, any finite set with fewer than $n$ elements in a sufficiently large finite field can be translated so that each of its elements becomes a non-zero $k^{\text{th}}$ power. We also prove a theorem concerning Diophantine approximation along multiplicatively syndetic subsets of $\N$ and a theorem showing that subsets of positive upper Banach density in certain multiplicative sub-semigroups of $\N$ of zero density contain arbitrarily long arithmetic progressions.  Along the way, we develop a new characterization of upper Banach density in a wide class of amenable semigroups and make explicit the uniformity in recurrence theorems from measure theoretic and topological dynamics. This in turn leads to strengthened forms of classical theorems of Szemer\'edi and van der Waerden on arithmetic progressions.
\end{abstract}

\maketitle

\section{Introduction}\label{sec:intro} 

\subsection{Background} 

A classic result of van der Waerden \cite{vdw} states that at least one cell of any finite partition of $\N = \{1, 2, 3, \ldots\}$ contains arbitrarily long arithmetic progressions. A far-reaching generalization by Rado \cite{rado} characterizes those systems of homogeneous linear equations to which at least one cell of any finite partition of $\N$ contains a solution. Underlying both of these foundational results in Ramsey theory are notions of ``additive largeness,'' and for many of these notions of largeness there are two essential theorems: \emph{at least one cell of any partition of $\N$ is additively large}, and \emph{additively large sets contain the sought-after combinatorial configurations}.

The pertinent notions of largeness underlying van der Waerden's theorem are syndeticity, piecewise syndeticity, and additive upper Banach density. In Rado's theorem, the notions of IP structure and centrality play a fundamental role. We will be concerned with these notions in this work.\footnote{The reader is encouraged to consult the index on page \pageref{indextable} upon encountering unfamiliar symbols or terminology.} To facilitate the discussion in the introduction, we define them now for subsets $A \subseteq \N$.
\begin{itemize}
\item $A$ is \emph{syndetic} if there exists a finite set $F \subseteq \N$ for which
\[\bigcup_{f \in F} \left(A - f\right) = \N, \quad \text{where } A - f \defeq \big\{n \in \N \ \big| \ n + f \in A \big\}.\]
\item $A$ is \emph{piecewise syndetic} if there exists a syndetic set $S \subseteq \N$ and a sequence $(m_n)_{n \in \N} \subseteq \N$ for which
\[S \cap \bigcup_{n=1}^\infty \big\{m_n+1,\ldots,m_n+n\big\} \subseteq A.\]
\item $A$ has positive \emph{additive upper Banach density} if
\begin{align}\label{eqn:densityinseminplusintro}
\begin{aligned}\limsup_{N - M \to \infty} \frac{\big |A \cap \{N, \ldots, N+M-1\} \big|}{N-M} &\defeq \\
\limsup_{n \to \infty} \ \max_{m \in \N} \ \frac{\big |A \cap (m+\{1, \ldots, n\}) \big|}{n} &> 0.\end{aligned}
\end{align}
\item $A$ is \emph{AP-rich} if it contains arbitrarily long arithmetic progressions $\{x,\allowbreak x+y,\allowbreak x+2y,\allowbreak \ldots,\allowbreak x+ky\}$.
\item $A$ is an \emph{IP set} if it contains a set of the form
\begin{align*}
\text{FS}(n_i)_{i \in \N} = \left\{ \sum_{i \in I } n_i \ \middle| \ \text{finite, non-empty } I \subseteq \N \right\}, \quad (n_i)_{i \in \N} \subseteq \N.
\end{align*}
\item $A$ is \emph{central}\footnote{Central sets were introduced by Furstenberg in \cite{furstenberg-book} in the language of topological dynamics. They play an essential role in Ramsey theory, forming one of the most natural and rich classes of largeness. See Definition \ref{def:defofcentral} and the discussion surrounding it for more details.} if it is a member of an additively minimal idempotent ultrafilter in $(\beta \N,+)$.\footnote{Ultrafilters capture many important notions of largeness via algebra in $\beta \N$, the Stone-\v{C}ech compactification of $\N$, but we largely avoid their use in this paper; see \cite{bhnonmetrizable}, \cite{hindmanstrauss-book}, and \cite{hindmanstrassalgebra}. For readers familiar with $\beta \N$, we point out that $A \subseteq \N$ is syndetic if and only if $\emph{cl}_{\beta \N}(A)$ has non-empty intersection with every left ideal in $\beta \N$; is \emph{thick}, that is, contains arbitrarily long intervals, if and only if $\emph{cl}_{\beta \N}(A)$ contains a left ideal of $\beta \N$; is piecewise syndetic if and only if $\emph{cl}_{\beta \N}(A)$ has non-empty intersection with the smallest ideal in $\beta \N$; and is an IP set if and only if $\emph{cl}_{\beta \N}(A)$ contains an idempotent.}
\end{itemize}

Many fundamental results in Ramsey theory can be interpreted as elucidating the relationships between these notions of largeness. For example, that syndetic sets are AP-rich is a consequence of van der Waerden's theorem. Hindman's theorem \cite{hindmanoriginal} implies that if $A$ piecewise syndetic, then $A-n$ is an IP set for some $n \in \N$, and it follows by a remark of Furstenberg \cite[Page 163]{furstenberg-book} that $A-n$ is central for some $n \in \N$.\footnote{More precisely, if $A$ is piecewise syndetic, then there exists a finite set $F \subseteq \N$ for which $\cup_{f \in F} (A - f)$ is thick, that is, contains arbitrarily long intervals. Thick sets are both IP and central. Hindman's theorem implies that one cell of any finite partition of an IP set contains an IP set. It is also true that one cell of any finite partition of a central set is central; see Remark \ref{rmk:partitionregularityofipnaughtandcentral}.} Szemer\'edi's theorem \cite{szemeredi} gives that sets of positive upper Banach density are AP-rich, and Furstenberg and Katznelson's IP Szemer\'edi theorem \cite{furstenbergkatznelsonipszem} (Theorem \ref{thm:ipszemeredicomb} below) gives that in any set $A$ of positive upper Banach density, for every $\ell \in \N$, the set of step sizes of arithmetic progressions of length $\ell$ appearing in $A$ has non-empty intersection with every IP set.

The set of positive integers supports another associative and commutative operation, multiplication, which is just as fundamental to its structure. For each of the notions of additive largeness defined above, there is a natural analogous notion of multiplicative largeness for $A \subseteq \N$. Fix $p_1, p_2, \ldots$ an enumeration of the prime numbers.
\begin{itemize}
\item $A$ is \emph{multiplicatively syndetic} if there exists a finite set $F \subseteq \N$ for which
\[\bigcup_{f \in F} \left(A \big/ f\right)  = \N, \quad \text{where } A \big/ f \defeq \big\{n \in \N \ \big| \ nf \in A \big\}.\]
\item $A$ is \emph{multiplicatively piecewise syndetic} if there exists a multiplicatively syndetic set $S \subseteq \N$ and a sequence $(m_n)_{n \in \N} \subseteq \N$ for which
\[S \cap \bigcup_{n=1}^\infty \big\{m_n p_1^{e_1} \cdots p_{n}^{e_{n}} \ | \ e_1, \ldots, e_{n} \in \{1, \ldots, n\} \big\} \subseteq A.\]
\item $A$ has positive \emph{multiplicative upper Banach density}\footnote{We will show in Section 3 that the multiplicative upper Banach density does not depend on the specific choice of multiplicative \emph{\folner{} sequence} (see Definition \ref{def:sfcandfolner}) in the definition given here.} if
\begin{align}\label{eqn:densityinseminplusintromult}
\limsup_{n \to \infty} \ \max_{m \in \N} \ \frac{|A \cap \{m p_1^{e_1} \cdots p_n^{e_n} \ | \ e_1, \ldots, e_{n} \in \{1, \ldots, n\} \}|}{n^n} > 0.
\end{align}
\item $A$ is \emph{GP-rich} if it contains arbitrarily long geometric progressions $\{x,\allowbreak xy,\allowbreak xy^2,\allowbreak \ldots,\allowbreak xy^k\}$.
\item $A$ is a \emph{multiplicative IP set} if it contains a set of the form
\begin{align*}
\text{FP}(n_i)_{i \in \N} = \left\{ \prod_{i \in I } n_i \ \middle| \ \text{finite, non-empty } I \subseteq \N \right\}, \quad (n_i)_{i \in \N} \subseteq \N.
\end{align*}
\item $A$ is \emph{multiplicatively central} if it is a member of a multiplicatively minimal idempotent ultrafilter in $(\beta \N,\cdot)$.
\end{itemize}

Multiplicative analogues of the Ramsey theoretical results described above explain some of the relationships between these notions of multiplicative largeness. For example, a multiplicative analogue of Hindman's theorem \cite[Lemma 2.1]{bergelsonhindman-elementary} gives that if $A$ multiplicatively piecewise syndetic, then $A / n$ is a multiplicative IP set for some $n \in \N$, and an analogue of Szemer\'edi's theorem gives that sets of positive multiplicative upper Banach density are GP-rich (c.f. \cite[Theorem 3.8]{bmultlarge}).

With addition and multiplication occupying a central role in the study of the positive integers, it is incumbent on us to explain the relationships between these notions of additive and multiplicative largeness. It has been known for some time that additive and multiplicative largeness can be simultaneously guaranteed in one cell of any partition of $\N$; this was first explained by Hindman \cite{hindmanmultaddip} and later developed further in \cite{bergelsonhindman-elementary}. Such partition results leave open the question as to how these notions of additive and multiplicative largeness interact: does one beget the other, or are the notions in general position and hence guaranteed to co-exist with potentially no relation?

A natural first question exploring the direct relationship between additive and multiplicative largeness is: to what extent can a set of positive integers be additively large but multiplicatively small? The set $(4\N -2) \cup \text{FS}\big(2^{2^i} \big)_{i \in \N}$ is additively large (both syndetic and an IP set) but multiplicatively small (neither of positive multiplicative density nor a multiplicative IP set). In searching for a complementary example -- that is, a set which is multiplicatively large but additively small -- one encounters an asymmetry between the relationships amongst notions of additive and multiplicative largeness: \emph{multiplicatively large sets cannot be additively very small}. Several instances of this principle have appeared in the literature, and we cite three of them here.\footnote{Those readers unfamiliar with the notion of centrality should note in the following results that additively central sets are additively piecewise syndetic and additively $\ip$, while \emph{multiplicatively thick} sets (sets $A \subseteq \N$ for which for all finite $F \subseteq \N$, there exists $n \in \N$ so that $nF \subseteq A$) are multiplicatively central. These relationships are explained in Section \ref{sec:defs} and depicted in that section in Figure \ref{fig:containmentdiagram}.}

\vskip.2cm
\noindent\quad \textbf{Theorem A} \ (\cite[Lemma 5.11]{BergelsonSurveytwoten}) \ \emph{For all $A \subseteq \N$,}
\[\text{\emph{if $A$ is multiplicatively syndetic, then $A$ is additively central.}}\]
\quad \textbf{Theorem B} \ (\cite[Theorem 3.5]{berghind-onipsets}, \cite[Proposition 4.1]{BRcountablefields}) \ \emph{For all $A \subseteq \N$,}
\[\text{\emph{if $A$ is multiplicatively central, then $A$ is additively $\ipnaught$.}}\footnote{A set $A \subseteq \N$ is \emph{additively $\ipnaught$} if for all $r \in \N$, there exist $n_1, \ldots, n_r \in \N$ such that for all non-empty $I \subseteq \{1, \ldots r\}$, $\sum_{i \in I} n_i \in A$.}\]
\quad \textbf{Theorem C} \ (\cite[Theorems 3.2 \& 3.15]{bmultlarge}) \ \emph{For all $A \subseteq \N$,}
\begin{gather*}
\text{\emph{if $A$ has positive multiplicative upper Banach density, then $A$ is AP-rich;}}\\
\text{\emph{what is more, $A$ contains arbitrarily large ``geo-arithmetic'' patterns.}}\footnotemark
\end{gather*}
\footnotetext{For example, for any $n \in \N$, there exist $a, c, d \in \N$ for which $\big\{ c (a+id)^j \ \big| \ 1 \leq i, j \leq n \big\} \subseteq A$. We will elaborate on such configurations in Section \ref{sec:geoarithmeticprogressions}.}Thus, while an additively syndetic IP set need not have positive multiplicative density nor be a multiplicative IP set, it follows from Theorem A that a multiplicatively syndetic set necessarily has positive additive upper Banach density and is an additive IP set.

In spite of these results, the relationships between notions of additive and multiplicative largeness are not entirely understood. Attempting to better understand these relationships quickly leads to interesting open problems. For example, it is unknown whether or not additively syndetic subsets of $\N$ are necessarily GP-rich. While we only discuss this particular problem in passing (see Section \ref{sec:negexamplea}), we advance our understanding of these relationships in this paper and obtain new and interesting applications in the theory of amenability and invariant means on semigroups, Diophantine approximation, and combinatorial number theory.

\subsection{Results}

In this paper, we wish to explore in the proper generality the interplay between notions of additive and multiplicative largeness. The natural numbers $\N$ with addition and multiplication form a \emph{semiring}: a set $S$ supporting a commutative additive semigroup $\semisplus$ and a multiplicative semigroup $\semistimes$ in which distributivity holds: $s (s_1+s_2) = s s_1 + s s_2$, and similarly on the right. Semirings are minimally structured algebraic objects in which addition and multiplication co-exist, and it is in them that we study the relationships between additive and multiplicative largeness.

This work is comprised of two parts. First, we generalize Theorems A, B, and C to the context of semirings, in the process strengthening Theorem B for subsets of $\N$. Thus, we prove the following theorems for semirings $\semisring$ that contain a suitable multiplicative subsemigroup $(R,\cdot)$; the requisite definitions can be found in Sections 2, 4, and 7, and the meaning of ``suitable'' can be found in the precise statements of the theorems in the sections to which they belong.
\vskip.2cm
\noindent\quad \textbf{Theorem \ref{thm:maintheorema}} \ \emph{For all $A \subseteq R$,}
\[\text{\emph{if $A$ is multiplicatively syndetic, then $A$ is additively central.}}\]
\quad \textbf{Theorem \ref{thm:multpwsimpliesaddipnaught}} \ \emph{For all $A \subseteq R$,}
\[\text{\emph{if $A$ is multiplicatively piecewise syndetic, then $A$ is additively $\ipnaught$.}}\footnotemark\]
\quad \textbf{Theorems \ref{thm:multdensitygivesaddpatterns} \& \ref{thm:geopatternsinsetsofdensity}} \ \emph{For all $A \subseteq R$,}
\begin{gather*}\text{\emph{if $A$ has positive multiplicative upper Banach density, then $A$ is}} \\ \text{\emph{additively combinatorially rich; what is more, $A$ contains}} \\ \text{\emph{arbitrarily large ``geo-arithmetic'' patterns.}} \end{gather*}
\footnotetext{Theorem \ref{thm:multpwsimpliesaddipnaught} strengthens Theorem B when $S = \N$ because multiplicatively central sets are multiplicatively piecewise syndetic. This relationship is explained in Section \ref{sec:defs} and depicted in that section in Figure \ref{fig:containmentdiagram}.}In Section \ref{sec:counterexamples}, we provide extremal examples that serve to illustrate the optimality of these results.\\

Useful throughout the paper is a new characterization of upper Banach density, developed in Section \ref{sec:banach}, that is applicable in a wide class of amenable semigroups. The upper Banach densities in (\ref{eqn:densityinseminplusintro}) and (\ref{eqn:densityinseminplusintromult}) are defined with the help of \emph{\folner{} sequences} (see Definition \ref{def:sfcandfolner}); in a general left amenable semigroup $\semistimes$, we define the upper Banach density of $A \subseteq S$ to be
\[d^*(A) = \sup \big\{ \lambda(\one_A) \ \big| \ \lambda \text{ a left translation invariant mean on } (S,\cdot) \big\},\]
(see Definition \ref{def:defofamenable}) and we prove the following result.
\begin{theoremintroa}
Let $\semistimes$ be an amenable group or a commutative semigroup. For all $A \subseteq S$,
\[\dstar(A) = \sup \left\{ \denlet \geq 0 \ \middle| \ \forall \text{ finite } F \subseteq S, \ \exists \ s \in S, \ |F \cap A \cdot s^{-1}| \geq \denlet |F| \right\},\]
where $A \cdot s^{-1} = \{t \in S \ | \ t \cdot s \in A \}$.
\end{theoremintroa}

This characterization of upper Banach density not only serves as an important tool throughout the paper, it allows us to improve on recent results regarding densities defined along \folner{} sequences obtained in \cite[Section 2]{hindmanstrauss-semigroupdensity2}.\\

The main impetus for this work arose not from results in the abstract semiring framework but from the diverse array of related results and applications afforded to us by the ideas behind these theorems. We proceed now with a sampling of these related results and representative applications of the theorems above. The focus in each of the following results is on the interplay between addition and multiplication.\\

In Section \ref{sec:posexamplea}, we combine Theorem \ref{thm:maintheorema} and an $\ip^*$ version of Szemer\'edi's theorem to prove that cosets of finite index multiplicative subgroups in infinite division rings are additively thick (cf. \cite[Remark 5.23]{Bcombanddioph}).  The following finitary analogue of that result is also achieved using the ideas behind the proof of Theorem \ref{thm:maintheorema}.

\begin{theoremintroc}
For all $n, k \in \N$, there exists an $N=N(n,k) \in \N$ with the following property. For all finite fields $\F$ with $|\F| \geq N$ and all $F \subseteq \F$ with $|F| \leq n$, there exists a non-zero $x \in \F$ for which every element of $x^k + F$ is a non-zero $k^{\text{th}}$ power.
\end{theoremintroc}

As another application of Theorem \ref{thm:maintheorema}, we exhibit a family of additively large sets which are defined with the help of familiar multiplicative functions from number theory. Let $\nu_2$, $\Omega: \N \to \N \cup \{0\}$ denote the 2-adic valuation and the prime divisor counting function, defined by $\nu_2(2^{e_1}p_2^{e_2} \cdots p_k^{e_k}) = e_1$ and $\Omega(p_1^{e_1} \cdots p_k^{e_k}) = \sum_{i=1}^k e_i$, where the $p_i$'s are distinct primes. Denote by $\{ x\}$ the fractional part of $x \in \R$.

\begin{theoremintrob}
Let $p \in \R[x]$ be a non-constant polynomial with irrational leading coefficient, and let $I \subseteq [0,1)$ be an interval. The sets
\begin{align}\label{eqn:multdefndaddlargesets}\big\{n \in \N \ \big| \ \{p(\nu_2(n)) \} \in I \big\} \text{ and } \big\{n \in \N \ \big| \ \{p(\Omega(n)) \} \in I \big\}\end{align}
are multiplicatively syndetic in $\semintimes$, hence additively central in $\seminplus$.
\end{theoremintrob}

This result complements known results about the uniform distribution of sequences of the form $(\{g(n) \})_{n \in \N}$ and $(\{p(g(n)) \})_{n \in \N}$ where $p$ is a polynomial and $g: \N \to \N$ is an arithmetic function arising in number theory; see, for example, \cite{delange,koninckkatai}. Since additively central sets contain solutions to partition regular systems of homogeneous linear equations (see \cite[Chapter 8]{furstenberg-book} or \cite[Chapter 14]{hindmanstrauss-book}), another consequence of Corollary \ref{cor:omegaismultsyndetic} is that the sets in (\ref{eqn:multdefndaddlargesets}) contain solutions to such systems. Corollary \ref{cor:dioappomega} exhibits yet another application.\\

In Section \ref{sec:posexampleb}, we explore the main applications of a corollary of Theorem \ref{thm:multpwsimpliesaddipnaught}: \emph{additive $\ipnaught^*$ sets -- those sets which have non-empty intersection with all $\ipnaught$ sets -- have non-empty intersection with all multiplicatively piecewise syndetic sets.} Additive $\ipnaught^*$ sets arise naturally in recurrence theorems in measure theoretic and topological dynamics to describe the set of ``return times'' of sets of positive measure and open sets, and thus Theorem \ref{thm:multpwsimpliesaddipnaught} enhances theorems in which these classes appear.

We demonstrate in Section \ref{sec:posexampleb} how to derive $\ipnaught^*$ versions of two prominent examples of such recurrence theorems: Furstenberg and Katznelson's IP Szemer\'edi theorem \cite{furstenbergkatznelsonipszem} and the polynomial IP van der Waerden theorem \cite{bergelsonleibmanams}. Then, we find a number of applications making use of the enhanced statements of these results: Theorem \ref{thm:finitarymultsyndgivesaddthickinfields} and Corollary \ref{cor:geoarithmeticinfpx} are corollaries of the $\ip_0^*$ version of the IP Szemer\'edi theorem, and the following corollary in Diophantine approximation follows from the $\ip_0^*$ version of the polynomial IP van der Waerden theorem. Denote by $\| x\|$ the distance from $x$ to the nearest integer.

\begin{theoremintrod}
Let $p \in \R[x]$ be a non-constant polynomial with irrational leading coefficient, and let $I \subseteq [0,1)$ be an interval. For all $d \in \N$,
\[\min_{\substack{1 \leq n \leq N \\ \{ p(\Omega(n)) \} \in I}} \|f(n) \| \longrightarrow 0 \ \text{ as } \  N \to \infty\]
uniformly in polynomials $f \in \R[x]$ of degree less than or equal to $d$ with no constant term.\\
\end{theoremintrod}

In Section \ref{sec:posexamplec}, we prove that subsets of a semiring that have positive multiplicative density in an additively \crich{} multiplicative sub-semigroup are additively \crich{}. This conclusion is particularly novel in the case that the multiplicative sub-semigroup has zero additive upper Banach density since this rules out the case that the additive combinatorial richness of the subset arises from it having positive additive upper Banach density.  As an example of an application, we use the theorem of Green and Tao \cite{greentaooriginal} to prove in Theorem \ref{thm:relativeszeminsubsemigroupsofn} that any subset of positive multiplicative density in the multiplicative sub-semigroup of positive integers that appear as norms of algebraic integers in a number field contains arbitrarily long arithmetic progressions.  The following is a special case of this result that makes use of the ring of integers in $\Q(\sqrt[3]{2})$; note that the multiplicative sub-semigroup $R$ has zero additive upper Banach density \cite[Theorem T]{odonireps}.

\begin{theoremintroe}
Let
\[R = \big\{ |x_1^3 + 2x_2^3 + 4x_3^3 - 6x_1x_2x_3 | \ \big| \ x_1, x_2, x_3 \in \Z \big\} \setminus \{0\}.\]
The set $(R,\cdot)$ is a sub-semigroup of $\semintimes$, and any set $A \subseteq R$ satisfying $\dstar_{(R,\cdot)}(A) > 0$ contains arbitrarily long arithmetic progressions.
\end{theoremintroe}

We also show that subsets of positive multiplicative density in positive additive density sub-semigroups of a semigroup contain geo-arithmetic patterns. The following is an example application which, in the $k=1$ case and considering additive upper Banach density in place of multiplicative upper Banach density, is a consequence of Szemer\'edi's theorem in finite characteristic, \cite[Theorem 9.10]{furstenbergkatznelsonipszem}.

\begin{theoremintrof}
Let $p \in\N$ be prime, and suppose $A \subseteq \F_p[x] \setminus \{0\}$ has positive multiplicative upper Banach density in $(\F_p[x] \setminus \{0\},\cdot)$. For all $d,n \in \N$, there exist $y,z \in \F_p[x]$, $z \neq 0$, and a $d$-dimensional vector subspace $V$ of $\F_p[x]$ such that
\[\bigcup_{v \in V} z \{ y + v, (y + v)^2, \cdots, (y + v)^k \big\} \subseteq A.\]
\end{theoremintrof}

This paper is organized as follows. In Section \ref{sec:defs} we define several notions of largeness for subsets of semigroups and discuss the hierarchy between them. The alternate characterization of upper Banach density is proved in Section \ref{sec:banach}. In Section \ref{semiringsandexamples} we define semirings and describe how notions of largeness behave under semigroup homomorphisms. Section \ref{sec:preamble} is a preamble to Sections \ref{sec:posexamplea}, \ref{sec:posexampleb}, and \ref{sec:posexamplec}, where we prove Theorems \ref{thm:maintheorema}, \ref{thm:multpwsimpliesaddipnaught}, and \ref{thm:multdensitygivesaddpatterns}, respectively, and prove the related results and applications discussed above. Finally, in Section \ref{sec:counterexamples}, we present the extremal examples that serve to show the optimality of the main theorems. Section \ref{sec:index} is an index of the most frequently used symbols in the work.

\subsection{Acknowledgements} The authors would like to thank Jim Cogdell and Warren Sinnott for their assistance with the material in Section \ref{sec:apinsubsemigroups} and Tim Browning for calling our attention to the work of R. W. K. Odoni. Thanks also goes to Neil Hindman for improving Lemma \ref{lem:bigandbigimpliesbig} and Corollary \ref{cor:bigandbigimpliesbig} and to Joel Moreira and Florian Richter for numerous helpful conversations and a close reading of the paper.  Last but not least, the authors give thanks to the referees for numerous helpful corrections and suggestions.


\section{Notions of largeness in semigroups}\label{sec:defs}  

In this section we define several notions of largeness for subsets of semigroups. The best general references are \cite[Chapter 9]{furstenberg-book} and \cite[Section 1]{BHabundant}, though we largely avoid the machinery of ultrafilters in this paper. While the definitions and results in this section make reference to only a single semigroup, we will apply this material in later sections to subsets of both the additive semigroup and the multiplicative semigroup of a semiring (see Definition \ref{def:semiring}).

Denote by $\N$ the set of natural numbers $\{1, 2, \ldots\}$. For $S$ a set, denote by $\finitesubsets S$ and $\subsets S$ the collections of all finite subsets (including the empty set) and all subsets of $S$, respectively. 

\begin{definition}\label{def:dualclasses}
Let $\arbclass \subseteq \subsets S$ be a collection of subsets of a set $S$.
\begin{enumerate}[label=(\Roman*)]
\item The class $\arbclass$ is \emph{partition regular} if for all $A \in \arbclass$ and all partitions $A = \cup_{i=1}^k A_i$ of $A$ into finitely many pieces, at least one $A_i$ is in $\arbclass$.
\item The class $\arbclass$ is a \emph{filter} if it is non-empty, $\emptyset \not\in \arbclass$, $\arbclass$ is upward closed, and for all $A, B \in \arbclass$, $A \cap B \in \arbclass$.
\item The \emph{dual class} $\arbclass^* \subseteq \subsets S$ is the collection of subsets of $S$ having non-empty intersection with every member of $\arbclass$; in other words, $A \in \arbclass^*$ if and only if for all $B \in \arbclass$, $A \cap B \neq \emptyset$.
\end{enumerate}
\end{definition}

\begin{remark}\label{rmk:setalgebra}
If a collection $\arbclass$ of subsets of a set $S$ is upward closed, then $(\arbclass^*)^* = \arbclass$. When $\arbclass$ does not contain $\emptyset$ and is upward closed, it is partition regular if and only if its dual $\arbclass^*$ is a filter. These facts follow from simple exercises with the definitions and will be used repeatedly throughout the work.
\end{remark}

For $\semistimes$ a semigroup written multiplicatively, we denote multiplication by juxtaposition. For $x \in S$ and $A \subseteq S$, let $x A$ denote $\{x a \ | \ a \in A \}$ and $x^{-1} A$ denote $\{s \in S \ | \ x s \in A \}$; the sets $A x$ and $A x^{-1}$ are defined analogously.  Note that $x x^{-1} A \subseteq A$ and that a strict inclusion is possible.

\begin{definition}\label{def:syndthick}
Let $\semistimes$ be a semigroup and $A \subseteq S$.
\begin{enumerate}[label=(\Roman*)]
\item $A$ is (right) \emph{syndetic} if there exist $s_1, \ldots, s_k \in S$ such that $S = s_1^{-1}A \cup \cdots \cup s_k^{-1}A$.
\item $A$ is (left) \emph{thick} if for all $F \in \finitesubsets S$, there exists $x \in S$ for which $F x \subseteq A$.
\item $A$ is (right) \emph{piecewise syndetic} if there exist $s_1, \ldots, s_k \in S$ such that $s_1^{-1}A \cup \cdots \cup s_k^{-1}A$ is (left) thick.
\end{enumerate}
Denote by $\syndetic \semistimes$, $\thick \semistimes$, and $\pws \semistimes$ the collections of all syndetic, thick, and piecewise syndetic subsets of the semigroup $\semistimes$. When the semigroup is apparent, we refer to these classes simply as $\syndetic$, $\thick$, and $\pws$.
\end{definition}

The choice of \emph{left} and \emph{right} in Definition \ref{def:syndthick} makes thickness ``dual'' to syndeticity in the sense that $\syndetic^* = \thick$ and $\thick^* = \syndetic$.  The opposite classes \emph{left syndetic}, \emph{right thick}, and \emph{left piecewise syndetic} may be defined analogously, and the opposite versions of the results appearing in this work hold for them.

We describe now another notion of largeness, $\ip$ structure, which is of fundamental importance in Ramsey theory and ergodic theory; see \cite{hindmanoriginal}, \cite{fwdynamics}, \cite{furstenbergkatznelsonipszem}, \cite{fkbulletin}, and \cite{bergelsonleibmanannals}.

\begin{definition}\label{def:ipstructure}
Let $\semistimes$ be a semigroup and $A \subseteq S$.
\begin{enumerate}[label=(\Roman*)]
\item \label{item:ipitemone} Given $(s_i)_{i=1}^r \subseteq S$ and a non-empty $\al = \{\al_1 < \cdots < \al_k\} \subseteq \{1, \ldots, r\}$, let
\[s_\al = s_{\al_1} s_{\al_2} \cdots s_{\al_k}.\]
\item $A$ is an \emph{$\ipr$ set}, $r \in \N$, if there exists $(s_i)_{i=1}^r \subseteq S$ for which
\begin{align*}\finiteproducts {(s_i)_{i=1}^r} = \big\{ s_\al \ \big| \ \emptyset \neq \al \subseteq \{1, \ldots, r\} \big\} \subseteq A.\end{align*}
\item $A$ is an \emph{$\ipnaught$ set} if for all $r \in \N$, it is an $\ipr$ set.
\item $A$ is an \emph{$\ip$ set} if there exists $(s_i)_{i\in \N} \subseteq S$ for which
\begin{align*}\finiteproducts {(s_i)_{i\in\N}} = \big\{ s_\al \ \big| \ \emptyset \neq \al \in \finitesubsets \N \big\} \subseteq A.\end{align*}
\end{enumerate}
We denote the class of $\ipr$, $\ipnaught$, and $\ip$ subsets of $\semistimes$ by $\iprclass \semistimes$, $\ipnaughtclass \semistimes$, and $\ipclass \semistimes$, respectively.
\end{definition}

In this definition, ``FP'' is short for ``finite products''; when the semigroup is written additively, we write ``FS,'' which abbreviates ``finite sums.'' The semigroup $\semistimes$ is not assumed to be commutative, so the order in which the products are taken in \ref{item:ipitemone} is important. The increasing order was chosen here so that every (left) thick set is an $\ipset$ set (see Lemma \ref{lem:hierarchylemma} below); decreasing $\ipset$ sets (those defined with a decreasing order) can be found in any right thick set.

The notion of \emph{centrality} combines piecewise syndeticity and $\ip$ structure. Central sets originated in $\N$ in a dynamical context with Furstenberg \cite[Definition 8.3]{furstenberg-book}, and it was revealed in \cite[Section 6]{bhnonmetrizable} that the property of being central is equivalent to membership in a minimal idempotent ultrafilter. See \cite{hindmanstraussmaleki-centralchar} for a combinatorial characterization of centrality, and, for more recent developments and applications, see \cite[Section 5]{hindmanstrassalgebra} and \cite{polyrado}.

\begin{definition}\label{def:defofcentral}
Let $\semistimes$ be a semigroup. A subset of $S$ is \emph{central} if it is a member of a minimal idempotent ultrafilter on $S$. We denote by $\central \semistimes$ the class of central subsets of $\semistimes$.
\end{definition}

\begin{remark}\label{rmk:partitionregularityofipnaughtandcentral}
The class $\central$ is partition regular. This fact was first remarked by Furstenberg \cite[Page 163]{furstenberg-book}, and it follows immediately from the ultrafilter characterization of centrality. The classes $\pws$, $\ipnaughtclass$, and $\ipclass$ are also partition regular. The first and third of these facts follow from the ultrafilter characterization of those classes (see \cite{hindmanstrauss-book}, Theorems 4.40 and 5.12), and a proof of the second fact may be found in \cite[Proposition 2.3]{BRcountablefields}. It follows from the set algebra discussed in Remark \ref{rmk:setalgebra} that the dual classes $\central^*$, $\pws^*$, $\ipnaughtclass^*$, and $\ipclass^*$ are filters.
\end{remark}

Density provides another natural notion of size for subsets of amenable semigroups.

\begin{definition}\label{def:defofamenable}
Let $\semistimes$ be a semigroup.
\begin{enumerate}[label=(\Roman*)]
\item $S$ is \emph{left amenable} if the space of bounded, real-valued functions on $S$ with the supremum norm admits a \emph{left translation invariant mean}, that is, a positive linear functional $\lambda$ of norm 1 which is left translation invariant: for all bounded $f: S \to \mathbb{R}$ and all $s \in S$, $\lambda \big( x \mapsto f(s x) \big) = \lambda ( f )$. When regarding means as finitely additive measures, we abuse notation slightly by writing $\lambda(A)$ instead of $\lambda(\one_A)$, where $\one_A$ is the indicator function of $A$, for subsets $A \subseteq S$.
\item \label{item:upperbdensity} The \emph{upper Banach density} of $A \subseteq S$ is
\[d^*(A) = \sup \big\{ \lambda(A) \ \big| \ \lambda \text{ a left translation invariant mean on } \semistimes \big\}.\]
\end{enumerate}
We denote by $\density \semistimes$ the class of subsets of $\semistimes$ of positive upper Banach density. (With the convention that the supremum of the empty set is $-\infty$, the class $\density$ is non-empty if and only if $\semistimes$ is left amenable.)
\end{definition}

The most familiar appearance of this notion of density is the additive upper Banach density for subsets $A \subseteq \N$ given by (\ref{eqn:densityinseminplusintro}) in the introduction. In addition to showing that this density coincides with the one in Definition \ref{def:defofamenable} \ref{item:upperbdensity}, the results in Section \ref{sec:banach} allow us to handle upper Banach density in more general semigroups. Two useful properties follow immediately from the definition: for all $A, B \subseteq S$ and all $s \in S$,
\begin{align}\label{eqn:densityproperties}\dstar(s^{-1}A) = \dstar(A) \quad \text{ and } \quad \dstar(A \cup B) \leq \dstar(A) + \dstar(B).\end{align}
The second of these properties implies that the class $\density$ is partition regular and, hence, that the class $\density^*$ is a filter.

Next we introduce a class of sets in commutative semigroups called \emph{\crich{}} sets which contain an abundance of combinatorial patterns. Roughly speaking, a set is combinatorially rich if it satisfies the conclusions of Theorem \ref{thm:ipszemeredicomb}, the $\ip_0^*$ version of Furstenberg and Katznelson's IP Szemer\'edi theorem.\footnote{The class of combinatorially rich sets is related to the class of \emph{J-sets} (\cite[Definition 14.8.1]{hindmanstrauss-book}); those are, roughly speaking, sets which satisfy the conclusions of the IP van der Waerden theorem \cite[Theorem 3.2]{fwdynamics}.}

For $n, r \in \N$, denote by $S^{r \times n}$ the set of $r$-by-$n$ matrices with elements in $S$. For $\matM = (M_{i,j}) \in S^{r \times n}$ and non-empty $\al \subseteq \{1, \ldots r\}$, denote by $\matm_{\al,j}$ the sum $\sum_{i \in \al} M_{i,j}$.

\begin{definition}\label{def:combrich}
Let $\semisplus$ be a commutative semigroup. A subset $A \subseteq S$ is \emph{\crich{}} if for all $n \in \N$, there exists an $r \in \N$ such that for all $\matM \in S^{r\times n}$, there exists a non-empty $\al \subseteq \{1, \ldots, r\}$ and $s \in S$ such that for all $j \in \{1, \ldots, n\}$,
\[s+ \matm_{\al,j} \in A.\]
We denote by $\crichclass \semisplus$ the class of \crich{} subsets of $\semisplus$.
\end{definition}

Every set of positive upper Banach density is combinatorially rich; we will prove this in Theorem \ref{thm:densityimpliesar}. The converse does not hold: there are combinatorially rich subsets of $\N$ of zero density.\footnote{To construct a subset of $\seminplus$ that is combinatorially rich but of zero upper Banach density, show first that for all $n \in \N$, there exists $r \in \N$ such that for all $\matM \in \N^{r\times n}$, there exists a non-empty $\al \subseteq \{1, \ldots, r\}$ for which the set $\big\{\matm_{\al,j} \big\}_{j=1}^n$ is $n$-separated. Using this, one can construct a combinatorially rich set of zero density by taking a countable union of sufficiently separated finite sets.} In the following subsection, we will give an equivalent and perhaps more friendly reformulation of Definition \ref{def:combrich}, give further examples of combinatorially rich sets in Examples \ref{examples:combcubes}, and say much more about the class of combinatorially rich sets.

Now we will explain the hierarchy between the notions of largeness just defined.

\begin{lemma}\label{lem:hierarchylemma}
Let $\semistimes$ be a semigroup and $r \in \N$. Figure \ref{fig:containmentdiagram} illustrates containment amongst classes of largeness in $\semistimes$ ($\mathcal{X} \to \mathcal{Y}$ indicates that $\mathcal{X} \subseteq \mathcal{Y}$), with the position of the classes $\density$ and $\density^*$ regarded only in the case that $\semistimes$ is left amenable and the position of the classes $\crichclass$ and $\crichclass^*$ regarded only in the case that $\semistimes$ is commutative.
\end{lemma}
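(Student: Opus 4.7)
The plan is to verify each arrow in Figure \ref{fig:containmentdiagram} individually, grouping them by the technique required. The easy arrows come straight from the definitions of Section \ref{sec:defs}. Every $\ip$ set, being generated by an infinite sequence, contains the finite-product set of every initial segment, so $\ipclass \subseteq \ipnaughtclass \subseteq \iprclass$. A syndetic set $A$ is piecewise syndetic since $S$ itself is vacuously thick and the syndeticity witnesses serve as the required $s_1, \ldots, s_k$. A thick set $A$ is piecewise syndetic either by taking $k=1$ with a unit, or, in the unit-free case, by noting that for any fixed $s \in S$ and any finite $F$ there exists $x$ with $(sF)x \subseteq A$, so $s^{-1}A$ is thick. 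Finally, by Remark \ref{rmk:setalgebra}, every containment $\mathcal{X} \subseteq \mathcal{Y}$ among the non-dual classes immediately flips to $\mathcal{Y}^* \subseteq \mathcal{X}^*$, which handles all arrows among the dual classes at once.

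To prove $\thick \subseteq \ipclass$, I would construct a sequence $(s_i)_{i \in \N} \subseteq S$ by induction so that $\finiteproducts{(s_i)_{i=1}^n} \subseteq A$ for every $n$. Having chosen $s_1, \ldots, s_n$, apply thickness to the finite set $F_n = \finiteproducts{(s_i)_{i=1}^n} \cup \{s_n\}$ to obtain an $x$ with $F_n x \subseteq A$, and set $s_{n+1} = x$ (picking $s_{n+1}$ also inside $A$ at the first step to seed the induction). This yields $\finiteproducts{(s_i)_{i=1}^{n+1}} \subseteq A$, as desired. Note in particular that this proof also gives $\thick \subseteq \iprclass$ for every $r$, and hence $\thick \subseteq \ipnaughtclass$.

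For the arrows emanating from $\central$, I would invoke the ultrafilter characterization in Definition \ref{def:defofcentral}: since minimal idempotents lie in the smallest two-sided ideal of $\beta S$, whose members are precisely those ultrafilters consisting of piecewise syndetic sets, one has $\central \subseteq \pws$; and since every member of an idempotent ultrafilter contains an $\ip$ set by the standard Galvin-type argument, $\central \subseteq \ipclass$ and hence also $\central \subseteq \ipnaughtclass, \iprclass$. This is the one place where a small amount of ultrafilter input cannot be avoided.

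When $\semistimes$ is left amenable, $\density \subseteq \pws$ follows from the characterization of upper Banach density to be established in Theorem A of Section \ref{sec:banach}: if $\dstar(A) = \alpha > 0$, then for every finite $F \subseteq S$ there exists $s$ with $|F \cap A\cdot s^{-1}| \geq (\alpha/2) |F|$, which forces the complement of $A$ to fail to be thick and hence, by translation arguments, makes $A$ piecewise syndetic. In the commutative case, $\density \subseteq \crichclass$ is exactly the forthcoming Theorem \ref{thm:densityimpliesar}, which we may cite. The main obstacle in the whole plan is bookkeeping: the density containment depends on Section \ref{sec:banach} and the central containments depend on background about $\beta S$; once these are in hand, every arrow is either a two-line calculation or a direct citation, and the dual arrows follow for free by the set-algebra in Remark \ref{rmk:setalgebra}.
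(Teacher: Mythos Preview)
Your plan is mostly sound and matches the paper's own approach (which simply cites \cite{BHabundant} for the arrows not involving $\density$ or $\crichclass$, proves $\pws \subseteq \density$ by a short mean argument, and defers $\density \subseteq \crichclass$ to Theorem~\ref{thm:densityimpliesar}). However, you have reversed the arrow between $\pws$ and $\density$. In Figure~\ref{fig:containmentdiagram} the containment is $\pws \subseteq \density$, not $\density \subseteq \pws$; that is, piecewise syndetic sets have positive upper Banach density, not the other way around. Indeed, $\density \subseteq \pws$ is false in general: Section~\ref{sec:negexampled} exhibits subsets of $\N$ of multiplicative upper Banach density arbitrarily close to $1$ that are not multiplicatively piecewise syndetic.

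Your argument for the false direction is also broken on its own terms. The conclusion ``the complement of $A$ fails to be thick'' is equivalent to $A$ being \emph{syndetic}, not merely piecewise syndetic, and positive upper Banach density certainly does not force syndeticity. The correct argument runs in the opposite direction and is the one the paper gives: if $A$ is piecewise syndetic, choose $s_1,\ldots,s_k$ so that $B = s_1^{-1}A \cup \cdots \cup s_k^{-1}A$ is thick; any left-invariant mean supported on $B$ (such exist by \cite[Prop.~1.21]{patersonbook}) gives $\lambda(B)=1$, and subadditivity plus invariance yield $\lambda(A) \geq 1/k$. You should also note that $\thick \subseteq \central$ (needed for the arrow $\thick \to \central$) is not covered by your $\thick \subseteq \ipclass$ argument; it requires the fact that the closure of a thick set in $\beta S$ contains a minimal left ideal and hence a minimal idempotent.
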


\begin{figure}[ht]
\centering
\begin{tikzpicture}[>=triangle 60]
  \matrix[matrix of math nodes,column sep={15pt,between origins},row sep={12pt,between origins}](m)
  {
    & & & & & & & & |[name=arichstar]|\crichclass^* & & & & & & \\
    \\
    |[name=iprstar]|\iprclass^* & & & & & & & & & & & & & & \\
    & & & & & & & & |[name=densitystar]|\density^* & & & & & & \\
    & & |[name=ipnaughtstar]|\ipnaughtclass^* \\
    \\
    & & & & |[name=ipstar]|\ipclass^* & & & & |[name=pwsstar]|\pws^* \\
    \\
    & & & & & & |[name=centralstar]|\central^* & & & & |[name=thick]| \thick \\
    \\
    & & & & |[name=syndetic]|\syndetic & & & & |[name=central]|\central \\
    \\
    & & & & & & |[name=pws]| \pws & & & & |[name=ip]|\ipclass\\
    \\
    & & & & & & & & & & & & |[name=ipnaught]|\ipnaughtclass \\
    & & & & & & |[name=density]|\density \\
    & & & & & & & & & & & & & & |[name=ipr]|\iprclass \\
    \\
    & & & & & & |[name=arich]|\crichclass \\
 };
 
   \draw[-angle 90] (arichstar) edge (densitystar)
   			(iprstar) edge (ipnaughtstar)
            (ipnaughtstar) edge (ipstar)
            (ipstar) edge (centralstar)
            (centralstar) edge (central)
            (central) edge (ip)
            (ip) edge (ipnaught)
            (ipnaught) edge (ipr)
            (densitystar) edge (pwsstar)
            (pwsstar) edge (thick)
            (pwsstar) edge (centralstar)
            (centralstar) edge (syndetic)
            (syndetic) edge (pws)
            (thick) edge (central)
            (central) edge (pws)
            (pws) edge (density)
            (density) edge (arich)
  ;
\end{tikzpicture}
\caption{Containment amongst classes of largeness in a semigroup.}
\label{fig:containmentdiagram}
\end{figure}

The containments indicated in Figure \ref{fig:containmentdiagram} which do not involve the classes $\density$ or $\crichclass$ are proved in \cite[Section 1]{BHabundant}. The following lemma shows that in left amenable semigroups, $\pws \subseteq \density$; by considering the dual classes, it follows that $\density^* \subseteq \pws^*$.

\begin{lemma}
Let $\semistimes$ be a left amenable semigroup. If $A \subseteq S$ is piecewise syndetic, then $\dstar(A) > 0$.
\end{lemma}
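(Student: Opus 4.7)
The plan is to reduce to showing that every thick set in a left amenable semigroup has upper Banach density equal to $1$, and then apply subadditivity. Specifically, if $A$ is piecewise syndetic, choose $s_1,\ldots,s_k\in S$ with $B:=s_1^{-1}A\cup\cdots\cup s_k^{-1}A$ thick. If we can produce a single left translation invariant mean $\lambda$ with $\lambda(B)=1$, then from $\one_B\le\sum_i\one_{s_i^{-1}A}$, left invariance of $\lambda$, and $\one_{s_i^{-1}A}=L_{s_i}\one_A$ (where $L_sf(t)=f(st)$), we get
\[
1=\lambda(B)\le\sum_{i=1}^k\lambda(s_i^{-1}A)=k\lambda(A),
\]
so $d^*(A)\ge\lambda(A)\ge 1/k>0$.

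The heart of the argument is therefore the construction of $\lambda$. First I would build an auxiliary (not necessarily invariant) mean $m$ on $\ell^\infty(S)$ concentrated on $B$ in a translation-robust way. Let $\mathcal{F}$ be the directed set of finite subsets of $S$ ordered by inclusion; by thickness, for each $F\in\mathcal{F}$ we can pick $x_F\in\bigcap_{f\in F}f^{-1}B$. Let $\mathcal{U}$ be an ultrafilter on $\mathcal{F}$ containing every tail $\{F:F\supseteq F_0\}$, and set $m(f)=\lim_{F\to\mathcal{U}}f(x_F)$. Then $m$ is a mean, and for every $s\in S$ the tail $\{F:s\in F\}$ lies in $\mathcal{U}$, so $sx_F\in B$ eventually, giving $m(\one_{s^{-1}B})=1$ for all $s\in S$.

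Next, using that $S$ is left amenable, fix any left translation invariant mean $\lambda_0$, and define the convolution $\lambda:=\lambda_0*m$ by
\[
\lambda(f)=\lambda_0\bigl(s\mapsto m(L_sf)\bigr).
\]
A short check using $L_sL_r=L_{rs}$ together with left invariance of $\lambda_0$ shows $\lambda$ is a left translation invariant mean. Moreover,
\[
\lambda(\one_B)=\lambda_0\bigl(s\mapsto m(\one_{s^{-1}B})\bigr)=\lambda_0(1)=1,
\]
which completes the reduction step and hence the proof.

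The main obstacle is precisely this construction of a left invariant mean assigning full mass to the thick set $B$: the chosen points $x_F$ only record that $B$ contains translates of every finite configuration, so the raw limit $m$ is only finitely additive and not invariant, and the convolution with $\lambda_0$ is needed to promote it to an invariant mean while preserving the mass it places on $B$.
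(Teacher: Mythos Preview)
Your proof is correct and follows exactly the same strategy as the paper's: reduce to showing a thick set supports a left invariant mean of full mass, then use $\one_B\le\sum_i\one_{s_i^{-1}A}$ and left invariance to get $\lambda(A)\ge 1/k$. The only difference is that the paper cites this thick-set fact from \cite[Prop.~1.21]{patersonbook}, whereas you supply a direct proof via the ultrafilter-limit mean $m$ and the convolution $\lambda_0*m$; your construction is a standard proof of precisely that proposition.
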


\begin{proof}
Let $A \subseteq S$ be piecewise syndetic. There exist $s_1, \ldots, s_k \in S$ for which the set $B \defeq s_1^{-1} A \cup \cdots \cup s_k^{-1} A$ is thick. By \cite[Prop. 1.21]{patersonbook}, there exists a left invariant mean $\lambda$ on $\semistimes$ for which $\lambda(B) = 1$. Since $\lambda$ is positive, linear, and invariant,
\[1 = \lambda(B) \leq \lambda \big( \one_{s_1^{-1} A} + \cdots + \one_{s_k^{-1} A} \big) = \lambda \big( s_1^{-1} A \big) + \cdots + \lambda \big( s_k^{-1} A \big) = k \lambda \big( A \big),\]
whereby $\dstar(A) \geq 1 \big/ k$.
\end{proof}

All that remains to be shown in Lemma \ref{lem:hierarchylemma} is that sets of positive upper Banach density in commutative semigroups are combinatorially rich. This is proven in Theorem \ref{thm:densityimpliesar} below with the help of an alternate characterization of upper Banach density developed in Section \ref{sec:banach}.

\subsection{Combinatorially rich sets and Hales-Jewett-type theorems}\label{sec:crsetsandhjtheorems}

The following lemma helps us give examples of combinatorial configurations within \crich{} sets, show that the class $\crichclass$ is partition regular, and connect these sets to sets which satisfy the conclusions of the $\ip_0^*$ version of Szemer\'edi's theorem, Theorem \ref{thm:ipszemeredicomb}.

\begin{lemma}\label{lem:equivtocombrich}
Let $\semisplus$ be a commutative semigroup and $A \subseteq S$. The following are equivalent:
\begin{enumerate}[label=(\Roman*)]
\item \label{item:crich} $A$ is \crich{}.
\item \label{item:multidcombrich} For all $n, m \in \N$, there exists $r=r(n,m) \in \N$ such that for all $\matM \in S^{r\times n}$, there exists non-empty, disjoint $\al_1, \ldots, \al_m \subseteq \{1, \ldots, r\}$ and $s \in S$ such that for all $j_1, \ldots, j_m \in \{1, \ldots, n\}$,
\[s+ \matm_{\al_1,j_1} + \matm_{\al_2,j_2} + \cdots + \matm_{\al_m,j_m} \in A.\]
\item \label{item:homdef} For all $n \in \N$, there exists $r \in \N$ such that for all commutative semigroups $T$ and all homomorphisms $\varphi_1, \ldots, \varphi_n$ from $T$ to $S$, the set
\begin{align}\label{eqn:setofgoodinputs} \left\{ t \in T \ \middle | \ \big(A - \varphi_1(t) \big) \cap \cdots \cap \big(A - \varphi_n(t) \big) \neq \emptyset \right\}\end{align}
is $\ip_r^*$ in $T$.\footnote{Note that $\big(A - \varphi_1(t) \big) \cap \cdots \cap \big(A - \varphi_n(t) \big) \neq \emptyset$ if and only if there exists $s \in S$ such that for all $i \in \{1,\ldots, n\}$, $s+\varphi_i(t) \in A$.  We will utilize this equivalence repeatedly usually without mentioning it explicitly.}
\end{enumerate}
\end{lemma}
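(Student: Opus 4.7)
The plan is to prove the cyclic implications (II) $\Rightarrow$ (I) $\Rightarrow$ (III) $\Rightarrow$ (II). The first implication is immediate on specializing to $m = 1$, so the substance of the proof lies in establishing the equivalence (I) $\Leftrightarrow$ (III) and then upgrading the one-dimensional statement (I) to the multidimensional statement (II).

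The equivalence (I) $\Leftrightarrow$ (III) rests on a direct dictionary between matrices $\matM \in S^{r \times n}$ and data $(T, \varphi_1, \ldots, \varphi_n, (t_i)_{i=1}^r)$ consisting of a commutative semigroup $T$, $n$ homomorphisms $\varphi_j : T \to S$, and a length-$r$ sequence in $T$, related by $M_{i,j} = \varphi_j(t_i)$. Since each $\varphi_j$ is a homomorphism, the column sums satisfy $\matm_{\alpha, j} = \varphi_j(t_\alpha)$ for every non-empty $\alpha \subseteq \{1, \ldots, r\}$, so the existence of $\alpha$ and $s \in S$ with $s + \matm_{\alpha, j} \in A$ for all $j$ translates exactly into the element $t_\alpha$ lying in the set displayed in $(\ref{eqn:setofgoodinputs})$. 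Thus (I) $\Rightarrow$ (III) by taking for (III) the value of $r$ granted by (I) and, for any $(T, \varphi_j, (t_i))$, applying (I) to the matrix $M_{i,j} := \varphi_j(t_i)$; conversely (III) $\Rightarrow$ (I) by taking $T$ to be the free commutative semigroup on $r$ generators $e_1, \ldots, e_r$, defining $\varphi_j(e_i) := M_{i,j}$, and applying (III) to the sequence $(e_i)_{i=1}^r$.

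For (III) $\Rightarrow$ (II), given $n$ and $m$ I would set $N = n^m$, invoke (III) with parameter $N$ to obtain some $r_0$, and work inside the free commutative semigroup $T$ on generators $\{e_{i,k}\}$ arranged in $m$ ``layers'' indexed by $k \in \{1, \ldots, m\}$. For each matrix $M$ at hand, I would define $N$ homomorphisms $\psi_{\vec{j}} : T \to S$, indexed by $\vec{j} = (j_1, \ldots, j_m) \in \{1, \ldots, n\}^m$, by $\psi_{\vec{j}}(e_{i,k}) = M_{i, j_k}$; then by (III) the corresponding return set is $\ip^*_{r_0}$ in $T$, and pairing it with an $\ip_{r_0}$-generating sequence in $T$ whose terms spread across all $m$ layers yields a non-empty subset $\beta$ whose layer-decomposition $\alpha_k := \{i : (i,k) \in \beta\}$ consists of pairwise disjoint subsets of rows, together with a common shift $s \in S$ satisfying $s + \sum_k \matm_{\alpha_k, j_k} \in A$ for every $\vec{j}$.

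The principal obstacle in this last step is arranging that each of the $m$ sets $\alpha_k$ is non-empty, since a naive application of (III) may concentrate $\beta$ in only a few layers. I would address this either by enlarging $r_0$ and designing the $\ip_{r_0}$-generating sequence so that every non-empty $\ip$-subset is forced to visit every layer, or by induction on $m$: from (II) for $m-1$ one produces $\alpha_1, \ldots, \alpha_{m-1}$ together with a family of partial shifts, encodes this configuration as a new commutative semigroup $T'$ with associated homomorphisms, and applies (III) once more to extract a non-empty $\alpha_m$ disjoint from the others.
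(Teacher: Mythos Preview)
Your treatment of (II) $\Rightarrow$ (I) and of the equivalence (I) $\Leftrightarrow$ (III) is correct and matches the paper's argument almost exactly; the only cosmetic difference is that for (III) $\Rightarrow$ (I) the paper uses $T = S^n$ with the coordinate projections, while you use the free commutative semigroup on $r$ generators---both choices work equally well.

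The gap is in (III) $\Rightarrow$ (II). You correctly diagnose the obstacle (some $\alpha_k$ may be empty, and disjointness is not automatic), but neither of your proposed remedies is carried out, and neither is obviously workable as stated. Induction on $m$ runs into the problem that after producing $\alpha_1,\ldots,\alpha_{m-1}$ you have no control over which rows remain, so there is no clean way to feed the residual situation back into (III). Your first remedy---choosing the $\ip_{r_0}$ generators so that every subsum visits every layer---is the right instinct, but to make it give \emph{disjoint non-empty} $\alpha_k$ you are forced to make each generator $t_i$ draw its layer-$k$ component from a \emph{separate block of rows} for each $k$. Once you do that, you have rediscovered the paper's argument.

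The paper avoids the detour through (III) and proves (I) $\Rightarrow$ (II) directly by this block construction. Given $n,m$, set $n' = n^m$, take $r' = r(n')$ from (I), and put $r = m r'$. For $\matM \in S^{r\times n}$, form $\matM' \in S^{r'\times n'}$ whose columns are indexed by $(j_1,\ldots,j_m)\in\{1,\ldots,n\}^m$ via
\[
M'_{i,(j_1,\ldots,j_m)} \;=\; M_{i,j_1} + M_{i+r',\,j_2} + \cdots + M_{i+(m-1)r',\,j_m}.
\]
Applying (I) to $\matM'$ yields a non-empty $\alpha\subseteq\{1,\ldots,r'\}$ and $s\in S$ with $s + M'_{\alpha,(j_1,\ldots,j_m)}\in A$ for all $(j_1,\ldots,j_m)$. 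Setting $\alpha_k = \alpha + (k-1)r'$ gives $m$ pairwise disjoint non-empty subsets of $\{1,\ldots,r\}$ with $s + M_{\alpha_1,j_1} + \cdots + M_{\alpha_m,j_m}\in A$, which is exactly (II). The point is that splitting the rows into $m$ blocks in advance guarantees both disjointness and non-emptiness for free.
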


\begin{proof}
To see that \ref{item:crich} implies \ref{item:multidcombrich}, suppose $A$ is \crich{}. Let $n, m \in \N$, put $n' = n^m$ and $r' = r(n')$ from Definition \ref{def:combrich} for the set $A$. Put $r = mr'$, and let $\matM \in S^{r \times n}$. Make $\matM' \in S^{r' \times n'}$ as follows: the columns of $\matM'$ are indexed by vectors $(j_1,\ldots,j_m) \in \{1, \ldots, n\}^m$, where for all $i \in \{1, \ldots r'\}$,
\[\matm'_{i,(j_1,\ldots,j_m)} = \matm_{i,j_1}+ \matm_{i+r',j_2} + \cdots+ \matm_{i+(m-1)r',j_m}.\]
Since $\matM' \in S^{r' \times n'}$ and $A$ is \crich{}, there exists $\al \subseteq \{1, \ldots, r'\}$ and $s \in S$ such that for all $(j_1,\ldots,j_m) \in \{1, \ldots, n\}^m$, $s + \matm'_{\al,(j_1,\ldots,j_m)} \in A$. Putting $\al_i = \al + (i-1)r'$, we see that $\al_1, \ldots, \al_m \subseteq \{1, \ldots, r\}$ are disjoint and for all $j_1, \ldots, j_m \in \{1, \ldots, n\}$,
\[s+ \matm_{\al_1,j_1} + \matm_{\al_2,j_2} + \cdots + \matm_{\al_m,j_m} \in A.\]

That \ref{item:multidcombrich} implies \ref{item:crich} is immediate. To see that \ref{item:crich} implies \ref{item:homdef}, let $n \in \N$ and take $r=r(n)$ from Definition \ref{def:combrich} for the set $A$. Let $T$ be a commutative semigroup and $\varphi_1, \ldots, \varphi_n$ be homomorphisms from $T$ to $S$. Let $t_1, \ldots, t_r \in T$, and put $\matM = \big( \varphi_j(t_i) \big) \in S^{r \times n}$. Since $A$ is \crich{}, there exists $\al \subseteq \{1, \ldots, r\}$ and $s \in S$ such that for all $j \in \{1, \ldots, n\}$, $s+\varphi_j(t_\al) \in A$. Since $t_1, \ldots, t_r$ were arbitrary, this shows that the set in (\ref{eqn:setofgoodinputs}) is $\ip_r^*$ in $T$.

To see that \ref{item:homdef} implies \ref{item:crich}, let $n \in \N$ and take $r=r(n)$ from \ref{item:homdef} for the set $A$. Let $\matM \in S^{r \times n}$. Put $T=S^n$ with coordinate-wise addition, and consider the coordinate projections $\pi_1, \ldots, \pi_n:S^n \to S$. For each $i \in \{1, \ldots, r\}$, let $t_i$ be the $i^{\text{th}}$ row of $\matM$. Since the set in (\ref{eqn:setofgoodinputs}) is $\ip_r^*$ in $S^n$, there exists an $\al \subseteq \{1,\ldots, r\}$ and $s \in S$ such that for all $j \in \{1, \ldots, n\}$, $s+\pi_j(t_\al) = s + \matm_{\al,j} \in A$. This shows that $A$ is \crich{}.
\end{proof}

We proceed by giving some examples of combinatorial configurations found in combinatorially rich sets in some common semigroups.

\begin{examples}\label{examples:combcubes} Let $\semisplus$ be a commutative semigroup, $A \subseteq S$ be \crich{}, $n, m \in \N$, and $r \in \N$ be sufficiently large. Specifying various matrices $\matM \in S^{r \times n}$ leads, by Definition \ref{def:combrich}, to combinatorial configurations in $S$.
\begin{enumerate}[label=(\Roman*)]
\item \label{item:additiveexampleofcrsets} Suppose $\semisplus = \seminplus$. Choosing $\matm_{i,j} = j$, there exists a non-empty $\al \subseteq \{1, \ldots, r\}$ and $s \in \N$ such that
\[s+ |\al|, s+2|\al|, \ldots, s+ n|\al| \in A.\]
Thus, \crich{} sets in $\seminplus$ are AP-rich: they contain arbitrarily long arithmetic progressions. Choosing $\matm_{i,j} = j(n+1)^i$ and using Lemma \ref{lem:equivtocombrich} \ref{item:multidcombrich}, we see that $A$ contains arbitrarily long, arbitrarily high-dimensional \emph{generalized arithmetic progressions} with no coinciding sums: there exist $d_1, \ldots, d_m \in \N$ so that for all $j_1, \ldots, j_m \in \{1, \ldots, n\}$,
\begin{align}\label{eqn:genap}s + j_1 d_1 + j_2 d_2 + \cdots + j_m d_m \in A,\end{align}
and there are exactly $n^m$ elements in this configuration. Given $s_1, \ldots, s_r \in \N$ as the generators of an additive $\ip_r$ set $B \subseteq \N$ and choosing $\matm_{i,j} = j s_i$, we see that $B$ contains the step sizes of a generalized arithmetic progression of length $n$ and dimension $m$ contained in $A$. Therefore, the set of (positive) differences of elements in a \crich{} set in $\seminplus$ and, more generally, the set of step sizes of generalized arithmetic progressions contained in a \crich{} set in $\seminplus$, is additively $\ipnaught^*$.

\item Suppose $\semisplus = \semintimes$. Choosing $M_{i,j} = p_i^j$, where $p_i$ is the $i^{\text{th}}$ prime number, it follows from Lemma \ref{lem:equivtocombrich} \ref{item:multidcombrich} that there exists $s \in \N$ and pairwise coprime $d_1, \ldots, d_m \in \N$ such that
\[\big\{ s d_1^{j_1} \cdots d_m^{j_m} \ \big| \ j_1, \ldots, j_m \in \{ 1, \ldots, n \} \big\} \subseteq A.\]
Thus, \crich{} sets in $\semintimes$ are more than GP-rich: they contain arbitrarily long, arbitrarily high-dimensional \emph{geometric cubes}. As an example, it is straightforward to check that the set $a \N + b$ is \crich{} in $\semintimes$ if and only if $b$ is a multiple of $a$.

\item \label{item:thirdexampleofcrstructure} Suppose $\semisplus = (\Z^d,+)$ and $n = N^d$. Using Lemma \ref{lem:equivtocombrich} \ref{item:homdef}, set $T = \seminplus$ and, for $x \in \{1, \ldots, N\}^d$, consider the homomorphism $\varphi_x: \N \to \Z^d$ defined by $\varphi_x(k) = kx$. There exist $t \in \N$ and $s \in \Z^d$ such that
\begin{align}\label{eqn:dilatedcube}s+ t\{1, \ldots, N\}^d \subseteq A.\end{align}
In fact, Lemma \ref{lem:equivtocombrich} \ref{item:homdef} gives that the set $t$'s for which there exists such an $s \in \Z^d$ is an $\ip_r^*$ subset of $\seminplus$. Thus, \crich{} sets in $(\Z^d,+)$ contain (many) dilations of any finite subset of $\Z^d$. Using $T = (\N^m,+)$ and similar homomorphisms, configurations such as the one in (\ref{eqn:dilatedcube}) can be made to be \emph{higher dimensional} as in (\ref{eqn:genap}). Setting $T = (\Z^d,+)$ and considering the $d$-by-$d$ integer matrices $W_1, \ldots, W_n$ as homomorphisms from $\Z^d$ to itself, the set of vectors $z \in \Z^d$ for which there exists $s \in \Z^d$ such that $s+W_1z, \ldots, s+W_n z \in A$ is $\ipr^*$ in $(\Z^d,+)$.

\item Suppose $\semisplus = (\F_p[x],+)$ and that $n \geq p$. Choosing $M_{i,j} = (j-1)x^i$, it follows from Lemma \ref{lem:equivtocombrich} \ref{item:multidcombrich} that there exists $s \in \F_p[x]$ and an $m$-dimensional vector subspace $V$ of $\F_p[x]$ such that
\[s + V \subseteq A.\]
Thus, \crich{} sets in $\F_p[x]$ contain \emph{affine vector subspaces} of arbitrarily high dimension.
\end{enumerate}
\end{examples}

We will show now that the class $\crichclass$ is partition regular by making use of Theorem \ref{thm:HJ} below, the Hales-Jewett theorem. Let $n, r \in \N$, and denote by $[n]$ the set $\{1,2, \ldots, n\}$. A \emph{variable word in $[n]^r$} is a word of length $r$ with letters from the alphabet $[n] \cup \{\star\}$ which contains the letter $\star$ at least once. A variable word $v$ in $[n]^r$ is considered to be a function $[n] \to [n]^r$ which sends the letter $x \in [n]$ to the word gotten by replacing each occurrence of the letter $\star$ in $v$ with the letter $x$. A \emph{combinatorial line in $[n]^r$} is the image of a variable word in $[n]^r$.

\begin{theorem}[{\cite{halesjewett}; see also \cite[Chapter 2, Theorem 3]{grsbook}}]\label{thm:HJ}
For all $n,k \in \N$, there exists $r = r(n,k) \in \N$ such that for all partitions $[n]^r = A_1 \cup \cdots \cup A_k$, some $A_i$ contains a combinatorial line.
\end{theorem}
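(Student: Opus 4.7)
The plan is to prove Theorem~\ref{thm:HJ} by induction on the alphabet size $n$, with $k$ allowed to vary freely at each stage. The base case $n=1$ is immediate since $[1]^r$ is a single point and any variable word has constant image. The heart of the argument is the inductive step, which rests on the classical \emph{focusing} construction of Hales and Jewett.

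Given the inductive hypothesis for $n-1$ in the form of a bound function $(n-1, k') \mapsto r(n-1, k')$, I would fix $k$ and construct the desired bound as $r = r_1 + \cdots + r_k$, with the block sizes $r_j$ chosen recursively (most conveniently in reverse order) so that each application of the inductive hypothesis remains valid as the effective number of colors grows. Identifying $[n]^r$ with $[n]^{r_1}\times\cdots\times[n]^{r_k}$, the goal is to produce variable words $w_j:[n-1]\to[n-1]^{r_j}$, one per block, enjoying the \emph{focusing property}: for every fixed setting of the remaining blocks, the coloring $\chi$ is constant on the image $\{w_j(1),\ldots,w_j(n-1)\}$ when placed in block $j$. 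At stage $j$ one applies the inductive hypothesis to an auxiliary coloring of $[n-1]^{r_j}$ whose ``colors'' are themselves functions encoding the $\chi$-color of every completion of block $j$ to a full point of $[n]^r$; the number of such functions is a power of $k$ depending on $n$ and on the other block sizes, and $r_j$ is chosen large enough to match this color count. Reinterpreting $w_j$ as a variable word over the full alphabet $[n]$ by allowing $\star=n$, its image on $\{1,\ldots,n-1\}$ remains monochromatic, while $w_j(n)$ may receive some other color.

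To conclude, examine the $k+1$ ``diagonal'' points $v_i = (w_1(n-1),\ldots,w_i(n-1), w_{i+1}(n),\ldots,w_k(n))$ for $i=0,\ldots,k$, and apply pigeonhole to the $k$ colors to obtain $i_1<i_2$ with $\chi(v_{i_1})=\chi(v_{i_2})$. The variable word in $[n]^r$ whose $j$-th block equals $w_j(n-1)$ for $j\leq i_1$, equals $w_j(n)$ for $j>i_2$, and equals $w_j(\star)$ for $i_1<j\leq i_2$ (with a common variable $\star$) traces out a combinatorial line; by the focusing property, any setting $\star\in\{1,\ldots,n-1\}$ produces a point of color $\chi(v_{i_1})$, while $\star=n$ produces $v_{i_2}$, and these coincide, so the line is monochromatic.

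The main obstacle is the interlocking recursion between the block sizes $r_j$ and the focusing construction: the auxiliary coloring used to produce $w_j$ has a color count that depends on $n$ and on the sizes of the other blocks, so a careful recursive definition of the $r_j$'s (and a careful order of construction of the $w_j$'s) is needed to keep every application of the inductive hypothesis well-founded. Once this bookkeeping is handled, the verification that the final concatenated word is a genuine combinatorial line and that the focusing yields monochromaticity is routine.
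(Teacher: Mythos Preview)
The paper does not give its own proof of Theorem~\ref{thm:HJ}; it simply quotes the result from the literature (the original Hales--Jewett paper and the Graham--Rothschild--Spencer text) and uses it as a black box. Your proposal is a faithful sketch of the classical focusing proof, with the inductive step on the alphabet size, the block decomposition $r=r_1+\cdots+r_k$, the construction of monochromatic $(n-1)$-letter lines in each block via the inductive hypothesis applied to auxiliary ``super-colorings,'' and the final pigeonhole on the diagonal points. That argument is correct as outlined, and the obstacle you flag (the recursive dependence of the $r_j$'s on one another through the tower of color counts) is exactly the bookkeeping one must get right; once the $r_j$'s are defined in reverse order and the $w_j$'s constructed in forward order, everything goes through. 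There is nothing to compare against in the paper itself.
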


\begin{lemma}\label{crichispr}
Let $\semisplus$ be a commutative semigroup. The class $\crichclass$ is partition regular, and the class $\crichclass^*$ is a filter.
\end{lemma}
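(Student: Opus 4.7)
My plan is to deduce partition regularity of $\crichclass$ from the Hales-Jewett theorem (Theorem \ref{thm:HJ}) together with the multidimensional reformulation of combinatorial richness given by Lemma \ref{lem:equivtocombrich} \ref{item:multidcombrich}. Once partition regularity is in hand, the upward closure of $\crichclass$ (immediate from Definition \ref{def:combrich}: any $\al, s$ witnessing the pattern in $A$ also witnesses it in any superset of $A$) combines with Remark \ref{rmk:setalgebra} to yield that $\crichclass^*$ is a filter.

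I will begin by isolating an auxiliary notion: call $A \subseteq S$ \emph{$n$-combinatorially rich} if some $r \in \N$ witnesses the defining property of Definition \ref{def:combrich} for the given $n$, so $A \in \crichclass$ iff $A$ is $n$-combinatorially rich for every $n$. A short padding argument -- extend any $\matM \in S^{r \times m}$ to $\matM' \in S^{r \times n}$ by repeating its last column -- shows monotonicity: if $A$ is $n$-combinatorially rich and $m \le n$, then $A$ is $m$-combinatorially rich, with the same $r$.

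The core of the proof is that, for each fixed $n$, $n$-combinatorial richness is partition regular. Fix $n, k \in \N$, let $N = r(n,k)$ be the Hales-Jewett number from Theorem \ref{thm:HJ}, and suppose $A \in \crichclass$ with $A = A_1 \cup \cdots \cup A_k$. Apply Lemma \ref{lem:equivtocombrich} \ref{item:multidcombrich} to $A$ with parameters $n$ and $m = N$ to obtain $r \in \N$ so that every $\matM \in S^{r \times n}$ admits pairwise disjoint non-empty $\al_1, \ldots, \al_N \subseteq \{1,\ldots,r\}$ and $s \in S$ with $s + \matm_{\al_1,j_1} + \cdots + \matm_{\al_N,j_N} \in A$ for every $(j_1, \ldots, j_N) \in [n]^N$. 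I then $k$-color $[n]^N$ by assigning to $w \in [n]^N$ the least index $i$ with $s + \sum_\ell \matm_{\al_\ell, w_\ell} \in A_i$, and Hales-Jewett produces a monochromatic combinatorial line whose variable word has $\star$-positions $P \subseteq [N]$ and fixed letters $v_\ell$ for $\ell \notin P$. Setting $\beta = \bigcup_{\ell \in P} \al_\ell$ (non-empty because $P \neq \emptyset$) and $s' = s + \sum_{\ell \notin P} \matm_{\al_\ell, v_\ell}$, disjointness of the $\al_\ell$ gives $\matm_{\beta, x} = \sum_{\ell \in P} \matm_{\al_\ell, x}$, so $s' + \matm_{\beta, x} \in A_i$ for every $x \in [n]$, witnessing that $A_i$ is $n$-combinatorially rich.

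To finish, I combine this with a pigeonhole step: for every $n$ some $A_{i(n)}$ is $n$-combinatorially rich, so some single $i \in [k]$ appears as $i(n)$ for arbitrarily large $n$, and monotonicity then forces $A_i$ to be $m$-combinatorially rich for every $m$, hence $A_i \in \crichclass$. The delicate point is the construction of $\beta$: a naive application of CR in the formulation of Definition \ref{def:combrich} to an $r \times n^N$ matrix whose columns encode words $w \in [n]^N$ by the sum $\sum_\ell \matm_{i, w_\ell}$ would, for a monochromatic line at $\star$-positions $P$, only produce a pattern of the shape $|P|\,\matm_{\al, x}$, which is not of the required form $\matm_{\beta, x}$. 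Passing to the multidimensional formulation with $N$ pairwise disjoint index sets $\al_\ell$ and aggregating via $\beta = \bigcup_{\ell \in P} \al_\ell$ is exactly what resolves this difficulty, and is what I anticipate to be the main obstacle in executing the argument cleanly.
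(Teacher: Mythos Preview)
Your Hales--Jewett step via Lemma~\ref{lem:equivtocombrich}~\ref{item:multidcombrich} is exactly the paper's argument and is carried out correctly, but there is a genuine gap at the sentence ``witnessing that $A_i$ is $n$-combinatorially rich.'' What your argument actually establishes is: there exists $r$ such that for every $\matM \in S^{r\times n}$ there exist $\beta$, $s'$ and \emph{some} index $i = i(\matM)$ with $s' + \matm_{\beta,j} \in A_i$ for all $j$. Because the coloring of $[n]^N$ is built from the $\matM$-dependent data $\al_1,\ldots,\al_N,s$, the monochromatic color $i$ depends on $\matM$; you have not shown that a single $A_i$ captures the pattern for \emph{all} matrices of the given size, which is precisely what ``$A_i$ is $n$-combinatorially rich'' requires. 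The subsequent pigeonhole step therefore rests on an unproven assertion.

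The paper arrives at the same intermediate conclusion (for $k=2$, which suffices by iterating) and then closes the gap with a short contradiction-by-concatenation argument: if neither $A_1$ nor $A_2$ is combinatorially rich, pick $n_1,n_2$ witnessing failure, set $r = r_0(n_1+n_2)$ from the weak conclusion, choose for each $\ell$ a bad matrix $\matM^{(\ell)} \in S^{r\times n_\ell}$ (one exists for every $r$), and concatenate their columns to form $\matM \in S^{r\times(n_1+n_2)}$. Applying the weak conclusion to $\matM$ forces all $n_1+n_2$ shifted columns into some $A_\ell$, contradicting the choice of $\matM^{(\ell)}$. The same concatenation works for general $k$ (stack $k$ bad matrices side by side), so your framework is easily repaired this way; the monotonicity/pigeonhole route as written cannot bridge the gap by itself.
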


\begin{proof}
The second statement follows from the first by Remark \ref{rmk:setalgebra}, so we focus on the first. Suppose $A_1 \cup A_2 \subseteq S$ is \crich{}; we will show that either $A_1$ or $A_2$ must be combinatorially rich. Let $n \in \N$, put $m = r(n,2)$ from Theorem \ref{thm:HJ}. Put $r = r(n,m)$ from Lemma \ref{lem:equivtocombrich} \ref{item:multidcombrich} by the combinatorial richness of the set $A_1 \cup A_2$. Let $\matM \in S^{r \times n}$. There exist disjoint, non-empty $\al_1, \ldots, \al_{m} \subseteq \{1, \ldots, r\}$ and $s \in S$ such that for all $w \in [n]^m$,
\[\pi(w) \defeq s+ \matm_{\al_1,w_1} + \matm_{\al_2,w_2} + \cdots + \matm_{\al_m,w_m} \in A_1 \cup A_2.\]
By our choice of $m$ and Theorem \ref{thm:HJ}, there exists $\ell \in \{1,2\}$ such that
\[\big\{ w \in [n]^m \ \big| \ \pi(w) \in A_\ell \big\}\]
contains the image of a combinatorial word $v$. Let $I_1 \subseteq \{1, \ldots, r\}$ be the set of indices of $v$ at which $\star$ appears, and let $I_2 = \{1,\ldots, r\} \setminus I_1$. It follows that for all $j \in \{1, \ldots, n\}$, $s + \sum_{i \in I_2} M_{\al_i,v_i} + \sum_{i \in I_1} M_{\al_i,j}  = s' + M_{\al,j} \in A_\ell$, where $s' = s + \sum_{i \in I_2} M_{\al_i,v_i}$ and $\al = \cup_{i\in I_1} \al_i$.

We have shown so far that for all $n \in \N$, there exists an $r_0=r_0(n) \in \N$ such that for all $\matM \in S^{r_0\times n}$, there exists a non-empty $\al \subseteq \{1, \ldots, r_0\}$, $s \in S$, and $\ell \in \{1,2\}$ such that for all $j \in \{1, \ldots, n\}$, $s+ \matm_{\al,j} \in A_\ell$. Suppose for a contradiction that neither $A_1$ nor $A_2$ is \crich{}. For each $\ell \in \{1,2\}$, there exists $n_\ell \in \N$ such that for $r = r_0(n_1+n_2)$, there exists $\matM^{(\ell)} \in S^{r\times n_\ell}$ such that for all $\al \subseteq \{1, \ldots, r\}$ and $s \in S$, there exists $j \in \{1, \ldots, n_\ell\}$ such that $s+ \matm^{(\ell)}_{\al,j} \not\in A_\ell$. Consider the matrix $\matM \in S^{r \times (n_1+n_2)}$ whose $i^{\text{th}}$ row is the concatenation of the $i^{\text{th}}$ rows of $\matM^{(1)}$ and $\matM^{(2)}$. By our choice of $r$, there exists a non-empty $\al \subseteq \{1, \ldots, r_0\}$, $s \in S$, and $\ell \in \{1,2\}$ such that for all $j \in \{1, \ldots, n_1+n_2\}$, $s+ \matm_{\al,j} \in A_\ell$. This contradicts the fact that there exists $j \in \{1, \ldots, n_\ell\}$ such that $s+ \matm^{(\ell)}_{\al,j} \not\in A_\ell$.
\end{proof}

The following theorem of Furstenberg and Katznelson is a density version of the Hales-Jewett theorem, Theorem \ref{thm:HJ}. We will make use of this theorem several times throughout this work to find combinatorial patterns in sets of positive density in amenable semigroups; we will show in Theorem \ref{thm:densityimpliesar}, for example, that sets of positive density are combinatorially rich.

\begin{theorem}[{\cite[Theorem 2.5]{FKdensityHJ}}]\label{thm:densityhj}
For all $n \in \N$ and $\eps > 0$, there exists $r = r(n,\eps) \in \N$ such that for all $A \subseteq [n]^r$ with $|A| \geq \eps n^r$, there exists a combinatorial line in $A$.
\end{theorem}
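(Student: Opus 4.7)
My plan would be to follow the ergodic-theoretic approach of Furstenberg and Katznelson from \cite{FKdensityHJ}: the density Hales--Jewett theorem is not deducible from Szemer\'edi's theorem alone (in fact it properly strengthens the multidimensional Szemer\'edi theorem), so a genuinely new ingredient is required, namely an IP-structure theorem for commuting families of measure preserving transformations.

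First, I would establish a \emph{correspondence principle} tailored to the combinatorial cube $[n]^r$. Let $\mathcal{F}$ denote the semigroup of non-empty finite subsets of $\N$ under disjoint union. For each $c \in [n]$ and each $\alpha \in \mathcal{F}$, the operation ``substitute the letter $c$ in every coordinate indexed by $\alpha$'' acts on words in a sufficiently long alphabet, and a combinatorial line is precisely recovered from such substitutions applied simultaneously for $c = 1, \ldots, n$ to a fixed template word. Given a sequence of sets $A_r \subseteq [n]^r$ with $|A_r| \geq \eps n^r$, I would pass to a subsequential ultrafilter limit of the normalized counting measures to construct a probability space $(X, \mathcal{B}, \mu)$, a set $E \subseteq X$ with $\mu(E) \geq \eps$, and for each $c \in [n]$ an IP-family $(T^{(c)}_\alpha)_{\alpha \in \mathcal{F}}$ of measure preserving transformations on $X$ so that the existence of a combinatorial line in $A_r$ is encoded, for $r$ large, by the positivity of
\[\mu\!\left(E \cap (T^{(1)}_\alpha)^{-1} E \cap \cdots \cap (T^{(n)}_\alpha)^{-1} E\right)\]
for some $\alpha \in \mathcal{F}$.

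Second, I would prove the required \emph{IP-multiple recurrence statement}: for any such setup with $\mu(E) > 0$, the set of $\alpha \in \mathcal{F}$ for which the above intersection has positive measure is $\ipclass^*$. This is the heart of the matter; it relies on the Furstenberg--Katznelson structure theory --- IP-limits along idempotent ultrafilters on $\mathcal{F}$, an IP van der Corput lemma, a dichotomy between IP-weak-mixing and IP-compact extensions, a Furstenberg--Zimmer style tower decomposition, and an SZ-compatibility induction on $n$. Once established, $\mu(E) \geq \eps > 0$ forces a recurrent $\alpha$, and unwinding the correspondence yields a combinatorial line in $A_r$ for all $r \geq r(n,\eps)$.

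The main obstacle is, unsurprisingly, this IP-multiple recurrence step: the $n$ substitution systems do not commute in the ordinary sense on $\mathcal{F}$, so the induction on $n$ must be executed with care on the semigroup action, and the compact extension step requires ruling out non-trivial equivariant factors via the IP-specific tools just mentioned. An entirely combinatorial alternative of comparable depth is the Polymath density-increment strategy, which iterates a structure/randomness dichotomy against partition-regular configurations of lower complexity; I would pursue that route only if explicit (primitive recursive) bounds on $r(n,\eps)$ were required, since the ergodic approach is, as usual, ineffective.
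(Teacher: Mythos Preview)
The paper does not prove this statement at all: it is quoted verbatim as \cite[Theorem 2.5]{FKdensityHJ} and used as a black box throughout (in Lemma~\ref{crichispr}, Theorem~\ref{thm:densityimpliesar}, Theorem~\ref{thm:ipszemeredicomb}, and Theorem~\ref{thm:multdensitygivesaddpatterns}). So there is nothing to compare against in the paper itself.

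Your outline is a faithful high-level summary of the Furstenberg--Katznelson strategy that the citation points to, and you correctly flag the genuine difficulty (the IP multiple recurrence step and the non-commuting substitution actions on $\mathcal{F}$). But be aware that what you have written is a roadmap, not a proof: the IP structure theory you invoke---IP-weak-mixing/compact dichotomy, the SZ-property induction, the tower argument---occupies the bulk of \cite{FKdensityHJ} and is not something one can ``establish'' in a paragraph. If this were submitted as a proof, the honest assessment is that it defers essentially all of the content to the cited source, which is exactly what the paper itself does by simply citing the result.
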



\section{An alternate characterization of upper Banach density}\label{sec:banach} 

In this section we give a useful alternate characterization of upper Banach density for a large class of semigroups. We first provide some motivation for the characterization; see also \cite[Corollary 9.2]{Griesmer}, \cite[Theorem G]{johnsonrichter}, and \cite[Lemma 9.6]{furstenbergkatznelsonipszem}.

Let $\semistimes$ be a cancellative semigroup. By definition, a subset $A \subseteq S$ is thick if for all $F \in \finitesubsets S$, there exists an $s \in S$ for which
\[Fs \subseteq A, \quad \text{which is the same as} \quad |Fs \cap A| \geq 1 |F|.\]
The second expression says that there exists a right translate of $F$ with the property that 100\% of its elements lie within $A$. This begs the question: given $A \subseteq S$ and $F \in \finitesubsets S$, how much of the set $F$, as a percentage of $|F|$, can we translate into $A$? That is, how large is the quantity
\[\sup \big\{ \denlet \geq 0 \ \big| \ \forall F \in \finitesubsets S, \ \exists s \in S, \ |Fs \cap A| \geq \denlet |F| \big\}?\]
This quantity is equal to 1 if and only if $A$ is thick in $S$. We will show that, with a slight modification, this quantity is equal to the upper Banach density of $A$ in a wide class of semigroups.

We begin by describing a multiset generalization of this idea. The utility of this more general formulation is manifest in the proof of Theorem \ref{thm:densityimpliesar} and will be useful several times later on.

\begin{definition}\label{def:multiset}
Let $S$ be a set. A \emph{multiset $\multif$ in $S$} is a 2-tuple consisting of a \emph{support} $\underf \subseteq S$ and a function $\multif: \underf \to \N$.  We say that $\multif$ is finite if $|\multif| \defeq \sum_{f \in F} \multif(f)$ is finite (equivalently, if $\underf$ is a finite set). If $\multif_1$ and $\multif_2$ are multisets in $S$, then $\multif_1 \cap \multif_2: F_1 \cap F_2 \to \N$ is defined by pointwise multiplication. There is a 1--1 correspondence between sets and constant $1$ multisets (regarding them as functions), and the terminology here agrees with the familiar terminology for sets.
\end{definition}

\begin{theorem}\label{thm:characterizationofdensity}
Let $\semistimes$ be a left amenable semigroup. For all $A \subseteq S$,
\begin{align}\label{eqn:densityequivalence}\dstar(A) \leq \sup \left\{ \denlet \geq 0 \ \middle| \ \begin{aligned} & \text{$\forall$ finite multisets $\multif$ in $S$}, \\ & \exists s \in S, \ |\multif \cap A s^{-1}| \geq \denlet |\multif| \end{aligned} \right\}.\end{align}
Moreover, for all left invariant means $\lambda$, all $\beta \in [0,\lambda(A)]$, and all finite multisets $\multif$ in $S$, the set
\[S_\beta(A,\multif) = \big\{ s \in S \ \big| \ |\multif \cap A s^{-1}| \geq \beta |\multif| \big\}\]
satisfies $\lambda \big( S_\beta(A,\multif) \big) \geq (\lambda(A) - \be) \big/ (1-\be)$; in particular, $\dstar \big(S_\beta(A,\multif) \big) \geq (\dstar(A) - \be) \big/ (1-\be)$.
\end{theorem}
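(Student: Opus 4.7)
The plan is to first prove the ``moreover'' clause, from which both the main inequality and the ``in particular'' statement follow by routine arguments. Fix a left invariant mean $\lambda$, a finite multiset $\multif$ with support $F$, and $\beta \in [0, \lambda(A)]$.

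The crucial observation is that the bounded function $s \mapsto |\multif \cap A s^{-1}|$ has $\lambda$-mean exactly $|\multif|\,\lambda(A)$. To see this, I expand
\[|\multif \cap A s^{-1}| = \sum_{f \in F} \multif(f)\, \one_A(f s),\]
and use left translation invariance of $\lambda$ (applied with translation by each $f \in F$) to get $\lambda\bigl(s \mapsto \one_A(f s)\bigr) = \lambda(A)$. By linearity,
\[\lambda\bigl(s \mapsto |\multif \cap A s^{-1}|\bigr) = \sum_{f \in F} \multif(f)\,\lambda(A) = |\multif|\,\lambda(A).\]

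A Markov-type estimate now extracts the bound. Setting $g(s) = |\multif \cap A s^{-1}|/|\multif|$, I have $g : S \to [0,1]$ with $\lambda(g) = \lambda(A)$, and the pointwise inequality
\[g(s) \leq \beta + (1-\beta)\,\one_{S_\beta(A,\multif)}(s),\]
since $g(s) \leq \beta$ for $s \notin S_\beta(A,\multif)$ and $g(s) \leq 1$ everywhere. Applying $\lambda$ (positive, linear, with $\lambda(1) = 1$) and rearranging yields
\[\lambda\bigl(S_\beta(A,\multif)\bigr) \geq \frac{\lambda(A) - \beta}{1-\beta}.\]

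To pass to $\dstar$, given $\eps > 0$ I choose a left invariant mean $\lambda$ with $\lambda(A) > \dstar(A) - \eps$; for $\eps$ small enough that $\beta \leq \lambda(A)$, the above gives
\[\dstar\bigl(S_\beta(A,\multif)\bigr) \geq \lambda\bigl(S_\beta(A,\multif)\bigr) \geq \frac{\lambda(A) - \beta}{1-\beta} > \frac{\dstar(A) - \eps - \beta}{1-\beta},\]
and letting $\eps \to 0$ completes the ``in particular'' statement. The main inequality (\ref{eqn:densityequivalence}) then follows at once: for any $\denlet < \dstar(A)$ and any finite multiset $\multif$, taking $\beta = \denlet$ gives $\dstar(S_\denlet(A,\multif)) > 0$, so $S_\denlet(A,\multif)$ is non-empty, placing $\denlet$ in the supremand on the right. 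The argument is essentially Markov's inequality combined with one translation-invariance identity; I foresee no genuine obstacle beyond careful bookkeeping in the multiset formulation.
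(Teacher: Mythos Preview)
Your proof is correct and follows essentially the same approach as the paper's: compute $\lambda(g)=\lambda(A)$ for $g(s)=|\multif\cap As^{-1}|/|\multif|$ via left invariance, then apply a Markov-type estimate. The only differences are organizational (you prove the ``moreover'' clause first and derive the main inequality from it, whereas the paper does them in the reverse order) and cosmetic (you phrase Markov as the pointwise bound $g\le\beta+(1-\beta)\one_{S_\beta}$, the paper splits $\lambda(g)$ over $S_\beta$ and its complement); the substance is identical.
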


\begin{proof}
Suppose $\dstar(A) > 0$, let $\eps > 0$, let $\lambda$ be a left invariant mean for which $\lambda(A) \geq \dstar(A) - \eps$. Let $\multif$ be a finite multiset in $S$ with support $F$. Consider the function $g: S \to [0,1]$ given by
\[g(s) = \frac{|\multif \cap A s^{-1}|}{|\multif|} = \frac {1}{|\multif|} \sum_{f \in \underf} \multif(f) \one_{A s^{-1}}(f) = \frac {1}{|\multif|} \sum_{f \in \underf} \multif(f) \one_{f^{-1}A}(s).\]
Since $\lambda$ is left invariant,
\begin{align}\label{eqn:gislarge}\lambda(g) = \frac 1{|\multif|} \sum_{f \in F} \multif(f) \lambda(f^{-1}A) = \lambda(A) \geq \dstar(A) - \eps.\end{align}
It follows by the positivity of $\lambda$ that there exists an $s \in S$ for which $g(s) \geq \dstar(A) - 2\eps$. Thus the right hand side of (\ref{eqn:densityequivalence}) is not less than $\dstar(A) - 2\eps$. Since $\eps > 0$ was arbitrary, the inequality in (\ref{eqn:densityequivalence}) holds.

Let $\lambda$ be an arbitrary left invariant mean on $S$, $\beta \in [0,\lambda(A)]$, and $\multif$ be a finite multiset in $S$. If $\beta = \lambda(A)$, there is nothing to show, so suppose $\beta < \lambda(A) - \eps$. Abbreviating $S_\beta(A,\multif)$ by $S_\beta$, it follows from (\ref{eqn:gislarge}) that
\begin{align*}
\lambda(A) = \lambda(g) &\leq \lambda \left( g \one_{S_\beta} \right) + \lambda \left( g \one_{S \setminus S_\beta} \right) \\
& \leq \lambda \left( S_\beta \right) + \beta \left( 1- \lambda(S_\beta) \right).
\end{align*}
Rearranging the previous inequality, we find that $\lambda \left( S_\beta \right) \geq (\lambda(A) - \be) \big/ (1-\be)$.
\end{proof}

\begin{theorem}\label{thm:densityimpliesar}
Let $\semisplus$ be a commutative semigroup and $A \subseteq S$. If $\dstar(A) > 0$, then $A$ is \crich{}.
\end{theorem}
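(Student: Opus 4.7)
The plan is to verify Definition \ref{def:combrich} directly by combining the multiset characterization of upper Banach density from Theorem \ref{thm:characterizationofdensity} with the density Hales--Jewett theorem (Theorem \ref{thm:densityhj}). Fix $n \in \N$, choose any $\beta \in (0, \dstar(A))$, and set $r = r(n, \beta)$ from Theorem \ref{thm:densityhj}. My goal is then to produce, for each $\matM \in S^{r \times n}$, a non-empty $\al \subseteq \{1, \ldots, r\}$ and $s \in S$ with $s + \matm_{\al, j} \in A$ for every $j \in \{1, \ldots, n\}$.

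The bridge from the $r \times n$ matrix to the combinatorial hypercube $[n]^r$ will be the map $\sigma \colon [n]^r \to S$ given by $\sigma(w) = M_{1, w_1} + M_{2, w_2} + \cdots + M_{r, w_r}$, which is well defined because $\semisplus$ is commutative. I would then form the finite multiset $\multif$ on $S$ with support $\sigma([n]^r)$ and multiplicities $\multif(t) = |\sigma^{-1}(t)|$, so that $|\multif| = n^r$. Applying Theorem \ref{thm:characterizationofdensity} to $\multif$ (since $\beta < \dstar(A)$) yields some $s \in S$ such that $|\multif \cap (A - s)| \geq \beta |\multif| = \beta n^r$, where $A - s = \{t \in S : t + s \in A\}$ is the additive analogue of $A \cdot s^{-1}$. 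The point of using a multiset rather than a set is that the multiplicities account for collisions $\sigma(w) = \sigma(w')$; unpacking the intersection via the multiplicities shows exactly that $B := \{w \in [n]^r : \sigma(w) + s \in A\}$ satisfies $|B| \geq \beta n^r$.

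Since $r = r(n, \beta)$, Theorem \ref{thm:densityhj} then delivers a combinatorial line inside $B$: a variable word $v$ in $[n]^r$ whose $\star$-positions form a non-empty set $I \subseteq \{1, \ldots, r\}$ and whose fixed letters $c_i \in [n]$ (for $i \notin I$) satisfy $v(j) \in B$ for every $j \in [n]$. Expanding, one has $\sigma(v(j)) = \sum_{i \notin I} M_{i, c_i} + \matm_{I, j}$, so setting $\al = I$ and $s' = s + \sum_{i \notin I} M_{i, c_i}$ produces $s' + \matm_{\al, j} = \sigma(v(j)) + s \in A$ for every $j$, which is precisely the condition of Definition \ref{def:combrich}.

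The conceptual core of the argument is the translation supplied by Theorem \ref{thm:characterizationofdensity}, which converts an abstract invariant-mean statement about $A \subseteq S$ into an honest counting statement on the finite set $[n]^r$ to which Hales--Jewett can be applied. I expect the main (minor) subtlety to be the use of the multiset formalism rather than sets: without it, the map $\sigma$ need not be injective and one would lose density when passing from $[n]^r$ to its image in $S$. The non-emptiness of $\al = I$, which is the only other thing that could go wrong, is automatic from the definition of a variable word.
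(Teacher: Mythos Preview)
Your proof is correct and follows essentially the same approach as the paper's: both define the map $[n]^r \to S$ by $w \mapsto \sum_i M_{i,w_i}$, push forward to a multiset in $S$, apply Theorem \ref{thm:characterizationofdensity} to obtain a translate hitting $A$ on a $\beta$-fraction of $[n]^r$, and then invoke the density Hales--Jewett theorem to extract a combinatorial line whose $\star$-set furnishes $\al$. The only differences are cosmetic (your $\sigma$, $\beta$, $I$ are the paper's $\pi$, $\eps$, $\al$, and the roles of $s$ and $s'$ are swapped).
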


\begin{proof}
Let $n \in \N$ and $0 < \eps < \dstar(A)$. Put $r = r(n,\eps)$ from Theorem \ref{thm:densityhj}. Let $\matM \in S^{r \times n}$, and define $\pi: [n]^r \to S$ by $\pi(w) = M_{1,w_1} + \cdots + M_{r,w_r}$. Let $F \subseteq S$ be the image of $\pi$, and define the multiset $\multif: F \to \N$ by
\[\multif(f) = \big| \{ w \in [n]^r \ | \ \pi(w) = f \}\big|.\]
By Theorem \ref{thm:characterizationofdensity}, there exists $s' \in S$ such that
\[\eps |\multif| \leq |\multif \cap A - s'| = \big| \{ w \in [n]^r \ | \ \pi(w) \in A - s' \}\big| = \big| \{ w \in [n]^r \ | \ s'+ \pi(w) \in A \}\big|.\]
Since $|\multif| = n^r$, Theorem \ref{thm:densityhj} gives that $\{ w \in [n]^r \ | \ s'+ \pi(w) \in A \}$ contains the image of a combinatorial word $v$ in $[n]^r$. Let $\al \subseteq \{1, \ldots, r\}$ be the set of indices of $v$ at which $\star$ appears, and let $\beta = \{1,\ldots, r\} \setminus \al$. Putting $s = s' + \sum_{i \in \beta} M_{i,v_i}$, it follows that for all $j \in \{1, \ldots, n\}$, $s + M_{\al,j} \in A$, meaning that $A$ is \crich{}.
\end{proof}

This completes the proof of Lemma \ref{lem:hierarchylemma}. We turn now to explaining when equality occurs in (\ref{eqn:densityequivalence}). It will be helpful to be able to construct left invariant means; this is simplest in semigroups satisfying the strong \folner{} condition, defined next.

\begin{definition} \label{def:sfcandfolner}
Let $\semistimes$ be a semigroup.
\begin{enumerate}[label=(\Roman*)]
\item $S$ satisfies the \emph{strong \folner{} condition} if
\begin{align}\tag{SFC}\label{eqn:sfc} \forall F \in \finitesubsets S, \ \forall \epsilon > 0, \ \exists H \in \finitesubsets S, \ \forall f \in F, \ \big| H \setminus fH \big| < \eps |H|.  \end{align}
\item Define the function $\sfcdstar: \subsets S \to [0,1]$ by
\[\sfcdstar(A) = \sup \Bigg\{ \denlet \geq 0 \ \Bigg| \ \begin{aligned} & \forall F \in \finitesubsets S, \ \forall \eps > 0, \ \exists H \in \finitesubsets S, \\ & |A \cap H| \geq \denlet |H| \text{ and } \forall f \in F, \ |H \setminus fH| < \eps |H| \end{aligned} \Bigg\}.\]
\item \label{item:folnersequence} A \emph{left \folner{} sequence} is a sequence $(F_n)_{n\in \N}$ of finite, non-empty subsets of $S$ with the property that for all $s \in S$, $|sF_n \bigtriangleup F_n| \big/ |F_n| \to 0$ as $n \to \infty$.
\end{enumerate}
\end{definition}

Any semigroup satisfying \sfc{} is left amenable. A countable semigroup satisfies \sfc{} if and only if it admits a left \folner{} sequence, and in this case,
\begin{align}\label{eqn:densityaslimitoffolner}\sfcdstar(A) = \sup \left\{ \limsup_{n \to \infty} \frac{|A \cap F_n|}{|F_n|} \ \middle| \ (F_n)_{n\in \N} \text{ is a \folner{} sequence} \right\}.\end{align}
The previous sentence holds true for uncountable semigroups with ``\folner{} sequence'' replaced by ``\folner{} net.'' We shall only need the fact that in semigroups $\semistimes$ satisfying \sfc{}, for all $A \subseteq S$, $\sfcdstar(A) \leq \dstar(A)$. This follows from the fact that any weak-$\ast$ cluster point $\lambda$ of a net of means arising from a \folner{} net in $S$ which realizes $\sfcdstar(A)$ will be left translation invariant and satisfy $\lambda(A) \geq \sfcdstar(A)$.\footnote{In fact, any such cluster point $\lambda$ is a left-invariant mean, and in a left-cancellative semigroup (see the definition in the following paragraph), every left-invariant mean is defined by a \folner{} net in the way described in the previous sentence. For a proof of these facts, see \cite[Section 1]{hindmanstrauss-semigroupdensity1}.} Since we will not make use of \folner{} nets in this paper, we refer the reader to \cite[Section 1]{hindmanstrauss-semigroupdensity1} for definitions, proofs, and related discussion.

A semigroup $\semistimes$ is \emph{left cancellative} if for all $x,y,z \in S$, the equality $x y = x z$ implies that $y=z$. \emph{Right cancellativity} is defined analogously. It will be convenient to name a class of semigroups satisfying more than \sfc{}; thus, we introduce the shorthand:
\begin{align}\tag{SFC+} \sfc{} \text{ \sc{and} } \big(\text{left cancellative } \text{\sc{or}} \text{ right cancellative } \text{\sc{or}} \text{ commutative}\big).  \end{align}
Since all commutative semigroups satisfy \sfc{}, all commutative semigroups satisfy \sfcplus{}. Also, all amenable groups satisfy \sfcplus{}.

\begin{theorem}\label{thm:charofdensityifsfcplus}
Let $\semistimes$ be a semigroup satisfying \sfcplus{}. For all $A \subseteq S$, $\sfcdstar(A) = \dstar(A)$, and both are equal to
\begin{align}\label{eqn:setdensity}\sup \left\{ \denlet \geq 0 \ \middle| \ \forall F \in \finitesubsets S, \ \exists s \in S, \ |F \cap A s^{-1}| \geq \denlet |F| \right\}.\end{align}
In particular, the inequality in (\ref{eqn:densityequivalence}) is an equality.
\end{theorem}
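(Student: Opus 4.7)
The plan is to establish the chain of inequalities
\[\sfcdstar(A) \;\leq\; \dstar(A) \;\leq\; M(A) \;\leq\; D(A) \;\leq\; \sfcdstar(A),\]
where $M(A)$ denotes the supremum on the right-hand side of \eqref{eqn:densityequivalence} and $D(A)$ denotes the supremum in \eqref{eqn:setdensity}. Three of these four inequalities are for free: $\sfcdstar(A) \leq \dstar(A)$ was observed in the paragraph following Definition \ref{def:sfcandfolner}; $\dstar(A) \leq M(A)$ is precisely Theorem \ref{thm:characterizationofdensity}; and $M(A) \leq D(A)$ holds because every finite set is a finite multiset of constant multiplicity one. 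The whole theorem thus reduces to the fourth inequality, $D(A) \leq \sfcdstar(A)$.

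To prove this fourth inequality, fix $\denlet < D(A)$ and let $F^* \in \finitesubsets S$, $\eta > 0$. I will exhibit a finite $H \subseteq S$ with $|A \cap H| \geq \denlet |H|$ and $|H \setminus fH| < \eta |H|$ for every $f \in F^*$. First, invoke \sfc{} to pick a finite $H_0 \subseteq S$ satisfying $|H_0 \setminus fH_0| < \eta |H_0|$ for every $f \in F^*$. Second, apply the hypothesis $D(A) > \denlet$ to the finite set $H_0$ to obtain some $s \in S$ with $|H_0 \cap A s^{-1}| \geq \denlet |H_0|$, i.e., at least $\denlet|H_0|$ elements $h \in H_0$ satisfy $hs \in A$. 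Third, set $H \defeq H_0 s$.

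The verification then rests on the map $\phi : H_0 \to H_0 s$ defined by $\phi(h) = hs$: when $\phi$ is injective, one has $|H| = |H_0|$ and $|A \cap H| = |H_0 \cap A s^{-1}| \geq \denlet|H_0| = \denlet|H|$, and the \folner{} property transfers via the identity $H_0 s \setminus f(H_0 s) = (H_0 \setminus fH_0) s$, giving $|H \setminus fH| = |H_0 \setminus fH_0| < \eta|H|$. The injectivity of $\phi$ is exactly what each option in \sfcplus{} furnishes: it is automatic in the \emph{right cancellative} case; it reduces to the right cancellative case via a dualization (passage to the opposite semigroup) in the \emph{left cancellative} case; and in the \emph{commutative cancellative} case it follows from $H_0 s = sH_0$ together with left cancellation.

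The hard part will be the \emph{commutative non-cancellative} case, which \sfcplus{} also subsumes. Here $\phi$ may fail to be injective, so $|H_0 s|$ may be strictly smaller than $|H_0|$ and the clean calculation above breaks down. I would address this either by (a) a diagonal argument that forces $H_0$ to be additionally $\{s\}$-\folner{}, overcoming the circularity that $s$ depends on $H_0$ by controlling the invariance parameters along an iterative refinement; or by (b) re-casting the estimate on the weighted multiset $\multif$ on $H_0 s$ defined by $\multif(y) = |\phi^{-1}(y)|$, which has total mass $|H_0|$ with at least $\denlet|H_0|$ of that mass in $A$, and closing through the multiset characterization of Theorem \ref{thm:characterizationofdensity}. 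Either route, combined with the clean calculation in the cancellative cases, should suffice to sandwich all four quantities together.
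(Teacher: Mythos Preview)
Your chain of inequalities and the reduction to $D(A) \leq \sfcdstar(A)$ match the paper exactly, and your argument in the right cancellative case is correct (minor quibble: $H_0 s \setminus f H_0 s = (H_0 \setminus fH_0)s$ is not an equality in general, but the inclusion $\subseteq$ suffices). The trouble is in the other two branches of \sfcplus{}.

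\textbf{Left cancellative.} Your dualization does not go through. Passing to $S^{\mathrm{op}}$ turns left cancellativity into right cancellativity, but it also turns the left \folner{} condition in \sfc{} into a \emph{right} \folner{} condition, and turns left-invariant means into right-invariant ones; the statement you would prove for $S^{\mathrm{op}}$ is not the one you need for $S$. Concretely, in a left cancellative semigroup the map $h \mapsto hs$ need not be injective for the particular $s$ you obtained from $H_0$, so the step $|H_0 s| = |H_0|$ is unjustified.

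\textbf{Commutative non-cancellative.} Neither proposed route closes. Route (a) has exactly the circularity you flag: any iterative refinement still produces a new $s$ at each stage, and there is no reason the procedure stabilises. Route (b) gives you a multiset on $H_0 s$ with mass $|H_0|$ of which $\geq \denlet|H_0|$ lies in $A$, but Theorem \ref{thm:characterizationofdensity} only tells you $\dstar(A) \leq M(A)$, the direction you already have; it does not let you convert multiset information back into a genuine \folner{} set witnessing $\sfcdstar(A) \geq \denlet$.

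The paper's fix is uniform for both problematic cases: it invokes a result of Argabright--Wilde to strengthen \sfc{} to the condition that for every finite $F$ and $\eps > 0$ there is a finite $H$ with $|H \setminus fH| < \eps|H|$ for all $f \in F$ \emph{and} $|Hs| = |H|$ for \emph{every} $s \in S$. This decouples the choice of $H$ from the later choice of $s$ and makes your clean calculation go through verbatim. The point is that in left cancellative or commutative (left amenable) semigroups one can always arrange the \folner{} set itself to be ``right-injective'' in this sense, but this is a nontrivial external input, not something you can extract from \sfc{} alone.
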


\begin{proof}
Denote temporarily the quantity in (\ref{eqn:setdensity}) by $\dstar_t(A)$ (``t'' for \emph{translate}). By Theorem \ref{thm:characterizationofdensity} and previous remarks,
\[\sfcdstar(A) \leq \dstar(A) \leq \text{right hand side of (\ref{eqn:densityequivalence})} \leq \dstar_t(A).\]
Therefore, it suffices to show that $\dstar_t(A) \leq \sfcdstar(A)$.

We will show first that $S$ satisfies a condition related to \sfc{}, namely
\begin{align}\label{eqn:strongerthansfc} \begin{gathered} \forall F \in \finitesubsets S, \ \forall \epsilon > 0, \ \exists H \in \finitesubsets S, \\ \forall s \in S, \ |Hs| = |H| \text{ and } \forall f \in F, \ \big| H \setminus fH \big| < \eps |H|. \end{gathered}  \end{align}
This follows immediately from \sfc{} if $S$ is right cancellative. If $S$ is commutative or left cancellative, this follows from the proof of \cite[Theorem 4]{awstrongfolner} and the discussion following it using the fact that left cancellative, left amenable semigroups satisfy \sfc{}.

Let $A \subseteq S$ and $\denlet < \dstar_t(A)$. We must show $\sfcdstar(A) \geq \denlet$. Let $F \in \finitesubsets S$ and $\eps > 0$. There exists a set $H \in \finitesubsets S$ satisfying the second line in (\ref{eqn:strongerthansfc}). By our choice of $\denlet$, there exists an $s \in S$ for which $|H \cap A s^{-1}| \geq \denlet |H|$. Since $|Hs| = |H|$, it follows that $|Hs \cap A| \geq \denlet |Hs|$ and that for all $f \in F$,
\[|Hs \setminus fHs| \leq |(H \setminus fH) s| = |H \setminus fH| < \eps |H| = \eps |Hs|.\]
The set $Hs$ satisfies the conditions required of $H$ in the definition of $\sfcdstar(A)$. Since $F$ and $\eps$ were arbitrary, this shows that $\sfcdstar(A) \geq \denlet$.
\end{proof}

In addition to extending the results in \cite[Section 2]{hindmanstrauss-semigroupdensity2}, this theorem allows us to handle upper Banach density in suitable semigroups without knowledge of the form \folner{} sequences take.

\begin{corollary}\label{cor:densitycharforcancelsemigroups}
Let $\semistimes$ be a right cancellative semigroup satisfying \sfc{}. For all $A \subseteq S$,
\begin{align}\label{eqn:dstarwithfs}\dstar(A) = \sup \left\{ \denlet \geq 0 \ \middle| \ \forall F \in \finitesubsets S, \ \exists s \in S, \ |Fs \cap A| \geq \denlet |F| \right\}.\end{align}
Suppose, in addition, that $\semistimes$ is countable, and fix a \folner{} sequence $(F_n)_{n\in\N}$. For all $A \subseteq S$,
\begin{align}\label{eqn:dstarwithtranslatedfolner}\dstar(A) = \limsup_{n \to \infty} \max_{s \in S} \frac{|A \cap F_n s|}{|F_n|}.\end{align}
\end{corollary}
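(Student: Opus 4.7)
The plan is to deduce both equalities from Theorem~\ref{thm:charofdensityifsfcplus}. Since $\semistimes$ is right cancellative and satisfies~\sfc{}, it satisfies~\sfcplus{}, and so that theorem characterizes $\dstar(A)$ as the supremum of all $\denlet \geq 0$ such that for every finite $F \subseteq S$ there exists $s \in S$ with $|F \cap A s^{-1}| \geq \denlet |F|$. Equation~(\ref{eqn:dstarwithfs}) is then a cosmetic reformulation of this: right cancellativity makes $f \mapsto fs$ injective on any finite $F \subseteq S$ and establishes a bijection between $F \cap A s^{-1}$ and $Fs \cap A$, so $|F \cap A s^{-1}| = |Fs \cap A|$, and substitution yields the claim.

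For~(\ref{eqn:dstarwithtranslatedfolner}), the $\geq$ direction is immediate from~(\ref{eqn:dstarwithfs}) applied with $F = F_n$ for each $n$: for any $\denlet < \dstar(A)$ and each $n$, some $s_n \in S$ witnesses $|A \cap F_n s_n|/|F_n| \geq \denlet$, so the limsup is at least $\denlet$; taking sup over $\denlet$ gives the bound.

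For the $\leq$ direction, write $L$ for the limsup. The plan is to extract a subsequence $(n_k)$ and elements $s_k \in S$ with $|A \cap F_{n_k} s_k|/|F_{n_k}| \to L$, and to argue that the translated sequence $(F_{n_k} s_k)_k$ is again a left F{\o}lner sequence in $\semistimes$. This verification is the main obstacle and is the only place where right cancellativity is genuinely used beyond Part~1: one checks the set identity $x F_{n_k} s_k \bigtriangleup F_{n_k} s_k = (x F_{n_k} \bigtriangleup F_{n_k}) s_k$, and right cancellativity ensures $|(x F_{n_k} \bigtriangleup F_{n_k}) s_k| = |x F_{n_k} \bigtriangleup F_{n_k}|$ and $|F_{n_k} s_k| = |F_{n_k}|$, so the left F{\o}lner ratios for $(F_{n_k} s_k)$ and $(F_{n_k})$ coincide and therefore tend to zero.

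Once this is established, the means $\mu_k \defeq |F_{n_k} s_k|^{-1} \sum_{y \in F_{n_k} s_k} \delta_y$ satisfy $\mu_k(\one_A) \to L$. By weak-$\ast$ compactness of the space of means, some subnet of $(\mu_k)$ converges to a mean $\lambda$ on $\semistimes$; by the standard construction arising from left F{\o}lner sequences (as invoked in the discussion following Definition~\ref{def:sfcandfolner}), this $\lambda$ is left translation invariant. Since $\mu_k(\one_A) \to L$, the subnet also converges on $\one_A$ to $L$, so $\lambda(\one_A) = L$. This gives $\dstar(A) \geq L$, completing the reverse inequality.
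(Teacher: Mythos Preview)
Your proof is correct and follows essentially the same route as the paper. The only cosmetic difference is in the $\leq$ direction of~(\ref{eqn:dstarwithtranslatedfolner}): the paper appeals to~(\ref{eqn:densityaslimitoffolner}) and Theorem~\ref{thm:charofdensityifsfcplus} (i.e., $\sfcdstar = \dstar$) to bound the limsup by $\dstar(A)$, whereas you unpack that step by constructing the invariant mean via a weak-$\ast$ cluster point directly---which is precisely the argument underlying the inequality $\sfcdstar \leq \dstar$ mentioned after Definition~\ref{def:sfcandfolner}.
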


\begin{proof}
Let $A \subseteq S$ and $F \in \finitesubsets S$. Since $\semistimes$ is right cancellative, for all $s \in S$, 
\[\big|Fs \cap A \big| = \big|( Fs \cap A ) s^{-1}\big| = \big|F \cap A s^{-1}\big|.\]
The equality in (\ref{eqn:dstarwithfs}) follows immediately from this and Theorem \ref{thm:charofdensityifsfcplus}.

It follows now from (\ref{eqn:dstarwithfs}) that the right hand side of (\ref{eqn:dstarwithtranslatedfolner}) is greater than $\dstar(A)$. If the limit supremum is obtained along the sequence $(n_k)_{k \in \N} \subseteq \N$ and $(s_k)_{k \in \N} \subseteq S$ is the corresponding sequence of elements of $S$, then it is simple to check that the sequence $(F_{n_k}s_k)_{k \in \N}$ is a \folner{} sequence. By (\ref{eqn:densityaslimitoffolner}) and Theorem \ref{thm:charofdensityifsfcplus}, the right hand side of (\ref{eqn:dstarwithtranslatedfolner}) is less than $\dstar(A)$, as was to be shown.
\end{proof}

This shows that the heuristic for upper Banach density discussed in the beginning of this section holds as written for right cancellative semigroups satisfying \sfc{}. This corollary also proves that the expressions for additive and multiplicative upper Banach density for subsets of $\N$ in (\ref{eqn:densityinseminplusintro}) and (\ref{eqn:densityinseminplusintromult}) from the introduction coincide with the upper Banach density defined in Definition \ref{def:defofamenable}. More generally, for countable, right cancellative semigroup satisfying \sfc{}, it shows that right translates of the finite sets which make up any single \folner{} sequence suffice to capture the upper Banach density as given in (\ref{eqn:densityaslimitoffolner}) as a limit over all \folner{} sequences.


\section{Semirings: context for the interaction of additive and multiplicative largeness}\label{semiringsandexamples}  

One goal of this paper is to explain the degree to which notions of additive and multiplicative largeness interact. Semirings are basic algebraic objects in which a study of this interaction makes sense.

\begin{definition}\label{def:semiring}
A \emph{semiring} $(S,+,\cdot)$ is a set $S$ together with two binary operations $+$ (addition) and $\cdot$ (multiplication) for which
\begin{enumerate}[label=(\Roman*)]
\item $\semisplus$ is a commutative semigroup;
\item $\semistimes$ is a semigroup;
\item \label{item:distributivity} \emph{left and right distributivity} hold: for all $s_1, s_2 \in S$ and $s \in S$,
\[s (s_1 + s_2) = s s_1 + s s_2 \quad \text{and} \quad (s_1 + s_2) s = s_1 s + s_2 s.\]
\end{enumerate}
The semiring $\semisring$ is \emph{commutative} if $\semistimes$ is a commutative semigroup.
\end{definition}

Addition on the left or right by a fixed element of $S$ will be called \emph{translation} and multiplication on the left or right will be called \emph{dilation}. Another way to phrase \ref{item:distributivity} above is that left and right dilation are homomorphisms of the additive semigroup $\semisplus$.

\begin{examples}\label{ex:beginningexamplessectionfour} Every (not-necessarily unital or commutative) ring is a semiring; thus, $\Z$, $\Q$, $\R$, and $\Z \big/ n\Z$ with the usual operations are semirings. Every two sided ideal of a ring is a semiring, and the cone of positive elements of a partially ordered ring is a semiring; the quintessential semiring $\N$ arises from the ring $\Z$ in this way. Familiar algebraic constructions may be used to generate semirings from a given semiring $\semisring$.
\begin{enumerate}[label=(\Roman*)]
\item $M_d(S)$, the set of $d$-by-$d$ matrices with elements from $S$ under matrix addition and multiplication, is a semiring. This class includes algebraic extensions of $\Z$ and $\Q$, for example the rational quaternions $\mathbb{H}(\Q)$ and (cones in) rings of integers of number fields such as $\N[\sqrt{2}]$.
\item When $\semisring$ is commutative, $S[x_1, \ldots, x_n]$, the set of polynomials in the variables $x_1, \ldots, x_n$ with coefficients in $S$ with polynomial addition and multiplication, is a semiring.
\item Given any set $X$ and a semiring $\semisring$, the set of functions $\{f: X \to S\}$ under pointwise addition and multiplication is a semiring.
\item For $\semistimes$ any semigroup, the endomorphism ring $\{\varphi: \semistimes \to \semistimes\}$ of semigroup homomorphisms under pointwise multiplication and composition is a semiring.
\end{enumerate}
The semirings $(\finitesubsets \N,\allowbreak \cup,\allowbreak \cap)$ and $(\Z,\min,\max)$, the former of which has significance in Ramsey theory, are further examples. Curiously, addition and multiplication are interchangeable in both of these semirings.
\end{examples}

The first results regarding the interaction of additive and multiplicative largeness concern translations and dilations of large sets. The following examples show that translation does not always preserve multiplicative largeness.

\begin{example} In $\N$, the even numbers $2\N$ have multiplicative density $\dstar_{\semintimes}(2\N) = 1$ because, by (\ref{eqn:densityproperties}), \[\dstar_{\semintimes}(2\N) = \dstar_{\semintimes}(2^{-1} \cdot 2\N) = \dstar_{\semintimes}(\N) = 1.\]
The odd numbers $2\N - 1$, however, have zero multiplicative density, since the set $2^{-1} \cdot (2\N - 1) = \emptyset$ has zero multiplicative density. For a more extreme example, note that the set $A = 4 \N + \{0,1\}$ is multiplicatively $\ip_3^*$ and in $\crichclass^*$ while $A+2$ is neither multiplicatively $\ip_3$ nor \crich{} (take $\matM$ in Definition \ref{def:combrich} to be a multiple of $4$, for example).
\end{example}

While translation does not preserve multiplicative largeness, dilation does preserve additive largeness. This is a consequence of the more general fact that the images of a ``large'' set under suitable semigroup homomorphisms are ``large.'' We prove two lemmas along these lines that will be useful later on. Recall the definition of \sfcplus{} just prior to the statement of Theorem \ref{thm:charofdensityifsfcplus} in Section \ref{sec:banach}.

\begin{lemma}\label{lem:surjectivehom}
Let $\semistimes$ and $\semittimes$ be semigroups, $\varphi: \semistimes \to \semittimes$ be a surjective homomorphism, $A \subseteq S$, $B \subseteq T$, and $r \in \N$. For all $\arbclass \in \{ \syndetic,\allowbreak \thick,\allowbreak \pws,\allowbreak \pws^*,\allowbreak \central,\allowbreak \central^*,\allowbreak \iprclass,\allowbreak \iprclass^*,\allowbreak \ipnaughtclass,\allowbreak \ipnaughtclass^*,\allowbreak \ipclass,\allowbreak \ipclass^*\}$,
\begin{align}
\label{eqn:rightarrow} A \in \arbclass \semistimes \quad &\Longrightarrow \quad  \varphi(A) \in \arbclass \semittimes, \\
\label{eqn:leftarrow} \varphi^{-1}(B) \in \arbclass \semistimes \quad &\Longleftarrow \quad B \in \arbclass \semittimes.
\end{align}
If $\semistimes$ and $\semittimes$ are commutative, then (\ref{eqn:rightarrow}) and (\ref{eqn:leftarrow}) hold for $\arbclass \in \{ \crichclass,\allowbreak \crichclass^*\}$. If $\semistimes$ and $\semittimes$ satisfy \sfcplus{}, then (\ref{eqn:rightarrow}) and (\ref{eqn:leftarrow}) hold for $\arbclass \in \{ \density,\allowbreak \density^*\}$. 
\end{lemma}

\begin{proof}
Note that if the lemma holds for $\arbclass$, then it holds for the dual class $\arbclass^*$; therefore, it suffices to prove the lemma for the eight classes $\syndetic$, $\pws$, $\central$, $\iprclass$, $\ipnaughtclass$, $\ipclass$, $\crichclass$, and $\density$. It is routine to check that the lemma holds for each $\arbclass \in \{ \syndetic, \pws, \central\}$ from the facts in \cite[Exercise 1.7.3]{hindmanstrauss-book} using the ultrafilter characterization of those classes. The lemma is easily verified from the definitions for $\arbclass \in \{ \ipclass, \iprclass, \ipnaughtclass\}$. The statement in (\ref{eqn:rightarrow}) follows for $\arbclass \in \{\crichclass, \density \}$ from Lemma \ref{lem:bigandbigimpliesbig} below.

Suppose $\semistimes$ and $\semittimes$ are commutative; we will write the operation ``$\cdot$'' as ``$+$'' in this paragraph. To verify (\ref{eqn:leftarrow}) for $\arbclass = \crichclass$, suppose $B$ is \crich{} in $T$. Let $n \in \N$, and choose $r = r(n)$ guaranteed by the definition of combinatorial richness for $B$. Let $\matM \in S^{r\times n}$. Considering the matrix $\varphi(\matM) \in T^{r \times n}$, since $B$ is \crich{}, there exists $\al \subseteq \{1,\ldots,r\}$ and $t \in T$ such that for all $j \in \{1,\ldots, n\}$, $t+\varphi(M_{\al,j}) \in B$. Since $\varphi$ is surjective, there exists $s \in S$ for which $t=\varphi(s)$. Therefore, for all $j \in \{1, \ldots, n\}$, $s+M_{\al,j} \in \varphi^{-1}(B)$, showing that $\varphi^{-1}(B)$ is \crich{}.

Suppose $\semistimes$ and $\semittimes$ satisfy \sfcplus{}. We will show (\ref{eqn:leftarrow}) for $\arbclass = \density$ with the help of Theorem \ref{thm:charofdensityifsfcplus}. Let $B \subseteq T$ with $\dstar(B) \geq \denlet$. Let $\multif$ be a finite multiset in $S$ with support $F$. Let $\multih$ be the multiset on $H = \varphi(F)$ defined by
\[\multih(h) = \sum_{\substack{f \in F \\ \varphi(f)=h}} \multif(f),\]
and note that $|\multih| = |\multif|$. Since $\dstar(B) \geq \denlet$, there exists a $t \in T$ such that $|\multih \cap Bt^{-1}| \geq \denlet |\multih|$. Since $\varphi$ is surjective, there exists an $s \in S$ for which $\varphi(s) = t$. Now
\begin{align*}
\denlet |\multif| = \denlet |\multih| \leq |\multih \cap By^{-1}| &= \sum_{h \in H} \multih(h) \one_{B}(ht) \\
&= \sum_{h \in H} \sum_{\substack{f \in F \\ \varphi(f)=h}} \multif(f) \one_{B}(ht) \\
&= \sum_{h \in H} \sum_{\substack{f \in F \\ \varphi(f)=h}} \big( \multif(f) \one_{B}(\varphi(f)\varphi(s)) \big) \\
&= \sum_{f \in F} \multif(f) \one_{B}(\varphi(fs)) = |\multif \cap \varphi^{-1}(B)s^{-1}|.
\end{align*}
Since $\multif$ was arbitrary, this shows that $\dstar(\varphi^{-1}B) \geq \denlet$.
\end{proof}

This lemma provides us with a tool to generate examples of large sets from other large sets with surjective homomorphisms.

\begin{examples}\label{examples:homomorphisms} \leavevmode
\begin{enumerate}[label=(\Roman*)]
\item For $p \in \N$ a prime, the $p$-adic valuation $\nu_p: \semintimes \to (\N \cup \{0\},+)$ defined by $\nu_p(p^{e}p_1^{e_1} \cdots p_k^{e_k}) = e$, where the $p_i$'s are distinct primes different from $p$, is a surjective semigroup homomorphism. Let $\irratnum \in \R \setminus \Q$ and $I \subseteq [0,1)$ an interval, and note that the set $A \defeq \{n \in \N \ | \ \{n \irratnum\} \in I \}$, where $\{x\}$ denotes the fractional part of $x$, is syndetic in $\seminplus$. By Lemma \ref{lem:surjectivehom}, the set
\[\nu_p^{-1}(A) = \big\{ n \in \N \ \big| \ \{\nu_p(n) \irratnum \} \in I \big\}\]
is syndetic in $\semintimes$. We will give a polynomial generalization of this class of multiplicatively syndetic sets in Section \ref{sec:newclassofmultsynd}.
\item \label{item:multeven} Define $\Omega: \semintimes \to (\N \cup \{0\},+)$ by $\Omega(p_1^{e_1} \cdots p_k^{e_k}) = \sum_{i=1}^k e_i$, where $p_1, \ldots, p_k$ are distinct primes. Since $\Omega$ is a surjective semigroup homomorphism, Lemma \ref{lem:surjectivehom} gives that the \emph{multiplicatively even and odd numbers}
\[\multevenn = \Omega^{-1} \big( 2\N \big) \qquad \multoddn = \Omega^{-1} \big( 2\N - 1 \big)\]
are both syndetic subsets of $\semintimes$. In fact, $\multevenn$ is multiplicatively $\ip_2^*$ since $2\N$ is additively $\ip_2^*$. The homomorphism $\Omega$ extends naturally  to a homomorphism $(\Q\setminus \{0\},\cdot) \to \semizplus$, allowing us to define the multiplicatively even and odd rational numbers. We will say more about these sets in Section \ref{sec:syndeticinfields}.
\item The natural logarithm $\log: ((0,\infty),\cdot) \to (\R,+)$ is a surjective semigroup homomorphism. Since the set of Liouville numbers is thick\footnote{Since the set of Liouville numbers $L \subseteq \R$ are a dense $G_\delta$ set, even more is true: for any countable set $F \subseteq \R$, the set $\cap_{f \in F} (-f + L)$ is a dense $G_\delta$ set, in particular non-empty, implying there exists an $x \in \R$ for which $F + x \subseteq L$.} in $(\R,+)$, the set of positive real numbers whose logarithm is a Liouville number is multiplicatively thick in $(0,\infty)$. In Section \ref{sec:newclassofmultsynd} we will use the natural logarithm to generate a class of multiplicatively syndetic sets.
\item The determinant $\det: (M_d(\N) \cap \gldq,\cdot) \to (\N,\cdot)$ is a surjective semigroup homomorphism. Let $A$ be the set of those $n \in \N$ which contain in their canonical prime factorization a term of the form $p^{100}$; it is shown in \cite[Pg. 1223]{bbhsaddimpliesmult} that $A$ is $\pws^*$ in $\seminplus$. Thus, the set of those matrices $M \in M_d(\N) \cap \gldq$ with $\Omega(\det(M)) \in A$ is in $\pws^*(M_d(\N) \cap \gldq,\cdot)$.
\item For any $a \in \F_p$, evaluation $f(x) \mapsto f(a)$ is a surjective homomorphism from $(\F_p[x],+)$ to $(\F_p,+)$ and from $(\F_p[x] \setminus \{0\},\cdot)$ to $(\F_p\setminus\{0\},\cdot)$. Thus, the set of those non-zero polynomials which take $1$ at $a$ is both additively syndetic in $(\F_p[x],+)$ and multiplicatively syndetic in $(\F_p[x]\setminus\{0\},\cdot)$.
\end{enumerate}
\end{examples}

Because dilation is rarely surjective, Lemma \ref{lem:surjectivehom} does not yield information about the additive largeness of dilated sets. The following lemma addresses this and will be useful several times throughout the paper. It says that if the image of a semigroup homomorphism is ``large,'' then the image of any ``large'' set is ``large.''

\begin{lemma}\label{lem:bigandbigimpliesbig}
Let $\semistimes$, $\semittimes$ be semigroups, $\varphi: \semistimes \to \semittimes$ be a homomorphism, $A \subseteq S$, and $r \in \N$.
\begin{enumerate}[label=(\Roman*)]
\item \label{item:central} If $A$ is central in $S$ and $\varphi(S)$ is piecewise syndetic in $T$, then $\varphi(A)$ is central in $T$.
\item[(I\,$'$)] \label{item:pshindman} If $A$ is piecewise syndetic in $S$ and $\varphi(S)$ is piecewise syndetic in $T$, then $\varphi(A)$ is piecewise syndetic in $T$.
\item \label{item:ipr} If $A$ is $\ip_r \big/ \ipnaught \big/ \ip$ in $S$, then $\varphi(A)$ is $\ip_r \big/ \ipnaught \big/ \ip$ in $T$, respectively.
\item \label{item:iprstar} If $A$ is $\ipnaught^*$ in $S$ and $\varphi(S)$ is $\ipnaught^*$ in $T$, then $\varphi(A)$ is $\ipnaught^*$ in $T$.
\item \label{item:comborich} Suppose $\semistimes$ and $\semittimes$ are commutative. If $A$ is \crich{} in $S$ and $\varphi(S)$ is $\ipnaught^*$ in $T$, then $\varphi(A)$ is \crich{} in $T$.
\item \label{item:density} Suppose $\semistimes$ and $\semittimes$ satisfy \sfcplus{}. If $\dstar_{\semistimes}(A) > 0$ and $\dstar_{\semittimes}(\varphi(S)) > 0$, then $\dstar_{\semittimes}(\varphi(A)) > 0$.
\end{enumerate}
\end{lemma}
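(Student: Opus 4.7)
The plan is to treat the six claims in increasing order of difficulty. Part (II) is immediate: since $\varphi$ is a homomorphism, $\varphi(s_\al) = \varphi(s_{\al_1}) \cdots \varphi(s_{\al_k})$, so $(\varphi(s_i))_{i=1}^r$ generates a set of finite products inside $\varphi(A)$ whenever $(s_i)_{i=1}^r$ does inside $A$. For parts (I) and (I$'$), I would pass to ultrafilters: the continuous extension $\tilde{\varphi}: \beta S \to \beta T$ is a semigroup homomorphism with $\tilde{\varphi}(\beta S) = \overline{\varphi(S)}$, so the hypothesis that $\varphi(S)$ is piecewise syndetic in $T$ is equivalent to $K(\beta T) \cap \tilde{\varphi}(\beta S) \neq \emptyset$. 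Standard results (see \cite[Theorems 1.65 and 4.40]{hindmanstrauss-book}) then give $K(\tilde{\varphi}(\beta S)) \subseteq K(\beta T)$, so if $A \in p$ for some $p \in K(\beta S)$, then $\tilde{\varphi}(p) \in K(\beta T)$ contains $\varphi(A)$, proving (I$'$); taking $p$ to be a minimal idempotent in addition, $\tilde{\varphi}(p)$ is also idempotent, proving (I).

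For part (III), let $B \subseteq T$ be $\ipnaught$. By the partition regularity of $\ipnaught$ (Remark~\ref{rmk:partitionregularityofipnaughtandcentral}) and the fact that $B \setminus \varphi(S)$ is disjoint from the $\ipnaught^*$ set $\varphi(S)$, the partition $B = (B \cap \varphi(S)) \cup (B \setminus \varphi(S))$ forces $B \cap \varphi(S)$ to be $\ipnaught$. Lifting the generators $t_1, \ldots, t_r \in B \cap \varphi(S)$ of any $\ip_r$ set to preimages $s_1, \ldots, s_r \in S$ yields an $\ip_r$ set contained in $\varphi^{-1}(B)$, since $\varphi(s_\al) = t_\al \in B$. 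Thus $\varphi^{-1}(B)$ is $\ipnaught$ in $S$ and so meets the $\ipnaught^*$ set $A$, whence $\varphi(A) \cap B \neq \emptyset$.

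For part (IV) I would use the homomorphism characterization of combinatorial richness (Lemma~\ref{lem:equivtocombrich}~\ref{item:homdef}). Given a commutative semigroup $U$ and homomorphisms $\psi_1, \ldots, \psi_n: U \to T$, I would form the sub-semigroup
\[U' = \{(u, s_1, \ldots, s_n) \in U \times S^n \ | \ \psi_i(u) = \varphi(s_i) \text{ for all } i\}\]
of $U \times S^n$, with projections $\pi: U' \to U$ and $\pi_i: U' \to S$ satisfying $\psi_i \circ \pi = \varphi \circ \pi_i$. Combinatorial richness of $A$ applied through the $\pi_i$ then produces an $\ip_r^*$ subset $E' \subseteq U'$ whose image $\pi(E')$ lies in the target set $E = \{u \in U \ | \ \cap_i (\varphi(A) - \psi_i(u)) \neq \emptyset\}$. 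The image $\pi(U') = \cap_i \psi_i^{-1}(\varphi(S))$ is $\ipnaught^*$ in $U$: each factor is $\ipnaught^*$ because if $C \subseteq U$ is $\ipnaught$, then $\psi_i(C)$ is $\ipnaught$ in $T$ by part (II), hence meets $\varphi(S)$, and so $C$ meets $\psi_i^{-1}(\varphi(S))$; and $\ipnaught^*$ is a filter. By Lemma~\ref{lem:surjectivehom}, $\pi(E')$ is $\ip_r^*$ in $\pi(U')$. The final step is to upgrade an $\ip_r^*$ set in an $\ipnaught^*$ sub-semigroup to an $\ip_{r'}^*$ set in the ambient semigroup; this follows from a finite Hindman-style Ramsey argument (for $r'$ large enough, two-coloring the generators of any $\ip_{r'}$ set in $U$ by membership in $\pi(U')$ yields a monochromatic sub-$\ip_r$ structure, and the ``outside'' color cannot carry such a structure because $U \setminus \pi(U')$ is not $\ipnaught$). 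This last step is the main technical obstacle.

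For part (V), I would apply Theorem~\ref{thm:charofdensityifsfcplus} with $\alpha = \dstar_{\semistimes}(A)$ and $\beta = \dstar_{\semittimes}(\varphi(S))$ to prove $\dstar_{\semittimes}(\varphi(A)) \geq \alpha\beta > 0$. Given any finite $F' \subseteq T$, the translate form of the density characterization yields $t_0 \in T$ with $|F'_1| \geq \beta |F'|$, where $F'_1 = \{f' \in F' \ | \ f' t_0 \in \varphi(S)\}$. For each $f' \in F'_1$ pick a preimage $s_{f'} \in S$ of $f' t_0$, and let $\multif$ be the multiset on $S$ with support $\{s_{f'} : f' \in F'_1\}$ and $\multif(g) = |\{f' \in F'_1 : s_{f'} = g\}|$, so that $|\multif| = |F'_1|$. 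Theorem~\ref{thm:characterizationofdensity} then produces $s \in S$ with $|\multif \cap A s^{-1}| \geq \alpha |\multif|$; setting $t = t_0 \varphi(s)$ and using $\varphi(s_{f'} s) = f' t$ gives $|F' \cap \varphi(A) t^{-1}| \geq \alpha\beta |F'|$. Since $F'$ was arbitrary, Theorem~\ref{thm:charofdensityifsfcplus} concludes.
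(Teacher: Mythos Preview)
Your arguments for parts (I), (I$'$), (II), (III), and (V) are correct and, up to cosmetic differences, match the paper's.  For (I) and (I$'$) the paper also works in $\beta S$ and establishes $\Phi(K(\beta S))\subseteq K(\beta T)$ via the preimage-of-an-ideal trick; for (III) the paper gives the explicit quantitative version (if $A$ is $\ip_r^*$ and $\varphi(S)$ is $\ip_t^*$ then $\varphi(A)$ is $\ip_{rt}^*$), which is the same idea as your lifting argument once one unpacks the partition-regularity step; and for (V) the paper does exactly your ``translate into $\varphi(S)$, lift, translate into $A$'' computation with multisets.

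Part (IV) is the one place where your route diverges from the paper's and where your sketch has a gap.  Your fiber-product construction and the reduction to showing that an $\ip_r^*$ subset of an $\ipnaught^*$ subsemigroup is $\ip_{r'}^*$ in the ambient group are correct in principle, but the sentence ``the outside color cannot carry such a structure because $U\setminus\pi(U')$ is not $\ipnaught$'' does not give what you need: it only says $U\setminus\pi(U')$ fails to be $\ip_{r_0}$ for \emph{some} $r_0$, and that $r_0$ a priori depends on $U$ and the $\psi_i$, whereas the $r'$ in Lemma~\ref{lem:equivtocombrich}~\ref{item:homdef} must be uniform in those choices.  The fix is to first pick a single $R$ with $\varphi(S)\in\ip_R^*(T)$ (this exists since $\ipnaught^*=\bigcup_r\ip_r^*$), observe that then each $\psi_i^{-1}(\varphi(S))$ is $\ip_R^*$ in $U$ with the \emph{same} $R$, and use Folkman--Rado--Sanders to get a uniform $R'=R'(R,n)$ with $\pi(U')\in\ip_{R'}^*(U)$; only then does your coloring argument go through with a uniform $r'$.

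The paper avoids this entire detour: it works directly with the matrix definition of combinatorial richness.  Given $\matM\in T^{\ell\times n}$ with $\ell=Rr$ (where $\varphi(S)^n$ is $\ip_R^*$ in $T^n$ and $r=r(n)$ comes from the richness of $A$), it splits the rows into $r$ blocks of size $R$, uses the $\ip_R^*$ property in each block to produce a row-sum lying in $\varphi(S)^n$, lifts the resulting $r\times n$ matrix to $S$, and applies the richness of $A$.  This is shorter and sidesteps the uniformity issue automatically, since the single $R$ for $\varphi(S)^n$ is fixed at the outset.
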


\begin{proof}
\ref{item:central} \ (We employ here, and only here, the machinery of ultrafilters and algebra in the Stone-\v Cech compactification $\beta S$ of the semigroup $\semistimes$. The reader is referred to \cite[Chapter 4]{hindmanstrauss-book} for the requisite definitions.) The homomorphism $\varphi$ from the (discrete) semigroup $\semistimes$ to $\semittimes$ induces a homomorphism $\Phi$ from the (compact right topological) semigroup $(\beta S,\cdot)$ to $(\beta T,\cdot)$ defined for $p \in \beta S$ by
\[\Phi(p) = \big\{B \subseteq T \ \big| \ \varphi^{-1}B \in p \big\}.\]
For any $C \subseteq S$, $\Phi(\overline{C}) \subseteq \overline{\varphi(C)}$, and it is an exercise to show that $\Phi(\beta S) = \overline{\varphi(S)}$.

Denote by $K(\beta S)$ and $K(\beta T)$ the smallest (minimal) two-sided ideal of $(\beta S,\cdot)$ and $(\beta T,\cdot)$, respectively. Since $\varphi(S)$ is piecewise syndetic in $T$, $\overline{\varphi(S)} \cap K(\beta T) \neq \emptyset$ (\cite[Theorem 4.40]{hindmanstrauss-book}). Since $\overline{\varphi(S)} = \Phi(\beta S)$, this shows that $\Phi^{-1}(K(\beta T)) \neq \emptyset$. Since the preimage under $\Phi$ of any two-sided ideal is a two-sided ideal, $\Phi^{-1}(K(\beta T))$ is a non-empty two-sided ideal, so it contains $K(\beta S)$. This means $\Phi(K(\beta S)) \subseteq K(\beta T)$.

Denote by $E(\beta S)$ and $E(\beta T)$ the set of idempotents of $(\beta S,\cdot)$ and $(\beta T,\cdot)$, respectively. Since $\Phi$ is a homomorphism, $\Phi(E(\beta S)) \subseteq E(\beta T)$. By the definition of $A$ being central in $S$, $\overline{A} \cap K(\beta S) \cap E(\beta S) \neq \emptyset$. Applying $\Phi$, we see that
\[\Phi(\overline{A}) \cap \Phi(K(\beta S)) \cap \Phi(E(\beta S)) \neq \emptyset, \ \text{ whereby } \ \overline{\varphi(A)} \cap K(\beta T) \cap E(\beta T) \neq \emptyset.\]
This implies that $\varphi(A)$ is central in $T$.

(I$'$) \ It was remarked by Neil Hindman that (I$'$) follows as a consequence of (I) and \cite[Theorem 4.43]{hindmanstrauss-book}.\footnote{The version of \cite[Theorem 4.43]{hindmanstrauss-book} we use states: \emph{For any semigroup $\semistimes$, a subset $A \subseteq S$ is piecewise syndetic if and only if there exists $s \in S$ such that $s^{-1} A$ is central.} We have dropped the assumption in the statement that $\semistimes$ is infinite at the cost of a weaker conclusion; the proof in \cite{hindmanstrauss-book} must be modified slightly by taking the idempotent $e$ to be minimal.} By that theorem, there exists $s \in S$ such that $s^{-1}A$ is central in $S$. By (I), the set $\varphi(s^{-1}A)$ is central in $T$. Since $\varphi(s^{-1}A) \subseteq \varphi(s)^{-1} \varphi(A)$, the set $\varphi(s)^{-1} \varphi(A)$ is central in $T$, and it follows by \cite[Theorem 4.43]{hindmanstrauss-book} again that $\varphi(A)$ is piecewise syndetic in $T$.

\ref{item:ipr} \ If $\finitesums (s_i)_{i=1}^r \subseteq A$, then $\finitesums (\varphi(s_i))_{i=1}^r = \varphi( \finitesums (s_i)_{i=1}^r ) \subseteq \varphi(A)$. The other statements follow similarly.

\ref{item:iprstar} \ Let $r, t \in \N$ so that $A$ is an $\ip_r^*$ set in $S$ and $\varphi(S)$ is an $\ip_t^*$ set in $T$. We will show that $\varphi(A)$ is an $\ip_{rt}^*$ set in $T$ by showing that for every $(t_i)_{i=1}^{rt} \subseteq T$, there exists a non-empty $\gamma \subseteq \{1, \ldots, rt\}$ for which $t_\gamma \in \varphi(A)$.

Let $(t_i)_{i=1}^{rt} \subseteq T$. Since $\varphi(S)$ is an $\ip_t^*$ set in $T$, for each $i \in \{1, \ldots, r\}$, there exists $\alpha_i \subseteq \{(i-1)t + 1, \ldots, it\}$ for which $t_{\al_i} \in \varphi(S)$. For each $i$, let $s_i \in S$ be such that $\varphi(s_i) = t_{\al_i}$. Since $A$ is $\ip_r^*$ in $S$, there exists a non-empty $\beta \subseteq \{1, \ldots, r\}$ such that $s_\beta \in A$. Applying $\varphi$ and using that the $\alpha_i$'s are disjoint, we see that there exists a $\gamma \subseteq \{1, \ldots, rt\}$ for which $t_{\gamma} \in \varphi(A)$.

\ref{item:comborich} \ Suppose $\semistimes$ and $\semittimes$ are commutative; we will write the operation ``$\cdot$'' as ``$+$'' in this paragraph and the next. Let $n \in \N$. Since $\varphi(S)$ is $\ipnaught^*$ in $T$, $\varphi(S)^n$ is $\ipnaught^*$ in $T^n$, where the operation on $T^n$ is the operation from $T$ applied coordinate-wise.\footnote{This fact follows from a more general one: the Cartesian product of $\ipnaught^*$ sets is an $\ipnaught^*$ set. To see why, write the product of $\ipnaught^*$ sets as an intersection of preimages of $\ipnaught^*$ sets under the coordinate projection maps. It is simple to check that each preimage is an $\ipnaught^*$ set. Since the intersection of $\ipnaught^*$ sets is an $\ipnaught^*$ set (Remark \ref{rmk:partitionregularityofipnaughtandcentral}), the Cartesian product is an $\ipnaught^*$ set.} Let $R \in \N$ be such that $\varphi(S)^n$ is $\ip_R^*$ in $T^n$, let $r = r(n)$ be from the definition of combinatorial richness for $A$, and put $\ell = Rr$. Let $\matM \in T^{\ell \times n}$. Since $\varphi(S)^n$ is $\ip_R^*$ in $T^n$, for $i \in \{1, \ldots, r\}$, there exists $\al_i \subseteq \{(i-1)R+1,\ldots, iR\}$ such that $\matM' = (M_{\al_i,j}) \in \varphi(S)^{r \times n}$.

Let $\mathbf{N} \in S^{r \times n}$ be such that $\varphi(\mathbf{N}) = \matM'$. Since $A$ is \crich{}, there exists a non-empty $\beta \subseteq \{1, \ldots, r\}$ and $s \in S$ such that for all $j \in \{1, \ldots, n\}$, $s+ N_{\beta,j} \in A$. Applying $\varphi$, this means that there exists a non-empty $\al \subseteq \{1, \ldots, \ell\}$ such that for all $j \in \{1, \ldots, n\}$, $\varphi(s)+ M_{\al,j} \in \varphi(A)$. This shows that $\varphi(A)$ is \crich{}.

\ref{item:density} \ Suppose $\dstar_{\semistimes}(A) \geq \denlet$ and $\dstar_{\semittimes}(\varphi(S)) \geq \beta$. We will show that $\dstar_{\semittimes}(\varphi(A)) \allowbreak \geq \denlet \beta$ with the help of Theorem \ref{thm:charofdensityifsfcplus}. Let $\multif$ be a finite multiset in $T$ with support $F$. There exists $z \in T$ such that $|\multif \cap \varphi(S) z^{-1}| \geq \be |\multif|$. Define the multiset $\multif'$ with support $F' = Fz \cap \varphi(S)$ by
\[\multif'(f') = \sum_{\substack{f \in F \cap \varphi(S)z^{-1} \\ fz = f'}} \multif(f).\]
Note that
\[|\multif'| = \sum_{f' \in F'} \sum_{\substack{f \in F \cap \varphi(S)z^{-1} \\ fz = f'}} \multif(f) = \sum_{f \in F \cap \varphi(S) z^{-1}} \multif(f) = |\multif \cap \varphi(S)z^{-1}| \geq \beta |\multif|.\]
Since $F' \subseteq \varphi(S)$, there exists an $H \subseteq S$ such that $|H| = |F'|$ and $\varphi(H) = F'$. Define the multiset $\multih$ with support $H$ by $\multih(h) = \multif'(\varphi(h))$, and note that $|\multih| = |\multif'|$. Since $\multih$ is a multiset in $S$, there exists an $s \in S$ such that $|\multih \cap A s^{-1}| \geq \denlet |\multih|$. Let $t = \varphi(s)$. We see that
\begin{align*}
\denlet \be |\multif| \leq |\multih \cap As^{-1}| &= \sum_{h \in H} \multih(h) \one_A(hs)\\
&\leq \sum_{f' \in F'} \multif'(f') \one_{\varphi(A)}(f' t)\\
&= \sum_{f' \in F'} \sum_{\substack{f \in F \cap \varphi(S)z^{-1} \\ fz = f'}} \multif(f) \one_{\varphi(A)t^{-1}}(fz)\\
&= \sum_{f \in F \cap \varphi(S) z^{-1}} \multif(f) \one_{\varphi(A)t^{-1}z^{-1}}(f) \leq |\multif \cap \varphi(A)(zt)^{-1}|.
\end{align*}
Since $\multif$ was arbitrary, this shows that $\dstar_{\semittimes}(\varphi(A)) \geq \denlet \beta$.
\end{proof}

\begin{corollary}\label{cor:bigandbigimpliesbig}
Let $\semisring$ be a semiring, $A \subseteq S$, $s \in S$, and $r \in \N$.
\begin{enumerate}[label=(\Roman*)]
\item \label{item:centralpluspsimpliescentral} If $A$ is central in $\semisplus$ and $sS$ is piecewise syndetic in $\semisplus$, then $sA$ is central in $\semisplus$.
\item[(I\,$'$)] If $A$ is piecewise syndetic in $\semisplus$ and $sS$ is piecewise syndetic in $\semisplus$, then $sA$ is piecewise syndetic in $\semisplus$.
\item If $A$ is $\ip_r \big/ \ipnaught \big/ \ip$ in $\semisplus$, then $sA$ is $\ip_r \big/ \ipnaught \big/ \ip$ in $\semisplus$, respectively.
\item If $A$ and $sS$ are $\ipnaught^*$ in $\semisplus$, then $sA$ is $\ipnaught^*$ in $\semisplus$.
\item Suppose $\semistimes$ and $\semittimes$ are commutative. If $A$ is \crich{} in $\semisplus$ and $sS$ is $\ipnaught^*$ in $\semisplus$, then $sA$ is \crich{} in $\semisplus$.
\item If $\dstar_{\semisplus}(A) > 0$ and $\dstar_{\semisplus}(sS) > 0$, then $\dstar_{\semisplus}(sA) > 0$.
\end{enumerate}
All of these results hold with $sA$ and $sS$ replaced by $As$ and $Ss$, respectively.
\end{corollary}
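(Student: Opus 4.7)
The plan is to deduce the corollary directly from Lemma \ref{lem:bigandbigimpliesbig} by choosing an appropriate homomorphism. For each fixed $s \in S$, the map $\varphi_s \colon S \to S$ defined by $\varphi_s(x) = sx$ is a semigroup homomorphism from $\semisplus$ to itself. Indeed, the left distributivity axiom of the semiring gives
\[\varphi_s(x+y) = s(x+y) = sx + sy = \varphi_s(x) + \varphi_s(y)\]
for all $x, y \in S$. Observe that $\varphi_s(S) = sS$ and $\varphi_s(A) = sA$, so the hypotheses and conclusions of the corollary translate exactly into those of the lemma applied with $\semistimes = \semittimes = \semisplus$ and $\varphi = \varphi_s$.

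With this identification, each of the six statements of the corollary is an immediate instance of the corresponding statement of Lemma \ref{lem:bigandbigimpliesbig}. The commutativity hypothesis required in part (IV) of the lemma is automatic here, because $\semisplus$ is commutative by the definition of a semiring; likewise, the \sfcplus{} hypothesis required in part (V) holds automatically since commutative semigroups satisfy \sfcplus{}. For the opposite statements with $sA$ and $sS$ replaced by $As$ and $Ss$, one uses instead the map $\psi_s \colon S \to S$ defined by $\psi_s(x) = xs$, which is a homomorphism of $\semisplus$ by right distributivity and satisfies $\psi_s(S) = Ss$ and $\psi_s(A) = As$; another appeal to Lemma \ref{lem:bigandbigimpliesbig} then finishes the argument.

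There is essentially no obstacle to carrying this out: the only conceptual point is the observation that distributivity is exactly what makes left and right dilation into homomorphisms of the additive semigroup, after which the corollary reduces to bookkeeping against the six cases of Lemma \ref{lem:bigandbigimpliesbig}.
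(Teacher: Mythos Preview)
Your proposal is correct and matches the paper's proof exactly: the paper simply states that each item follows from Lemma \ref{lem:bigandbigimpliesbig} by taking $\varphi$ to be left (or right) dilation by $s$. Your version is a more detailed write-up of the same argument, including the helpful observation that the commutativity and \sfcplus{} hypotheses become automatic because $\semisplus$ is commutative.
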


\begin{proof}
Each statement follows immediately from Lemma \ref{lem:bigandbigimpliesbig} by taking $\varphi$ to be dilation on the left (or right) by $s$. 
\end{proof}

Thus, in a semiring $\semisring$, an additively large subset $A \subseteq S$ remains additively large under left dilation by an element $s \in S$ when the principal right ideal $sS$ is sufficiently additively large. The semirings $\N$, $\Z$, $\matdz$, and $\F_p[x]$ all have the property that (non-zero) principal multiplicative ideals are additively $\ipnaught^*$.  An example of a semiring which does not have this property is $\N[x]$, for $x\N[x]$ is not even additively \crich{}.

\section{Preamble to Sections \ref{sec:posexamplea} through \ref{sec:counterexamples}}\label{sec:preamble}

In Sections \ref{sec:posexamplea} through \ref{sec:counterexamples}, we explain the extent to which multiplicative largeness begets additive largeness and vice versa. More precisely, given a semiring $\semisring$ and two classes of largeness $\arbclass$ and $\mathcal{Y}$ from Section \ref{sec:defs} (for example, the class of syndetic sets or sets of positive upper Banach density), we will show that
\begin{align}\label{eqn:desiredcontainment}\arbclass\semistimes \subseteq \mathcal{Y}\semisplus\end{align}
or provide an example to the contrary. We will explain how reverse inclusions -- classes of additive largeness contained in classes of multiplicative largeness -- follow from (\ref{eqn:desiredcontainment}) by considering complements and dual classes.

Sections \ref{sec:posexamplea} through \ref{sec:posexamplec} each highlight one such inclusion and include a number of related applications, while counterexamples to the other possible inclusions are given in Section \ref{sec:counterexamples}. A summary of these results is contained in Figure \ref{fig:implicationdiagram}. The containments depicted in Figure \ref{fig:containmentdiagram} are indicated by solid lines, the positive results are indicated by solid arrows, and the negative results -- those implications which are, in general, false -- are indicated by dashed arrows.

For example, in Section \ref{sec:posexamplea}, we will show that under some mild conditions on the semiring $\semisring$ and a multiplicative subsemigroup $\semirtimes$ of $\semistimes$,
\begin{align}\label{eqn:syndeticimpliescentral}\syndetic \semirtimes \subseteq \central \semisplus,\end{align}
which means that if a set is multiplicatively syndetic in $R$, then it is additively central in $S$. This is a generalization of \cite[Lemma 5.11]{BergelsonSurveytwoten}, which gives the containment in (\ref{eqn:syndeticimpliescentral}) in the case that $R = S = \N$.

Note that (\ref{eqn:syndeticimpliescentral}) differs from (\ref{eqn:desiredcontainment}) by the appearance of $\semirtimes$, a multiplicative subsemigroup of $\semistimes$. The following examples serve to illustrate the need for the appearance of $\semirtimes$ when proving such containments in semirings more general than $\seminring$.
\begin{enumerate}[label=(\Roman*)]
\item \label{item:problemone} In $\semizring$, a set is multiplicatively syndetic if and only if it contains zero. The set $\{0\}$, however, is not additively central (it is not even additively \crich{}).
\item \label{item:problemtwo} In $(\N[x],+,\cdot)$, the set $x\N[x]$ is multiplicatively syndetic but is not additively central (it is not even additively \crich{}).
\end{enumerate}
This shows that $\syndetic \semistimes \not\subseteq \central \semisplus$, even for the ring $\semizring$. The problem in \ref{item:problemone} stems from the existence of multiplicative zeroes or zero divisors, while the problem in \ref{item:problemtwo} stems from the fact that there are additively small principal multiplicative right ideals.

Invoking a multiplicative subsemigroup $\semirtimes$ of $\semistimes$ that avoids these problems allows us to salvage statements of the form in (\ref{eqn:desiredcontainment}). Here, for example, is Theorem \ref{thm:maintheorema}, one of the main theorems from Section \ref{sec:posexamplea}:
\begin{quote}
\emph{Let $\semisring$ be a semiring. Suppose that $\semirtimes$ is a subsemigroup of $\semistimes$ which is additively large in the following ways: \underline{\smash{$R$ is central}} \underline{\smash{in $\semisplus$}}, and \underline{\smash{for all $r \in R$, the set $rS$ is piecewise syndetic in $(S,$}} \underline{\smash{$+)$}}. If $A \subseteq R$ is multiplicatively \underline{\smash{syndetic}} in $R$, then $A$ is additively \underline{\smash{central}} in $S$.}
\end{quote}
The first two underlined statements are assumptions regarding the additive largeness of $R$ in $S$. Since the theorem asserts that multiplicatively large subsets of $R$ are additively large in $S$, it is necessary to assume that $R$ is itself additively large in $S$. The main theorems in the following sections take this form, where the underlined portions change depending on the setting.

\begin{figure}[ht]
\centering
\begin{tikzpicture}[>=triangle 60]
  \matrix[matrix of math nodes,column sep={20pt,between origins},row sep={15pt,between origins}](m)
  {
    & & & & & & & & |[name=arichstar]|\crichclass_\Endset^* & & & & & & \\
    \\
    & & & & & & & & & & & & & & \\
    & & & & & & & & |[name=densitystar]|\density^* & & & & & & \\
    & & |[name=ipnaughtstar]|\ipnaughtclass^* \\
    \\
    & & & & |[name=ipstar]|\ipclass^* & & & & |[name=pwsstar]|\pws^* \\
    \\
    & & & & & & |[name=centralstar]|\central^* & & & & |[name=thick]| \thick \\
    \\
    & & & & |[name=syndetic]|\syndetic & & & & |[name=central]|\central \\
    \\
    & & & & & & |[name=pws]| \pws & & & & |[name=ip]|\ipclass\\
    \\
    & & & & & & & & & & & & |[name=ipnaught]|\ipnaughtclass \\
    & & & & & & |[name=density]|\density \\
    & & & & & & & & & & & & & & \\
    \\
    & & & & & & |[name=arich]|\crichclass_\Endset \\
 };

   \draw[line width=0.05mm,-] (arichstar) edge (densitystar)
            (ipnaughtstar) edge (ipstar)
            (ipstar) edge (centralstar)
            (centralstar) edge (central)
            (central) edge (ip)
            (ip) edge (ipnaught)
            (densitystar) edge (pwsstar)
            (pwsstar) edge (thick)
            (pwsstar) edge (centralstar)
            (centralstar) edge (syndetic)
            (syndetic) edge (pws)
            (thick) edge (central)
            (central) edge (pws)
            (pws) edge (density)
            (density) edge (arich)
  ;
  
     \draw[-angle 90,dashed] (arichstar) edge[bend left=20] node[right] {\small Sec. \ref{sec:negexamplea}} (thick)
     				  (arichstar) edge[bend right=5] (syndetic)
				      (ipnaughtstar) edge (syndetic)
				      (ipnaughtstar) edge[bend left=2] (thick)
				      (thick) edge[bend left=10] (density)
				      (thick) edge node[right] {\small Sec. \ref{sec:negexampleb}} (ip)
				      (ip) edge[bend left=30] node[xshift=0.6cm, yshift=0.2cm] {\small Sec. \ref{sec:negexamplec}} (ipnaught)
				      (ip) edge (arich)
				      (density) edge node[xshift=1.0cm, yshift=-0.2cm] {\small Sec. \ref{sec:negexampled}} (ipnaught)
				      (arich) edge[in=310,out=230,distance=10mm,loop,->,>=angle 90] node[right, near end] {\small Sec. \ref{sec:negexamplee}} (arich)

  ;
  
       \draw[line width=0.35mm,->] (syndetic) edge node[above] {\small Sec. \ref{sec:posexamplea}} (central)
       								(pws) edge node[above] {\small \ref{sec:posexampleb}} (ipnaught)
									(density) edge[bend right=30] node[left] {\small Sec. \ref{sec:posexamplec}} (arich)

  ;
\end{tikzpicture}
\caption{Containment amongst additive and multiplicative classes of largeness in a semiring $(S,+,\cdot)$. A solid line $\mathcal{X}$ --- $\mathcal{Y}$ with $\mathcal{X}$ positioned above $\mathcal{Y}$ indicates that $\mathcal{X}(S,+) \subseteq \mathcal{Y}(S,+)$ and $\mathcal{X}(S,\cdot) \subseteq \mathcal{Y}(S,\cdot)$.  A solid arrow $\mathcal{X} \to \mathcal{Y}$ indicates that $\mathcal{X}(R,\cdot) \subseteq \mathcal{Y}(S,+)$ for suitable subsemigroups $(R,\cdot)$ of $(S,\cdot)$. A dashed arrow $\mathcal{X} \dashrightarrow \mathcal{Y}$ indicates that $\mathcal{X}(\N,\cdot) \not\subseteq \mathcal{Y}(\N,+)$.  See Definition \ref{def:combrichuptoe} for the definition of the class $\crichclass_\Endset$.}
\label{fig:implicationdiagram}
\end{figure}

\section{Multiplicative syndeticity implies additive centrality}\label{sec:posexamplea}

It was shown in \cite[Lemma 5.11]{BergelsonSurveytwoten} that multiplicatively syndetic subsets of $\N$ are additively central. In this section, we extend this result to semirings and prove related results and applications.

The following theorem says that in suitable semirings, \emph{multiplicative syndeticity implies additive centrality}.

\begin{theorem}\label{thm:maintheorema}
Let $\semisring$ be a semiring. Suppose that $\semirtimes$ is a subsemigroup of $\semistimes$ which is additively large in the following ways:
\begin{itemize}
\item $R$ is central in $\semisplus$, and
\item for all $r \in R$, the set $rS$ is piecewise syndetic in $\semisplus$.
\end{itemize}
If $A \subseteq R$ is syndetic in $\semirtimes$, then $A$ is central in $\semisplus$.
\end{theorem}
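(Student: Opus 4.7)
The plan is to use the multiplicative syndeticity of $A$ in $R$ to partition $R$ into finitely many pieces whose dilates land inside $A$, apply the partition regularity of centrality to one of those pieces, and then push the central piece through a multiplicative dilation using Corollary \ref{cor:bigandbigimpliesbig}.

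First, I would unpack the hypothesis that $A$ is syndetic in $\semirtimes$: there exist $f_1,\dots,f_k \in R$ with $R = \bigcup_{i=1}^k f_i^{-1} A$, and defining $B_i = \{r \in R : f_i r \in A\}$ gives a covering $R = B_1 \cup \cdots \cup B_k$ with the crucial property $f_i B_i \subseteq A$ for each $i$.

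Next, I would invoke the assumption that $R$ is central in $\semisplus$ together with the partition regularity of the class $\central$ (Remark \ref{rmk:partitionregularityofipnaughtandcentral}): since $R$ is central and $R = \bigcup_i B_i$ is a finite cover, some $B_{i_0}$ is itself central in $\semisplus$.

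The final step would use Corollary \ref{cor:bigandbigimpliesbig}\ref{item:centralpluspsimpliescentral}: since $f_{i_0} \in R$, the principal right ideal $f_{i_0} S$ is piecewise syndetic in $\semisplus$ by hypothesis, and hence the dilate $f_{i_0} B_{i_0}$ remains central in $\semisplus$. Because $f_{i_0} B_{i_0} \subseteq A$ and centrality is upward closed, $A$ itself is central in $\semisplus$.

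The only non-routine point is making sure the class $\central$ of additively central sets behaves correctly under both of the operations I need: partition regularity (to choose $B_{i_0}$) and compatibility with multiplicative dilation (to move from $B_{i_0}$ to $f_{i_0} B_{i_0}$). Both have already been isolated in the paper --- the first as a remark following the definition of centrality, the second as the dilation statement of Corollary \ref{cor:bigandbigimpliesbig} --- and this is precisely what the two hypotheses on $R$ are tailored to supply. So there is no real obstacle; the argument is a short three-line composition of these two facts once the covering $R = \bigcup_i B_i$ is written down.
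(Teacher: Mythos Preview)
Your proof is correct and follows essentially the same approach as the paper: cover $R$ by the sets $f_i^{-1}A$, use partition regularity of centrality to find one that is central in $\semisplus$, then apply Corollary \ref{cor:bigandbigimpliesbig}\ref{item:centralpluspsimpliescentral} to dilate it inside $A$. The only cosmetic difference is that the paper phrases the pieces as $r_i^{-1}A$ (noting the distinction between computing this in $R$ versus in $S$) rather than introducing the separate notation $B_i$, but the argument is identical.
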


\begin{proof}
Let $A \subseteq R$ be syndetic in $\semirtimes$. There exist $r_1, \ldots, r_k \in R$ for which $R = \cup_{i=1}^k r_i^{-1} A$, where each $r_i^{-1}A$ is computed in $R$. When computed in $S$, we have that $R \subseteq \cup_{i=1}^k r_i^{-1} A$. Since $R$ is central in $\semisplus$ and the class $\central \semisplus$ is partition regular (Remark \ref{rmk:partitionregularityofipnaughtandcentral}), there exists an $i \in \{1, \ldots, k\}$ such that $r_i^{-1} A$ is central in $\semisplus$. By Corollary \ref{cor:bigandbigimpliesbig} \ref{item:central}, since $r_i S$ is piecewise syndetic in $\semisplus$, the set $r_i r_i^{-1} A$ is central in $\semisplus$. Since $r_i r_i^{-1} A \subseteq A$, this implies that $A$ is central in $\semisplus$.
\end{proof}

The following is an example application of this theorem; recall the definition of multiplicatively even and odd integers in Examples \ref{examples:homomorphisms} \ref{item:multeven}. We present another application of Theorem \ref{thm:maintheorema} to division rings in the following subsection.

\begin{corollary}
The set 
\[A = \big\{ M \in \matdz \ \big| \ \det(M) \text{ a multiplicatively odd integer} \big\}\]
is central in $(\matdz, +)$.
\end{corollary}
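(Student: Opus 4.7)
The plan is to apply Theorem \ref{thm:maintheorema} with the multiplicative subsemigroup $R = \matdz \cap \gldq$ of $(\matdz, \cdot)$, i.e., the integer matrices with nonzero determinant. Note that $A \subseteq R$ because a multiplicatively odd integer is, by definition, nonzero.

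I will first verify the two additive largeness hypotheses on $R$. To see that $R$ is central in $(\matdz, +)$, by Figure \ref{fig:containmentdiagram} it suffices to show that $R$ is thick. Given a finite $F \subseteq \matdz$, for each $N \in F$ the map $k \mapsto \det(kI + N)$ is a monic polynomial of degree $d$ in the integer variable $k$, so it has only finitely many integer roots; choosing $k \in \Z$ outside the finite union of these root sets yields $M = kI \in \matdz$ with $M + F \subseteq R$. This polynomial observation is the only non-tautological step. For the second hypothesis, the remark at the end of Section \ref{semiringsandexamples} records that every nonzero principal multiplicative ideal $r\matdz$ is additively $\ipnaught^*$, hence additively piecewise syndetic by the containment diagram.

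It remains to show that $A$ is syndetic in $(R, \cdot)$. The determinant $\det : (R, \cdot) \to (\Z \setminus \{0\}, \cdot)$ is a surjective semigroup homomorphism, since $\mathrm{diag}(n, 1, \ldots, 1) \in R$ has determinant $n$ for every $n \in \Z \setminus \{0\}$. Extending $\Omega$ to $(\Z \setminus \{0\}, \cdot)$ via $\Omega(-1) = 0$ and $\Omega(n) = \Omega(|n|)$ and composing with the projection to $\Z/2\Z$ yields a surjective homomorphism $(\Z \setminus \{0\}, \cdot) \to (\Z/2\Z, +)$ whose fiber over $1$ is precisely $\multoddn$. Since a singleton in a finite group is syndetic, two applications of Lemma \ref{lem:surjectivehom} imply that $\multoddn$ is syndetic in $(\Z \setminus \{0\}, \cdot)$ and, in turn, that $A = \det^{-1}(\multoddn)$ is syndetic in $(R, \cdot)$. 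Theorem \ref{thm:maintheorema} then yields that $A$ is central in $(\matdz, +)$.
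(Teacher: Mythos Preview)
Your proof is correct and follows essentially the same approach as the paper: apply Theorem~\ref{thm:maintheorema} with $R = \matdz \cap \gldq$ and verify the three conditions in the same way. The only minor difference is that the paper verifies syndeticity of $A$ in $(R,\cdot)$ directly by exhibiting $R = A \cup W^{-1}A$ for any $W$ with $\det(W)$ multiplicatively odd, whereas you route this through Lemma~\ref{lem:surjectivehom}; both arguments are equally short.
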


\begin{proof}
Let $R = \gldq \cap \matdz$, and note that $A \subseteq R$. The corollary follows from Theorem \ref{thm:maintheorema} if we verify that 1) $A$ is syndetic in $\semirtimes$, 2) $R$ is central in $(\matdz,+)$, and 3) for all $W \in R$, the set $W \matdz$ is piecewise syndetic in $(\matdz,+)$. To see 1), let $W \in R$ be such that $\det(W)$ is multiplicatively odd, and note that $R = A \cup W^{-1}A$. To show 2), it suffices to show that $R$ is thick in $(\matdz,+)$: for $F \subseteq \matdz$ finite, there exists $\lambda \in \N$ such that every matrix in the set $\lambda \text{Id} + F$ is invertible. Finally, note that for all $W \in R$, the set $W \matdz$ is a finite-index subgroup of $(\matdz,+)$. This implies that $W \matdz$ is syndetic, hence piecewise syndetic, in $(\matdz,+)$, proving 3).
\end{proof}

The conclusion of the previous corollary holds with ``odd'' replaced by ``even'' with the same proof. Specifying to the case $d=1$, we conclude that the sets of multiplicatively even and odd integers are each additively central in $\semizplus$.

The following corollary is a dual form of Theorem \ref{thm:maintheorema} which says that \emph{sets which have non-empty intersection with every additively central set are multiplicatively thick}.

\begin{corollary}
Let $\semisring$ be a semiring. Suppose that $\semirtimes$ is a subsemigroup of $\semistimes$ which is additively large in the following ways:
\begin{itemize}
\item $R$ is central in $\semisplus$, and
\item for all $r \in R$, the set $rS$ is piecewise syndetic in $\semisplus$.
\end{itemize}
If $A \subseteq S$ is $\central^*$ in $\semisplus$, then $A \cap R$ is thick in $\semirtimes$.
\end{corollary}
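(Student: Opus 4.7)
The plan is to derive this corollary from Theorem \ref{thm:maintheorema} by exploiting the duality $\syndetic^* = \thick$ noted after Definition \ref{def:syndthick}. That duality, applied inside the semigroup $(R,\cdot)$, reduces showing $A \cap R \in \thick(R,\cdot)$ to showing that $A \cap R$ meets every set which is syndetic in $(R,\cdot)$.

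So the argument I would write is a two-line chase. Let $B \subseteq R$ be syndetic in $(R,\cdot)$; it suffices to produce a point of $(A \cap R) \cap B$. Since $R$ is central in $(S,+)$ and, for every $r \in R$, $rS$ is piecewise syndetic in $(S,+)$, the hypotheses of Theorem \ref{thm:maintheorema} are satisfied by the subsemigroup $(R,\cdot)$ of $(S,\cdot)$. Applying that theorem to $B \subseteq R$ gives $B \in \central(S,+)$. Because $A \in \central^*(S,+)$, by the definition of the dual class we have $A \cap B \neq \emptyset$. As $B \subseteq R$, this immediately forces $(A \cap R) \cap B = A \cap B \neq \emptyset$, as desired.

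This shows $A \cap R$ lies in $\syndetic(R,\cdot)^*$, which by the remark following Definition \ref{def:syndthick} equals $\thick(R,\cdot)$, completing the proof.

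I do not anticipate any serious obstacle: the hypotheses on $R$ are precisely what is needed to invoke Theorem \ref{thm:maintheorema}, and the only conceptual point is recognizing that thickness in $(R,\cdot)$ is characterized as having non-empty intersection with every syndetic subset of $(R,\cdot)$ (as opposed to of $(S,\cdot)$), which is why it is essential that Theorem \ref{thm:maintheorema} takes a set syndetic \emph{in the subsemigroup} $R$ as its input.
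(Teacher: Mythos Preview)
Your proof is correct and is precisely the dual argument the paper has in mind: the corollary is stated as the ``dual form'' of Theorem \ref{thm:maintheorema} and no separate proof is given, so your use of $\syndetic(R,\cdot)^* = \thick(R,\cdot)$ together with Theorem \ref{thm:maintheorema} and the definition of $\central^*$ is exactly what is intended.
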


\subsection{Multiplicatively syndetic subgroups of fields}\label{sec:syndeticinfields} For subgroups of the multiplicative group of an infinite division ring, multiplicative syndeticity implies considerably more than additive centrality. Let $(\F,+,\tone)$ be a division ring (a unital ring in which every non-zero element has a multiplicative inverse), and let $\fstar = \F \setminus \{0\}$.\footnote{The notation $\fstar$ is not be confused with the star notation for the dual class from Definition \ref{def:dualclasses}.} Note that a subgroup of the group $(\fstar,\cdot)$ is syndetic if and only if it is of finite index. With the help of Theorem \ref{thm:maintheorema} and Corollary \ref{cor:ipszemfordensityinsemigroups}, we prove in the following theorem that syndetic multiplicative subgroups of $(\fstar,\cdot)$ and their cosets are thick in $(\F,+)$; cf. \cite[Remark 5.23]{Bcombanddioph}.

\begin{theorem}\label{thm:multsyndgivesaddthickinfields}
Let $(\F,+,\tone)$ be an infinite division ring and $\Gamma$ be a finite index subgroup of $(\fstar,\tone)$. For all $g \in \fstar$, the sets $g \Gamma$ and $\Gamma g$ are thick in $(\F,+)$; in particular, $\Gamma$ is thick in $(\F,+)$.
\end{theorem}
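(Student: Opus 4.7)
The plan is to apply Theorem \ref{thm:maintheorema} to promote $\Gamma$ from being multiplicatively syndetic to being additively central in $(\F,+)$, and then combine the resulting combinatorial richness with the multiplicative closure of $\Gamma$ to upgrade additive centrality all the way to additive thickness. The cases of $g\Gamma$ and $\Gamma g$ reduce immediately to the case of $\Gamma$: the maps $L_g(z)=gz$ and $R_g(z)=zg$ are additive bijections of $\F$ (using distributivity and invertibility of $g$) that send $\Gamma$ to the respective cosets. So given a finite $F' \subseteq \F$, applying thickness of $\Gamma$ to the finite set $g^{-1}F'$ (resp.\ $F'g^{-1}$) and producing $y$ with $g^{-1}F' + y \subseteq \Gamma$ (resp.\ $F'g^{-1} + y \subseteq \Gamma$) yields $F' + gy \subseteq g\Gamma$ (resp.\ $F' + yg \subseteq \Gamma g$).

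For $\Gamma$ itself, I would first verify the hypotheses of Theorem \ref{thm:maintheorema} with $R = \fstar$ and $S = \F$. The set $\fstar$ is additively cofinite, hence thick and therefore central in $(\F,+)$; for every $r \in \fstar$ the principal ideal $r\F$ equals $\F$ because $\F$ is a division ring, which is trivially piecewise syndetic in $(\F,+)$; and $\Gamma$, being a finite-index subgroup of $\fstar$, is multiplicatively syndetic in $(\fstar,\cdot)$. Theorem \ref{thm:maintheorema} then delivers that $\Gamma$ is additively central in $(\F,+)$. By Lemma \ref{lem:hierarchylemma} (see Figure \ref{fig:containmentdiagram}), $\Gamma$ both contains an additive $\ip$ set and has positive additive upper Banach density, and Theorem \ref{thm:densityimpliesar} then gives that $\Gamma$ is combinatorially rich in $(\F,+)$.

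Next, fix a finite set $\{v_1, \ldots, v_n\} \subseteq \F$. By right distributivity, each map $\varphi_i \colon (\F,+) \to (\F,+)$, $\varphi_i(t) = t v_i$, is an additive homomorphism. Applying Lemma \ref{lem:equivtocombrich} \ref{item:homdef} with these homomorphisms produces some $r \in \N$ such that
\[
K \defeq \big\{\, t \in \F \;\big|\; \exists\, x \in \F,\ x + t v_i \in \Gamma \text{ for all } i \,\big\}
\]
is $\ip_r^*$ in $(\F,+)$. Since $\Gamma$ is additively central it contains an additive $\ip$ set, and in particular an $\ip_r$ set, so $K \cap \Gamma \neq \emptyset$. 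Choose $t \in K \cap \Gamma$ together with a witness $x \in \F$, and set $y = t^{-1} x$; since $\Gamma$ is a multiplicative subgroup containing $t$, the identity $y + v_i = t^{-1}(x + t v_i) \in t^{-1}\Gamma = \Gamma$ holds for every $i$, so $\{v_1, \ldots, v_n\} + y \subseteq \Gamma$, giving thickness.

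The main obstacle is to reconcile the additive and multiplicative roles of the step size $t$: Lemma \ref{lem:equivtocombrich} produces $t$ from an additively large set $K \subseteq \F$, while the normalization $y = t^{-1}x$ requires $t \in \Gamma$. What makes both requirements compatible is precisely the output of Theorem \ref{thm:maintheorema}: $\Gamma$ is forced to contain an additive $\ip_r$ set of its own, and intersecting this set with $K$ supplies a step size that wears both hats at once.
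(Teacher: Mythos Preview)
Your proof is correct and follows essentially the same strategy as the paper's: use Theorem \ref{thm:maintheorema} to make $\Gamma$ additively central (hence $\ip$ and of positive density), invoke a density-Hales-Jewett--based result to produce an $\ip_r^*$ set of ``good dilations,'' intersect that set with the additive $\ip_r$ structure inside $\Gamma$, and then use the group property of $\Gamma$ to normalize. The only cosmetic differences are that you reduce the coset case to the case of $\Gamma$ up front (the paper handles $\Gamma g$ directly and recovers $\Gamma$ as $g=1$), and that you route the density-Hales-Jewett step through Theorem \ref{thm:densityimpliesar} and Lemma \ref{lem:equivtocombrich} \ref{item:homdef} rather than through Corollary \ref{cor:ipszemfordensityinsemigroups}; both packagings yield the same $\ip_r^*$ set of shifts.
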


\begin{proof}
Since $\F$ is infinite, the set $\{0\}$ is not syndetic in $(\F,+)$. It follows that $\fstar$ is thick, hence central, in $(\F,+)$. Since $\Gamma$ is syndetic in $(\fstar, \cdot)$, Theorem \ref{thm:maintheorema} gives that $\Gamma$ is central in $(\F,+)$. It follows that $\Gamma$ is an $\ip$ set in $(\F,+)$ and that $\dstar_{(\F,+)}(\Gamma) > 0$.

Let $g \in \fstar$. We will show that $\Gamma g$ is thick in $(\F,+)$. Let $F \in \finitesubsets \F$. Since $\Gamma$ is an $\ip$ set in $(\F,+)$, Corollary \ref{cor:bigandbigimpliesbig} \ref{item:ipr} gives that $g^{-1} \Gamma$ is an $\ip$ set in $(\F,+)$. Since $\dstar_{(\F,+)}(\Gamma) > 0$, Corollary \ref{cor:ipszemfordensityinsemigroups} gives that there exists $g^{-1} \gamma \in g^{-1} \Gamma$ and $z \in \F$ for which $z + F g^{-1}\gamma \subseteq \Gamma$. Multiplying on the right by $\gamma^{-1} g$, we see that $z\gamma^{-1} g + F \subseteq \Gamma \gamma^{-1} g = \Gamma g$. Since $F \in \finitesubsets \F$ was arbitrary, this shows that $\Gamma g$ is additively thick.

That the set $g \Gamma$ is thick in $(\F,+)$ follows in the same way from Corollary \ref{cor:ipszemfordensityinsemigroups} by using that $\Gamma g^{-1}$ is an $\ip$ set in $(\F,+)$.
\end{proof}

A subset of a left amenable semigroup is thick if and only if it has upper Banach density equal to 1 (see \cite[Prop. 1.21]{patersonbook}). Thus, the proof of Theorem \ref{thm:multsyndgivesaddthickinfields} shows how to use the IP Szemer\'edi theorem to improve the statement ``$\Gamma$ is an $\ip$ set in $(\F,+)$ and $\dstar_{(\F,+)}(\Gamma) > 0$'' to the statement ``$\dstar_{(\F,+)}(\Gamma) = 1$.''  In fact, we can conclude from Theorem \ref{thm:multsyndgivesaddthickinfields} that the left and right cosets of $\Gamma$ all have additive upper Banach density equal to $1$.

While Theorem \ref{thm:multsyndgivesaddthickinfields} is presented here as an example application of Theorem \ref{thm:maintheorema} and the IP Szemer\'edi theorem, it can be proved with less sophisticated tools. We will demonstrate how to derive a related result in a more elementary way in the proof of Corollary \ref{cor:multevenrationalsthick} below.

The multiplicatively even positive integers (recall Examples \ref{examples:homomorphisms} \ref{item:multeven}) are multiplicatively syndetic in $\N$, but it is an open problem to determine whether or not they are additively thick.\footnote{This is a specific instance of the problem of determining whether any finite word with letters from the set $\{-1,1\}$ is witnessed infinitely often as consecutive values of the Liouville function $\lambda: \N \to \{-1,1\}$, the completely multiplicative function taking the value $-1$ at every prime; see \cite{patterninliouville} for more details.} Theorem \ref{thm:multsyndgivesaddthickinfields} yields the following positive answer to the analogous problem in $\Q$.

\begin{corollary}\label{cor:multevenrationalsthick}
Let $\varphi: \Q \setminus \{0\} \to \{-1,1\}$ be non-constant and completely multiplicative. The sets $\varphi^{-1}(\{1\})$ and $\varphi^{-1}(\{-1\})$ are thick in $(\Q,+)$. In particular, the multiplicatively even and odd rational numbers (recall Examples \ref{examples:homomorphisms} \ref{item:multeven}) are additively thick.
\end{corollary}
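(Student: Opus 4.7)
The plan is to apply Theorem \ref{thm:multsyndgivesaddthickinfields} directly to $\F = \Q$ with $\Gamma = \varphi^{-1}(\{1\})$. Since $\varphi$ is completely multiplicative with values in the group $\{-1,1\}$, the set $\Gamma$ is precisely the kernel of $\varphi$ regarded as a group homomorphism $(\Q^*,\cdot) \to \{-1,1\}$, and so is a subgroup of $(\Q^*,\cdot)$. The non-constancy hypothesis forces the image of $\varphi$ to be all of $\{-1,1\}$, whence $[\Q^*:\Gamma] = 2$; in particular $\Gamma$ is syndetic (indeed of finite index) in $(\Q^*,\cdot)$. Theorem \ref{thm:multsyndgivesaddthickinfields}, applied to the infinite division ring $\Q$ and this $\Gamma$, immediately yields that $\Gamma = \varphi^{-1}(\{1\})$ is thick in $(\Q,+)$.

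For the other preimage, I would pick any $g \in \Q^*$ with $\varphi(g) = -1$ (which exists by non-constancy); complete multiplicativity of $\varphi$ gives $g\Gamma = \varphi^{-1}(\{-1\})$, and Theorem \ref{thm:multsyndgivesaddthickinfields} then provides that this coset is also thick in $(\Q,+)$. The ``in particular'' clause follows by specializing: extend $\Omega$ to a semigroup homomorphism $(\Q \setminus \{0\},\cdot) \to (\Z,+)$ via $\Omega(p/q) = \Omega(p) - \Omega(q)$ and set $\varphi(r) = (-1)^{\Omega(r)}$. This $\varphi$ is completely multiplicative into $\{-1,1\}$ and is non-constant since $\varphi(1) = 1 \neq -1 = \varphi(2)$, so its preimages of $\pm 1$, which are precisely the multiplicatively even and odd rational numbers, are additively thick.

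There is no real obstacle given Theorem \ref{thm:multsyndgivesaddthickinfields}; the content of the corollary is the passage from the open integer problem to its tractable rational analogue, which is afforded by the existence of multiplicative inverses in $\Q$. The only delicate point concerns the promised alternative derivation: Theorem \ref{thm:multsyndgivesaddthickinfields} was proved using the IP Szemer\'edi theorem, but for the $\F = \Q$, $[\Q^*:\Gamma]=2$ case one can hope for a more elementary route exploiting the containment $(\Q^*)^2 \subseteq \Gamma$ (valid since $\Gamma$ has index $2$). Such an argument would attempt to produce, for any finite $F \subseteq \Q$, an $x \in \Q$ with $x+F$ contained in a single coset of $(\Q^*)^2$, using that in $\Q$ every nonzero element decomposes as a squarefree integer times a rational square. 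I would keep the main proof as the short invocation of Theorem \ref{thm:multsyndgivesaddthickinfields} above, and include the elementary discussion as a supplementary remark.
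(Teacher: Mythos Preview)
Your main argument is correct and matches the paper's primary proof exactly: apply Theorem \ref{thm:multsyndgivesaddthickinfields} to $\F=\Q$ with $\Gamma=\varphi^{-1}(\{1\})$, which is an index-$2$ subgroup of $(\Q^*,\cdot)$, and identify $\varphi^{-1}(\{-1\})$ as the nontrivial coset $g\Gamma$.

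The only divergence is in the supplementary ``more elementary'' route. The paper does supply a concrete alternative, but not the one you sketch via $(\Q^*)^2\subseteq\Gamma$. Instead it uses Hindman's theorem together with the IP van der Waerden theorem: given finite $F\subseteq\Q$, choose $m\in\N$ with $\varphi(m)=1$ and $mF\subseteq\Z$; Hindman plus a dilation argument shows both $A\cap\N$ and $\N\setminus A$ are additive IP sets in $\N$; IP van der Waerden then gives that the set of $d$ with some $x+dmF$ monochromatic is IP$^*$, and one selects $d$ of the appropriate $\varphi$-value so that dividing by $dm$ lands $x/(dm)+F$ in $A$. This is still dynamical/Ramsey-theoretic but avoids the full IP Szemer\'edi machinery behind Theorem \ref{thm:multsyndgivesaddthickinfields}. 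Your $(\Q^*)^2$ idea is plausible in spirit but you have not carried it through, and the paper's alternative is both different and complete.
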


\begin{proof}
The corollary follows immediately from Theorem \ref{thm:multsyndgivesaddthickinfields} by verifying that the set $\varphi^{-1}(\{1\})$ is a subgroup of $(\Q \setminus \{0\}, \cdot)$ of finite index. We provide now an alternate proof of this corollary using Hindman's theorem \cite{hindmanoriginal} and the IP van der Waerden theorem \cite[Section 3]{fwdynamics}.

Let $A=\varphi^{-1}(\{1\})$ and $F \in \finitesubsets \Q$. Let $m \in \N$ so that $\varphi(m) = 1$ and $mF \subseteq \Z$. By Hindman's theorem, either $A \cap \N$ or $\N \setminus A$ is an additive $\ip$ set in $\N$; multiplying by an $n \in \N$ for which $\varphi(n) = -1$, we see that both sets are additive $\ip$ sets. By the IP van der Waerden theorem, either
\[\big\{ d \in \N \ \big| \ \exists x \in \N, \ x + dmF \subseteq A \cap \N \big\}\]
is an $\ip^*$ set, or the same set with $\N \setminus A$ in place of $A \cap \N$ is an $\ip^*$ set. In the first case, let $d$ be from the set such that $\varphi(d) = 1$; in the second, choose $d$ so  $\varphi(d) = -1$. Either way, on dividing, $x \big/ (dm) + F \subseteq A$.
\end{proof}

Taking advantage of the massivity of the set of returns in the $\ip_0^*$ formulation of Szemer\'edi's theorem, Theorem \ref{thm:ipszemeredicomb}, the proof of Theorem \ref{thm:multsyndgivesaddthickinfields} can be made to work in the finite field setting. The following theorem is a generalization of \cite[Theorem 1.1]{Bcombanddioph}, which is a generalization of Schur's result \cite{schuroriginal} that Fermat's Last Theorem fails in finite fields.

\begin{theorem}\label{thm:finitarymultsyndgivesaddthickinfields}
For all $n, k \in \N$, there exists an $N=N(n,k) \in \N$ with the following property. For all finite fields $\F$ with $|\F| \geq N$ and all $F \subseteq \F$ with $|F| \leq n$, there exists a non-zero $x \in \F$ for which every element of $x^k + F$ is a non-zero $k^{\text{th}}$ power.
\end{theorem}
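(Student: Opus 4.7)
The plan is to mimic the proof of Theorem \ref{thm:multsyndgivesaddthickinfields} in a finitary fashion, replacing its appeal to the IP Szemer\'edi theorem with the $\ipnaught^*$ formulation, whose quantitative content holds uniformly over finite fields of sufficient cardinality.

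First I would set $\Gamma = \big\{ x^k \ \big| \ x \in \fstar \big\}$, the subgroup of nonzero $k$-th powers in $\fstar$; it has index $d = \gcd(k,|\fstar|) \leq k$, so its additive density in $(\F,+)$ satisfies $|\Gamma| \big/ |\F| \to 1 \big/ d \geq 1 \big/ k$ as $|\F| \to \infty$. Choose $N_0 = N_0(k)$ such that $|\F| \geq N_0$ forces $|\Gamma| \big/ |\F| \geq 1 \big/ (2k)$. Writing $F^+ = F \cup \{0\}$, it then suffices to produce $z \in \F$ and $\gamma \in \Gamma$ satisfying $z + \gamma F^+ \subseteq \Gamma$: the $f = 0$ entry forces $z \in \Gamma$, hence $z\gamma^{-1} \in \Gamma$, and multiplying the containment through by $\gamma^{-1}$ yields $z\gamma^{-1} + F \subseteq \Gamma$, whence any $x \in \fstar$ with $x^k = z\gamma^{-1}$ completes the argument. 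This augmentation-by-zero trick plays the role of the ``central-implies-thick'' step in the infinite proof by forcing the additive shift to land in $\Gamma$ itself rather than in an arbitrary coset.

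To produce such a pair $(z,\gamma)$, I would apply Lemma \ref{lem:equivtocombrich}\,(III) with $T = (\F,+)$, $A = \Gamma$, and the additive homomorphisms $\varphi_f(t) = ft$ for $f \in F^+$, which are homomorphisms by distributivity. Since $\Gamma$ has additive density at least $1 \big/ (2k)$, Theorem \ref{thm:densityimpliesar} shows $\Gamma$ is \crich{} in $(\F,+)$ with a uniform parameter $r = r(n+1, 1/(2k))$ depending only on $n$ and $k$ --- the uniformity being inherited from Theorem \ref{thm:densityhj}. The lemma then guarantees that
\[ B \defeq \big\{ t \in \F \ \big| \ \exists z \in \F, \ z + t F^+ \subseteq \Gamma \big\} \]
is $\ip_r^*$ in $(\F,+)$. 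Finally, to locate $\gamma \in B \cap \Gamma$, I would invoke the standard density fact that a subset of a finite abelian group of density at least $1 \big/ (2k)$ contains an additive $\ip_r$ set (a Hilbert cube of dimension $r$) as soon as the group is sufficiently large in terms of $r$ and $1 \big/ (2k)$; any such $\ip_r$ set within $\Gamma$ must be hit by the $\ip_r^*$ set $B$, delivering the required $\gamma$.

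The main obstacle is arranging these finitary density estimates so that the density lower bound on $\Gamma$, the combinatorial-richness parameter $r$, and the Hilbert-cube containment threshold can be taken simultaneously uniform in $\F$ and absorbed into one explicit $N(n,k)$. An appealing alternative that sidesteps this bookkeeping altogether is a compactness argument: were the theorem to fail for some fixed $n,k$, an ultraproduct along a sequence of counterexamples $(\F_m, F_m)$ would yield an infinite (pseudo-finite) field $\F$ and a finite set $F \subseteq \F$ of size at most $n$ violating Theorem \ref{thm:multsyndgivesaddthickinfields} applied to the finite-index subgroup of $k$-th powers in $\fstar$, via the same augmentation-by-zero trick applied to $F^+$.
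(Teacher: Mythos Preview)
Your overall strategy is close to the paper's, and your $F^+ = F \cup \{0\}$ device is a genuine simplification: the paper instead applies Corollary~\ref{cor:ipszemfordensityinsemigroups} twice---once to $F$ to find $z$ with $z+F\subseteq\Gamma$, then again to $\{z\}\cup(z+F)$---to force the base point into $\Gamma$, whereas your augmentation accomplishes this in a single pass. Your ultraproduct alternative is also correct.

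There is, however, a real gap in the primary argument. The assertion that ``a subset of a finite abelian group of density at least $1/(2k)$ contains an additive $\ip_r$ set'' is false. In $(\Z/p\Z)^n$ with $p\le 2k$, the set of vectors whose first coordinate equals $1$ has density $1/p\ge 1/(2k)$, yet it is not even $\ip_2$: any two of its elements sum to a vector with first coordinate $2\neq 1$. Since $(\F,+)\cong(\Z/p\Z)^n$ and $p=\mathrm{char}\,\F$ may be small relative to $k$ while $|\F|$ is arbitrarily large, this obstruction is live here. You appear to be conflating $\ip_r$ sets with \emph{affine} Hilbert cubes $\{a_0+\sum_{i\in I}a_i\}$; dense sets do contain the latter, but the shift $a_0$ means an $\ip_r^*$ set need not meet them.

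The conclusion that $\Gamma$ is additively $\ip_r$ is nonetheless true, and the paper proves it by exploiting the multiplicative structure rather than density alone: build an $\ip_R$ set inside $\fstar$ greedily (requiring only $|\F|>2^R$), apply the quantitative partition regularity of $\ipnaughtclass$ (Remark~\ref{rmk:partitionregularityofipnaughtandcentral}) to the partition of this $\ip_R$ set into the at most $k$ cosets of $\Gamma$ to find some $g\Gamma$ that is $\ip_r$, and then dilate by $g^{-1}$ (Corollary~\ref{cor:bigandbigimpliesbig}~\ref{item:ipr}) to see that $\Gamma$ itself is $\ip_r$. Plugging this into your framework in place of the density claim, your argument goes through---and is then marginally cleaner than the paper's two-step version.
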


\begin{proof}
Using the partition regularity of the class $\ipnaughtclass$ (see Remark \ref{rmk:partitionregularityofipnaughtandcentral}), let $R \in \N$ be large enough that one cell of any partition of an $\ip_R$ set into $k$ parts is guaranteed to be an $\ip_r$ set, where $r = r(n+1,1/(2k))$ is from Corollary \ref{cor:ipszemfordensityinsemigroups}. Let $N > 2^R$.

Let $\F$ be a finite field with $|\F| \geq N$. To see that $\fstar$ is an $\ip_R$ set, let $x_1 \in \fstar$. Let $x_2 \in \fstar \setminus (-\{x_1\})$, and note $\finitesums(x_1,x_2) \subseteq \fstar$. Choose $x_3 \in \fstar \setminus (-\finitesums(x_1,x_2))$, and note $\finitesums(x_1,x_2,x_3) \subseteq \fstar$. Repeat this process to build and $\ip_R$ set in $\fstar$.

Let $\Gamma$ be the subgroup of $(\fstar,\cdot)$ consisting of non-zero $k^{\text{th}}$ powers. Since $\Gamma$ is of index at most $k$, by our choice of $R$, there exists a $g \in \F$ such that $g\Gamma$ is an $\ip_r$ set in $(\F,+)$. By Corollary \ref{cor:bigandbigimpliesbig} \ref{item:ipr}, $\Gamma$ is an $\ip_r$ set in $(\F,+)$.

Let $F \subseteq \F$ with $|F| \leq n$. Using Corollary \ref{cor:ipszemfordensityinsemigroups} in the same way as it was applied in the proof of Theorem \ref{thm:multsyndgivesaddthickinfields}, there exists a $z \in \F$ for which $z + F \subseteq \Gamma$. Consider the set $\{z\} \cup (z+ F)$. Using Corollary \ref{cor:ipszemfordensityinsemigroups} in the same way, there exists a $y \in \F$ for which $\{y+z\} \cup (y+z+F) \subseteq \Gamma$. Since $y+z \in \Gamma$, there exists a non-zero $x \in \F$ for which $y+z = x^k$. It follows that $x^k + F \subseteq \Gamma$. 
\end{proof}

It follows in the same way as in the proof of Theorem \ref{thm:multsyndgivesaddthickinfields} that any coset of the subgroup of non-zero $k^{\text{th}}$ powers satisfies the conclusion in Theorem \ref{thm:finitarymultsyndgivesaddthickinfields}.

\subsection{Lower density and a class of syndetic sets in \texorpdfstring{$\semintimes$}{(N,.)}}\label{sec:newclassofmultsynd} According to Theorem \ref{thm:maintheorema}, multiplicatively syndetic subsets of suitable semirings are additively central. In particular, they have positive additive upper Banach density. In fact, more is true: multiplicatively syndetic sets have positive lower density along \folner{} sequences in $\semisplus$ (see the expression in (\ref{eqn:defoflowerdensity})) which are well behaved with respect to the multiplication.

\begin{lemma}\label{lem:multsyndimplieslowerdensity}
Let $\semisring$ be a countable semiring with $\semistimes$ left cancellative. Suppose $(F_n)_{n \in \N}$ is a \folner{} sequence for $\semisplus$ with the property that for all $s_1, \ldots, s_k \in S$, $\liminf_{n \to \infty} | \cap_{i=1}^k s_i^{-1}F_n | \big/ |F_n| > 0$. If $A \subseteq \semistimes$ is syndetic, then
\begin{align}\label{eqn:defoflowerdensity}\underline{d}_{(F_n)}(A) \defeq \liminf_{n \to \infty} \frac{|A \cap F_n|}{|F_n|} > 0.\end{align}
\end{lemma}

\begin{proof}
Suppose $\underline{d}_{(F_n)}(A) = 0$. To show that $A$ is not syndetic in $\semistimes$, we need only to show that for any $s_1, \ldots, s_k \in S$, $\cup_{i=1}^k s_i^{-1} A \neq S$.

Let $s_1, \ldots, s_k \in S$, $G_n = \cap_{i=1}^k s_i^{-1}F_n$, and $0 < \eps < \liminf_{n \to \infty} | G_n | \big/ |F_n|$. Choose $n \in \N$ such that $|A \cap F_n | < \eps |F_n| \big / k$ and $|G_n| > \eps |F_n|$. By left cancellativity, $|s_i^{-1} A \cap G_n| \leq |A \cap F_n| < \eps |F_n| \big/ k$. Therefore, $|\cup_{i=1}^k s_i^{-1} A \cap G_n| < \eps |F_n|$. Since $|G_n| > \eps |F_n|$, this shows that $\cup_{i=1}^k s_i^{-1} A \neq S$.
\end{proof}

It is quick to check that the initial \folner{} sequence $(F_n = \{1, \ldots, n\})_{n \in \N}$ in $(\N,+)$ satisfies the property in the statement of Lemma \ref{lem:multsyndimplieslowerdensity}.  In this case, the lemma says that a multiplicatively syndetic subset of $\N$ has positive additive lower asymptotic density.

If $(F_n)_{n\in\N}$ is a \folner{} sequence for $\semistimes$, then for all $s_1, \ldots, s_k \in S$, $ | \cap_{i=1}^k s_i^{-1}F_n | \big/ |F_n| \to 1$ as $n \to \infty$. Thus a consequence of Lemma \ref{lem:multsyndimplieslowerdensity} is that any multiplicatively syndetic set has positive lower asymptotic density along sequences $(F_n)_{n\in\N}$ which are \folner{} for both $\semisplus$ and $\semistimes$. While such \folner{} sequences do not exist in $\N$,\footnote{Were $(F_N)_{N \in \N} \subseteq \finitesubsets{\N}$ a \folner{} sequence for both $\seminplus$ and $\semintimes$, the density function $\overline{d}_{(F_N)}(A) \defeq \limsup_{N \to \infty} |F_N|^{-1} |A \cap F_N|$ would, by multiplicative invariance, satisfy $\overline{d}_{(F_N)}(2 \N) = 1$.  This would imply that the set $2\N$ has additive upper Banach density equal to 1, a contradiction to, for example, Corollary \ref{cor:densitycharforcancelsemigroups}.} they do exist in countable fields \cite[Proposition 2.4 and Remark 6.2]{bergelson_moreira_2017}.

It follows from Theorem \ref{thm:charofdensityifsfcplus} that a subset of a countable, commutative semigroup $\semisplus$ is syndetic if and only if for every \folner{} sequence $(F_n)_{n\in\N}$, $\underline{d}_{(F_n)}(A) > 0$. The conclusion of Lemma \ref{lem:multsyndimplieslowerdensity} can thus be interpreted as being a weaker form of additive syndeticity for the set $A$: while $A$ may not be additively syndetic, it does have positive asymptotic lower density along certain \folner{} sequences.

The following is an immediate corollary of Lemma \ref{lem:multsyndimplieslowerdensity} for subsets of the positive integers. The contrapositive statement is related to \cite[Theorem 6.3]{bmmpactions}, where it is shown that if $\overline{d}_{\seminplus}(A) = 1$, then $\overline{d}_{\seminplus}(A/n) = 1$.

\begin{corollary}\label{cor:syndeticgiveslowerdensity}
Let $A \subseteq \N$. If $A$ is syndetic in $\semintimes$, then
\[ \liminf_{n \to \infty} \frac{|A \cap \{1, \ldots, n\}|}{n} > 0.\]
\end{corollary}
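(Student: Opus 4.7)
The plan is to deduce the corollary directly from Lemma \ref{lem:multsyndimplieslowerdensity} applied to the semiring $(\N,+,\cdot)$ with the Følner sequence $F_n = \{1, 2, \ldots, n\}$ for $(\N,+)$.

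First I would verify the hypotheses of the lemma. The semiring $\seminring$ is countable, and $\semintimes$ is left cancellative. The sequence $(F_n)_{n \in \N}$ with $F_n = \{1, \ldots, n\}$ is a standard Følner sequence for $\seminplus$. It remains to check the multiplicative inverse condition: given $s_1, \ldots, s_k \in \N$, for each $i$ one has
\[
s_i^{-1} F_n = \{ t \in \N \mid s_i t \in F_n \} = \{1, 2, \ldots, \lfloor n/s_i \rfloor\},
\]
so that $\bigcap_{i=1}^k s_i^{-1} F_n = \{1, \ldots, \lfloor n/M \rfloor\}$ with $M = \max_i s_i$. Therefore
\[
\liminf_{n \to \infty} \frac{\bigl|\bigcap_{i=1}^k s_i^{-1} F_n\bigr|}{|F_n|} = \liminf_{n \to \infty} \frac{\lfloor n/M \rfloor}{n} = \frac{1}{M} > 0.
\]

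With the hypotheses verified, Lemma \ref{lem:multsyndimplieslowerdensity} applies: if $A \subseteq \N$ is syndetic in $\semintimes$, then
\[
\underline{d}_{(F_n)}(A) = \liminf_{n \to \infty} \frac{|A \cap \{1, \ldots, n\}|}{n} > 0,
\]
which is exactly the desired conclusion. There is no substantial obstacle here; the only mildly delicate point is unwinding the definition of $s_i^{-1} F_n$ under the multiplicative operation and confirming that the floor function contributes nothing asymptotically, which I have done above.
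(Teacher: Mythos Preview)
Your proof is correct and follows exactly the approach the paper intends: the corollary is presented there as an immediate consequence of Lemma~\ref{lem:multsyndimplieslowerdensity} applied to $\seminring$ with $F_n=\{1,\ldots,n\}$, and your verification of the hypothesis $\liminf_{n\to\infty}|\bigcap_i s_i^{-1}F_n|/|F_n|=1/M>0$ is precisely the check needed.
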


We conclude this section by describing a new class of multiplicatively syndetic subsets of $\N$. These sets provide applications for Theorem \ref{thm:maintheorema} and Corollary \ref{cor:syndeticgiveslowerdensity} and will appear again in applications in Section \ref{sec:posexampleb}. We begin by demonstrating uniformity in certain exponential sums.

\begin{lemma}\label{lem:uniformconvergenceofsums}
Let $c \in \R \setminus \Q$ and $d \in \N$. For all $\eps > 0$, there exists an $N_0 \in \N$ such that for all $N > N_0$, all $M \in \Z$, and all $f \in \R[x]$ of degree $d$ with leading coefficient $c$,
\[\left| \frac {1}{N} \sum_{n=M+1}^{M+N} e^{2 \pi i f(n)} \right| < \eps.\]
\end{lemma}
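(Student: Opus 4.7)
The plan is to prove this by induction on the degree $d$, using van der Corput's inequality (Weyl differencing) to reduce degree.

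For the base case $d=1$, write $f(x) = cx + b$. The sum is then geometric:
\[\sum_{n=M+1}^{M+N} e^{2\pi i f(n)} = e^{2\pi i(b+c(M+1))}\frac{e^{2\pi i cN}-1}{e^{2\pi i c}-1},\]
whose modulus is bounded by $C_c \defeq 2/|1-e^{2\pi i c}|$, a finite constant depending only on $c$ (finite because $c \notin \Z$). Dividing by $N$ and taking $N_0 > C_c/\eps$ gives the bound, uniformly in $M$ and in the lower-order coefficient $b$.

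For the inductive step, assume the statement holds in degree $d-1$, and apply van der Corput's inequality to $z_n = e^{2\pi i f(n)}$: for any $1 \leq H \leq N$,
\[\left|\frac{1}{N}\sum_{n=M+1}^{M+N} z_n\right|^2 \leq \frac{N+H-1}{HN} + \frac{2(N+H-1)}{H^2N^2}\sum_{h=1}^{H-1}(H-h)\left|\sum_{n=M+1}^{M+N-h} e^{2\pi i g_h(n)}\right|,\]
where $g_h(x) \defeq f(x+h) - f(x)$. A direct expansion shows that $g_h$ has degree exactly $d-1$ with leading coefficient $dhc$, which is irrational for every $h \geq 1$ since $c$ is. Given $\eps > 0$, first pick $H$ with $2/H < \eps^2/2$. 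For each $h \in \{1,\ldots,H-1\}$, the inductive hypothesis applied to degree $d-1$ with leading coefficient $dhc$ supplies an $N_h \in \N$ so that for every $N' > N_h$, every $M' \in \Z$, and every degree-$(d{-}1)$ polynomial with leading coefficient $dhc$ (and arbitrary lower-order coefficients), the averaged sum is less than $\eps^2/8$; set $N_1 = \max_{1 \leq h < H} N_h$. Choosing $N_0 \gg N_1, H, 1/\eps$ appropriately and unpacking the van der Corput bound for $N > N_0$ yields the desired estimate $|N^{-1}\sum_{n=M+1}^{M+N} e^{2\pi i f(n)}|^2 < \eps^2$.

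The main subtlety is that the inductive hypothesis must be uniform in \emph{all} coefficients below the leading one, because the lower coefficients of $g_h$ depend both on $h$ and on the unknown lower coefficients of $f$. Fortunately this uniformity is exactly what the statement being proved asserts, so the induction closes. A secondary technicality is that the inner sum in van der Corput's inequality ranges over $N-h$ terms rather than $N$, but for $h < H \ll N$ this discrepancy is absorbed into the $\eps$-budget by choosing $N_0$ large relative to $H$. Finally, the fact that $h$ ranges over the finite set $\{1,\ldots,H-1\}$ is essential, since it lets us take $N_1$ to be the maximum of finitely many $N_h$; each $N_h$ may depend on $dhc$ in an uncontrolled way, but we only need finitely many of them.
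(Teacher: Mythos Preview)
Your argument is correct, but it takes a genuinely different route from the paper's. The paper does not do induction on $d$; instead it invokes Weyl's inequality (Davenport, Lemma~3.1) as a black box. Concretely, it uses Dirichlet's theorem to produce denominators $N_i \to \infty$ with $|c - a_i/N_i| \le N_i^{-2}$, then Weyl's inequality gives
\[
\left|\sum_{n=M+1}^{M+N_i} e^{2\pi i f(n)}\right| \le C(d)\, N_i^{1-1/2^d}
\]
uniformly in $M$ and in the lower-order coefficients of $f$ (since only the leading coefficient enters the hypothesis of Weyl's inequality). For a general $N$, the paper chops $[M+1,M+N]$ into blocks of length $N_i$ plus a remainder of size at most $N_i$, and applies the triangle inequality.

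Your approach is more elementary in that it requires no external estimate: you re-derive the needed cancellation by running the Weyl-differencing induction yourself. The paper's approach is shorter to write down and yields an explicit power saving $N^{-1/2^d}$, whereas yours gives only the qualitative $o(1)$, which is all the lemma claims. Both proofs handle the uniformity in $M$ and in lower-order coefficients for the same underlying reason---the leading coefficient survives translation and differencing unchanged up to an explicit integer multiple---but they package this observation differently.
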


\begin{proof}
Since $c$ is irrational, Dirichlet's approximation theorem gives that there exist sequences $(N_i)_{i=1}^\infty \subseteq \N$ and $(a_i)_{i=1}^\infty \subseteq \Z$ with $N_i \to \infty$, $(N_i,a_i) =1$, and $| c - a_i / N_i | \leq N_i^{-2}$. We now apply Weyl's Inequality \cite[Lemma 3.1]{davenport} with ``$\eps$'' as $2^{-d}$: there exists a constant $C = C(d)$ such that for all $M \in \Z$, for all $i \in \N$, and for all $f \in \R[x]$ with degree $d$ with leading coefficient $c$,
\[\left| \sum_{n=M+1}^{M+N_i} e^{2 \pi i f(n)} \right| \leq C N_i^{1-1/2^d}.\]
This estimate is uniform in $M$ because the leading coefficient of a polynomial remains invariant under translating the argument.

Let $\eps > 0$.  If $i\in \N$ is so large that $C N_i^{-1/2^d} < \eps / 2$, then for all $N > 2 N_i \eps^{-1}$, all $M \in \Z$, and all $f \in \R[x]$ with degree $d$ with leading coefficient $c$,
\begin{align*}\left| \sum_{n=M+1}^{M+N} e^{2 \pi i f(n)} \right| &\leq \sum_{h=0}^{\lfloor N / N_i \rfloor -1} \left| \sum_{n=M+h N_i + 1}^{M+(h+1)N_i} e^{2 \pi i f(n)} \right| + \left| \sum_{n=M+\lfloor N / N_i \rfloor N_i + 1}^{M+N} e^{2 \pi i f(n)} \right|\\
&\leq \left\lfloor \frac{N}{N_i} \right\rfloor C N_i^{1-1/2^d} + N_i < N \eps.
\end{align*}
This shows the uniformity in the desired estimate.
\end{proof}

In the results that follow, denote by $\{x\}$ the fractional part of $x \in \R$.

\begin{lemma}\label{lem:uniformepsilondensnessinbeta}
Let $p \in \R[x]$ be a polynomial of degree $d \geq 1$ with leading coefficient $c$, and suppose $\irratnum \in \R$ is such that $c \irratnum^d \in \R \setminus \Q$. The sequence $\big(p(n\irratnum + \be) \big)_{n \in \N}$ is \emph{$\eps$-dense modulo 1 uniformly in $\beta$}: for all $\eps > 0$, there exists an $N \in \N$ such that for all $\be \in \R$, the set $\big\{p(n\irratnum + \be) \big\}_{n=1}^N$ is $\eps$-dense modulo $1$.
\end{lemma}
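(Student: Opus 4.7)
The plan is to reduce the claim to uniform Weyl-type exponential sum bounds via a trigonometric polynomial (Fejér-kernel) approximation. Set $q_\be(n) \defeq p(n\irratnum + \be)$. Expanding $p$ and collecting terms of degree $d$ in $n$, one sees that $q_\be$ is a polynomial in $n$ of degree $d$ whose leading coefficient equals $c\irratnum^d$; crucially, this leading coefficient is independent of $\be$ and is irrational by hypothesis. Consequently, for any $h \in \Z \setminus \{0\}$, the polynomial $h q_\be$ has degree $d$ and irrational leading coefficient $h c \irratnum^d$, again independent of $\be$. Lemma \ref{lem:uniformconvergenceofsums} then yields: for every $\eps' > 0$ and every $h \in \Z \setminus \{0\}$, there exists $N_0 = N_0(h, \eps')$ such that for all $N > N_0$ and all $\be \in \R$,
\[ \left| \frac{1}{N} \sum_{n=1}^N e^{2\pi i h q_\be(n)} \right| < \eps'. \]

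Given $\eps > 0$, I next fix a non-negative trigonometric polynomial $\psi(x) = \sum_{|h| \leq H} a_h e^{2\pi i h x}$ on $\R / \Z$ whose support (as a function on $\R/\Z$) lies in an interval of length less than $\eps$ around $0$ and which satisfies $a_0 = \int_0^1 \psi(x) \, dx > 0$; such a $\psi$ is furnished by a standard Fejér-kernel squaring construction. For each $y \in \R / \Z$, the translate $\psi_y(x) \defeq \psi(x - y)$ has Fourier coefficients $a_h e^{-2\pi i h y}$, whose moduli equal $|a_h|$ independently of $y$; the support of $\psi_y$ is an $\eps$-neighborhood of $y$ in $\R/\Z$.

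Combining these two ingredients, for any $y \in \R/\Z$ and any $\be \in \R$,
\[ \sum_{n=1}^N \psi_y\big(q_\be(n) \bmod 1\big) = a_0 N + \sum_{0 < |h| \leq H} a_h e^{-2\pi i h y} \sum_{n=1}^N e^{2\pi i h q_\be(n)}. \]
Choose $\eps' > 0$ so that $\eps' \sum_{0 < |h| \leq H} |a_h| < a_0$, and take $N$ larger than $\max_{0 < |h| \leq H} N_0(h, \eps')$ (a finite maximum). The right-hand side is then strictly positive for every $y$ and every $\be$, which forces the existence of some $n \in \{1, \ldots, N\}$ with $q_\be(n) \bmod 1$ lying in the support of $\psi_y$, hence within $\eps$ of $y$. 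Since $y$ and $\be$ were arbitrary, this is the uniform $\eps$-density asserted.

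The main thing to get right is the uniformity in $\be$. This uniformity comes essentially for free once one observes that the leading coefficient of $h p(n\irratnum + \be)$ in $n$ equals $h c \irratnum^d$ and does not depend on $\be$, so Lemma \ref{lem:uniformconvergenceofsums} applies simultaneously to the entire family $\{h p(\cdot\, \irratnum + \be)\}_{\be \in \R}$ at each fixed nonzero $h$. The remainder of the argument is a classical Weyl-criterion-style deduction of discrepancy (and in particular $\eps$-denseness) from exponential sum bounds, implemented via the bump $\psi$.
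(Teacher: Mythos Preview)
Your overall strategy matches the paper's: both reduce to uniform exponential-sum bounds via Lemma \ref{lem:uniformconvergenceofsums}, using the key observation that the leading coefficient of $n \mapsto h\,p(n\irratnum + \be)$ equals $h c \irratnum^d$ and is independent of $\be$. The paper then converts these bounds into a discrepancy estimate by quoting LeVeque's inequality, whereas you attempt a direct argument with a trigonometric bump.

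There is, however, a genuine gap in your bump construction. A nonzero trigonometric polynomial on $\R/\Z$ is real-analytic and therefore cannot vanish on any open arc; consequently no nonzero $\psi$ of the form $\sum_{|h| \leq H} a_h e^{2\pi i h x}$ has support contained in an interval of length $< \eps$. Without that support condition, the implication ``$\sum_{n=1}^N \psi_y(q_\be(n)) > 0 \Rightarrow$ some $q_\be(n)$ lies within $\eps$ of $y$'' fails. The repair is standard: replace your $\psi$ by a Beurling--Selberg minorant of $\mathbf{1}_{[-\eps/2,\eps/2]}$ (a trigonometric polynomial with $\psi \leq \mathbf{1}_{[-\eps/2,\eps/2]}$ pointwise and $\int \psi > 0$), after which your positivity argument goes through verbatim; or simply invoke a discrepancy inequality (Erd\H{o}s--Tur\'an or LeVeque) as the paper does.
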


\begin{proof}
Let $\eps > 0$. We will prove that there exists an $N \in \N$ such that for all $\beta \in \R$,
\[D_N \defeq \sup_{0 \leq a < b \leq 1} \left| \frac{|\{ 1 \leq n \leq N \ | \ \{p(n\irratnum + \be)\} \in [a,b) \}|}{N} - (b-a)\right| < \eps.\]
By an inequality of LeVeque \cite[Ch. 2, Theorem 2.4]{kuipersbook},
\[D_N \leq \left( \frac{6}{\pi^2} \sum_{h=1}^\infty \frac 1{h^2} \left| \frac 1N \sum_{n=1}^N e^{2\pi i h p(\irratnum n + \be)} \right|^2 \right)^{1/3}.\]
By truncating the sum on $h$ at some sufficiently large $H \in \N$, it suffices to furnish $N \in \N$ such that for all $1 \leq h \leq H$ and all $\be \in \R$,
\begin{align}\label{eqn:targetexpsumest}\left| \frac 1N \sum_{n=1}^N e^{2 \pi i h p(\irratnum n + \be)} \right| < \sqrt{\frac{\eps^3}2}.\end{align}
Let $h \in \{1, \ldots, H\}$. Since the leading coefficient of the polynomial $h p(\irratnum n + \be)$ is irrational, Lemma \ref{lem:uniformconvergenceofsums} gives the existence of $N_h$ satisfying (\ref{eqn:targetexpsumest}) uniformly in $\beta$. Take $N = \max_{1 \leq h \leq H} N_h$.
\end{proof}

\begin{theorem}\label{thm:logpolyissyndetic}
Let $p \in \R[x]$ be a polynomial of degree $d \geq 1$ with leading coefficient $c$. Let $\varphi: \semintimes \to (\R,+)$ be a semigroup homomorphism, and suppose that there exists an $a \in \N$ for which $c \varphi(a)^d \in \R \setminus \Q$. For any interval $I \subseteq [0,1)$, the set
\begin{align}\label{eqn:multsyndsetone}A_I = \big\{n \in \N \ \big| \ \{p(\varphi(n)) \} \in I \big\}\end{align}
is multiplicatively syndetic in $\semintimes$.
\end{theorem}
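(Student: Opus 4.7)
The plan is to use Lemma \ref{lem:uniformepsilondensnessinbeta} applied with $\alpha = \varphi(a)$. Since $\varphi$ is a homomorphism from $\semintimes$ to $(\R,+)$, for any $n, m \in \N$ we have $\varphi(nm) = \varphi(n) + \varphi(m)$; in particular $\varphi(a^k) = k \varphi(a)$ for every $k \in \N$. The hypothesis $c \varphi(a)^d \in \R \setminus \Q$ is precisely the irrationality condition needed to apply Lemma \ref{lem:uniformepsilondensnessinbeta} with $\alpha = \varphi(a)$.

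To show $A_I$ is multiplicatively syndetic in $\semintimes$, I need to exhibit a finite set $F \subseteq \N$ for which $\bigcup_{f \in F} (A_I / f) = \N$, that is, such that for every $n \in \N$ there exists $f \in F$ with $nf \in A_I$. I will take $F$ to be of the form $\{a^k : 1 \leq k \leq N\}$ for a suitably chosen $N$, so that membership of $n a^k$ in $A_I$ reduces to the condition $\{p(\varphi(n) + k \varphi(a))\} \in I$ via the homomorphism property.

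Choose $\varepsilon > 0$ small enough that every $\varepsilon$-dense subset of $[0,1)$ (mod $1$) meets $I$; this is possible because $I$ is an interval of positive length (the case $|I| = 0$ is vacuous). Apply Lemma \ref{lem:uniformepsilondensnessinbeta} to $p$ and $\alpha = \varphi(a)$ to obtain $N \in \N$ such that for every $\beta \in \R$, the set $\{\, p(k \varphi(a) + \beta)\,\}_{k=1}^N$ is $\varepsilon$-dense modulo $1$. Setting $\beta = \varphi(n)$ for an arbitrary $n \in \N$, there exists $k \in \{1, \ldots, N\}$ with $\{p(k\varphi(a) + \varphi(n))\} \in I$; by the homomorphism property this equals $\{p(\varphi(n a^k))\}$, so $n a^k \in A_I$.

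Therefore $F = \{a^k : 1 \leq k \leq N\}$ witnesses multiplicative syndeticity of $A_I$. The only real work is invoking the uniform density lemma correctly; there is no serious obstacle, because the hypothesis on $c \varphi(a)^d$ was tailored exactly to feed into Lemma \ref{lem:uniformepsilondensnessinbeta}, and the homomorphism property of $\varphi$ translates the additive density statement into the desired multiplicative one.
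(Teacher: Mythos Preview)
Your proof is correct and follows essentially the same route as the paper's: both apply Lemma \ref{lem:uniformepsilondensnessinbeta} with $\irratnum = \varphi(a)$, use the homomorphism property to write $\varphi(na^k) = k\varphi(a) + \varphi(n)$, and take the finite set $F = \{a, a^2, \ldots, a^N\}$ to witness multiplicative syndeticity. The paper simply makes the explicit choice $\eps = |I|/2$ where you choose $\eps$ abstractly, but the argument is otherwise identical.
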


\begin{proof}
Put $\eps = |I| \big/ 2$ and take $N$ from Lemma \ref{lem:uniformepsilondensnessinbeta} so that the sequence $\big( p(n \varphi(a) + \be) \big)_{n=1}^N$ is $\eps$-dense modulo 1 uniformly in $\beta$. Let $m \in \N$, and put $\beta = \varphi(m)$. There exists an $n \in \{1, \ldots, N\}$ for which $\{p\big(\varphi(m a^n) \big)\} = \{p(n \varphi(a) + \be)\} \in I$. Therefore, $m a^n \in A_I$. This shows that the set $F = \{a^1, \ldots, a^N\}$ has the property that for all $m \in \N$, $(Fm) \cap A_I \neq \emptyset$, proving that $A_I$ is multiplicatively syndetic.
\end{proof}

\begin{corollary}\label{cor:logismultsyndetic}
Let $p \in \R[x]$ be a non-constant polynomial, and let $I \subseteq [0,1)$ be an interval. The set
\begin{align}\label{eqn:multsyndsettwo}A_I = \big\{n \in \N \ \big| \ \{p(\log n) \} \in I \big\}\end{align}
is multiplicatively syndetic in $\semintimes$. If the leading coefficient of $p$ is rational, then for every sub-semigroup $\semirtimes$ of $\semintimes$, the set $A_I \cap R$ is multiplicatively syndetic in $\semirtimes$.
\end{corollary}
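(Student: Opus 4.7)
The plan is to deduce Corollary~\ref{cor:logismultsyndetic} directly from Theorem~\ref{thm:logpolyissyndetic} applied to the semigroup homomorphism $\varphi = \log \colon \semintimes \to (\R,+)$. Writing the leading coefficient of $p$ as $c$ and its degree as $d$, it suffices in each case to exhibit an $a$ in the relevant domain for which $c(\log a)^d$ is irrational.

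For the first assertion, I would verify that at least one of $a = 2$ or $a = 3$ works. If neither does, then both $c(\log 2)^d$ and $c(\log 3)^d$ are nonzero rationals, and dividing yields $(\log 2 / \log 3)^d \in \Q$; so $\beta \defeq \log 2 / \log 3$ is a $d$-th root of a rational, hence algebraic. But $\beta$ cannot be rational (else $2^q = 3^p$ for some positive integers, impossible), nor algebraic and irrational (else the Gelfond--Schneider theorem, applied to the algebraic number $\alpha = 3 \neq 0,1$, would force $3^\beta = 2$ to be transcendental, contradicting $2 \in \Q$). Applying Theorem~\ref{thm:logpolyissyndetic} with the surviving choice of $a$ yields multiplicative syndeticity of $A_I$ in $\semintimes$.

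For the second assertion, assume $c \in \Q \setminus \{0\}$ and let $\semirtimes$ be a subsemigroup of $\semintimes$; the only non-vacuous case is when $R$ contains some $a \geq 2$. By the Hermite--Lindemann theorem, $\log a$ is transcendental for every integer $a \geq 2$, so $c(\log a)^d$ is transcendental and in particular irrational. The proof of Theorem~\ref{thm:logpolyissyndetic} now adapts verbatim, applied to $\varphi = \log\vert_R \colon \semirtimes \to (\R,+)$: Lemma~\ref{lem:uniformepsilondensnessinbeta} supplies an $N$ such that $F = \{a, a^2, \ldots, a^N\}$ lies in $R$ and witnesses $(Fm) \cap (A_I \cap R) \neq \emptyset$ for every $m \in R$, establishing multiplicative syndeticity of $A_I \cap R$ in $\semirtimes$.

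The main technical obstacle is the irrationality witness in Part~1, which leans on the Gelfond--Schneider theorem; everything else is a routine invocation of the machinery already developed in Theorem~\ref{thm:logpolyissyndetic}.
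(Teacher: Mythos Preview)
Your proposal is correct and follows the same approach as the paper: apply Theorem~\ref{thm:logpolyissyndetic} with $\varphi = \log$ after using Gelfond--Schneider to secure an $a \in \{2,3\}$ with $c(\log a)^d$ irrational, and for the second part observe (via Hermite--Lindemann, which the paper leaves implicit) that $c(\log r)^d$ is irrational for any $r \geq 2$ in $R$ and rerun the proof of Theorem~\ref{thm:logpolyissyndetic} inside $R$. Your write-up simply makes explicit the transcendence arguments that the paper compresses into a single citation.
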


\begin{proof}
Denote by $c$ and $d$ the leading coefficient and degree of $p$, respectively. The Gelfond-Schneider theorem \cite[Section 3.2]{gelfondbook} gives that one of $c (\log 2)^d$ and $c (\log 3)^d$ is irrational (actually, transcendental). Now, we need only to apply Theorem \ref{thm:logpolyissyndetic} with $\varphi$ as $\log$ and $a \in \{2,3\}$ chosen so that $c (\log a)^d$ is irrational. The second statement follows by taking any $r \in R$, noting that $c\log(r)^d$ is irrational, and applying the same proof as in Theorem \ref{thm:logpolyissyndetic}.
\end{proof}

Note that by the monotonicity of the logarithm, the set in (\ref{eqn:multsyndsettwo}) is additively thick, hence additively central, and it is not hard to verify the conclusion of Corollary \ref{cor:syndeticgiveslowerdensity} by hand for this set, either. The relationship between additive thickness and the final conclusion in Corollary \ref{cor:logismultsyndetic} is discussed in Section \ref{sec:negexamplea}. These conclusions are not so clear for other homomorphisms $\varphi$ in Theorem \ref{thm:logpolyissyndetic}; in the following corollary, we consider $\nu_p$ and $\Omega$, the $p$-adic valuation and prime divisor counting function (with multiplicity) defined in Examples \ref{examples:homomorphisms}.

\begin{corollary}\label{cor:omegaismultsyndetic}
Let $p \in \R[x]$ be a non-constant polynomial with irrational leading coefficient, $I \subseteq [0,1)$ be an interval, and $q$ be prime. The sets
\begin{align}\label{eqn:multsyndsetthree}\big\{n \in \N \ \big| \ \{p(\nu_q(n)) \} \in I \big\} \text{ and } \big\{n \in \N \ \big| \ \{p(\Omega(n)) \} \in I \big\}\end{align}
are multiplicatively syndetic in $\semintimes$.
\end{corollary}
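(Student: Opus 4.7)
\medskip

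The plan is to derive both statements as direct applications of Theorem \ref{thm:logpolyissyndetic}, using $\varphi = \nu_q$ in the first case and $\varphi = \Omega$ in the second. Both maps are, by the unique factorization of integers, surjective semigroup homomorphisms from $\semintimes$ to $(\N \cup \{0\}, +)$, which we may regard as taking values in $(\R,+)$. Thus the structural hypothesis of Theorem \ref{thm:logpolyissyndetic} is satisfied automatically.

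The only item left to verify is the existence of an $a \in \N$ such that $c \varphi(a)^d \in \R \setminus \Q$, where $c$ and $d$ denote the leading coefficient and degree of $p$. For $\varphi = \nu_q$, I would simply take $a = q$, whence $\nu_q(a) = 1$ and $c \nu_q(a)^d = c$, which is irrational by hypothesis. For $\varphi = \Omega$, I would take $a = 2$ (or any prime), whence $\Omega(a) = 1$ and again $c \Omega(a)^d = c \notin \Q$. Applying Theorem \ref{thm:logpolyissyndetic} with these choices yields multiplicative syndeticity of both sets in (\ref{eqn:multsyndsetthree}) in $\semintimes$.

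There is essentially no main obstacle here: the corollary is an immediate specialization of Theorem \ref{thm:logpolyissyndetic} once one observes that $\nu_q$ and $\Omega$ are semigroup homomorphisms that hit the value $1$. The only mild subtlety is to note that, because the range of these homomorphisms sits inside $\N \cup \{0\}$, any non-zero value $\varphi(a) = m \in \N$ makes $c \varphi(a)^d = c m^d$ irrational iff $c$ is, so essentially any $a$ with $\varphi(a) \neq 0$ will do; this is why no Gelfond--Schneider-style transcendence input is needed here, in contrast to Corollary \ref{cor:logismultsyndetic}.
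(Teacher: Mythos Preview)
Your proposal is correct and matches the paper's intended derivation: the corollary is stated without proof in the paper, as an immediate application of Theorem~\ref{thm:logpolyissyndetic} with $\varphi = \nu_q$ and $\varphi = \Omega$, and your choices of $a$ (namely $a=q$ and $a$ any prime) giving $\varphi(a)=1$ are exactly what is needed.
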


It follows from Theorem \ref{thm:multsyndgivesaddthickinfields} and Corollary \ref{cor:syndeticgiveslowerdensity} that the sets in (\ref{eqn:multsyndsetthree}) are additively central, hence piecewise syndetic and IP, and have positive asymptotic lower density in $\seminplus$. We will use the fact that these sets are additively $\ipnaught$ in an application to Diophantine approximation in Section \ref{sec:ipzerostarindynamics}.

We end this section by making note of another approach to the results in Corollary \ref{cor:omegaismultsyndetic} using the fact that the set of return times (a set of the form in (\ref{eqn:defofreturntimes})) of any point to a neighborhood of itself in a nilsystem\footnote{A nilsystem is a topological dynamical system of the form $(X,T)$ where $X$ is a compact homogeneous space of a nilpotent Lie group $G$ and $T$ is a translation of $X$ by an element of $G$.} is $\ipnaught^*$; see \cite[Theorem 0.2]{BLiprstarcharacterization}.\footnote{To avoid invoking the full strength of \cite[Theorem 0.2]{BLiprstarcharacterization} to prove Theorem \ref{thm:alternativemethod}, one may employ a strategy similar to the one used in the proof of \cite[Theorem 7.7]{BergelsonSurveytwoten}.} The following theorem generalizes \cite[Theorem 3.15]{bergelsonminimalidempotents}.

\begin{theorem}\label{thm:alternativemethod} 
Let $p_1, \ldots, p_k \in \R[x]$ be non-constant polynomials that are linearly independent in the following sense: for all $h_1, \ldots, h_k \in \Z$, not all zero, at least one of the non-constant coefficients of $\sum_{i=1}^k h_i p_i$ is irrational. Let $I_1, \ldots, I_k \subseteq [0,1)$ be sets that are open when $[0,1)$ is identified with the 1-torus. The set
\[A \defeq \big\{ n \in \Z \ \big| \ \text{for all } i \in \{1,\ldots,k\}, \ \{p_i(n)\} \in I_i \big\}\]
is, up to a translate, $\ip_0^*$ in $(\Z,+)$.  More precisely, the set $A$ is non-empty, and for $m \in \Z$, the set $A-m$ is $\ip_0^*$ if and only if $m \in A$.
\end{theorem}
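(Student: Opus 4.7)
Let $U = I_1 \times \cdots \times I_k$, viewed as an open subset of $\mathbb{T}^k = \R^k/\Z^k$. The linear independence hypothesis on $p_1,\ldots,p_k$ is exactly Weyl's equidistribution criterion for the polynomial sequence $n \mapsto (p_1(n),\ldots,p_k(n)) \bmod 1$, so this orbit is equidistributed in $\mathbb{T}^k$; since $U$ is open and non-empty, $A$ has positive density and in particular is non-empty. For the easy direction of the iff, suppose $A - m$ is $\ip_0^*$. The constant sequence $(s_i)_{i=1}^r = (0,\ldots,0) \subseteq \Z$ has $\text{FS}(s_i)_{i=1}^r = \{0\}$, so $\{0\}$ is $\ip_r$ for every $r$ and hence $\ip_0$ in $(\Z,+)$; therefore $(A-m) \cap \{0\} \neq \emptyset$, which gives $m \in A$.

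For the substantive direction, fix $m \in A$, so that $(\{p_1(m)\},\ldots,\{p_k(m)\}) \in U$. The plan is to realize the shifted polynomial orbit $n \mapsto (p_1(n+m),\ldots,p_k(n+m)) \bmod 1$ as an orbit in a nilsystem and then invoke the $\ip_0^*$ return-times principle. Concretely, there exists a nilsystem $(X,T)$, a point $x_m \in X$, and a continuous factor map $\pi : X \to \mathbb{T}^k$ satisfying
\[
\pi(T^n x_m) = \big(p_1(n+m),\ldots,p_k(n+m)\big) \bmod 1 \quad \text{for all } n \in \Z.
\]
This is the classical realization of a polynomial sequence as an orbit on a nilmanifold (each coordinate $p_i$ arises via an iterated unipotent translation on a step-$\deg p_i$ nilmanifold, and the coordinates are assembled into a single product nilsystem; the shift by $m$ amounts to taking $x_m = T^m x_0$ for a suitable base point $x_0$). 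Setting $V \defeq \pi^{-1}(U)$, the set $V$ is open and contains $x_m$, since $\pi(x_m) \in U$ by our choice of $m$. Applying \cite[Theorem 0.2]{BLiprstarcharacterization}, the return-time set $\{n \in \Z : T^n x_m \in V\}$ is $\ip_0^*$ in $(\Z,+)$. Unwinding definitions, this set coincides with $\{n \in \Z : n+m \in A\} = A - m$, so $A - m$ is $\ip_0^*$, as desired.

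The main obstacle is the nilsystem realization step. The construction is standard (cf.\ the framework underlying the polynomial van der Waerden theorem \cite{bergelsonleibmanams}), but care is required in combining polynomials of varying degrees into a single nilsystem and in tracking how translation of the argument corresponds to translation of the base point. As the paper's footnote suggests, one may alternatively bypass the full strength of \cite[Theorem 0.2]{BLiprstarcharacterization} by imitating the proof of \cite[Theorem 7.7]{BergelsonSurveytwoten}, substituting direct iteration of Weyl's theorem for the abstract nilsystem characterization. The linear independence hypothesis plays no role in the hard direction itself beyond ensuring $A$ is non-empty in the first place; it could in principle be dropped at the cost of replacing $\mathbb{T}^k$ by the closure of the polynomial orbit.
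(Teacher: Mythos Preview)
Your proposal is correct and follows essentially the same route as the paper: realize the polynomial orbit in a nilsystem and invoke \cite[Theorem~0.2]{BLiprstarcharacterization}. Two differences are worth noting. First, the paper is explicit about the nilsystem: it builds each coordinate as the Furstenberg skew product on $\T^{d_i}$ (so that the last coordinate of $T_i^n x_{(i)}$ is $p_i(n)$), takes the product, and then passes to the orbit closure $X'$ of the base point---using distality to get minimality and the fact that sub-nilsystems are nilsystems---before applying \cite{BLiprstarcharacterization}. You assert the existence of the nilsystem and omit the orbit-closure step; this is harmless but worth making explicit, since the cited theorem is stated for minimal systems. Second, your ``only if'' direction is simpler than the paper's: you observe that $\{0\}$ is $\ip_0$ in $(\Z,+)$ (take all generators equal to $0$), so any $\ip_0^*$ set contains $0$. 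The paper instead argues dynamically (in a footnote): if $T^m x \notin U$, pick an open $V \ni T^m x$ disjoint from $U$; then $R(T^m x,V)$ is $\ip_0^*$ but disjoint from $R(T^m x,U) = A-m$, so $A-m$ is not even $\ip_0$. Your observation is cleaner here and works precisely because the ambient semigroup is $\Z$ rather than $\N$.
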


\begin{proof}
Let $d_i$ be the degree of $p_i$ and let $\al_i \in \R \setminus \{0\}$ be its leading coefficient. (We do not assume that the $\al_i$'s are irrational.) Let $(X_i = \T^{d_i},T_i)$ be the skew-product
\[T_i(x_1,\ldots,x_d) = (x_1+d_i! \, \al_i, x_2 + x_1,\ldots,x_d+x_{d-1})\]
described in \cite[Chapter 1, Page 23]{furstenberg-book}.  There exists a point $x_{(i)} \in \T^{d_i}$ with the property that for all $n \in \Z$, the last coordinate of $T_i^n x_{(i)}$ is $\{p_i(n)\}$.

Let
\[(X,T) = (X_1 \times \cdots \times X_k,T_1 \times \cdots \times T_k),\]
and let $x = (x_{(1)},\ldots,x_{(k)}) \in X$. Note that $x$ is an element of the $(\sum_{i=1}^k d_i)$-dimensional torus, $X$. Let $\pi: X \to \T^k$ be the projection onto the last coordinates of each of the $X_i = \T^{d_i}$ factors of $X$, so that for all $n \in \Z$,
\[\pi (T^n x) = \big(p_1(n),\ldots,p_k(n) \big).\]
By our assumptions and Weyl's equidistribution theorem, the sequence $\big(\pi (T^n x)\big)_{n \in \Z }$ is dense in $\T^k$.

Let $X'$ be the orbit closure of $x$ under $T$. Since $(X,T)$ is distal, $(X',T)$ is minimal \cite[Theorem 3.2]{furstenbergdistalstructuretheory}, and since $(X,T)$ is a nilsystem, so too is $(X',T)$ \cite[Theorem 2.21]{leibman05}. Minimality means that for all $x \in X'$ and all non-empty, open sets $U \subseteq X'$, the set
\begin{align}\label{eqn:defofreturntimes}R(x,U) \defeq \{n \in \Z \ | \ T^n x \in U\}\end{align}
is non-empty. By \cite[Theorem 0.2]{BLiprstarcharacterization}, the fact that $(X',T)$ is a nilsystem implies that the set $R(x,U)$ is $\ip_0^*$ if and only if $x \in U$.\footnote{More precisely, \cite[Theorem 0.2]{BLiprstarcharacterization} gives that if $x \in U$, then the set $R(x,U)$ is $\ip_0^*$. Suppose $x \not\in U$, and let $V$ be an open neighborhood of $x$ disjoint from $U$. The same theorem gives that the set $R(x,V)$ is $\ip_0^*$. Since $U \cap V = \emptyset$, $R(x,U) \cap R(x,V) = \emptyset$, meaning that $R(x,U)$ is not an $\ip_0$ set. In particular, it is not an $\ip_0^*$ set.}

Let
\[U = X' \cap \big( (\mathbb{T}^{d_1-1} \times I_1) \times (\mathbb{T}^{d_2-1} \times I_2) \times \cdots \times (\mathbb{T}^{d_k-1} \times I_k) \big),\]
and note that $A = R(x,U)$. The set $U$ is open, and since $\pi( X') = \T^k$, it is non-empty. Since $(X',T)$ is minimal, the set $A$ is non-empty. Since $A-m = R(T^mx,U)$, this shows that the set $A-m$ is $\ip_0^*$ if and only if $T^mx \in U$, that is, if and only if $m \in A$.
\end{proof}

Since both $\nu_2$ and $\Omega$ are surjective homomorphisms, a slight modification to Lemma \ref{lem:surjectivehom} combined with Theorem \ref{thm:alternativemethod} shows that the sets in Corollary \ref{cor:omegaismultsyndetic}, namely $\nu_2^{-1}(X)$ and $\Omega^{-1}(X)$, can be dilated to become multiplicatively $\ipnaught^*$; in particular, both are multiplicatively syndetic. The full strength of Theorem \ref{thm:logpolyissyndetic} is still not easily recovered in this way since we do not require in that theorem that $\varphi$ be surjective.

\section{Multiplicative piecewise syndeticity implies additive \texorpdfstring{$\ipnaught$}{IP0}}\label{sec:posexampleb} 

Multiplicatively central sets in $\N$ were shown in \cite[Theorem 3.5]{berghind-onipsets} to be additively $\ipnaught$, and this result was generalized to the ring setting in \cite[Proposition 4.1]{BRcountablefields}. In this section, we strengthen both theorems, generalize them to semirings, and give some related results. In particular, we describe some rich sources of additively $\ipnaught^*$ sets from measure theoretic and topological dynamics.

The following theorem says that \emph{multiplicative piecewise syndeticity implies additive $\ipnaught$}.

\begin{theorem}\label{thm:multpwsimpliesaddipnaught}
Let $\semisring$ be a semiring. Suppose that $\semirtimes$ is a subsemigroup of $\semistimes$ which is additively large in the following way: $R$ is an $\ipnaught$ set in $\semisplus$. If $A \subseteq R$ is piecewise syndetic in $\semirtimes$, then $A$ is an $\ipnaught$ set in $\semisplus$.
\end{theorem}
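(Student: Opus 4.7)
The plan is to relate multiplicative piecewise syndeticity to additive $\ipnaught$ structure through distributivity, in three steps. First, unpack the hypothesis: there exist $r_1, \ldots, r_k \in R$ such that
\[B \defeq \bigcup_{i=1}^{k} \bigl(r_i^{-1}A \cap R\bigr)\]
is (left) thick in $(R,\cdot)$, meaning that for every finite $F \subseteq R$ there exists $x \in R$ with $Fx \subseteq B$.

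The heart of the argument is to first show that $B$ itself is additively $\ipnaught$ in $(S,+)$. Fix $n \in \N$. Since $R$ is additively $\ipnaught$ in $(S,+)$, there exist $u_1, \ldots, u_n \in S$ with $\text{FS}(u_i)_{i=1}^n \subseteq R$. Apply multiplicative thickness of $B$ to the finite set $F = \{u_\alpha : \emptyset \neq \alpha \subseteq \{1,\ldots,n\}\} \subseteq R$ to produce $x \in R$ with $Fx \subseteq B$. Setting $v_i \defeq u_i x$, the distributive law yields
\[\sum_{i \in \alpha} v_i \;=\; \sum_{i \in \alpha} u_i x \;=\; \Bigl(\sum_{i \in \alpha} u_i\Bigr) x \;=\; u_\alpha x \;\in\; B\]
for every non-empty $\alpha \subseteq \{1,\ldots,n\}$, so $\text{FS}(v_i)_{i=1}^n \subseteq B$. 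Since $n$ was arbitrary, $B$ is additively $\ipnaught$ in $(S,+)$.

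To finish, apply the partition regularity of the class $\ipnaught$ (Remark \ref{rmk:partitionregularityofipnaughtandcentral}) to the partition $B = \bigcup_{i=1}^k (r_i^{-1}A \cap R)$: some cell, say $r_{i_0}^{-1}A \cap R$, is additively $\ipnaught$ in $(S,+)$. Given $n \in \N$, pick $w_1, \ldots, w_n \in S$ with $\text{FS}(w_i)_{i=1}^n \subseteq r_{i_0}^{-1}A$, i.e., $r_{i_0} w_\alpha \in A$ for every non-empty $\alpha$. A second application of distributivity, $r_{i_0} w_\alpha = \sum_{i \in \alpha} r_{i_0} w_i$, shows that $s_i \defeq r_{i_0} w_i$ satisfies $\text{FS}(s_i)_{i=1}^n \subseteq A$. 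Since $n$ was arbitrary, $A$ is additively $\ipnaught$. The only substantive idea is the single right multiplication by $x$ in the second step: multiplicative thickness of $B$ in $R$ is precisely the device that converts an additive $\ip_n$-configuration inside $R$ into one inside $B$, with distributivity guaranteeing that $\ip$-structure is preserved under dilation; the rest is a direct invocation of partition regularity followed by another use of distributivity.
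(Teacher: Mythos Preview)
Your proof is correct and follows essentially the same route as the paper's. The paper packages your step~2 (thick set absorbs an additive $\ip_n$-configuration via right dilation) and your step~4 (left dilation by $r_{i_0}$ preserves $\ipnaught$) as two invocations of Corollary~\ref{cor:bigandbigimpliesbig}~\ref{item:ipr}, which you have simply unpacked by hand using distributivity; the partition-regularity step is identical.
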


\begin{proof}
Let $A \subseteq R$ be piecewise syndetic in $\semirtimes$. There exist $r_1, \ldots, r_k \in R$ such that $R' = \cup_{i=1}^k r_i^{-1}A$ (computed in $R$) is thick in $\semirtimes$. Since $R$ is an $\ipnaught$ set in $\semisplus$ and $R'$ is thick in $\semirtimes$, it is a consequence of Corollary \ref{cor:bigandbigimpliesbig} \ref{item:ipr} that $R'$ is an $\ipnaught$ set in $\semisplus$.

The class $\ipnaughtclass \semisplus$ is partition regular (Remark \ref{rmk:partitionregularityofipnaughtandcentral}). Since $R' \subseteq \cup_{i=1}^k r_i^{-1}A$ (where the right hand side is now computed in $S$) is an $\ipnaught$ set in $\semisplus$, there exists an $i$ for which $r_i^{-1}A$ is an $\ipnaught$ set in $\semisplus$. By Corollary \ref{cor:bigandbigimpliesbig} \ref{item:ipr}, this means $r_i r_i^{-1}A$ is an $\ipnaught$ set in $\semisplus$. The theorem follows since $r_i r_i^{-1}A \subseteq A$.
\end{proof}

Our proof of this theorem is similar to the proof of \cite[Theorem 1.3]{bbhsaddimpliesmult}, where it is shown that multiplicatively piecewise syndetic subsets of $\N$ contain arbitrarily long arithmetic progressions. We will show much more along the lines of Theorem \ref{thm:multpwsimpliesaddipnaught} in Section \ref{sec:posexamplec}, especially in Theorem \ref{thm:multdensitygivesaddpatterns}.

The following theorem is related to the previous one and says that \emph{multiplicative syndeticity implies additive $\ip$}. The proof of this theorem is nearly identical to the proof of Theorem \ref{thm:maintheorema}, so we omit it.

\begin{theorem}
Let $\semisring$ be a semiring. Suppose that $\semirtimes$ is a subsemigroup of $\semistimes$ which is additively large in the following way: $R$ is an $\ip$ set in $\semisplus$. If $A \subseteq R$ is syndetic in $\semirtimes$, then $A$ is an $\ip$ set in $\semisplus$.
\end{theorem}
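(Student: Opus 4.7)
The plan is to mimic the proof of Theorem \ref{thm:maintheorema} step for step, substituting the class $\ipclass$ for the class $\central$ throughout. The two ingredients from the proof of Theorem \ref{thm:maintheorema} that need analogues are (1) the partition regularity of the class $\central$, and (2) the fact (Corollary \ref{cor:bigandbigimpliesbig} \ref{item:centralpluspsimpliescentral}) that left dilation by $r_i$ preserves centrality when $r_iS$ is additively piecewise syndetic. Both have simpler counterparts for $\ipclass$: the class $\ipclass$ is partition regular by Hindman's theorem (as recorded in Remark \ref{rmk:partitionregularityofipnaughtandcentral}), and left dilation by any element preserves $\ip$ structure \emph{without any side hypothesis}, by Corollary \ref{cor:bigandbigimpliesbig} (III).

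Concretely, I would proceed as follows. First, since $A$ is syndetic in $\semirtimes$, fix $r_1, \ldots, r_k \in R$ for which
\[R = \bigcup_{i=1}^k r_i^{-1} A,\]
where the preimages $r_i^{-1}A$ are computed inside $R$. Viewing the same preimages now inside $S$ only enlarges them, so we still have $R \subseteq \bigcup_{i=1}^k r_i^{-1} A$ in $S$. Since $R$ is assumed to be $\ip$ in $\semisplus$ and $\ipclass\semisplus$ is partition regular, some $r_i^{-1}A$ is $\ip$ in $\semisplus$.

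Finally, apply Corollary \ref{cor:bigandbigimpliesbig} (III) to the element $r_i$ and the set $r_i^{-1}A$ to conclude that $r_i \cdot (r_i^{-1}A)$ is $\ip$ in $\semisplus$. Since $r_i \cdot (r_i^{-1}A) \subseteq A$ and the class $\ipclass$ is upward closed, $A$ is $\ip$ in $\semisplus$, as desired. No step presents a genuine obstacle; the key observation is that unlike the centrality case, the dilation step requires no additional additive largeness hypothesis on the principal right ideals $r_iS$, so the assumption on $R$ (that it be an $\ip$ set additively) is the \emph{only} hypothesis needed beyond the multiplicative syndeticity of $A$ in $R$.
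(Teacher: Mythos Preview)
Your proof is correct and is exactly the argument the paper has in mind (the paper omits the proof, saying it is ``nearly identical to the proof of Theorem~\ref{thm:maintheorema}''). One small remark: the $\ip$-preservation-under-dilation item in Corollary~\ref{cor:bigandbigimpliesbig} is (II), not (III), since the manually inserted item (I\,$'$) does not advance the enumeration counter.
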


The following is an immediate corollary to Theorem \ref{thm:multpwsimpliesaddipnaught} gotten by considering dual classes.

\begin{corollary}\label{cor:ipstarimpliesmultpsstar}
Let $\semisring$ be a semiring. Suppose that $\semirtimes$ is a subsemigroup of $\semistimes$ which is additively large in the following way: $R$ is an $\ipnaught$ set in $\semisplus$. If $A \subseteq S$ is an $\ip_0^*$ set in $\semisplus$, then $A \cap R$ is $\pws^*$ in $\semirtimes$.
\end{corollary}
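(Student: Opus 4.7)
The plan is to obtain the corollary by chasing an arbitrary element of $\pws(\semirtimes)$ through Theorem \ref{thm:multpwsimpliesaddipnaught} and then pairing it with the $\ipnaught^*$ hypothesis on $A$. Unpacking Definition \ref{def:dualclasses}, to show that $A \cap R$ lies in $\pws^*(\semirtimes)$ it is enough to fix an arbitrary $B \in \pws(\semirtimes)$ and produce an element of $(A \cap R) \cap B$. Since $B \subseteq R$, this reduces to exhibiting a point in $A \cap B$.

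Next, I would feed $B$ into Theorem \ref{thm:multpwsimpliesaddipnaught}: the hypotheses of the corollary are exactly the hypotheses of that theorem applied to $B$ in place of $A$, namely that $R$ is an $\ipnaught$ set in $\semisplus$ and that $B \subseteq R$ is piecewise syndetic in $\semirtimes$. The conclusion of the theorem is then that $B$ is an $\ipnaught$ set in $\semisplus$.

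Finally, since $A \in \ipnaughtclass^*(\semisplus)$, by definition of the dual class $A$ has non-empty intersection with every $\ipnaught$ set in $\semisplus$; in particular $A \cap B \neq \emptyset$. As noted above, $A \cap B \subseteq A \cap R$, so $B$ witnesses that $A \cap R$ meets every piecewise syndetic subset of $\semirtimes$. Because $B$ was arbitrary, this yields $A \cap R \in \pws^*(\semirtimes)$.

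There is no genuine obstacle here: the only thing to verify is the bookkeeping that the containment $B \subseteq R$ (coming from $B \in \pws(\semirtimes)$) turns a point of $A \cap B$ automatically into a point of $A \cap R$, so that the intersection condition required for membership in $\pws^*(\semirtimes)$ is actually witnessed inside $R$. This is precisely the ``upward closure $\Rightarrow$ $(\mathcal{X}^*)^* = \mathcal{X}$'' bookkeeping remarked on in Remark \ref{rmk:setalgebra}.
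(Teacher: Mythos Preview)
Your argument is correct and is exactly the unpacking of the paper's one-line justification ``gotten by considering dual classes'': you take an arbitrary $B \in \pws(\semirtimes)$, apply Theorem \ref{thm:multpwsimpliesaddipnaught} to obtain $B \in \ipnaughtclass(\semisplus)$, and then use $A \in \ipnaughtclass^*(\semisplus)$ together with $B \subseteq R$ to get $(A \cap R) \cap B \neq \emptyset$.
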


It is interesting to consider what this corollary says in the case that there are multiple multiplicative structures atop a single additive semigroup which are all ``compatible'' with the addition. In this case, any additive $\ipnaught^*$ set is multiplicatively large with respect to \emph{all} of these multiplications.

\begin{example}
Let $A \subseteq \Z^2$ be additively $\ipnaught^*$. (Such sets arise in Theorems \ref{thm:ipszemeredicomb} or \ref{thm:bliprstarsemiring}, for example.) Each of the rings $\big\{\Z[\sqrt c] \ | \ c \in \Z \text{ not a square} \big\}$ induces a multiplication $\circledast_c$ on $\Z^2$ (under the usual identification of $x_1+x_2 \sqrt c$ with $(x_1,x_2)$) which makes $(\Z^2,+,\circledast_c)$ into a semiring. Corollary \ref{cor:ipstarimpliesmultpsstar} gives that $A \setminus \{0\}$ is multiplicatively $\pws^*$ with respect to each of the multiplications $\circledast_c$. The authors showed in \cite[Theorem A]{BGpapertwo} that the families $\pws^*(\Z^2 \setminus \{0\},\circledast_c)$ are, predominantly, in general position. Thus, the conclusion from Corollary \ref{cor:ipstarimpliesmultpsstar} that $A$ is large with respect to each of these multiplications consists of countably many genuinely different conclusions.
\end{example}

Though each of the multiplications described in the previous example are commutative, we do not assume commutativity in Corollary \ref{cor:ipstarimpliesmultpsstar} or in most of the other main results in Sections \ref{sec:posexamplea}, \ref{sec:posexampleb}, and \ref{sec:posexamplec}. 

\subsection{Additive \texorpdfstring{$\ipnaught^*$}{IP0*} sets arising in dynamics}\label{sec:ipzerostarindynamics}

Corollary \ref{cor:ipstarimpliesmultpsstar} is made more interesting by the plethora of examples of additive $\ipnaught^*$ sets arising in dynamics and combinatorics. Additive $\ipnaught^*$ sets first appeared in the work of Furstenberg and Katznelson \cite{furstenbergkatznelsonipszem} on the IP multidimensional Szemer\'{e}di theorem, and they appeared implicitly in \cite[Section 1]{bergelsonleibmanams} in connection with the multidimensional polynomial van der Waerden theorem. It was shown more recently in \cite{BLiprstarcharacterization} that $\ipnaught^*$ sets can be used to characterize nilsystems (algebraic generalizations of group rotations): roughly speaking, a system is a nilsystem if and only if the return times of every point to a neighborhood of itself is an $\ipnaught^*$ set.

While applications relying on the $\ipnaught^*$ formulations of recurrence theorems in dynamics have since appeared (e.g. \cite{blzpaper}) and are present in the current work, they have not been stated explicitly in an $\ipnaught^*$ form in the literature. We begin to remedy this here by deriving the $\ipnaught^*$ versions of these theorems in a succinct way from more general combinatorial theorems. Further improvements on the theorems and techniques in this section will appear in a forthcoming work.

We begin by deriving the the IP Szemer\'edi theorem of Furstenberg and Katznelson in an $\ipnaught^*$ form; the order of the quantifiers as it is stated here is implicit in their proof in \cite[Theorem 10.3]{furstenbergkatznelsonipszem}. The short proof here is possible by appealing to the density Hales-Jewett theorem, Theorem \ref{thm:densityhj}, and the results from Section \ref{sec:banach}.

\begin{theorem}\label{thm:ipszemeredicomb} Let $n \in \N$ and $\denlet > 0$. There exists $r \in \N$ and $\be >0$ for which the following holds. For all commutative semigroups $\semisplus$ and $\semirplus$, all homomorphisms $\varphi_1, \ldots, \varphi_n$ from $S$ to $R$, all left invariant means $\lambda$ on $R$, and all $A \subseteq R$ with $\lambda(A) \geq \denlet$, the set

\begin{align}\label{eqn:setofshiftstobelarge}\left\{ s \in S \ \middle| \ \lambda \big( \big(A - \varphi_1(s) \big) \cap \cdots \cap \big(A - \varphi_n(s) \big) \big) \geq \be \right\}\end{align}
is $\ip_r^*$ in $\semisplus$.
\end{theorem}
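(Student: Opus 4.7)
The plan is to combine the density Hales--Jewett theorem (Theorem \ref{thm:densityhj}) with the multiset characterization of upper Banach density (Theorem \ref{thm:characterizationofdensity}), following the strategy used in the proof of Theorem \ref{thm:densityimpliesar}. Given $n \in \N$ and $\denlet > 0$, I would take $r = r(n, \denlet/2)$ from Theorem \ref{thm:densityhj} and set $\be = \denlet \big/ \big(2 (n+1)^r\big)$; both depend only on $n$ and $\denlet$.

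To show that the set in (\ref{eqn:setofshiftstobelarge}) is $\ip_r^*$, I fix $s_1, \ldots, s_r \in S$ and produce a non-empty $\al \subseteq \{1, \ldots, r\}$ for which $s_\al$ lies in that set. Define $\pi : [n]^r \to R$ by $\pi(w) = \sum_{i=1}^r \varphi_{w_i}(s_i)$ and let $\multif$ be the multiset on $R$ with $\multif(f) = |\pi^{-1}(f)|$, so that $|\multif| = n^r$. Applying Theorem \ref{thm:characterizationofdensity} to the mean $\lambda$ at threshold $\denlet/2$, and noting that $|\multif \cap (A - t)| = |W_t|$ where $W_t \defeq \{w \in [n]^r : t + \pi(w) \in A\}$, produces
\[T \defeq \big\{ t \in R \ \big| \ |W_t| \geq (\denlet/2) n^r \big\} \quad \text{with} \quad \lambda(T) \geq \frac{\denlet/2}{1 - \denlet/2}.\]
By our choice of $r$, Theorem \ref{thm:densityhj} supplies a combinatorial line inside $W_t$ for each $t \in T$.

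Since there are fewer than $(n+1)^r$ variable words in $[n]^r$, finite subadditivity of $\lambda$ yields a single variable word $v$ whose combinatorial line lies in $W_t$ for every $t$ in a subset $T_v \subseteq T$ with $\lambda(T_v) \geq \be$. Writing $I \subseteq \{1, \ldots, r\}$ for the variable positions of $v$, $s_I = \sum_{i \in I} s_i$, and $c = \sum_{i \notin I} \varphi_{v_i}(s_i)$, the homomorphism property of each $\varphi_j$ gives, for every $t \in T_v$ and every $j \in \{1, \ldots, n\}$,
\[t + \pi(v(j)) = t + c + \varphi_j(s_I) \in A,\]
so that $T_v \subseteq \big(\bigcap_{j=1}^n (A - \varphi_j(s_I))\big) - c$. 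Left invariance of $\lambda$ then yields $\lambda\big(\bigcap_{j=1}^n (A - \varphi_j(s_I))\big) \geq \lambda(T_v) \geq \be$, and setting $\al \defeq I$ concludes the argument.

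I expect the main obstacle to be the final invocation of left invariance, which upgrades the containment $T_v \subseteq \big(\bigcap_j (A - \varphi_j(s_I))\big) - c$ into a lower bound on $\lambda$ of the intersection itself; without this step one would only extract a single witness rather than a positive-measure return set. The parameter bookkeeping for $r$ and $\be$ is routine once one realizes that pigeonholing over variable words in $[n]^r$, rather than separately over variable positions and fixed coordinates, suffices to isolate a single combinatorial line type serving a $\lambda$-positive-measure set of basepoints.
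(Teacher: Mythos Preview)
Your proposal is correct and follows essentially the same approach as the paper: both combine Theorem~\ref{thm:densityhj} with Theorem~\ref{thm:characterizationofdensity} via the multiset pushforward $\multif$ of $\pi$, then pigeonhole over the at most $(n+1)^r$ variable words to isolate a single line type valid on a set of $\lambda$-measure at least $\be$. The only cosmetic differences are that the paper keeps a free parameter $\eta<\denlet$ where you take $\eta=\denlet/2$, and that you phrase the final step as $T_v \subseteq \big(\bigcap_j (A-\varphi_j(s_I))\big)-c$ and invoke $\lambda(B-c)=\lambda(B)$ directly, whereas the paper writes $R'_\eta+u\subseteq B$ and argues $\lambda(R'_\eta+u)\geq\lambda(R'_\eta)$ via $(R'_\eta+u)-u\supseteq R'_\eta$; these are equivalent.
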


\begin{proof}
Let $n \in \N$ and $\denlet > 0$. Let $0 < \eta < \denlet$. Take $r = r(n,\eta)$ from Theorem \ref{thm:densityhj}, and put $\beta = (\denlet - \eta) \big / ((1-\eta)(n+1)^r)$. Let $\semisplus$ and $\semirplus$ be commutative semigroups, $\varphi_1, \ldots, \varphi_n$ be homomorphisms from $S$ to $R$, $\lambda$ be a left invariant mean on $R$, and $A \subseteq R$ with $\lambda(A) \geq \denlet$.

Let $(s_i)_{i=1}^r \subseteq S$. We must show that the set in (\ref{eqn:setofshiftstobelarge}) has non-empty intersection with the finite sums set generated by $(s_i)_{i=1}^r$. Let
\begin{align*}
\pi:[n]^r &\to R\\
\omega &\mapsto \varphi_{\omega_1}(s_1) + \cdots + \varphi_{\omega_r}(s_r).
\end{align*}
Let $F$ be the image of $\pi$, and define the multiset $\multif: F \to \N$ by
\[\multif(f) = \big| \{ \omega \in [n]^r \ \big| \ \pi(\omega) = f \}\big|.\]
Setting $R_\eta(A,\multif) = \{t \in R \ | \ | \multif \cap (A - t) | \geq \eta |\multif| \}$, Theorem \ref{thm:characterizationofdensity} gives that
\[\lambda \big(R_\eta(A,\multif) \big) \geq \frac{\denlet - \eta}{1-\eta}.\]

Abbreviate $R_\eta(A,\multif)$ by $R_\eta$, and let $t \in R_\eta$. Since $\big|\{ \omega \in [n]^r \ | \ t + \pi(\omega) \in A \} \big| \geq \eta n^r$, there exists a combinatorial line $L_t$ in $[n]^r$ such that
\[\{t + \pi(\omega) \ | \ \omega \in L_t \} \subseteq A.\]
Since $\lambda$ is additive and there are no more than $(n+1)^r$ combinatorial lines in $[n]^r$, there exists a combinatorial line $L$ in $[n]^r$ and a subset $R'_\eta \subseteq R_\eta$ such that $\lambda(R'_\eta) \geq \beta$ and for all $t \in R'_\eta$,
\[\{t + \pi(\omega) \ | \ \omega \in L \} \subseteq A.\]

Since $\semirplus$ is commutative and $L$ is a combinatorial line, there exists $u \in R$ and a non-empty $\al \subseteq \{1, \ldots, r\}$ such that for all $t \in R'_\eta$ and all $i \in \{1, \ldots, n\}$, $t + u + \varphi_i (s_\al) \in A$. This implies that
\begin{align}\label{eqn:translationcontainment}R'_\eta + u \subseteq \big(A - \varphi_1(s) \big) \cap \cdots \cap \big(A - \varphi_n(s) \big).\end{align}

Since $R'_\eta + u + (-u) \supseteq R'_\eta$ and $\lambda$ is translation invariant, we have that
\[\lambda \big(R'_\eta + u \big) = \lambda \big(R'_\eta + u + (-u) \big) \geq \lambda \big(R'_\eta\big) \geq \beta.\]
It follows from this and (\ref{eqn:translationcontainment}) that the finite sums set generated by $(s_i)_{i=1}^r$ indeed has non-empty intersection with the set in (\ref{eqn:setofshiftstobelarge}). 
\end{proof}

The following is an immediate corollary of Theorem \ref{thm:ipszemeredicomb} when applied to semirings $\semisring$ in the following way: put $\semirplus = \semisplus$, and take the homomorphisms $\varphi_i$ to be multiplication on the left (or right) by fixed elements of $S$.

\begin{corollary}\label{cor:ipszemfordensityinsemigroups}
Let $n \in \N$ and $\denlet > 0$. There exists $r \in \N$ and $\beta >0$ for which the following holds. Let $\semisring$ be a semiring and $F \subseteq S$ with $|F| \leq n$. For all $A \subseteq S$ with $\dstar_{\semisplus}(A) \geq \denlet$, the set
\[\left\{ s \in S \ \big| \ \dstar_{\semisplus} \left( \big\{ t \in S \ \big| \ t+F s \subseteq A \big\} \right) \geq \beta \right\}\]
is $\ip_r^*$ in $\semisplus$. The same conclusion holds with $Fs$ replaced by $sF$.
\end{corollary}

Besides providing a natural source of $\ipnaught^*$ sets, the uniformity implied by the order of the quantifiers in Theorem \ref{thm:ipszemeredicomb} finds two applications in this work. The $\ip_r^*$ conclusion in this theorem was used in the proof of Theorem \ref{thm:finitarymultsyndgivesaddthickinfields}, and we will use the lower bound on the mean of the sets in (\ref{eqn:setofshiftstobelarge}) to find ``geo-arithmetic'' patterns in sets of positive multiplicative Banach density in Theorem \ref{thm:geopatternsinsetsofdensity}.\\

We proceed now by proving a strengthening of the $\ipnaught^*$ formulation of the multidimensional polynomial van der Waerden theorem originally obtained in \cite[Theorem C and Corollary 1.12]{bergelsonleibmanams}. The uniformity implied by the order of the quantifiers in the theorem as it is stated here is implicit in its original proof and has already found applications, for example in \cite{blzpaper}. The short proof we provide here is possible by appealing to \cite[Theorem 4.4]{bltopmultiplerecurrence}, a polynomial van der Waerden-type theorem for nilpotent groups.

\begin{definition}
Let $\semigplus$ and $\semihplus$ be abelian groups. A map $\varphi: G \to H$ is a \emph{polynomial of degree at most $d$} if the application of any $d+1$ of the discrete difference operators $\Delta_g$, $g \in G$, defined by $(\Delta_g \varphi)(x) \defeq \varphi(x+g) - \varphi(x)$, reduces $\varphi$ to the constant $0_H$ function. The map $\varphi$ has \emph{zero constant term} if $\varphi(0_G) = 0_H$.
\end{definition}

\begin{theorem}\label{thm:bliprstarsemiring}
Let $n,d,k \in \N$. There exists $r \in \N$ for which the following holds. For all abelian groups $\semigplus$ and $\semihplus$, all polynomials $\varphi_1, \ldots, \varphi_n$ from $G \to H$ of degree at most $d$ with zero constant term, and all partitions $H = A_1 \cup \cdots \cup A_k$, there exists $i \in \{1, \ldots, k\}$ for which the set

\begin{align}\label{eqn:setofshiftstobelargevdw}\big\{ g \in G \ \big| \ \big(A_i - \varphi_1(g) \big) \cap \cdots \cap \big(A_i - \varphi_n(g) \big) \neq \emptyset \big\}\end{align}
is $\ip_r^*$ in $\semigplus$.
\end{theorem}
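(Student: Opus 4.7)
The plan is to reduce Theorem \ref{thm:bliprstarsemiring} to the polynomial van der Waerden theorem for nilpotent groups, \cite[Theorem 4.4]{bltopmultiplerecurrence}, by producing an auxiliary family of polynomials on the product group $G^r$ whose monochromatic pattern structure exactly encodes the $\ip_r^*$ conclusion.

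First I would unpack the $\ip_r^*$ conclusion. The set in (\ref{eqn:setofshiftstobelargevdw}) is $\ip_r^*$ in $\semigplus$ if and only if for every tuple $(g_1, \ldots, g_r) \in G^r$ there exist a non-empty $\alpha \subseteq \{1, \ldots, r\}$ and $h \in H$ with $h + \varphi_j(g_\alpha) \in A_i$ for all $j \in \{1, \ldots, n\}$, writing $g_\alpha = \sum_{\ell \in \alpha} g_\ell$. This moves the statement from one about the set of good $g$'s to an FS-type pattern-finding problem on $G^r$.

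Next, for each $j \in \{1, \ldots, n\}$ and each non-empty $\alpha \subseteq \{1, \ldots, r\}$, I would introduce the polynomial map
\begin{equation*}
\Phi_{j,\alpha} \colon G^r \to H, \qquad \Phi_{j,\alpha}(x_1, \ldots, x_r) = \varphi_j\Bigl(\sum_{\ell \in \alpha} x_\ell\Bigr).
\end{equation*}
Each $\Phi_{j,\alpha}$ is the composition of the group homomorphism $(x_1,\ldots,x_r) \mapsto \sum_{\ell \in \alpha} x_\ell$ with the polynomial $\varphi_j$, and hence is itself a polynomial of degree at most $d$ with zero constant term; altogether the family $\{\Phi_{j,\alpha}\}$ consists of $n(2^r-1)$ such polynomials from $G^r$ to $H$. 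I would then invoke \cite[Theorem 4.4]{bltopmultiplerecurrence} in its finitary, quantifier-uniform form applied to this family, the ambient abelian (hence nilpotent of class one) group $G^r$, and the $k$-coloring induced by the partition $H = A_1 \cup \cdots \cup A_k$. The output is a bound $r = r(n,d,k)$, depending only on $n$, $d$, and $k$, and a cell $A_i$ (depending on the partition but not on any tuple) for which every $\vec g \in G^r$ supports some non-empty $\alpha \subseteq \{1, \ldots, r\}$ and $h \in H$ with $h + \Phi_{j,\alpha}(\vec g) \in A_i$ for all $j$. Unwinding $\Phi_{j,\alpha}(\vec g) = \varphi_j(g_\alpha)$ yields exactly the desired conclusion.

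The main obstacle is the quantifier uniformity. A naive single-configuration polynomial van der Waerden theorem produces only one tuple $\vec g^* \in G^r$ on which a monochromatic configuration exists, whereas Theorem \ref{thm:bliprstarsemiring} demands such a configuration for every $\vec g \in G^r$ and with the color index $i$ chosen uniformly in the partition. Verifying that the PET-induction proof of \cite[Theorem 4.4]{bltopmultiplerecurrence} is finitary in $n$, $d$, $k$ and uniform in the ambient abelian groups $G$, $H$ — so that $r$ may be fixed before $G$, $H$, the $\varphi_j$'s, and the partition are chosen, and $i$ may be fixed once the partition is given — is the crucial point. Once that uniformity is secured, the construction of the auxiliary polynomials $\Phi_{j,\alpha}$ above turns the single-configuration content of Theorem 4.4 on $G^r$ into the sought-after statement about every $r$-tuple in $G$.
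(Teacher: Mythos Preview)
Your reduction does not close. There are two intertwined problems.

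First, the construction is circular. You index the auxiliary polynomials $\Phi_{j,\alpha}$ by nonempty $\alpha\subseteq\{1,\ldots,r\}$, so the family has $n(2^r-1)$ members; but $r$ is precisely the bound you are trying to extract from Theorem~4.4 applied to that family. Any quantitative van der Waerden--type statement will return a bound depending on the \emph{number} of input polynomials, so the output cannot be the same $r$ that went into defining the family.

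Second, and more seriously, the shape of the conclusion is wrong. A polynomial van der Waerden theorem applied to the family $\{\Phi_{j,\alpha}\}_{j,\alpha}$ on $G^r$ yields a \emph{single} $\vec g^{\,*}\in G^r$ and $h\in H$ with $h+\Phi_{j,\alpha}(\vec g^{\,*})\in A_i$ for \emph{all} $j$ and \emph{all} $\alpha$. What you need is the transposed statement: for \emph{every} $\vec g\in G^r$ there is \emph{some} $\alpha$ (and $h$) with $h+\varphi_j(g_\alpha)\in A_i$ for all $j$. Your last paragraph asserts that uniformity of the PET induction bridges this, but it does not: uniformity in $G,H$ and in the $\varphi_j$ only lets you fix $r$ in advance, it does not convert an existential $\vec g^{\,*}$ into a universal one.

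The paper sidesteps both issues by exploiting that \cite[Theorem~4.4]{bltopmultiplerecurrence} is \emph{already} a set-polynomial (i.e.\ FS-type) statement: its inputs are polynomials $\mathcal{P}_{\text{fin}}([N])\to H$ and its output is a nonempty $\alpha\subseteq[N]$. One first takes $r=N(n,d,k)$ from that theorem. Then, given an arbitrary tuple $(g_i)_{i=1}^r\subseteq G$, one defines set-polynomials $\overline{\varphi_j}(\alpha)=\varphi_j(g_\alpha)$; these have degree at most $d$ in the sense of \cite{bltopmultiplerecurrence}, and Theorem~4.4 applied to the $n$ maps $\overline{\varphi_1},\ldots,\overline{\varphi_n}$ and the $k$-coloring of $H$ produces the required $\alpha$ and $h$. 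The point is that the ``for every tuple'' quantifier is handled by applying the set-polynomial theorem \emph{once per tuple}, with the tuple encoded into the set-polynomials themselves; there is no need to pass to $G^r$.
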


\begin{proof}
Let $n,d,k \in \N$. Let $r \in \N$ be $N$ from \cite[Theorem 4.4]{bltopmultiplerecurrence} where $w$ is our $d$, $r$ is our $k$, and $k$ is our $n$. Let $\semigplus$ and $\semihplus$ be abelian groups. Let $\varphi_1, \ldots, \varphi_n$ be polynomials of degree at most $d$ with zero constant term from $G$ to $H$, and suppose $H = A_1 \cup \cdots \cup A_k$.

Let $(g_i)_{i=1}^r \subseteq G$. We must show that the set in (\ref{eqn:setofshiftstobelargevdw}) has non-empty intersection with the finite sums set generated by $(g_i)_{i=1}^r$. For $i \in \{1, \ldots, n\}$, define $\overline{\varphi_i}: \finitesubsets {[r]} \to H$ by $\overline{\varphi_i}(\al) = \varphi_i(g_\al)$. It is straightforward to check that each $\overline{\varphi_i}$ is a polynomial of degree at most $d$ in the sense described in \cite{bltopmultiplerecurrence}. Now \cite[Theorem 4.4]{bltopmultiplerecurrence} gives $i \in \{1, \ldots, k\}$, a non-empty $\al \in \finitesubsets{[r]}$, and $h \in H$ such that
\[h + \{\overline{\varphi_1}(\al), \ldots, \overline{\varphi_n}(\al)\} \subseteq A_i.\]
This means $g_\al$ is a member of the set in (\ref{eqn:setofshiftstobelargevdw}); in particular, the finite sums set generated by $(g_i)_{i=1}^r$ indeed has non-empty intersection with that set.
\end{proof}

We conclude this section by presenting an application of Theorem \ref{thm:bliprstarsemiring} to polynomial Diophantine approximation. A one-variable variation on the following theorem is described in \cite[Theorem 7.7]{BergelsonSurveytwoten}. We denote by $\|(x_1, \ldots, x_k)\|$ the Euclidean distance from $(x_1, \ldots, x_k)$ to the nearest integer lattice point in $\R^k$.

\begin{theorem}\label{thm:diophantineapplication}
Let $A \subseteq \N^\ell$ be additively $\ipnaught$. For all $n, d \in \N$,
\[\min_{\substack{1 \leq m_1, \ldots, m_\ell \leq M \\ (m_1, \ldots, m_\ell) \in A}} \big\| \big(f_1(m_1,\ldots,m_\ell), \ldots, f_n(m_1,\ldots,m_\ell) \big) \big\| \longrightarrow 0  \ \text{ as } \ M \to \infty\]
uniformly in polynomials $f_1, \ldots, f_n \in \R[x_1,\ldots,x_\ell]$ of degree less than or equal to $d$ with no constant term.
\end{theorem}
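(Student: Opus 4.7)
The idea is to recast $(f_1,\dots,f_n)$ as a single torus-valued polynomial on $\Z^\ell$ and apply the $\ip_r^*$ polynomial van~der~Waerden theorem (Theorem~\ref{thm:bliprstarsemiring}). Set $G=\Z^\ell$, $H=(\R/\Z)^n$, and define $\Phi\colon G\to H$ by $\Phi(m)=(f_1(m),\dots,f_n(m))+\Z^n$. Since each $f_i$ has degree at most $d$ with zero constant term, and since the reduction $\R^n\to(\R/\Z)^n$ is a group homomorphism that commutes with the discrete difference operators $\Delta_g$, the map $\Phi$ is a polynomial $G\to H$ of degree at most $d$ with zero constant term in the sense of the definition preceding Theorem~\ref{thm:bliprstarsemiring}.

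Fix $\eps>0$. Partition $H=B_1\cup\cdots\cup B_k$ into finitely many Borel cells of Euclidean diameter less than $\eps$, where $k$ depends only on $\eps$ and $n$. Apply Theorem~\ref{thm:bliprstarsemiring} with the two polynomials $\varphi_1\equiv 0$ and $\varphi_2=\Phi$ (so ``$n$'' in that theorem equals $2$) and the partition above. This yields an integer $r=r(2,d,k)$, depending only on $\eps$, $n$, and $d$, together with an index $i\in\{1,\dots,k\}$ (depending on the $f_j$'s) for which
\[
E\;\defeq\;\bigl\{\,g\in\Z^\ell\,:\,B_i\cap\bigl(B_i-\Phi(g)\bigr)\neq\emptyset\,\bigr\}
\]
is $\ip_r^*$ in $(\Z^\ell,+)$. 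For any $g\in E$ there exists $h\in B_i$ with $h+\Phi(g)\in B_i$; since $\mathrm{diam}(B_i)<\eps$ this forces $\|\Phi(g)\|<\eps$, that is, $\|(f_1(g),\dots,f_n(g))\|<\eps$.

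Now comes the key point for uniformity. Because $A\subseteq\N^\ell$ is additively $\ipnaught$, we may---\emph{before examining any specific polynomial}---fix generators $(t_j)_{j=1}^{r}\subseteq\N^\ell$ with $\finitesums(t_j)_{j=1}^{r}\subseteq A$, and set $M_0$ to be the largest coordinate of $t_1+\cdots+t_r$. Then $\finitesums(t_j)_{j=1}^{r}$ is an $\ip_r$ subset of $A\cap[1,M_0]^\ell\subseteq A\cap[1,M]^\ell$ for every $M\ge M_0$. For each choice of polynomials $f_1,\dots,f_n$ of degree at most $d$ with no constant term, the $\ip_r^*$ set $E$ must intersect this $\ip_r$ set, producing $m\in A\cap[1,M]^\ell$ with $\|(f_1(m),\dots,f_n(m))\|<\eps$. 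Since both $r$ and $M_0$ depend only on $A$, $n$, $d$, and $\eps$---and \emph{not} on the particular polynomials---this yields the claimed uniform convergence to $0$.

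\textbf{Expected difficulty.} No deep obstacle arises once Theorem~\ref{thm:bliprstarsemiring} is identified as the right tool. The two points requiring care are (i)~packaging $(f_1,\dots,f_n)$ into one polynomial valued in $\T^n$ so that the conclusion $B_i\cap(B_i-\Phi(g))\neq\emptyset$ directly bounds the \emph{vector} $(f_1(g),\dots,f_n(g))$ rather than its coordinates separately, and (ii)~choosing the $\ip_r$ witness inside $A$ prior to seeing any polynomial, which is what secures the uniformity over all admissible $f_1,\dots,f_n$.
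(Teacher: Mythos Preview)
Your proof is correct and follows essentially the same route as the paper: apply Theorem~\ref{thm:bliprstarsemiring} to a partition of the target group into small-diameter pieces to produce, uniformly in the polynomials, an $\ip_r^*$ set of ``good'' $g$, and then intersect it with an $\ip_r$ set chosen in advance inside $A$. Your packaging---bundling the $f_j$ into a single map $\Phi:\Z^\ell\to(\R/\Z)^n$ and pairing it with the zero polynomial---is a slightly tidier variant of the paper's setup (which takes $H=\R$ and uses the $f_j$ separately), and in particular makes the step $h,\,h+\Phi(g)\in B_i \Rightarrow \|\Phi(g)\|<\eps$ completely transparent.
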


\begin{proof}
Let $n, d \in \N$ and $\eps > 0$. Put $k = \lceil 2 / \eps \rceil$, $\semigplus = (\R^\ell,+)$, and $\semihplus = (\R,+)$. For $i \in \{1, \ldots, k\}$, put $A_i = [(i-1) \eps / 2, i \eps / 2) + \Z$, and note that $H = \cup_{i=1}^k A_i$. By Theorem \ref{thm:bliprstarsemiring} (with ``$n$'' equal to $n+1$), there exists an $r \in \N$ such that for $f_0 \defeq 0$ and for all $f_1, \ldots, f_n \in \R[x_1,\ldots,x_\ell]$ of degree less than or equal to $d$ with zero constant term (such maps are polynomials from $G$ to $H$), there exists $i \in \{1,\ldots,k\}$ such that the set
\begin{gather*}
\big\{ g \in \R^{\ell} \ \big| \ \big(A_i - f_0(g) \big) \cap \big(A_i - f_1(g) \big) \cap \cdots \cap \big(A_i - f_n(g) \big) \neq \emptyset \big\} = \\
\big\{ g \in \R^\ell \ \big| \ \exists t \in A_i, \ \forall j \in \{1, \ldots, n\}, \ t + f_j(g) \in A_i \big\}
\end{gather*}
is $\ip_r^*$ in $(\R^\ell,+)$. Note that if $t \in A_i$ and $t+f_1(g), \ldots, t+f_n(g) \in A_i$, then $\big\|\big(f_1(g),\allowbreak \ldots,\allowbreak f_n(g)\big)\big\| < \eps$. It follows that if we define the set
\begin{align*}
G_{(f_1,\ldots,f_n)} =& \ \big\{ g \in \R^\ell \ \big| \ \big\|\big(f_1(g), \ldots, f_n(g)\big)\big\| < \eps \big\},
\end{align*}
then for all $f_1, \ldots, f_n \in \R[x_1,\ldots,x_\ell]$ of degree less than or equal to $d$ with zero constant term, the set $G_{(f_1,\ldots,f_n)}$ is $\ip_r^*$ in $(\R^\ell,+)$. Since $(\N^\ell,+)$ is a sub-semigroup of $(\R^\ell,+)$, the set $G_{(f_1,\ldots,f_n)} \cap \N^\ell$ is $\ip_r^*$ in $(\N^\ell,+)$.

Since $A$ is $\ipnaught$ in $\N^\ell$, there exists an $M \in \N$ such that $A \cap \{1, \ldots, M\}^\ell$ is $\ip_r$.  It follows that for all $f_1, \ldots, f_n \in \R[x_1,\ldots,x_\ell]$ of degree less than or equal to $d$ with no constant term, the set $G_{(f_1,\ldots,f_n)} \cap A \cap \{1, \ldots, M\}^\ell$ is non-empty, implying
\[\min_{\substack{1 \leq m_1, \ldots, m_\ell \leq M \\ (m_1, \ldots, m_\ell) \in A}} \big\| \big(f_1(m_1,\ldots,m_\ell), \ldots, f_n(m_1,\ldots,m_\ell) \big) \big\| < \eps.\]
This proves the desired uniformity.
\end{proof}

This theorem can be contextualized as a multivariable, multidimensional, ineffective version of \cite[Theorem 1]{bakeruniform} along a restricted set of ``denominators.'' The following is a more concrete application demonstrating the interplay between addition inherent in Diophantine approximation on the torus and multiplication used to define the subsets along which the approximation takes place. The proof of the following corollary follows immediately from Theorem \ref{thm:diophantineapplication}, Corollary \ref{cor:omegaismultsyndetic}, and the remarks following Corollary \ref{cor:omegaismultsyndetic}; recall that $\Omega(p_1^{e_1} \cdots p_k^{e_k}) = \sum_{i=1}^k e_i$, where the $p_i$'s are distinct primes.

\begin{corollary}\label{cor:dioappomega}
Let $p \in \R[x]$ be a non-constant polynomial with irrational leading coefficient, and let $I \subseteq [0,1)$ be an interval. For all $d \in \N$,
\[\min_{\substack{1 \leq n \leq N \\ \{ p(\Omega(n)) \} \in I}} \|f(n) \| \longrightarrow 0 \ \text{ as } \  N \to \infty\]

uniformly in polynomials $f \in \R[x]$ of degree less than or equal to $d$ with no constant term.
\end{corollary}


\section{Multiplicative density implies additive combinatorial richness}\label{sec:posexamplec} 

It was proved in \cite[Theorem 3.2]{bmultlarge} that subsets of $\N$ of positive multiplicative upper Banach density are AP-rich. In addition to strengthening this fact and generalizing it to semirings, the following theorem yields a ``relativization'' of the result: \emph{multiplicatively large subsets of additively rich sets are additively rich}.

To make precise what we mean by ``rich,'' we need a slight variation on the class of \crich{} sets defined in Section \ref{sec:defs}. 

\begin{definition}\label{def:combrichuptoe}
Let $\semisplus$ be a commutative semigroup and $\Endset$ be a set of endomorphisms of $\semisplus$. A set $A \subseteq \semisplus$ is \emph{\crich{} up to $\Endset$} if for all $n \in \N$, there exists an $r \in \N$ such that for all $\matM \in S^{r\times n}$, there exists a non-empty $\al \subseteq \{1, \ldots, r\}$, $\Endelt \in \Endset$, and $s \in S$ such that for all $j \in \{1, \ldots, n\}$,
\[s+ \Endelt(\matm_{\al,j}) \in A.\]
We denote by $\crichclass_\Endset \semisplus$ the class of subsets of $\semisplus$ which are \crich{} up to $\Endset$.
\end{definition}

The analogue of Lemma \ref{lem:equivtocombrich} holds for the class $\crichclass_\Endset$, and the same argument used to prove Lemma \ref{crichispr} shows that $\crichclass_\Endset$ is partition regular. Note that the larger the set $\Endset$ of endomorphisms, the larger the class $\crichclass_\Endset$ is and the weaker the notion of \emph{combinatorially rich up to $\Endset$} becomes.

In a semiring $\semisring$, the right dilations $\{\rho_r: x \mapsto xr \ | \ r \in S \}$ form a natural class of endomorphisms of $\semisplus$. It is with respect to this set of endomorphisms that the following theorem says that \emph{sets of positive multiplicative density are additively combinatorially rich.} 

\begin{theorem}\label{thm:multdensitygivesaddpatterns}
Let $\semisring$ be a semiring. Suppose that $\semirtimes$ is a left amenable subsemigroup of $\semistimes$ which is additively large in the following way: $R$ is \crich{} in $\semisplus$. If $A \subseteq R$ is of positive upper Banach density in $\semirtimes$, then $A$ is \crich{} up to $\{\rho_r: x \mapsto xr \ | \ r \in R \}$ in $\semisplus$.
\end{theorem}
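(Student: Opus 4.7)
The plan is to fuse the two hypotheses — additive combinatorial richness of $R$ inside $(S,+)$ and multiplicative positive density of $A$ inside $(R,\cdot)$ — through the multiset characterization of upper Banach density (Theorem \ref{thm:characterizationofdensity}) and the density Hales--Jewett theorem (Theorem \ref{thm:densityhj}), following the template of the proof of Theorem \ref{thm:densityimpliesar} but with an extra ``bootstrap'' step that uses the multidimensional form of combinatorial richness (Lemma \ref{lem:equivtocombrich} \ref{item:multidcombrich}) to ensure that the configuration we work with lands inside $R$ (so that the multiplicative density is usable).

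First I would fix $n \in \N$, set $\denlet = \dstar_{\semirtimes}(A)$, pick $0 < \eps < \denlet$, and let $r_{\mathrm{HJ}} = r(n,\eps)$ from Theorem \ref{thm:densityhj}. Using that $R$ is \crich{} in $\semisplus$, I would invoke Lemma \ref{lem:equivtocombrich} \ref{item:multidcombrich} with parameters $n$ and $m = r_{\mathrm{HJ}}$ to obtain an $r$ with the following property: for every $\matM \in S^{r \times n}$ there are disjoint non-empty $\al_1, \ldots, \al_{r_{\mathrm{HJ}}} \subseteq \{1,\ldots, r\}$ and $s \in S$ so that
\[s + \matm_{\al_1,j_1} + \cdots + \matm_{\al_{r_{\mathrm{HJ}}},j_{r_{\mathrm{HJ}}}} \in R \quad \text{for all } (j_1,\ldots,j_{r_{\mathrm{HJ}}}) \in [n]^{r_{\mathrm{HJ}}}.\]
This $r$ will be the value witnessing combinatorial richness of $A$ up to $\{\rho_r\}_{r \in R}$.

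Given $\matM \in S^{r \times n}$ and the data above, I would define $\pi: [n]^{r_{\mathrm{HJ}}} \to R$ by $\pi(w) = s + \sum_{k=1}^{r_{\mathrm{HJ}}} \matm_{\al_k, w_k}$, form the finite multiset $\multif$ supported on the image of $\pi$ with $\multif(f) = |\pi^{-1}(f)|$ (so $|\multif| = n^{r_{\mathrm{HJ}}}$), and apply Theorem \ref{thm:characterizationofdensity} inside the left amenable semigroup $\semirtimes$ to produce $r_0 \in R$ with $|\multif \cap A r_0^{-1}| \geq \eps |\multif|$. This says $|\{w \in [n]^{r_{\mathrm{HJ}}} : \pi(w) r_0 \in A\}| \geq \eps n^{r_{\mathrm{HJ}}}$, so by Theorem \ref{thm:densityhj} this set contains the image of a combinatorial word $v$. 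Letting $I_1 \subseteq \{1,\ldots,r_{\mathrm{HJ}}\}$ be the $\star$-positions of $v$, $I_2$ its complement, $\al = \bigcup_{i \in I_1} \al_i$ (a disjoint union, hence $\matm_{\al,j} = \sum_{i \in I_1} \matm_{\al_i,j}$), and $s'' = s + \sum_{i \in I_2} \matm_{\al_i, v_i} \in S$, one checks $\pi(v(j)) = s'' + \matm_{\al,j}$ for all $j \in [n]$. Distributivity then gives $s'' r_0 + \matm_{\al,j} r_0 \in A$, so with $s' = s'' r_0 \in S$ we obtain $s' + \rho_{r_0}(\matm_{\al,j}) \in A$ for every $j$, yielding the claimed combinatorial richness up to $\{\rho_r\}_{r \in R}$.

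The main obstacle, such as it is, is conceptual rather than technical: one has to coordinate two different semigroup structures and arrange the argument so that the additive richness of $R$ \emph{inside} $S$ produces a configuration actually living in $R$ (so that multiplicative density in $R$ can be applied), and then rely on left distributivity to convert the resulting right dilation by $r_0$ into an additive pattern back in $S$. Keeping the left/right conventions in Theorem \ref{thm:characterizationofdensity} compatible with right dilation $\rho_{r_0}$ — and ensuring the disjointness of the $\al_k$'s so that $\matm_{\al,j}$ decomposes correctly along the combinatorial line — is where the bookkeeping must be handled carefully, but no novel estimate is required beyond the two cited density theorems.
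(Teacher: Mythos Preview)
Your proposal is correct and follows essentially the same approach as the paper's proof: both use Lemma \ref{lem:equivtocombrich} \ref{item:multidcombrich} (with $m$ equal to the Hales--Jewett parameter $r(n,\eps)$) to push a full $[n]^m$-cube into $R$, then apply the multiset inequality of Theorem \ref{thm:characterizationofdensity} in $(R,\cdot)$ followed by Theorem \ref{thm:densityhj} to extract a combinatorial line, and finally invoke right distributivity to read off the $\rho_{r_0}$-dilated configuration in $A$. The only difference is notational, and your explicit unpacking of the distributivity step is a nice touch the paper leaves implicit.
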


\begin{proof}
Let $0 < \eps < \dstar_{(R,\cdot)}(A)$ and $n \in \N$. Let $\ell = r(n,\eps)$ from Theorem \ref{thm:densityhj}, and using that $R$ is \crich{} in $\semisplus$, let $L = r(n,\ell)$ from Lemma \ref{lem:equivtocombrich} \ref{item:multidcombrich}.

Let $M \in S^{L \times n}$. By our choice of $L$ and Lemma \ref{lem:equivtocombrich} \ref{item:multidcombrich}, there exist disjoint, non-empty $\al_1, \ldots, \al_\ell \subseteq \{1, \ldots, L\}$ and $s \in S$ such that for all $w \in [n]^\ell$,
\[\pi(w) \defeq s+ \matm_{\al_1,w_1} + \matm_{\al_2,w_2} + \cdots + \matm_{\al_\ell,w_\ell} \in R.\]

Let $F \subseteq R$ be the image of $\pi: [n]^\ell \to R$, and define the multiset $\multif: F \to \N$ by
\[\multif(f) = \big| \{ w \in [n]^\ell \ | \ \pi(w) = f \}\big|.\]
Since $\dstar_{(R,\cdot)}(A) > \eps$, Theorem \ref{thm:characterizationofdensity} gives that there exists an $r \in R$ such that
\[\eps |\multif| \leq |\multif \cap A r^{-1}| = \big| \{ w \in [n]^\ell \ | \ \pi(w) \in Ar^{-1} \}\big|,\]
where $Ar^{-1}$ is computed in $\semirtimes$. Since $|\multif| = n^\ell$, Theorem \ref{thm:densityhj} gives that there exists a variable word $v: [n] \to [n]^\ell$ such that
\[\{ \pi(v(j)) \ | \ j \in \{1,\ldots,n\} \} \subseteq A r^{-1},\]
where now we can regard $A r^{-1}$ as computed in $\semistimes$. Therefore, there exists $\al \subseteq \{1, \ldots L\}$, $r \in R$, and $s' \in S$ such that for all $j \in \{1, \ldots, n\}$,
\[s'+ \rho_r(\matm_{\al,j}) \in A.\]
This shows that $A$ is \crich{} up to $\{\rho_r \ | \ r \in R \}$ in $\semisplus$.
\end{proof}

A set which is \crich{} up to $\{\rho_r: x \mapsto xr \ | \ r \in R \}$ contains an abundance of additive configurations provided the endomorphisms in $\Endset$ are not degenerate. (Note that, for example, any non-empty subset of $\Z$ is \crich{} up to $\{x \mapsto 0\}$.) In this case, Theorem \ref{thm:multdensitygivesaddpatterns} gives that sets of positive multiplicative density contain arbitrarily large additive patterns, some examples of which are given in Examples \ref{examples:combcubes}. Here are two example corollaries of the theorem.

\begin{corollary}\label{cor:posmultdenimpliesaddpatternsingaussianints}
Let $A$ be a subset of positive multiplicative density in the non-zero Gaussian integers $(\Z[i] \setminus \{0\},\cdot)$. For any $n \in \N$, there exists $z_0 \in \Z[i]$ and $z_1 \in \Z[i] \setminus \{0\}$ such that 
\begin{align}\label{eqn:gaussiancubeconfig}\big\{ z_0 + z_1(k + \ell i) \ \big| \ k, \ell \in \{0, \ldots, n\} \big\} \subseteq A.\end{align}
\end{corollary}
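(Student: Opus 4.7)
The plan is to apply Theorem \ref{thm:multdensitygivesaddpatterns} directly to the semiring $(\Z[i],+,\cdot)$ with the multiplicative subsemigroup $R = \Z[i]\setminus\{0\}$, and then unpack the resulting combinatorial richness conclusion on a carefully chosen matrix.

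First I would verify the hypotheses of Theorem \ref{thm:multdensitygivesaddpatterns}. The semigroup $(\Z[i]\setminus\{0\},\cdot)$ is commutative (so in particular left amenable), and because $\Z[i]$ is an integral domain, $\Z[i]\setminus\{0\}$ is closed under multiplication. The set $R = \Z[i]\setminus\{0\}$ is cofinite in $\Z[i]$, so $\dstar_{(\Z[i],+)}(R) = 1 > 0$, and by Theorem \ref{thm:densityimpliesar} it is \crich{} in $(\Z[i],+)$. Thus all the assumptions of Theorem \ref{thm:multdensitygivesaddpatterns} are met, and the conclusion is that $A$ is \crich{} up to the class of right dilations $\{\rho_r : x \mapsto xr \mid r \in R\}$ in $(\Z[i],+)$.

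Next I would apply Definition \ref{def:combrichuptoe} with parameter $n' = (n+1)^2$ to obtain an $r = r(n') \in \N$. I would then construct a matrix $\matM \in \Z[i]^{r \times (n+1)^2}$ whose columns are indexed by pairs $(k,\ell) \in \{0,\ldots,n\}^2$ and defined to be constant in the row index:
\[
M_{i,(k,\ell)} = k + \ell i \qquad \text{for all } i \in \{1,\ldots,r\}.
\]
For any non-empty $\al \subseteq \{1,\ldots,r\}$ and any $(k,\ell)$, this gives $\matm_{\al,(k,\ell)} = |\al|(k+\ell i)$. By combinatorial richness up to $\{\rho_r \mid r \in R\}$, there exist a non-empty $\al \subseteq \{1,\ldots,r\}$, some $w \in R$, and some $z_0 \in \Z[i]$ such that for every $(k,\ell) \in \{0,\ldots,n\}^2$,
\[
z_0 + \rho_w\big(\matm_{\al,(k,\ell)}\big) \;=\; z_0 + |\al|(k + \ell i)\, w \;\in\; A.
\]

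Finally I would set $z_1 = |\al| w$. Since $|\al| \geq 1$ and $w \in \Z[i]\setminus\{0\}$, and since $\Z[i]$ has no zero divisors, $z_1 \in \Z[i]\setminus\{0\}$. We then have $\{z_0 + z_1(k + \ell i) \mid k,\ell \in \{0,\ldots,n\}\} \subseteq A$, as required. There is no real obstacle here; the only subtlety is the absorption of $|\al|$ into $z_1$, which works precisely because $\Z[i]$ is a domain so that dilation by $|\al|$ preserves non-zeroness.
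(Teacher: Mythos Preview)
Your proposal is correct and follows essentially the same approach as the paper's proof: both verify the hypotheses of Theorem \ref{thm:multdensitygivesaddpatterns} and then extract the configuration from combinatorial richness up to dilations. The only cosmetic differences are that the paper invokes thickness of $\Z[i]\setminus\{0\}$ (rather than cofiniteness and Theorem \ref{thm:densityimpliesar}) to get combinatorial richness, and appeals to the homomorphism formulation analogous to Lemma \ref{lem:equivtocombrich} \ref{item:homdef} (as in Examples \ref{examples:combcubes} \ref{item:thirdexampleofcrstructure}) rather than directly building the constant-row matrix; your matrix is exactly what that homomorphism argument unwinds to.
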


\begin{proof}
The sub-semigroup $\Z[i] \setminus \{0\}$ is combinatorially rich in $(\Z[i],+)$ because it is thick. It follows by Theorem \ref{thm:multdensitygivesaddpatterns} that the set $A$ is \crich{} up to dilations in $(\Z[i],+)$. Using the analogue of Lemma \ref{lem:equivtocombrich} \ref{item:homdef} for combinatorial richness up to dilations just as it is used in Examples \ref{examples:combcubes} \ref{item:thirdexampleofcrstructure}, we find $z_0 \in \Z[i]$ and a dilation $z_1 \in \Z[i] \setminus \{0\}$ for which (\ref{eqn:gaussiancubeconfig}) holds.
\end{proof}

The previous corollary can be extended to the ring of integers in a number field in the following way: any subset of positive multiplicative density of the multiplicative subsemigroup of non-zero integers contains translated, dilated copies of any finite set, in particular the cube $\{0,\ldots, n\}^d$ (under any association between the additive group of the ring of integers and $\Z^d$).

\begin{corollary}\label{cor:posmultdenimpliesaddpatternsinfpx}
Let $A$ be a subset of positive multiplicative density in the semigroup $(\F_p[x] \setminus \{0\},\cdot)$. For any $n \in \N$, there exist $f \in \F_p[x]$ and a vector subspace $V$ of $\F_p[x]$ of dimension $n$ for which $f + V \subseteq A$.
\end{corollary}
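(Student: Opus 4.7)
The plan is to deduce this corollary from Theorem~\ref{thm:multdensitygivesaddpatterns} applied with $R = S = \F_p[x] \setminus \{0\}$ as a multiplicative sub-semigroup of the semiring $\F_p[x]$, then extract the subspace structure using the choice of matrix from Examples~\ref{examples:combcubes}(IV). First I would verify the hypotheses of Theorem~\ref{thm:multdensitygivesaddpatterns}. The sub-semigroup $(\F_p[x]\setminus\{0\},\cdot)$ is commutative and therefore left amenable. Moreover, $\F_p[x]\setminus\{0\}$ is additively thick in $(\F_p[x],+)$: given any finite $F \subseteq \F_p[x]$, choose $s \in \F_p[x]\setminus(-F)$, so that $F + s \subseteq \F_p[x]\setminus\{0\}$. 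Hence it is additively central, so additively \crich{} by Figure~\ref{fig:containmentdiagram}. Theorem~\ref{thm:multdensitygivesaddpatterns} then yields that $A$ is \crich{} up to the set of dilations $\Endset = \{\rho_q : h \mapsto hq \mid q \in \F_p[x]\setminus\{0\}\}$ in $(\F_p[x],+)$.

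Next, exactly as the analogue of Lemma~\ref{lem:equivtocombrich}(II) holds for the class $\crichclass_\Endset$, there exists $r \in \N$ such that for every matrix $\matM \in (\F_p[x])^{r \times p}$ one can find disjoint non-empty $\al_1, \ldots, \al_n \subseteq \{1,\ldots,r\}$, an element $q \in \F_p[x]\setminus\{0\}$, and $s \in \F_p[x]$ such that for all $(j_1,\ldots,j_n) \in \{1,\ldots,p\}^n$,
\[ s + q\bigl(\matm_{\al_1,j_1} + \matm_{\al_2,j_2} + \cdots + \matm_{\al_n,j_n}\bigr) \in A. \]
Following Examples~\ref{examples:combcubes}(IV), I would apply this to the matrix $\matM$ defined by $M_{i,j} = (j-1)x^i$. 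Writing $g_i \defeq \sum_{k \in \al_i} x^k$, the displayed inclusion becomes
\[ s + (j_1-1)\, q g_1 + \cdots + (j_n-1)\, q g_n \in A \quad \text{for all } j_1,\ldots,j_n \in \{1,\ldots,p\}. \]
Since we are in characteristic $p$, as each $j_i$ ranges over $\{1,\ldots,p\}$ the scalar $j_i - 1$ ranges over all of $\F_p$, so setting $f = s$ and $V = \mathrm{span}_{\F_p}\{qg_1,\ldots,qg_n\}$ yields $f + V \subseteq A$.

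The final step is to observe that $\dim_{\F_p} V = n$. Because $\al_1,\ldots,\al_n$ are pairwise disjoint, the polynomials $g_1,\ldots,g_n$ have pairwise disjoint monomial supports and are therefore $\F_p$-linearly independent in $\F_p[x]$. Since $\F_p[x]$ is an integral domain and $q \neq 0$, multiplication by $q$ is injective and preserves linear independence, so $qg_1,\ldots,qg_n$ are independent and $V$ is genuinely $n$-dimensional.

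I do not anticipate a major obstacle; the main technical point is invoking the multi-dimensional reformulation (analogue of Lemma~\ref{lem:equivtocombrich}(II)) for the class $\crichclass_\Endset$, which the paper asserts holds by the same proof. Everything else is a direct transcription of Examples~\ref{examples:combcubes}(IV) combined with the disjoint-support linear-independence observation above.
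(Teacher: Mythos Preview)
Your proof is correct and follows essentially the approach the paper uses for the analogous Gaussian-integer corollary (Corollary~\ref{cor:posmultdenimpliesaddpatternsingaussianints}): verify that $\F_p[x]\setminus\{0\}$ is additively thick, apply Theorem~\ref{thm:multdensitygivesaddpatterns} to obtain combinatorial richness up to dilations, and then specialize via the matrix from Examples~\ref{examples:combcubes}(IV). The paper itself foregoes the proof of this corollary in favor of the stronger Corollary~\ref{cor:geoarithmeticinfpx}, which proceeds instead through Theorem~\ref{thm:geopatternsinsetsofdensity} and the Furstenberg--Katznelson density result for affine subspaces; your direct route via the $\crichclass_\Endset$ analogue of Lemma~\ref{lem:equivtocombrich}(II) is exactly what the paper intends here. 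One small notational slip: you write ``$R = S = \F_p[x]\setminus\{0\}$'', but in Theorem~\ref{thm:multdensitygivesaddpatterns} the ambient semiring is $S = \F_p[x]$ and $R = \F_p[x]\setminus\{0\}$ is the multiplicative sub-semigroup.
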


We forego the proof of Corollary \ref{cor:posmultdenimpliesaddpatternsinfpx} because much more will be shown in Section \ref{sec:geoarithmeticprogressions}. With a bit more work, we will show that the set $A$ in the previous two corollaries actually contains ``geo-arithmetic'' patterns involving both the additive and multiplicative structures.

It is worth noting that while the set $A$ in Theorem \ref{thm:multdensitygivesaddpatterns} is guaranteed to be additively \crich{} up to right dilations, it need not have additive density. Indeed, examples of sets which are multiplicatively thick but not additively combinatorially rich are given in Section \ref{sec:negexampleb}.

\subsection{Arithmetic progressions in sub-semigroups of \texorpdfstring{$\semintimes$}{(N,.)}}\label{sec:apinsubsemigroups}

The technique used in the proof of Theorem \ref{thm:multdensitygivesaddpatterns} can be used in conjunction with a theorem of Green and Tao to find arbitrarily long arithmetic progressions in sets of positive multiplicative density in certain sub-semigroups of $\semintimes$ of algebraic origin.

Let $L \big / \Q$ be a degree $d \geq 2$ field extension, and denote by $\normlq: L \to \Q$ the norm and $\intringl$ the ring of integers of $L$. Fix an integral basis $(\ell_1, \ldots, \ell_d)$ for $L \big / \Q$ (that is, $(\ell_i)_{i=1}^d \subseteq \intringl$ is a vector space basis for $L \big/ \Q$ and a $\Z$-module basis for $\intringl$). The map
\begin{align*}
\Psi: \Z^d &\to \Z \\
(x_1, \ldots, x_d) &\mapsto \normlq \big( x_1 \ell_1 + \ldots + x_d \ell_d \big)
\end{align*} 
is a degree $d$, homogeneous, integral-coefficient polynomial in $d$ variables. We will call $\Psi$ the \emph{norm form arising from the extension $L \big/ \Q$ and the basis $(\ell_1, \ldots, \ell_d)$}.

\begin{examples} The following are examples of norm forms of degree 2 and 3. That the bases are indeed integral bases of the associated number fields may be found in Chapter 2 of \cite{marcusbook} as Corollary 2 and Exercises 35 and 41.
\begin{enumerate}[label=(\Roman*)]
\item \label{item:examplenormformone} $\Psi(x_1,x_2) = x_1^2 - a x_2^2$ arises from $\Q(\sqrt{a}) \big / \Q$ and the basis $(1, \sqrt{a})$, where $a \in \Z$ is square-free and $a \equiv 2, 3 \pmod 4$.
\item $\Psi(x_1,x_2) = x_1^2 + x_1 x_2 - (a-1) x_2^2 \big / 4$ arises from $\Q(\sqrt{a}) \big / \Q$ and the basis $\big(1, (1+\sqrt{a})/2 \big)$, where $a \in \Z \setminus \{1\}$ is square-free and $a \equiv 1 \pmod 4$.
\item $\Psi(x_1,x_2,x_3) = x_1^3 + a x_2^3 + a^2 x_3^3 - 3a x_1x_2x_3$ arises from $\Q(\sqrt[3]{a}) \big / \Q$ and the basis $(1, \sqrt[3]{a}, (\sqrt[3]{a})^2)$, where $a \in \Z \setminus \{-1,0\}$ is square-free and $a \not\equiv \pm 1 \pmod 9$.
\item $\Psi(x_1,x_2,x_3) = x_1^3-x_2^3+x_3^3 -3 x_1 (x_2^2-x_3^2) +3 x_3 (2 x_1+x_2) (x_1+x_3)$ arises from $\Q(\omega) \big / \Q$ and the basis $(1, \omega, \omega^2)$, where $\omega = \zeta_{7}+\zeta_{7}^{-1}$.
\end{enumerate}
\end{examples}

For a norm form $\Psi$, let
\[\NPsi = \big\{ n \in \N \ \big| \ \exists z \in \Z^d, \ n = |\Psi(z)| \big\}\]
be the set of absolute values of non-zero integers represented by $\Psi$. It is interesting to ask about combinatorial patterns contained in $\NPsi$. Because $\dstar_{\seminplus}(\NPsi) = 0$ (see \cite[Theorem T]{odonireps}), we cannot immediately appeal to Szemer\'edi-type theorems to find combinatorial patterns.  Norm forms do, however, represent a positive upper relative density of the (positive, rational) prime numbers $\mathbb{P}$, as the following lemma shows.

\begin{lemma}\label{lem:normformsrepresentposdensityofp}
Every norm form $\Psi$ represents a positive relative upper density of the prime numbers:
\begin{align} \label{eqn:posreldensity} \limsup_{N \to \infty} \frac{\big| \NPsi \cap \mathbb{P} \cap \{1, \ldots, N \}\big|}{\big| \mathbb{P} \cap \{1, \ldots, N \}\big|} > 0.\end{align}
\end{lemma}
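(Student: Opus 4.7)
The plan is to re-express the condition $p \in N_\Psi$ for a rational prime $p$ in ideal-theoretic terms and then invoke Chebotarev's density theorem. Because $(\ell_1,\ldots,\ell_d)$ is an integral basis, a positive integer $n$ lies in $N_\Psi$ if and only if $n = |N_{L/\Q}(\alpha)|$ for some nonzero $\alpha \in \mathcal{O}_L$. Using multiplicativity of the ideal norm, this translates for a rational prime $p$ into the existence of a principal prime ideal $\mathfrak{p} \subseteq \mathcal{O}_L$ of residue degree one above $p$: if $(\alpha) = \prod \mathfrak{p}_i^{a_i}$ has norm $p$, then the factorization must consist of a single prime ideal of norm $p$, which is then principal; conversely, a generator of such a principal $\mathfrak{p}$ has absolute norm $p$.

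First I would let $H$ denote the Hilbert class field of $L$, whose defining property is that a prime ideal of $\mathcal{O}_L$ is principal if and only if it splits completely in $H/L$. Let $\widetilde{H}$ be the Galois closure of $H$ over $\Q$, a finite Galois extension containing $L$. For any rational prime $p$ that is unramified in $\widetilde{H}$ and splits completely there, $p$ splits completely in $L/\Q$ and each of the $d$ primes of $\mathcal{O}_L$ above $p$ splits completely in $H/L$; such primes therefore have residue degree one and are principal, so $p \in N_\Psi$.

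Finally I would invoke Chebotarev's density theorem for $\widetilde{H}/\Q$: the set of rational primes that split completely has natural density $1/[\widetilde{H}:\Q] > 0$. Omitting the finitely many ramified primes does not change this density, so
\[\liminf_{N \to \infty} \frac{\big|N_\Psi \cap \mathbb{P} \cap \{1,\ldots,N\}\big|}{\big|\mathbb{P} \cap \{1,\ldots,N\}\big|} \;\geq\; \frac{1}{[\widetilde{H}:\Q]} \;>\; 0,\]
which is stronger than (\ref{eqn:posreldensity}) (the liminf is positive, not merely the limsup). The main conceptual point is the ideal-theoretic translation in the first paragraph; once it is in place, the argument is a textbook application of Chebotarev and class field theory, with no quantitative estimates to carry out. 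One subtlety worth noting is that the Hilbert class field $H$ is only guaranteed to be Galois over $L$, not over $\Q$, which is why one passes to its Galois closure $\widetilde{H}/\Q$ before applying Chebotarev.
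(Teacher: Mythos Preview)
Your proposal is correct and follows essentially the same approach as the paper: both arguments pass to the Hilbert class field $H$ of $L$, then to a Galois closure of $H$ over $\Q$, and apply Chebotarev to conclude that primes splitting completely there lie in $N_\Psi$ via a principal prime of norm $p$. The only cosmetic difference is that the paper phrases the conclusion via Dirichlet density (hence only the $\limsup$), whereas you invoke the natural-density form of Chebotarev to obtain the $\liminf$ bound directly.
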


\begin{proof}
Suppose $\Psi$ arises from the extension $L \big/ \Q$. Consider the tower of finite field extensions $\Q \subseteq L \subseteq H \subseteq M$ where $H$ is the Hilbert class field of $L$ (\cite[Theorems 5.18 and 8.10]{coxbook}) and $M$ is a finite Galois extension of $\Q$ containing $H$ (take, for example, a Galois closure of the extension $H \big/ \Q$). It is a consequence of the \v{C}ebotarev Density Theorem that the Dirichlet density of the set of rational primes that split completely in $M$ exists and is positive (indeed, it is equal to $[M:\Q]^{-1}$); see \cite[Pg. 170]{coxbook}.

Let $p \in \mathbb{P}$ be a prime which splits completely in $M$. Since $p$ splits completely in $L$ (see \cite[Pg. 177]{coxbook}), the ideal $p \intringl$ factors into the product $\mathfrak{p}_1 \cdots \mathfrak{p}_d$ of $d = [L:\Q]$ distinct prime ideals of $\intringl$. Because $\mathfrak{p}_1$ lies above $p$, it splits completely in $M$, and hence it splits completely in $H$, too. Since $H$ is the Hilbert class field of $L$, this means that there exists $\ell \in \intringl$ for which $\mathfrak{p}_1 = \ell \intringl$ (see \cite[Corollary 5.25]{coxbook}). Since the norm of the ideal $p \intringl$ is $p^d$, the norm of $\mathfrak{p}_1$ is $p$. Since $\mathfrak{p}_1$ is principal, $|\normlq(\ell)| = p$. It follows by writing $\ell$ in the integral basis from which $\Psi$ arose that $p \in \NPsi$.

Thus the Dirichlet density of the set $\NPsi \cap \mathbb{P}$ exists and is positive. It follows that the the limit supremum in (\ref{eqn:posreldensity}) is positive (see \cite[Chapter III.1]{tenenbaumbook}).
\end{proof}

It is shown in \cite{greentaooriginal} that any subset of $\mathbb{P}$ of positive relative upper density (in the sense of (\ref{eqn:posreldensity})) is AP-rich. Therefore, the set $\NPsi$ is AP-rich.

Since the norm $\normlq$ is multiplicative, $\big( \NPsi, \cdot \big)$ is a sub-semigroup of $\semintimes$. Our aim now is to prove that subsets of the semigroup $\big( \NPsi, \cdot \big)$ which have positive multiplicative upper Banach density are AP-rich. The argument used here is essentially the same one used in the proof of Theorem \ref{thm:multdensitygivesaddpatterns}.

\begin{theorem}\label{thm:relativeszeminsubsemigroupsofn}
Let $\Psi$ be a norm form. If $A \subseteq \NPsi$ satisfies $\dstar_{(\NPsi,\cdot)}(A) > 0$, then $A$ is AP-rich.
\end{theorem}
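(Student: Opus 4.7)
The plan is to mimic the strategy used in the proof of Theorem \ref{thm:multdensitygivesaddpatterns}, but substituting the density Hales--Jewett theorem with the Green--Tao theorem and finite Szemer\'edi. The combinatorial input we want to import is: by Lemma \ref{lem:normformsrepresentposdensityofp} and the Green--Tao theorem \cite{greentaooriginal}, the set $\NPsi \cap \mathbb{P}$ is AP-rich, so for every $L \in \N$ we can find a length-$L$ arithmetic progression $p, p+d, \ldots, p+(L-1)d$ entirely inside $\NPsi \cap \mathbb{P}$. We will combine this with a density argument on $\semintimes$-translates to transport a long AP inside $\NPsi$ into $A$.

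Fix $n \in \N$ and $0 < \eps < \dstar_{(\NPsi,\cdot)}(A)$. First I would invoke the finite Szemer\'edi theorem to produce $L = L(n,\eps) \in \N$ such that any subset of $\{0,1,\ldots,L-1\}$ of cardinality at least $\eps L$ contains an arithmetic progression of length $n$. Using Green--Tao together with Lemma \ref{lem:normformsrepresentposdensityofp}, I would then select an arithmetic progression $F = \{p + id \ | \ 0 \leq i \leq L-1\} \subseteq \NPsi \cap \mathbb{P}$ of length $L$. Since $\NPsi$ is a commutative (hence left amenable) sub-semigroup of $\semintimes$, Theorem \ref{thm:characterizationofdensity} applies to the constant-$1$ multiset $\multif$ supported on $F$ inside $(\NPsi,\cdot)$: there exists $s \in \NPsi$ such that
\[
\big|\{ i \in \{0,\ldots,L-1\} \ | \ s(p+id) \in A \}\big| \;=\; |\multif \cap A s^{-1}| \;\geq\; \eps L,
\]
where $A s^{-1}$ is taken inside $(\NPsi,\cdot)$.

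Let $B \subseteq \{0,1,\ldots,L-1\}$ denote the index set above. Because $s \in \N$, the set $\{ s(p+id) \ | \ i \in B \} = \{ sp + i(sd) \ | \ i \in B\}$ is a subset of the genuine integer arithmetic progression $\{sp + i(sd) \ | \ 0 \leq i \leq L-1\}$. By our choice of $L$, the index set $B$ contains an AP $\{j, j+k, j+2k, \ldots, j+(n-1)k\}$ with $k \geq 1$, and therefore
\[
\big\{ sp + (j + mk)(sd) \ \big|\ 0 \leq m \leq n-1 \big\} \;\subseteq\; A
\]
is an arithmetic progression of length $n$ in $A$. Since $n$ was arbitrary, $A$ is AP-rich.

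The only step that requires any care is ensuring the compatibility of the two density notions: the hypothesis gives density in the multiplicative semigroup $(\NPsi,\cdot)$, while the AP we wish to hit is additive in $\Z$. The multiset translation trick in Theorem \ref{thm:characterizationofdensity} bridges the two, because right multiplication by an integer $s \in \NPsi \subseteq \N$ carries an additive AP in $\N$ to an additive AP in $\N$ while simultaneously playing the role of the multiplicative translate witnessing positive density. I do not anticipate any further obstacle beyond verifying, as the paper already does in the sentence preceding the theorem, that $(\NPsi,\cdot)$ is indeed a sub-semigroup of $\semintimes$ and is (commutative, hence) amenable.
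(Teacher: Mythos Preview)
Your proof is correct and follows essentially the same approach as the paper's: pick $\eps<\dstar_{(\NPsi,\cdot)}(A)$, use Szemer\'edi to determine $L$, find a length-$L$ arithmetic progression inside $\NPsi$, apply Theorem~\ref{thm:characterizationofdensity} to translate multiplicatively so that at least an $\eps$-fraction of the progression lands in $A$, and then invoke Szemer\'edi again. The only cosmetic difference is that you fold the Green--Tao step into the proof itself (and insist the progression lie in $\NPsi\cap\mathbb{P}$), whereas the paper records that $\NPsi$ is AP-rich in the paragraph just before the theorem and then simply writes ``Let $P\subseteq\NPsi$ be an arithmetic progression of length $L$.''
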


\begin{proof}
Let $\ell \in \N$ and $0 < \eps < \dstar_{(\NPsi,\cdot)}(A)$. By Szemer\'edi's theorem \cite{szemeredi}, there exists an $L \in \N$ so that any subset of cardinality at least $\eps L$ of an arithmetic progression of length $L$ contains an arithmetic progression of length $\ell$. Let $P \subseteq \NPsi$ be an arithmetic progression of length $L$. Since $\dstar_{(\NPsi,\cdot)}(A) > \eps$, Theorem \ref{thm:characterizationofdensity} gives that there exists an $n \in \NPsi$ for which $|Pn \cap A| > \eps L$. Since $Pn$ is an arithmetic progression of length $L$, it follows by our choice of $L$ that $Pn \cap A$ contains an arithmetic progression of length $\ell$.
\end{proof}

The technique used here can be used to transfer other additive combinatorial patterns in the semigroup $\big( \NPsi, \cdot \big)$ to multiplicatively dense subsets of it. Thus, as results concerning additive patterns in $\NPsi$ improve, so will results for multiplicatively large subsets of it.

\subsection{Geo-arithmetic patterns in semirings}\label{sec:geoarithmeticprogressions}

It was shown in \cite[Theorem 3.15]{bmultlarge} that sets of positive multiplicative density are not only AP-rich, they contain ``geo-arithmetic'' patterns. In what follows, we prove a strengthening and relativization of this result by showing that such geo-arithmetic patterns exist in sets with positive multiplicative density in sub-semigroups of positive additive upper Banach density.

As the approach in \cite[Theorem 3.15]{bmultlarge} uses a correspondence principle and an intersectivity lemma, it is limited to countable semigroups. Here we avoid use of either by appealing to the purely combinatorial version of the IP Szemer\'edi theorem, Theorem \ref{thm:ipszemeredicomb}.

\begin{theorem}\label{thm:geopatternsinsetsofdensity}
Let $\semisring$ be a commutative semiring. Suppose that $\semirtimes$ is a subsemigroup of $\semistimes$ which is additively large in the following ways:
\begin{itemize}
\item $\dstar_{\semisplus}(R) > 0$, and
\item for all $r \in R$, $\dstar_{\semisplus}(rS) > 0$.
\end{itemize}
If $A \subseteq R$ satisfies $\dstar_{\semirtimes}(A) > 0$, then for all $n \in \N$ and all endomorphisms $\varphi_1, \ldots, \varphi_n$ of $R$, there exists an $\eps > 0$ such that for all $F \in \finitesubsets {S}$, there exists $F' \subseteq F$ with $|F'| \geq \eps |F|$, $x \in S$, and $z \in R$ such that $x+F' \subseteq R$ and
\begin{align}\label{eqn:geoarithmeticsets} \big\{ z \varphi_i(x+f) \ \big| \ f \in F', \ i \in \{1,\ldots,n\} \big\} \subseteq A.\end{align}
\end{theorem}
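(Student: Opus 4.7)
Proof plan. The strategy combines Theorem \ref{thm:ipszemeredicomb} (the combinatorial IP Szemer\'edi theorem applied in the multiplicative semigroup $(R,\cdot)$) with Theorem \ref{thm:characterizationofdensity} (the characterization of additive upper Banach density by translates) via a Fubini-type averaging argument. The idea is that Theorem \ref{thm:ipszemeredicomb} produces a ``good shift set'' $T \subseteq R$ uniformly in the endomorphisms $\varphi_i$; combined with the additive largeness of $R$ in $S$, this allows us to realize a proportional subset of $T$ as additive translates $x + f$ with $f \in F$ and then to pick a common multiplicative dilation $z$.

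First, set $\delta_A := \dstar_{(R,\cdot)}(A)$ and $\delta_R := \dstar_{(S,+)}(R)$, both positive by hypothesis. Applying Theorem \ref{thm:ipszemeredicomb} to $(R,\cdot)$ with both source and target equal to $(R,\cdot)$ and with $n$ endomorphisms, I would extract constants $r \in \N$ and $\beta > 0$ depending only on $n$ and $\delta_A/2$; crucially, these constants are uniform over all choices of $\varphi_1,\ldots,\varphi_n$ and all multiplicative invariant means $\lambda$ on $R$ with $\lambda(A) \geq \delta_A/2$. Writing $B_s := \bigcap_{i=1}^n A\varphi_i(s)^{-1} \subseteq R$, the conclusion is that
\[T = T(\lambda,\varphi_1,\ldots,\varphi_n) := \{\,s \in R : \lambda(B_s) \geq \beta\,\}\]
is $\ip_r^*$ in $(R,\cdot)$.

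Next, given the $\varphi_i$'s and a finite $F \subseteq S$, I would find $x \in S$ and $F_T \subseteq F$ so that $x + F_T \subseteq T$. Granting the key estimate $\dstar_{(S,+)}(T) \geq \eps' > 0$ with $\eps'$ depending only on $n$, $\delta_A$, and $\delta_R$ (see below), Theorem \ref{thm:characterizationofdensity} furnishes $x \in S$ with $|F \cap (T-x)| \geq (\eps'/2)|F|$; set $F_T := F \cap (T-x)$, so $x + F_T \subseteq T \subseteq R$ and $\lambda(B_{x+f}) \geq \beta$ for each $f \in F_T$. Fubini then yields
\[\int_R \bigl|\{\,f \in F_T : z\varphi_i(x+f) \in A\ \forall i\,\}\bigr|\,d\lambda(z) \;=\; \sum_{f \in F_T} \lambda(B_{x+f}) \;\geq\; \beta|F_T| \;\geq\; (\beta\eps'/2)\,|F|,\]
so pigeonholing in $z$ produces some $z \in R$ with $|F'| \geq (\beta/2)|F_T| \geq (\beta\eps'/4)|F|$, where $F' := \{f \in F_T : z\varphi_i(x+f) \in A\ \forall i\}$. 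Taking $\eps := \beta\eps'/4$ completes the argument.

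The principal obstacle is the uniform density estimate $\dstar_{(S,+)}(T) \geq \eps'$; the bare $\ip_r^*$ conclusion does not obviously give positive additive density in $(S,+)$. To obtain it, I would exploit the structural content from the proof of Theorem \ref{thm:ipszemeredicomb} rather than just its statement: that proof, given any $\ip_r$-generating sequence $(s_i)_{i=1}^{r} \subseteq R$, produces $\alpha \subseteq \{1,\ldots,r\}$ together with $R_\eta' \subseteq R$ satisfying $\lambda(R_\eta') \geq \beta$ and $u \in R$ such that $R_\eta' \cdot u \subseteq B_{s_\alpha}$, thereby witnessing $s_\alpha \in T$. Combined with the hypothesis $\dstar_{(S,+)}(rS) > 0$ for every $r \in R$ via Corollary \ref{cor:bigandbigimpliesbig} (which transfers additive density through dilation by elements with additively large principal ideals) and with $\dstar_{(S,+)}(R) > 0$, this refined information should produce the required uniform lower bound on $\dstar_{(S,+)}(T)$ depending only on $n$, $\delta_A$, and $\delta_R$.
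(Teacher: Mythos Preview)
Your overall architecture matches the paper's: apply Theorem~\ref{thm:ipszemeredicomb} in $(R,\cdot)$ to obtain the set $T$ (the paper writes $R'$) of good multiplicative shifts, show it has positive additive density in $(S,+)$, use Theorem~\ref{thm:charofdensityifsfcplus} to push a positive proportion of $F$ into it by an additive translate $x$, and then average over $z$ against the mean $\lambda$ to extract $F'$. The Fubini/pigeonhole step at the end is essentially identical to the paper's.

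Where you diverge is in handling the ``principal obstacle.'' You propose opening up the proof of Theorem~\ref{thm:ipszemeredicomb} to extract structural information yielding $\dstar_{(S,+)}(T) \geq \eps'(n,\delta_A,\delta_R)$; this is vague and, as written, not a proof. The paper instead argues directly from the \emph{conclusion} of Theorem~\ref{thm:ipszemeredicomb}: since $R'$ is $\ip_0^*$ in $(R,\cdot)$ it is in particular syndetic there, so $R \subseteq \bigcup_{j=1}^{k} r_j^{-1}R'$ for finitely many $r_j \in R$. As $\dstar_{(S,+)}(R) > 0$, subadditivity of $\dstar_{(S,+)}$ gives some $j$ with $\dstar_{(S,+)}(r_j^{-1}R') > 0$; then the second bullet hypothesis $\dstar_{(S,+)}(r_jS) > 0$ combined with Corollary~\ref{cor:bigandbigimpliesbig}~\ref{item:density} (applied to the dilation $x \mapsto r_j x$, using $r_j(r_j^{-1}R') \subseteq R'$) yields $\dstar_{(S,+)}(R') = \gamma > 0$. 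This is simpler, complete, and is precisely where the second hypothesis on $R$ enters. Note that the paper's $\gamma$, and hence its $\eps = \beta\gamma/2$, depends on the particular $\varphi_i$'s through $R'$; your attempt to secure a bound depending only on $n,\delta_A,\delta_R$ is asking for more than the paper's argument delivers, and the detour into the proof of Theorem~\ref{thm:ipszemeredicomb} is unnecessary.
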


\begin{proof}
Let $\mean$ be a left invariant mean on $\semirtimes$ for which $\mean(A) = \denlet > 0$. Let $n \in \N$, and let $\beta = \beta(\denlet,n) > 0$ be from Theorem \ref{thm:ipszemeredicomb}. For $r \in R$, set
\[A_r = A\varphi_1(r)^{-1}  \cap \cdots \cap A\varphi_n(r)^{-1} .\]

By Theorem \ref{thm:ipszemeredicomb},
\begin{align}\label{eqn:shiftsfromipszemeredi}R' = \big\{ r \in R \ \big | \ \mean \big( A_r \big) \geq \beta \big\}\end{align}
is $\ipnaught^*$ in $\semirtimes$. In particular, $R'$ is syndetic in $\semirtimes$. Since $\dstar_{\semisplus}(R) > 0$ and $R'$ is syndetic in $\semirtimes$, there exists an $r \in R$ for which $r^{-1}R'$ (computed in $\semistimes$) satisfies $\dstar_{\semisplus}(r^{-1}R') > 0$. By assumption, $\dstar_{\semisplus}(rS) > 0$, so Corollary \ref{cor:bigandbigimpliesbig} \ref{item:density} gives that $\dstar_{\semisplus}(R') = \gamma > 0$.

Put $\eps = \be \ga / 2$, and let $F \in \finitesubsets S$. Using Theorem \ref{thm:charofdensityifsfcplus}, there exists an $x \in S$ for which $|F \cap (R'-x)| \geq \gamma |F|$. If $f \in F \cap (R'-x)$, then $x+f \in R'$, so
\[g \defeq \frac 1{|F \cap (R'-x)|} \sum_{f \in F \cap (R'-x)} \one_{A_{x+f}}: R \to [0,1]\]
satisfies $\lambda(g) \geq \beta$. By the positivity of $\lambda$, there exists $z \in R$ such that $g(z) \geq \be / 2$. It follows that the set $F' = \{ f \in F \cap (R'-x) \ | \ z \in A_{x+f} \}$ satisfies $|F'| \geq \eps |F|$ and is such that for all $f \in F'$ and all $i \in \{1,\ldots,n\}$, $z\varphi_i(x+f) \in A$.
\end{proof}

The following corollaries give example applications of this theorem. The finite set $F$ is chosen in both so that any $\eps$-dense subset of $F$ contains the desired combinatorial configuration, and the endomorphisms $\varphi$ are chosen to be exponentiation by a fixed element.

Denote by $\R_+$ the semiring of positive real numbers.

\begin{corollary}\label{cor:multconfigsinpositivereals}
Suppose $A \subseteq \R_+$  has positive multiplicative upper Banach density in $(\R_+,\cdot)$. For all finite $E \subseteq \R_+$ and $\ell \in \N$, there exist $d, x, z \in \R_+$ such that
\begin{align}\label{eqn:multlargeconfig}\big\{ z (x+i d)^e \ \big| \ e \in E, \ i \in  \{1, \ldots, \ell\} \big\} \subseteq A.\end{align}
\end{corollary}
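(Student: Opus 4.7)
The plan is to apply Theorem \ref{thm:geopatternsinsetsofdensity} to the commutative semiring $(\R_+,+,\cdot)$ with $R = S = \R_+$. The hypotheses on $R$ as a subsemigroup of $(S,\cdot)$ are trivially satisfied: $\dstar_{(\R_+,+)}(\R_+) = 1$, and for any $r \in \R_+$ we have $r\R_+ = \R_+$, so $\dstar_{(\R_+,+)}(r\R_+) = 1 > 0$. The semiring is commutative, and the hypothesis $\dstar_{(\R_+,\cdot)}(A) > 0$ is given. The natural source of multiplicative endomorphisms of $(\R_+,\cdot)$ we will use is the family of power maps $y \mapsto y^e$ for $e \in \R_+$, which are multiplicative because $(xy)^e = x^e y^e$ in $\R_+$.

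Let $n = |E|$ and enumerate $E = \{e_1,\ldots,e_n\}$. Applying Theorem \ref{thm:geopatternsinsetsofdensity} yields an $\eps > 0$ such that, taking $\varphi_j(y) = y^{e_j}$, for every finite $F \subseteq \R_+$ there exist $F' \subseteq F$ with $|F'| \geq \eps|F|$, $x \in \R_+$, and $z \in \R_+$ such that
\[\big\{\,z(x+f)^{e_j} \ \big| \ f \in F', \ j \in \{1,\ldots,n\}\,\big\} \subseteq A.\]
Now invoke Szemer\'edi's theorem \cite{szemeredi}: there exists $L = L(\eps,\ell) \in \N$ such that every subset of $\{1,2,\ldots,L\}$ of size at least $\eps L$ contains an arithmetic progression of length $\ell$. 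Translating, every subset of $\{L,L+1,\ldots,2L-1\}$ of size at least $\eps L$ likewise contains such an AP.

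Choose $F = \{L, L+1, \ldots, 2L-1\} \subseteq \R_+$, and let $F'$, $x$, $z$ be as supplied by the theorem. By our choice of $L$, the set $F'$ contains an arithmetic progression $\{a, a+d, a+2d, \ldots, a+(\ell-1)d\}$ with $a \in F$ and $d \in \N$. Since $a \geq L$ and the AP lies in $F$ we have $d \leq (L-1)/(\ell-1) < L \leq a$, so in particular $x + a > d$. Setting $x' = x + a - d \in \R_+$ and re-indexing $i = k+1$ with $k \in \{0,1,\ldots,\ell-1\}$ gives
\[\big\{\,z(x' + id)^{e} \ \big| \ e \in E, \ i \in \{1,\ldots,\ell\}\,\big\} = \big\{\,z(x + a + kd)^{e} \ \big| \ e \in E, \ k \in \{0,\ldots,\ell-1\}\,\big\} \subseteq A,\]
which is the desired configuration with parameters $(x',d,z)$.

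The substantive content of the argument is entirely contained in Theorem \ref{thm:geopatternsinsetsofdensity} and Szemer\'edi's theorem; neither step presents a real obstacle. The only mild bookkeeping point is the choice of $F$ to ensure $x' = x + a - d > 0$, which is why we shift $F$ away from $\{1,\ldots,L\}$ to $\{L,\ldots,2L-1\}$ so that the AP's starting point $a$ dominates its common difference $d$.
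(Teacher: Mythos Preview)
Your proof is correct and follows essentially the same route as the paper: apply Theorem~\ref{thm:geopatternsinsetsofdensity} with $S=R=\R_+$ and the multiplicative endomorphisms $\varphi_j(y)=y^{e_j}$, then feed in a long arithmetic progression for $F$ and use Szemer\'edi's theorem on the returned $\eps$-dense subset $F'$. The only difference is that you take extra care with the choice of $F$ (shifting it to $\{L,\ldots,2L-1\}$) to guarantee $x'=x+a-d>0$, a bookkeeping point the paper leaves implicit.
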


\begin{proof}
Let $E = \{e_1,\ldots,e_n\} \subseteq \R_+$ be finite and $\ell \in \N$. Let $\eps > 0$ be as given in Theorem \ref{thm:geopatternsinsetsofdensity}. By Szemer\'edi's theorem \cite{szemeredi}, there exists $L \in \N$ so that any subset of density $\eps$ of a length-$L$ arithmetic progression contains an arithmetic progression of length $\ell$. Let $F \subseteq \R_+$ be an arithmetic progression of length $L$, and for $i \in \{1,\ldots,n\}$, define $\varphi_i(x) = x^{e_i}$. By Theorem \ref{thm:geopatternsinsetsofdensity}, there exists $F' \subseteq F$ with $|F'| \geq \eps |F|$ and $x, z \in \R_+$ so that (\ref{eqn:geoarithmeticsets}) holds. By Szemer\'edi's theorem, the set $F'$ contains an arithmetic progression of length $\ell$, yielding the configuration in (\ref{eqn:multlargeconfig}).
\end{proof}

Configurations similar to the one in (\ref{eqn:multlargeconfig}) were shown to exist in multiplicatively large subsets of $\seminring$ in \cite[Theorem 3.15]{bmultlarge}. That result can be recovered with the obvious modifications to the proof of Corollary \ref{cor:multconfigsinpositivereals}.

\begin{corollary}\label{cor:geoarithmeticinfpx}
Let $p \in\N$ be prime, and suppose $A \subseteq \F_p[x] \setminus \{0\}$ has positive multiplicative upper Banach density in $(\F_p[x] \setminus \{0\},\cdot)$. For all $d,n \in \N$, there exist $y,z \in \F_p[x]$, $z \neq 0$, and a $d$-dimensional vector subspace $V$ of $\F_p[x]$ such that
\[\bigcup_{v \in V} z \{ y + v, (y + v)^2, \cdots, (y + v)^k \big\} \subseteq A.\]
\end{corollary}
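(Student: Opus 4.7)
The plan is to apply Theorem \ref{thm:geopatternsinsetsofdensity} to the commutative semiring $(\F_p[x], +, \cdot)$ with subsemigroup $R = \F_p[x] \setminus \{0\}$ and the endomorphisms $\varphi_i \colon R \to R$ given by $\varphi_i(u) = u^i$ for $i \in \{1, \ldots, n\}$. (Each $\varphi_i$ is an endomorphism of $(R,\cdot)$ since $\F_p[x]$ is a commutative integral domain.) First I would verify the two hypotheses on $R$: one has $\dstar_{(S,+)}(R) = 1$ because $R$ is cofinite in $\F_p[x]$, and for each nonzero $r \in R$ of degree $k$, the principal ideal $rS$ has additive density $p^{-k} > 0$ (computed along the F\o lner sequence of polynomials of degree at most $N$).

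Theorem \ref{thm:geopatternsinsetsofdensity} then yields $\eps = \eps(n) > 0$ such that for every finite $F \subseteq \F_p[x]$, there exist $F' \subseteq F$ with $|F'| \geq \eps |F|$, $y_0 \in \F_p[x]$ with $y_0 + F' \subseteq R$, and $z \in R$ with
\[ \big\{ z(y_0 + f)^i \ \big| \ f \in F', \ i \in \{1, \ldots, n\} \big\} \subseteq A. \]
To convert the ``dense piece'' $F'$ into a $d$-dimensional affine subspace, I will choose $F$ to be the $\F_p$-vector subspace $W_r = \operatorname{span}_{\F_p}\{1, x, \ldots, x^{r-1}\} \cong \F_p^r$ for a sufficiently large $r$.

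The key combinatorial input is the \emph{multidimensional} density Hales--Jewett theorem: for every prime $p$, every $\eps > 0$, and every $d \in \N$, there exists $r = r(p, \eps, d) \in \N$ such that any subset of $\F_p^r$ of density at least $\eps$ contains a $d$-dimensional affine subspace. This is obtained from Theorem \ref{thm:densityhj} by iteration (a combinatorial subspace of dimension $d$ in $[p]^r$ corresponds to a $d$-dimensional affine subspace of $\F_p^r$), and it is precisely the finitary counterpart of the ``\crich{} sets contain affine subspaces'' assertion recorded in Examples \ref{examples:combcubes}(IV). Choosing $r$ this large, the set $F' \subseteq W_r$ contains an affine subspace $v_0 + V$ with $V$ a $d$-dimensional $\F_p$-vector subspace of $\F_p[x]$.

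Setting $y = y_0 + v_0$, for every $v \in V$ and every $i \in \{1, \ldots, n\}$,
\[ z(y+v)^i = z\big(y_0 + (v_0 + v)\big)^i \in A, \]
since $v_0 + v \in v_0 + V \subseteq F'$. This delivers the conclusion (with $k$ read as $n$, matching the quantification in the statement). The main obstacle is conceptual rather than computational: one must invoke the multidimensional density Hales--Jewett theorem, which is not explicitly stated in the excerpt, so I would either cite it directly or briefly indicate how it follows by iterating Theorem \ref{thm:densityhj}.
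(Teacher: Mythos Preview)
Your proposal is correct and follows essentially the same route as the paper: apply Theorem \ref{thm:geopatternsinsetsofdensity} with $S=\F_p[x]$, $R=\F_p[x]\setminus\{0\}$, and $\varphi_i(u)=u^i$, take $F$ to be a large $\F_p$-vector subspace, and then extract a $d$-dimensional affine subspace from the $\eps$-dense piece $F'$. The paper cites the needed density-of-affine-subspaces fact as \cite[Theorem 9.10]{furstenbergkatznelsonipszem} rather than deriving it from Theorem \ref{thm:densityhj}, but that is only a difference in attribution, not in argument.
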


\begin{proof}
The proof is entirely analogous to the proof of Corollary \ref{cor:multconfigsinpositivereals} with Szemer\'edi's theorem replaced by \cite[Theorem 9.10]{furstenbergkatznelsonipszem}, which states that any $\eps$-dense subset of a high-enough dimensional vector space over $\F_p$ contains a $d$-dimensional affine subspace.
\end{proof}

\section{Extremal examples}\label{sec:counterexamples} 

The examples in this section demonstrate the optimality of the main results in Sections \ref{sec:posexamplea}, \ref{sec:posexampleb}, and \ref{sec:posexamplec} by showing that the naive attempt to improve those results within this framework will fail even for the semiring $\N$. The reader is encouraged to refer to Figure \ref{fig:implicationdiagram} in which the dotted arrows indicate the examples below.

\subsection{Multiplicatively \texorpdfstring{$\ipnaughtclass^*$}{IP0*} and \texorpdfstring{$\crichclass^*$}{AR*} but not additively \texorpdfstring{$\syndetic$}{S} or \texorpdfstring{$\thick$}{T}} \label{sec:negexamplea}

The example in Lemma \ref{lem:badexampleone} below demonstrates that we cannot conclude more from multiplicative syndeticity than additive centrality.

\begin{lemma}[{cf. \cite[Theorem 3.4]{berghind-onipsets}}]\label{lem:thicknomultpatterns}
There exists an additively thick subset of $\N$ which does not contain any set of the form $\{kx, ky, kxy\}$ for $k \geq 1$ and $x,y \geq 2$.
\end{lemma}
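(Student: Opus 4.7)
The plan is to construct $A = \bigcup_{n \geq 1} I_n \subseteq \N$ as a union of intervals $I_n = [N_n, N_n + n]$ of length $n+1$, so that additive thickness is automatic, and then choose the left endpoints $N_n$ inductively along a very fast-growing sequence that destroys every triple of the shape $\{kx, ky, kxy\}$ with $k \geq 1$, $x, y \geq 2$.

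Set $N_1 = 2$ and $I_1 = \{2,3\}$, and suppose intervals $I_1, \ldots, I_{n-1}$ have been chosen; write $A_{n-1} = I_1 \cup \cdots \cup I_{n-1}$. The key observation is that the triple $\{kx, ky, kxy\}$ is already determined by its two smaller elements $a = kx$ and $b = ky$, together with a choice of $k \mid \gcd(a,b)$ with $a/k, b/k \geq 2$; the third element is then $ab/k$. Thus define the \emph{forbidden set} at stage $n$ by
\[
F_{n-1} \ \defeq \ \left\{ \, \tfrac{ab}{k} \ \middle| \ a, b \in A_{n-1}, \ k \mid \gcd(a,b), \ \tfrac{a}{k} \geq 2, \ \tfrac{b}{k} \geq 2 \, \right\},
\]
which is finite because $A_{n-1}$ is finite. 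Now choose $N_n$ to be any integer larger than $2(\max F_{n-1})(\max A_{n-1}) + n$ (so in particular $N_n > n$ and $N_n$ exceeds every element of $F_{n-1} \cup A_{n-1}$), and put $I_n = [N_n, N_n+n]$. By construction $I_n \cap F_{n-1} = \emptyset$.

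To verify that $A = \bigcup_n I_n$ contains no triple $\{kx, ky, kxy\}$ with $k \geq 1$, $x, y \geq 2$, suppose for contradiction that some such triple is contained in $A$, and let $n$ be the largest index for which at least one of the three elements lies in $I_n$. Note $kxy \geq 2\max(kx, ky)$ since $x, y \geq 2$. Case split:
\begin{itemize}
\item If $kx \in I_n$ (or by symmetry $ky \in I_n$), then $kxy \geq 2N_n > N_n + n$, so $kxy$ cannot lie in $I_n$, and it cannot lie in any $I_m$ with $m > n$ by maximality of $n$; hence $kxy \in A_{n-1}$, but $kxy \geq 2N_n > \max A_{n-1}$, contradiction.
\item Therefore $kx, ky \in A_{n-1}$ and $kxy \in I_n$. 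But then $kxy \in F_{n-1}$ by definition, contradicting $I_n \cap F_{n-1} = \emptyset$.
\end{itemize}

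The construction is short once the forbidden set $F_{n-1}$ is set up correctly; the only real subtlety, and the step where one must not be sloppy, is that a single pair $(a,b) \in A_{n-1}^2$ can yield many different third elements $ab/k$ depending on the choice of divisor $k$ of $\gcd(a,b)$, and $F_{n-1}$ must collect all of them. Once $F_{n-1}$ is finite (which it clearly is), the size gap $N_n \gg \max(A_{n-1} \cup F_{n-1})$ does the rest, since every multiplicative pattern must involve either an element of the new interval playing the role of the \emph{largest} term $kxy$ (ruled out by avoiding $F_{n-1}$) or an element of $I_n$ playing a \emph{smaller} role (ruled out by the fact that the corresponding $kxy$ would then be forced to be enormous, larger than anything in $A$).
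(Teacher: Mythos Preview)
Your proof is correct. The paper's proof uses the same underlying idea---a union of fast-growing intervals---but with cleaner bookkeeping: it takes $A=\bigcup_n [x_n,y_n]$ subject only to the explicit growth conditions $y_n<2x_n$ and $x_{n+1}>y_n^2$, and then argues in one line that if $kx\le ky$ with $ky\in[x_n,y_n]$ then
\[
y_n<2x_n\le x\cdot x_n\le kxy\le (ky)^2\le y_n^2<x_{n+1},
\]
so $kxy$ falls in the gap between the $n$th and $(n+1)$st intervals. This bypasses both your forbidden-set computation and your case analysis: the condition $x_{n+1}>y_n^2$ automatically dominates every product $ab/k$ with $a,b\le y_n$, so there is no need to enumerate $F_{n-1}$ or to split on which element of the triple lies in the newest interval. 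Your argument is a perfectly valid alternative, just slightly more labor-intensive.
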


\begin{proof}
Put $A = \cup_{n \geq 1} \{x_n, x_n+1, \ldots, y_n\}$, where $\max \left(y_n / 2,y_{n-1}^2 \right) < x_n < y_n$ and $y_n - x_n \to \infty$. (For example, put $x_n = 4^{4^n}$ and $y_n = 4^{4^n} + n$.) The set $A$ is additively thick. On the other hand, if $kx, ky \in A$ with $kx \leq ky$ and $ky \in \{x_n \ldots, y_n\}$, then
\[y_n < 2x_n \leq x x_n \leq k x y \leq (ky)^2 \leq y_n^2 < x_{n+1},\]

meaning that $kxy$ is not an element of $A$.
\end{proof}

\begin{lemma}\label{lem:badexampleone}
There exists a subset of $\N$ which is multiplicatively $\ipnaughtclass^*$ and $\crichclass^*$ but not additively syndetic or thick.
\end{lemma}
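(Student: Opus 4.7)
My plan is to produce the desired $A$ as the complement of a set $T' \subseteq \N$ that is simultaneously additively thick and additively syndetic, and that does not contain any configuration of the form $\{kx, ky, kxy\}$ with $k \geq 1$ and $x, y \geq 2$ (the multiplicative-avoidance property from Lemma~\ref{lem:thicknomultpatterns}). Given such a $T'$, the four required properties of $A := \N \setminus T'$ follow routinely: $A$ is not additively syndetic because $T'$ is thick, and not additively thick because $T'$ is syndetic. On the multiplicative side, any multiplicative $\ipnaught$ set $B$ is in particular $\ip_2$ and so contains a triple $\{s_1, s_2, s_1 s_2\}$; if $1 \in \{s_1, s_2\}$ then $1 \in B$ and we arrange $1 \in A$ (by making sure $1 \notin T'$), while if $s_1, s_2 \geq 2$ the avoidance property forces at least one of the three elements into $A$. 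And any multiplicative \crich{} set $B$ contains, by Lemma~\ref{lem:equivtocombrich}(II) applied with $n = 3$ and the matrix $\matM \in \N^{r \times 3}$ whose every row is $(2, 3, 6)$, a configuration $\{s \cdot 2^{|\al|}, s \cdot 3^{|\al|}, s \cdot 6^{|\al|}\} \subseteq B$, which is of the forbidden form with $x = 2^{|\al|} \geq 2$ and $y = 3^{|\al|} \geq 2$.

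The heart of the proof is therefore the construction of $T'$. I would start from $T_0 = \bigcup_n [x_n, y_n]$ in Lemma~\ref{lem:thicknomultpatterns}, which is thick with the required avoidance property but whose gaps $G_n := (y_n, x_{n+1}) \cap \N$ have length growing super-exponentially (since $x_{n+1} > y_n^2$). The plan is to inductively augment $T_0$ inside each $G_n$ by a carefully chosen $S_n \subseteq G_n$, setting $T' := T_0 \cup \bigcup_n S_n$, in such a way that $T'$ has gaps bounded by a fixed constant $N$ (hence is syndetic) while still avoiding every forbidden multiplicative triple.

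The chief obstacle is the inductive step: adding a new element $s \in G_n$ may complete a forbidden triple $\{kx, ky, kxy\}$ whose other two elements lie in $T_0 \cup \bigcup_{m \leq n-1} S_m \cup (S_n$ chosen so far$)$. My approach is to draw the elements of $S_n$ from a fixed residue class modulo a large prime $N$, with initial term coprime to $N$: this enforces syndeticity of $S_n$ with step $N$, and, through the modular reduction of the relation $uv = ks$, severely restricts the possible dilation factors $k$, thereby limiting both intra-$S_n$ and mixed forbidden configurations. The super-exponential growth of $|G_n|$ relative to the polynomial growth of $|T_0 \cup \bigcup_{m<n} S_m|$ then enables a counting/greedy argument to find, at each stage, a suitable residue class and initial term that avoid completing any forbidden triple. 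The real delicate point is a careful case analysis of configurations in which $S_n$ contributes more than one element of the forbidden triple, where the modular constraints from the arithmetic-progression structure and the divisibility constraints defining the forbidden triples must be simultaneously reconciled. Once $T'$ has been produced, the verification of the four required properties of $A = \N \setminus T'$ proceeds as outlined.
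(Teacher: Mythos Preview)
Your plan cannot work: a set $T'$ that is simultaneously additively thick and additively syndetic \emph{must} contain a configuration $\{kx, ky, kxy\}$ with $x,y \geq 2$. Indeed, if $T'$ is syndetic with gap bound $N$, pick $a \in T' \cap \{2,\ldots,N+1\}$. Since $T'$ is thick, it contains an interval $I$ of length greater than $aN$; this interval contains more than $N$ consecutive multiples of $a$, say $am$ for $m$ ranging over an interval $J$ of more than $N$ consecutive integers. By syndeticity, $T' \cap J$ contains some $m$, and $m \geq 2$ once $I$ lies beyond $2a$. Then $a, m, am \in T'$ with $a, m \geq 2$, which is exactly a forbidden triple with $k=1$. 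So no amount of care in the inductive step can rescue the construction of $T'$: the obstruction is not a ``delicate case analysis'' but an outright impossibility.

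The paper's proof sidesteps this by exploiting that $\ipnaughtclass^*$ and $\crichclass^*$ are \emph{filters} (Remark~\ref{rmk:partitionregularityofipnaughtandcentral}, Lemma~\ref{crichispr}): it builds two separate sets, each multiplicatively $\ipnaughtclass^*$ and $\crichclass^*$, one failing to be additively syndetic and the other failing to be additively thick, and then simply intersects them. The first set is the complement of the thick set from Lemma~\ref{lem:thicknomultpatterns} (this handles ``not syndetic'', and your verification that its complement is multiplicatively $\ipnaughtclass^*$ and $\crichclass^*$ is essentially the paper's reasoning). The second set is $B = 3\N \cup (3\N+1)$: it is not thick (its complement $3\N+2$ is syndetic), and it is multiplicatively $\ipnaughtclass^*$ and $\crichclass^*$ because any multiplicative $\ip_2$ set or \crich{} set must contain a multiple of $3$ or an element $\equiv 1 \pmod 3$. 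The intersection then has all four required properties.
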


\begin{proof}
Let $A \subseteq \N$ be the complement of an additively thick subset of $\N$ satisfying the conclusions of Lemma \ref{lem:thicknomultpatterns}.  The set $A$ is not additively syndetic, but it is both multiplicatively $\ipnaughtclass^*$ and $\crichclass^*$.

Let $B = 3\N \cup (3\N + 1)$. The set $B$ is clearly not additively thick. Since $B$ intersects every multiplicative $\ip_2$ set and every multiplicatively \crich{} set, it is multiplicatively $\ipnaughtclass^*$ and $\crichclass^*$.

Consider the set $C = A \cap B$. The set $C$ is neither additively thick nor additively syndetic. Moreover, it follows from the fact that the classes $\ipnaughtclass^*$ and $\crichclass^*$ are filters that $C$ is both multiplicatively $\ipnaughtclass^*$ and $\crichclass^*$.
\end{proof}

In fields, one form of multiplicative largeness does imply additive thickness: by Theorem \ref{thm:multsyndgivesaddthickinfields}, finite index subgroups of a field are additively thick. The same is not true for ``finite index'' sub-semigroups of $\semintimes$. For example, the set $3\N$ is a sub-semigroup of $\semintimes$ of finite index which is not additively thick.

This leads us to ask: what property of a sub-semigroup of $\semintimes$ (stronger than multiplicative syndeticity) would imply additive thickness? While $3\N$ is multiplicatively syndetic, it avoids many infinite sub-semigroups of $\semintimes$ (indeed, $\N \setminus 3\N$ is itself a sub-semigroup of $\semintimes$.) This is not a property shared in the case of fields: note that any finite index subgroup $\Gamma$ of the multiplicative group of a field is such that for all subgroups $H$, the subgroup $\Gamma \cap H$ is of finite index in $H$. A similar property fails to suffice in $\N$: the set $A = \{ n \in \N \ | \ \nu_2(A) \text{ even} \}$ is not additively thick yet it is multiplicatively syndetic in every sub-semigroup of $\semintimes$. 

These ideas are related to another open problem concerning geometric progressions. Lemma \ref{lem:badexampleone} gives that sets which are multiplicatively $\crichclass^*$ are not necessarily additively thick. It is still an open problem to determine whether or not sets which are $\mathcal{GP}^*$, i.e. have non-empty intersection with all sets containing arbitrarily long geometric progressions, are additively thick. The dual form of this problem was studied in \cite{bbhsaddimpliesmult}, and even the following ostensibly easier problem remains open.

\begin{question}\label{question:two}
If $B \subseteq \N$ is additively syndetic, does there exist $x,y \in \N$ such that $\{x, xy^2 \} \subseteq B$.
\end{question}

\subsection{Multiplicatively \texorpdfstring{$\thick$}{T} but not additively \texorpdfstring{$\crichclass$}{CR} or \texorpdfstring{$\ipclass$}{IP}}\label{sec:negexampleb}

A subset of $\N$ is multiplicatively thick if and only if it contains arbitrarily long arithmetic progressions starting from zero.\footnote{To see this, in Definition \ref{def:syndthick}, put $F = \{1,2,\ldots,N\}$.} If those progressions are chosen to be sufficiently separated and divisible (in the sense of the following lemmas), then the resulting multiplicatively thick set need be neither additively combinatorially rich nor additively $\ip$.

In what follows, given $A, B \subseteq \N$ and $n \in \N$, we define $A-B = \{a-b \ | \ a \in A, \ b \in B, \ a > b \}$ and $nA = \{n a \ | \ a \in A\}$.

\begin{lemma}\label{lem:nestedfinitedivisiblesetsnotcr}
Let $(A_i)_{i=1}^\infty \subseteq \finitesubsets \N$, and let $(d_i)_{i=1}^\infty \subseteq \N$ be such that

\begin{align}\label{eqn:growthrequirements} \lim_{i \to \infty} \big(d_{i+1} - \max \big(\cup_{j=1}^{i} d_j A_j \big) - d_i \max A_i\big) = \infty.\end{align}
The set $A \defeq \bigcup_{i=1}^\infty d_i A_i$ is not additively combinatorially rich.
\end{lemma}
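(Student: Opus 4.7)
The plan is to show $A$ fails the definition of combinatorial richness already at $n = 2$: for each $r \in \N$ I will construct an $r \times 2$ matrix $\matM$ over $\N$ by setting $M_{i,1} = 1$ and $M_{i,2} = 1 + t_i$, where the positive integers $t_i$ are chosen so that $\sum_{i \in \alpha} t_i \notin A - A$ for every non-empty $\alpha \subseteq \{1, \ldots, r\}$. This forces that no shift $s \in \N$ places both $s + M_{\alpha,1}$ and $s + M_{\alpha,2}$ in $A$, witnessing the failure.

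The key step is to exhibit arbitrarily large gaps in $A - A$ at predictable locations. Put $M_k = \max(\cup_{j \leq k} d_j A_j)$ and $I_k = (M_k, d_{k+1} - M_k)$. I first claim $I_k \cap (A - A) = \emptyset$ for every $k$: given $c = a - b > 0$ with $a \in d_{m_a} A_{m_a}$, either $m_a \leq k$ and $c \leq a \leq M_k$, or $m_a \geq k+1$ and $c \geq d_{m_a} - M_{m_a - 1} \geq d_{k+1} - M_k$, the second inequality following by a short induction on $m_a$ from the hypothesis together with the consequent fact that $(d_j)$ is eventually increasing. Second, $|I_k|$ tends to infinity along an appropriate infinite subsequence of $k$'s: either $M_k$ is eventually constant, in which case $|I_k| = d_{k+1} - 2 M_k \to \infty$ follows from $d_{k+1} \to \infty$, or $M_k$ is increased infinitely often at ``jump'' indices $k$ at which $M_k = d_k \max A_k$, and at these jumps $|I_k| = d_{k+1} - M_k - d_k \max A_k$ is exactly the quantity forced to infinity by the hypothesis.

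With these two facts, inductively pick $\sigma(1) < \sigma(2) < \cdots < \sigma(r)$ from the subsequence identified above so that $|I_{\sigma(m)}| > r(M_{\sigma(m-1)} + 1)$ at each step, and set $t_i = M_{\sigma(i)} + 1$. For any non-empty $\alpha \subseteq \{1, \ldots, r\}$ with maximum index $m$,
\[M_{\sigma(m)} < t_m \leq \sum_{i \in \alpha} t_i \leq M_{\sigma(m)} + r(M_{\sigma(m-1)} + 1) < d_{\sigma(m)+1} - M_{\sigma(m)},\]
so $\sum_{i \in \alpha} t_i \in I_{\sigma(m)}$ and therefore avoids $A - A$, as required.

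The main subtlety is that the growth hypothesis controls $d_{k+1}$ in terms of $M_k + d_k \max A_k$ rather than $2 M_k$, so $|I_k| = d_{k+1} - 2 M_k$ need not tend to infinity at every $k$: one can have long ``plateaus'' on which $M_k$ is constant but much larger than $d_k \max A_k$, during which the hypothesis gives only a weaker control of $|I_k|$. The resolution, as indicated above, is to confine $\sigma$ to the jump subsequence (in the non-trivial case where $M_k \to \infty$), where the hypothesis directly controls $|I_k|$.
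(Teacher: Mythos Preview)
Your argument is correct and turns on the same core fact as the paper's proof --- that $A-A$ has arbitrarily long gaps --- but the packaging differs. The paper appeals to Examples~\ref{examples:combcubes}~\ref{item:additiveexampleofcrsets}: the positive difference set of a combinatorially rich subset of $(\N,+)$ is $\ip_0^*$, hence syndetic. It then simply shows $A-A$ is not syndetic by locating the gap between $\max B_i$ and $\min B_{i+1}$ (with $B_i = d_iA_i - \bigcup_{j\le i} d_jA_j$), whose length is bounded below directly by the quantity in the hypothesis at \emph{every} large $i$. You instead build, for each $r$, a two-column matrix witnessing failure of the definition at $n=2$; in effect you place an $\ip_r$ set $\mathrm{FS}(t_1,\dots,t_r)$ inside the gaps of $A-A$, re-deriving inline the special case of that implication you need. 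The paper's route is shorter and avoids your case-split between ``$M_k$ eventually constant'' and ``jump indices''; yours is more self-contained.

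Two small repairs. First, ``$I_k\cap(A-A)=\emptyset$ for every $k$'' should read ``for all sufficiently large $k$'': both the monotonicity of $(d_j)$ and the induction on $m_a$ only hold past a threshold, and that is all you need. Second, the bound $c\ge d_{m_a}-M_{m_a-1}$ tacitly assumes $b$ lies in a block of index $<m_a$; you should also dispose of $m_b=m_a$ (there $c$ is a positive multiple of $d_{m_a}$, so $c\ge d_{m_a}\ge d_{k+1}-M_k$) and $m_b>m_a$ (for large indices the hypothesis forces $b\ge d_{m_b}>M_{m_b-1}\ge a$, so this case is vacuous). Finally, set $M_{\sigma(0)}=0$ or treat $m=1$ separately in the last chain of inequalities.
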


\begin{proof}
It was explained in Examples \ref{examples:combcubes} \ref{item:additiveexampleofcrsets} that the difference set of a combinatorially rich set in $\seminplus$ is $\ip_0^*$.  Because $\ip_0^*$ sets are syndetic, to show that $A$ is not additively combinatorially rich, it suffices to show that the set $B \defeq A-A$ is not additively syndetic.

We will show that $\N \setminus B$ contains arbitrarily long intervals. By (\ref{eqn:growthrequirements}), eventually $d_{i+1}$ is greater than $d_i \max A_i$, so the set $B$ differs from the set
\[\bigcup_{i=1}^\infty B_i \defeq \bigcup_{i = 1}^\infty \big( d_i A_i - \bigcup_{j=1}^{i} d_j A_j \big)\]
by at most finitely many elements. By fiat, $B_i \subseteq \N$, so $\min B_i \geq d_i - \max \big(\cup_{j=1}^{i-1} d_j A_j \big)$. Also, $\max B_i \leq d_i \max A_i$. It follows by (\ref{eqn:growthrequirements}) that $\min B_{i+1} - \max B_i  \to \infty$, and this implies that the set $\N \setminus B$ eventually contains the arbitrarily long intervals $\big\{\max B_i  +1, \ldots, \min B_{i+1} - 1\big\}$.
\end{proof}

\begin{lemma}\label{lem:nestedfinitedivisiblesetszerodensity}
Let $(A_i)_{i=1}^\infty \subseteq \finitesubsets \N$, and let $(d_i)_{i=1}^\infty \subseteq \N$ be such that $d_i \to \infty$ as $i \to \infty$ and for all $i \leq j$, $d_i | d_j$. The set $A \defeq \bigcup_{i=1}^\infty d_i A_i$ has $\dstar_{\seminplus}(A) = 0$ and is not an additive $\ip$ set.
\end{lemma}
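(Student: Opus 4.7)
The plan is to exploit the divisibility structure $d_i \mid d_j$ for $i \leq j$, which forces sufficiently large elements of $A$ to be divisible by every $d_i$. Concretely, for each $i \in \N$, set $F_i = \bigcup_{j < i} d_j A_j$, a finite set. For any $j \geq i$, the hypothesis gives $d_i \mid d_j$ and hence $d_j A_j \subseteq d_i \N$, so
\[
A \;\subseteq\; F_i \,\cup\, d_i \N.
\]
This containment will drive both parts of the proof.

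For the density claim, I would argue that $F_i$ is finite and $d_i \N$ has upper Banach density $1/d_i$ in $\seminplus$; by the subadditivity of $\dstar_{\seminplus}$ recorded in (\ref{eqn:densityproperties}) (together with $\dstar$ vanishing on finite sets), $\dstar_{\seminplus}(A) \leq 1/d_i$ for every $i$, and letting $i \to \infty$ gives $\dstar_{\seminplus}(A) = 0$.

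For the IP claim, I would argue by contradiction: suppose $\mathrm{FS}(n_k)_{k \in \N} \subseteq A$ for some $(n_k) \subseteq \N$. By passing to a sub-IP-set $\mathrm{FS}(m_k) \subseteq \mathrm{FS}(n_k)$ whose generators are defined inductively as finite sums $m_{k+1} = \sum_{j \in \al_{k+1}} n_j$ over disjoint blocks $\al_{k+1} \subseteq \{J_k+1, J_k+2,\dots\}$ chosen large enough to guarantee $m_{k+1} > m_k$ (possible since $n_j \geq 1$), I may assume $m_1 < m_2 < \cdots$ and $m_k \to \infty$. Now fix any $i$, let $M_i = \max F_i$, and choose $k$ so large that both $m_k > M_i$ and $m_1 + m_k > M_i$. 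Since both $m_k$ and $m_1 + m_k$ lie in $A$ and exceed $M_i$, the containment above forces both into $d_i \N$, so $d_i \mid m_1$. This holds for every $i$, but $d_i \to \infty$, so eventually $d_i > m_1 \geq 1$, contradicting $d_i \mid m_1$.

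There is no real obstacle here; the only mildly delicate step is the sub-IP-set reduction, which is standard. The conceptual content is the elementary but crucial observation that the nested divisibility of the $d_i$'s forces all large elements of $A$ to live in the filter generated by the subgroups $d_i \Z$, which is incompatible with additive IP structure because the distinguished generator $n_1$ (equivalently $m_1$) remains fixed while $d_i$ escapes.
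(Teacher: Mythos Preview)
Your proof is correct. The density argument is essentially identical to the paper's: both split $A$ as a finite piece $F_i$ union a subset of $d_i\N$, then use subadditivity of $\dstar$ and let $i\to\infty$.

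For the IP claim you take a genuinely different, more streamlined route. The paper proves the stronger statement that $|A\cap(A-x)|<\infty$ for every $x\in\N$, via a somewhat delicate case analysis: if $d_na_n = d_ma_m - x$ with $a_n\in A_n$, $a_m\in A_m$, one splits on whether $m\le n$ or $n\le m$ and uses the divisibility $d_{\min}\mid d_{\max}$ to bound $d_n,d_m$ by an explicit constant $X$ depending on $x$. Your argument bypasses this entirely: from $A\subseteq F_i\cup d_i\N$ you read off directly that any two sufficiently large elements of $A$ are congruent mod $d_i$, and apply this to $m_k$ and $m_1+m_k$ to force $d_i\mid m_1$ for every $i$. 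This is cleaner and makes the mechanism (the generator $m_1$ would have to lie in $\bigcap_i d_i\N=\emptyset$) more transparent; what the paper's approach buys is the slightly stronger ``bounded differences'' conclusion, which you do not need here.

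One small simplification: the sub-IP-set reduction is unnecessary. Given $\text{FS}(n_k)\subseteq A$, just take $s_k=n_2+\cdots+n_k$; then $s_k\to\infty$, and both $s_k$ and $n_1+s_k$ lie in $A$, so for $k$ large both lie in $d_i\N$, whence $d_i\mid n_1$ directly.
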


\begin{proof}
Let $\eps > 0$ and choose $k \in \N$ so that $d_k^{-1} < \eps$. By the divisibility condition, the set $\cup_{i=k}^\infty d_i A_i \subseteq d_k \N$, which has upper Banach density less than $\eps$. Since $\cup_{i=1}^{k-1} d_i A_i$ is finite, it has zero additive upper Banach density. The sub-additivity of $\dstar_{\seminplus}$ gives that $\dstar_{\seminplus}(A) < \eps$. Because $\eps > 0$ was arbitrary, $\dstar_{\seminplus}(A) = 0$.

To show that $A$ is not an additive $\ip$ set, it suffices to show that for all $x \in \N$, $|A \cap (A-x)|< \infty$; that is, that any positive difference of elements of $A$ appears only finitely often. Fix $x \in \N$, and set
\[X = x \left(1+ \max \bigcup \big\{ A_n \ | \ n \in \N, \ d_n \leq x \big\}\right).\]
It suffices to show that for all $n, m \in \N$, if $d_n A_n \cap (d_m A_m - x)$ is non-empty, then $d_n, d_m \leq X$.

Suppose $d_n A_n \cap (d_m A_m - x) \neq \emptyset$. There exists $a_m \in A_m$ and $a_n \in A_n$ such that
\[x = d_m a_m - d_n a_n.\]
If $m \leq n$, then $x = d_m (a_m - a_n d_n / d_m)$. Since $x, d_m \in \N$, $d_m \leq x$ and $a_n d_n / d_m \leq a_m$. It follows that $d_n \leq d_m a_m \leq X$. If $n \leq m$, then $x = d_n (a_m d_m / d_n - a_n)$, whereby $d_n \leq x$, and $d_m \leq a_m d_m = x+ d_na_n \leq X.$
\end{proof}

It follows from Lemmas \ref{lem:nestedfinitedivisiblesetsnotcr} and \ref{lem:nestedfinitedivisiblesetszerodensity} that
\[A = \bigcup_{i=1}^\infty 2^{2^i} \big\{ 1, \ldots, i \big\}\]
is a set which is multiplicatively thick but which is neither additively combinatorially rich nor additively $\ip$. (This is not in contradiction with Theorem \ref{thm:multdensitygivesaddpatterns}, which says that since $A$ is multiplicatively thick, it is additively combinatorially rich \emph{up to dilations by natural numbers}.) Also, note that the set $A$ contains solutions to all homogeneous systems of linear equations with solutions in $\N$. This shows that a set which contains solutions to all homogeneous, partition regular systems of linear equations is not necessarily combinatorially rich. 

We show in the next lemma another extreme example: no multiplicative \folner{} sequence is such that full density along it guarantees positive additive upper Banach density or infinite additive $\ip$ structure. Recall the definition of a \folner{} sequence in Definition \ref{def:sfcandfolner} \ref{item:folnersequence}.

\begin{lemma}\label{lem:multdensitywithoutadddensity}
For any \folner{} sequence $(F_n)_{n=1}^\infty \subseteq \finitesubsets \N$ for $\semintimes$, there exists a set $A \subseteq \N$ satisfying
\[\lim_{n \to \infty} \frac{|A \cap F_n|}{|F_n|} = 1, \ \dstar_{\seminplus}(A) = 0, \text{ and $A$ is not additively } \ip.\]
\end{lemma}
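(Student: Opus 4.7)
The plan is to exhibit $A$ of the form $\bigcup_{i=1}^\infty d_i A_i$ studied in Lemma \ref{lem:nestedfinitedivisiblesetszerodensity}, choosing the divisors $d_i$ and the finite sets $A_i$ so that $A$ captures nearly all of $F_n$ for every large $n$. Take $d_i = i!$, which is strictly increasing and satisfies $d_i \mid d_j$ whenever $i \leq j$, matching the hypotheses of Lemma \ref{lem:nestedfinitedivisiblesetszerodensity}.

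The first step is to unpack what the multiplicative \folner{} property gives us. For each fixed $d \in \N$, $|d F_n \bigtriangleup F_n|/|F_n| \to 0$ as $n \to \infty$. Since $d F_n \subseteq d\N$, the chain $|F_n \cap d\N| \geq |F_n \cap d F_n| \geq |F_n| - |F_n \setminus d F_n|$ yields
\[\lim_{n \to \infty} \frac{|F_n \cap d\N|}{|F_n|} = 1.\]
Applying this with $d = d_i$, I can inductively choose indices $1 = n_0 < n_1 < n_2 < \cdots$ so that for every $n \geq n_i$,
\[\frac{|F_n \cap d_i \N|}{|F_n|} \geq 1 - \frac{1}{i}.\]

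Next I would define, for each $i \geq 1$, the finite set
\[A_i = \bigcup_{n_i \leq n < n_{i+1}} \big\{ a \in \N \ \big| \ d_i a \in F_n \big\},\]
and set $A = \bigcup_{i=1}^\infty d_i A_i$. For each $n$ with $n_i \leq n < n_{i+1}$, we have $d_i A_i \supseteq \{d_i a : a \in \N,\, d_i a \in F_n\} = F_n \cap d_i \N$, so
\[\frac{|A \cap F_n|}{|F_n|} \geq \frac{|F_n \cap d_i \N|}{|F_n|} \geq 1 - \frac{1}{i}.\]
Since $i \to \infty$ as $n \to \infty$, this proves $|A \cap F_n|/|F_n| \to 1$.

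Finally, the sequence $(d_i) = (i!)$ tends to infinity and satisfies the divisibility condition $d_i \mid d_j$ for $i \leq j$, so Lemma \ref{lem:nestedfinitedivisiblesetszerodensity} applies directly to conclude $\dstar_{\seminplus}(A) = 0$ and that $A$ is not an additive $\ip$ set. I don't foresee any serious obstacle; the only subtlety is arranging the indices $n_i$ with the slack dictated by the \folner{} property so that the same divisor $d_i$ simultaneously approximates every $F_n$ in the window $[n_i, n_{i+1})$, after which the two conclusions piggyback on the already-proven Lemma \ref{lem:nestedfinitedivisiblesetszerodensity}.
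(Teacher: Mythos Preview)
Your proof is correct and follows essentially the same approach as the paper's: both use the multiplicative \folner{} property to show that $|F_n \cap d\N|/|F_n| \to 1$ for each fixed $d$, then choose slowly growing divisors (you use $d_i = i!$, the paper uses powers of $2$) so that Lemma~\ref{lem:nestedfinitedivisiblesetszerodensity} applies. The only cosmetic difference is that the paper takes $A = \bigcup_i 2^{\varphi(i)} F_i$ directly, whereas you take $d_i A_i \supseteq F_n \cap d_i\N$; both constructions yield the same three conclusions by the same mechanism.
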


\begin{proof}
Put $N_0 = 1$, and note that for all $i \geq N_0$, $|2^0 F_i \cap F_i| \geq (1-1/2^{0})|F_i|$. Having defined $N_0< \cdots < N_k$, choose $N_{k+1} > N_k$ such that for all $i \geq N_{k+1}$, $|2^{k+1}F_i \cap F_i| \geq (1-1/2^{k+1})|F_i|$.

For $i \in \N$, let $\varphi(i) \in \N$ be such that $N_{\varphi(i)} \leq i < N_{\varphi(i+1)}$. Note that $\varphi(i) \to \infty$ as $i \to \infty$ and that $|2^{\varphi(i)}F_i \cap F_i| \geq (1-1/2^{\varphi(i)})|F_i|$. Put $A = \cup_i 2^{\varphi(i)}F_i$. By Lemma \ref{lem:nestedfinitedivisiblesetszerodensity}, the set $A$ has zero additive upper Banach density and is not additively $\ip$.

To prove that the limit in the conclusion of this lemma is equal to one, let $\eps > 0$ and choose $k$ so that $2^{-k} < \eps$. For all $i \geq N_k$,
\[|A \cap F_i| \geq (1-1/2^{\varphi(i)})|F_i| \geq (1-1/2^{k})|F_i| \geq (1-\eps)|F_i|.\]
The result follows since $\eps$ was arbitrary.
\end{proof}

\subsection{Multiplicatively \texorpdfstring{$\ipclass$}{IP} but not additively \texorpdfstring{$\ipnaughtclass$}{IP0} or \texorpdfstring{$\crichclass$}{AR}}\label{sec:negexamplec}

For any prime $p$, the set $A = \N \setminus p\N$ is multiplicatively $\ip$. Indeed, it is an (infinitely generated) sub-semigroup of $\semintimes$. The set $A$ is not additively $\ipnaught$: given any $p$ elements of $A$, some subsum of them is divisible by $p$. The set $A$, however, is of positive additive upper Banach density, and so it is \crich{} in $\seminplus$.

The following lemma shows that the \emph{finitely generated} sub-semigroups of $\semintimes$ provide the type of example we seek.

\begin{lemma}
Any finitely generated sub-semigroup of $\semintimes$ is multiplicatively $\ip$ but neither additively \crich{} nor $\ipnaught$.
\end{lemma}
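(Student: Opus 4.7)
Plan: Write $R = \langle n_1, \ldots, n_k \rangle$ and let $P$ be the (finite) set of primes dividing some $n_i$.

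For the multiplicative $\ip$ claim, fix any $s \in R$ and set $s_i \defeq s^{2^i}$. Each non-empty finite product $s_\alpha = s^{\sum_{i \in \alpha} 2^i}$ is a positive power of $s$, hence an element of $R$, and the exponents $\sum_{i \in \alpha} 2^i$ are pairwise distinct over non-empty $\alpha \in \finitesubsets \N$, so $(s_i)_{i \in \N}$ generates a multiplicative $\ip$ set sitting inside $R$.

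For the failure of additive $\ipnaught$, choose any prime $p \notin P$; this is possible because $P$ is finite. Every element of $R$ is $P$-smooth and therefore coprime to $p$, so $R \subseteq \N \setminus p\N$. A standard pigeonhole argument shows $\N \setminus p\N$ is not additively $\ip_p$: for any $s_1, \ldots, s_p \in \N$, two of the $p+1$ partial sums $0, s_1, s_1+s_2, \ldots, s_1+\cdots+s_p$ agree modulo $p$, producing a non-empty subsum divisible by $p$. Thus $R$ is not additively $\ip_p$, hence not $\ipnaught$.

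For the failure of additive \crich{}-ness, suppose for contradiction that $R$ is \crich{}, and let $r = r(2)$ be the constant from the \crich{} condition at $n=2$. For any $\ip_r$ set $B \subseteq \N$ generated by $s_1, \ldots, s_r \in \N$, apply \crich{}-ness to the matrix $M \in \N^{r \times 2}$ with $M_{i,1} = 1$, $M_{i,2} = s_i + 1$: one obtains non-empty $\alpha \subseteq \{1, \ldots, r\}$ and $s' \in \N$ with $s' + |\alpha|$ and $s' + |\alpha| + \sum_{i \in \alpha} s_i$ both in $R$, whence $\sum_{i \in \alpha} s_i$ lies in $B \cap (R - R)$. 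So $R - R$ meets every $\ip_r$ set, i.e.\ $R - R$ is $\ip_r^*$, hence syndetic in $\seminplus$ by Lemma \ref{lem:hierarchylemma} and in particular of positive lower asymptotic density. To contradict this I will show $|(R - R) \cap [1, N]| = o(N)$. The standard exponent-vector bound gives $|R \cap [1, M]| \leq C_R (\log M)^{|P|}$, and combined with the polynomial growth of consecutive gaps in the $P$-smooth numbers (Tijdeman's theorem, or an elementary argument tailored to $R$), the pairs $(a, b) \in R \times R$ with $0 < a - b \leq N$ have both coordinates in $R \cap [1, N(\log N)^C]$ for some $C = C(P)$, giving
\[\big|(R - R) \cap [1, N]\big| \leq \big|R \cap [1, N(\log N)^C]\big|^2 = O\bigl((\log N)^{2|P|}\bigr) = o(N),\]
which contradicts the positive lower density of $R - R$. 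The only step requiring real care is the polylogarithmic control of gaps in the $P$-smooth numbers; the rest is an elementary combinatorial unwinding of the \crich{} and pigeonhole conditions.
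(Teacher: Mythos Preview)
Your arguments for the multiplicative $\ip$ claim and the failure of additive $\ipnaught$ are correct and coincide with the paper's (the paper simply takes $s_i = a$ for all $i$ rather than $s_i = s^{2^i}$, but the idea is identical).

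For the failure of additive combinatorial richness, your route is correct but genuinely different from the paper's, and considerably heavier. The paper argues by an elementary induction on the number of generators that a finitely generated multiplicative sub-semigroup of $\N$ contains no arbitrarily long arithmetic progressions: the one-generator case $\{n^e : e \in \N\}$ has no $3$-term AP, and the inductive step shows that a long AP in $\langle n_1, \ldots, n_{k+1}\rangle$ would force a long AP (along a residue class missing $n_1\N$) inside $\langle n_2, \ldots, n_{k+1}\rangle$. This is entirely self-contained. Your argument instead passes through the implication ``$R$ \crich{} $\Rightarrow R - R$ is $\ip_r^*$ $\Rightarrow R - R$ syndetic'' (which is indeed established in the paper, Examples~\ref{examples:combcubes}\ref{item:additiveexampleofcrsets}), and then contradicts syndeticity by showing $|(R-R)\cap[1,N]| = O((\log N)^{2|P|})$ via Tijdeman's gap bound for $P$-smooth numbers, itself a consequence of Baker's theory of linear forms in logarithms. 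This works, and has the virtue of fitting naturally into the paper's ``difference set is $\ipnaught^*$'' framework, but it imports transcendence methods where none are needed.

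One caution: your parenthetical ``or an elementary argument tailored to $R$'' for the polylogarithmic gap bound is not substantiated, and I do not believe such an elementary argument exists --- lower bounds of the shape $a - b \gg a/(\log a)^C$ for $P$-smooth $a > b$ are precisely what Baker's method delivers and are not known by softer means. Your proof stands as written with Tijdeman, but you should drop the suggestion of an elementary alternative unless you can supply one.
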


\begin{proof}
Let $A$ be a finitely generated sub-semigroup of $\semintimes$. The set $A$ contains $\finiteproducts (a, a, \ldots)$, whereby $A$ is $\ip$ in $\semintimes$. Since $A$ is finitely generated, there exists a prime $p$ for which $A \cap p\N = \emptyset$. This shows that $A$ is not additively $\ip_p$, hence not additively $\ipnaught$.

We will prove that finitely generated sub-semigroups of $\semintimes$ are not \crich{} by showing that they do not contain arbitrarily long arithmetic progressions. This is accomplished by induction on the number of generators. The base case is simple: for all $n \in \N$, the set $\{n^e \ | \ e \in \N\}$ does not contain arithmetic progressions of length greater than two.

Suppose that for all $n_1, \ldots, n_k \in \N$, the set $\{n_1^{e_1} \cdots n_k^{e_k} \ | \ e_1, \ldots, e_k \in \N \}$ does not contain arbitrarily long arithmetic progressions. Let $n_1, \ldots, n_{k+1} \in \N$ and put
\[A = \big\{n_1^{e_1} \cdots n_{k+1}^{e_{k+1}} \ \big| \ e_1, \ldots, e_{k+1} \in \N \big \}.\]
Suppose $A$ has an arithmetic progression of length $L \geq 3 n_1$, and let $P \subseteq A$ be an arithmetic progression of length $L$ of minimal step size $d$. It cannot be that $P \subseteq n_1 \N \cap A$ since otherwise $A \big / n_1$ would contain a length $L$ arithmetic progression of shorter step. Let $a \in A \setminus n_1 \N$, and note that
\[(a + n_1 d \Z) \cap A \subseteq A \setminus n_1\N \subseteq \big\{n_2^{e_2} \cdots n_{k+1}^{e_{k+1}} \ \big| \ e_2, \ldots, e_{k+1} \in \N \big \}.\]
This means an arithmetic progression of length at least $L \big / n_1$ is contained in a finitely generated sub-semigroup of $\semintimes$ with $k$ generators. By the induction hypothesis, $L \big / n_1$ is bounded from above by a function of $k$; in particular, $A$ does not contain arbitrarily long arithmetic progressions, completing the inductive step.
\end{proof}

\subsection{Multiplicatively \texorpdfstring{$\density$}{D} but not additively \texorpdfstring{$\ipnaughtclass$}{IP0}}\label{sec:negexampled}

Consider the action $(T_s)_{s \in \N}$ of the semigroup $\semintimes$ on the torus $\R \big/ \Z$ given by $T_s x = sx \pmod 1$. This action is ergodic and can be used in conjunction with von Neumann's mean ergodic theorem to generate examples of sets with high multiplicative density which are not additively $\ipnaught$.

\begin{lemma}
Let $\eps > 0$. There exists a set $A \subseteq \N$ with $\dstar_{\semintimes}(A) \geq 1-\eps$ which is not additively $\ipnaught$.
\end{lemma}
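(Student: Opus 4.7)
Fix $\delta = \eps/4$ and set $U = \{y \in \R/\Z : \|y\| > \delta\}$, where $\|\cdot\|$ denotes distance to the nearest integer, so that the Lebesgue measure is $\mu(U) = 1 - 2\delta = 1 - \eps/2$. For any irrational $x \in \R/\Z$, I would take
\[ A = A_x \defeq \{n \in \N : nx \in U\} = \{n \in \N : \|nx\| > \delta\}, \]
and verify the two required properties by combining measure preservation of the ergodic $\semintimes$-action $T_s y = sy$ on $(\R/\Z, \mu)$ with Weyl's equidistribution theorem applied to the orbit $(sx)_{s \in \N}$.

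For the density bound $\dstar_{\semintimes}(A) \geq 1-\eps$, I would appeal to Theorem \ref{thm:charofdensityifsfcplus} and reduce to producing, for each finite $F \subseteq \N$, an $s \in \N$ with $|\{m \in F : msx \in U\}| \geq (1-\eps)|F|$. Measure preservation of each $T_m$ gives
\[ \int_{\R/\Z} \frac{1}{|F|} \sum_{m \in F} \mathbf{1}_{U^c}(my)\, d\mu(y) = \mu(U^c) = \frac{\eps}{2}, \]
so Markov's inequality yields $\mu(G) \geq 1/2$ for
\[ G = \Big\{ y \in \R/\Z : |\{m \in F : my \in U\}| \geq (1-\eps)|F| \Big\}. \]
Since $U$ is an arc, $G$ is a finite union of intervals, hence Jordan measurable with non-empty interior; as $x$ is irrational, Weyl's equidistribution theorem produces the required $s \in \N$ with $sx \in G$. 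Equivalently, one may phrase this through von Neumann's mean ergodic theorem applied to $\mathbf{1}_U$: along any multiplicative \folner{} sequence $(F_N)$ for $\semintimes$, the averages $|F_N|^{-1}\sum_{s \in F_N} \mathbf{1}_U(sx)$ converge to $\mu(U)$ in $L^2(\mu)$, so for $\mu$-a.e.\ $x$ any weak-$\ast$ accumulation point of the associated Ces\`aro means on $\N$ is a left-invariant mean $\Lambda$ with $\Lambda(A) \geq \mu(U) \geq 1-\eps$.

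For the failure of the $\ipnaught$ property, I would establish the stronger statement that $A$ is not an $\ip_r$ set in $\seminplus$ once $r \geq \lceil 1/\delta \rceil$. Given any $(s_i)_{i=1}^r \subseteq \N$, consider the $r+1$ partial sums $S_k = (s_1 + \cdots + s_k)x \in \R/\Z$ for $k = 0, 1, \ldots, r$. Pigeonhole among $r+1$ points and $r$ arcs of length $1/r \leq \delta$ produces indices $0 \leq j < k \leq r$ with $\|S_k - S_j\| \leq \delta$; then $\alpha = \{j+1, \ldots, k\} \subseteq \{1, \ldots, r\}$ is non-empty and satisfies $\|s_\alpha x\| \leq \delta$, hence $s_\alpha \notin A$. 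This rules out $\finitesums(s_i)_{i=1}^r \subseteq A$ for every choice of $(s_i)$, so $A$ is not $\ip_r$ and therefore not $\ipnaught$. The only mildly delicate point is extracting the integer $s$ from the positive-measure set $G$ in the density argument; Weyl's theorem together with the Jordan measurability of $G$ (immediate from $U$ being an arc) resolve this cleanly.
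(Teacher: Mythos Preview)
Your proof is correct and follows the same construction as the paper: both take $A=\{n\in\N:\|nx\|>\delta\}$ for suitable $\delta$ and an irrational $x$, use measure preservation of the $(\N,\cdot)$-action on $\R/\Z$ to get the density bound, and invoke the Dirichlet pigeonhole argument to rule out $\ip_r$ for $r$ large. The only difference is in packaging the density step: the paper applies the mean ergodic theorem along a multiplicative F\o lner sequence and extracts a pointwise-convergent subsequence (concluding for a.e.\ $x$), whereas your primary argument goes directly through the characterization of $d^*$ in Theorem~\ref{thm:charofdensityifsfcplus}, using Markov's inequality plus Weyl equidistribution to locate $s$ for each finite $F$---which is a bit cleaner and works for \emph{every} irrational $x$ rather than almost every one.
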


\begin{proof}
Fix a \folner{} sequence $(F_n)_{n=1}^\infty \subseteq \finitesubsets \N$ for $\semintimes$. Since the action $(T_s)_{s \in \N}$ of the semigroup $\semintimes$ on the torus $\R \big/ \Z$ described above is ergodic, the mean ergodic theorem gives the $L^2$-convergence
\[ \lim_{n \to \infty}\frac 1{|F_n|} \sum_{s \in F_n} \one_{(\eps/2,1-\eps/2)}\circ T_s = \int_{\R / \Z} \one_{(\eps/2,1-\eps/2)} = 1-\eps.\]
Any $L^2$-convergent sequence has a pointwise a.e. convergent subsequence. By passing to such a subsequence, for Lebesgue a.e. $x \in \R \big/ \Z$,
\[ \lim_{k \to \infty}\frac 1{|F_{n_k}|} \sum_{s \in F_{n_k}} \one_{(\eps/2,1-\eps/2)}(T_s x) = \lim_{k \to \infty}\frac {\big| \{s \in \N \ | \ \| sx\| > \eps/2 \} \cap F_{n_k} \big|}{|F_{n_k}|} = 1-\eps.\]
Since $(F_{n_k})_{k=1}^\infty \subseteq \finitesubsets \N$ is a \folner{} sequence for $\semintimes$, for any point $x \in \R \big/ \Z$ for which there is convergence, the set
\[A = \big\{n \in \N \ | \ \| nx\| > \eps/2 \big\}\]
is such that $\dstar_{\semintimes}(A) \geq 1-\eps$. On the other hand, Dirichlet's pigeonhole principle argument shows that the set $\N \setminus A$ is $\ip_r^*$ (actually, $\Delta_r^*$) in $\seminplus$ for any $r > 2 \eps^{-1}$. Therefore, $A$ is not $\ipnaught$ in $\seminplus$.
\end{proof}

This lemma is optimal in the sense that if $\dstar_{\semintimes}(A) = 1$, then $A$ is multiplicatively thick and hence, by Theorem \ref{thm:multpwsimpliesaddipnaught}, additively $\ipnaught$.  In fact, this lemma produces sets $A$ which have multiplicative upper Banach density arbitrarily close to 1 but are not multiplicatively piecewise syndetic (by Corollary \ref{cor:ipstarimpliesmultpsstar}, since $\N \setminus A$ is additively $\ipnaught^*$). Such sets can be seen as weak multiplicative analogues of sets $A$ described by Ernst Straus \cite[Theorem 2.20]{bbhsaddimpliesmult} with $\lim_{n \to \infty} |A \cap \{1, \ldots, n\}| \big/ n$ existing and being arbitrarily close to 1 yet having no shifts being additively IP. Such sets were shown to exist in countably infinite, amenable groups -- and hence, in particular, in $\semintimes$ -- in \cite[Theorem 6.4]{bcrzpaper}.

\subsection{Multiplicatively \texorpdfstring{$\crichclass$}{AR} but not additively \texorpdfstring{$\crichclass$}{AR}}\label{sec:negexamplee}

Theorem \ref{thm:multdensitygivesaddpatterns} gives that sets of positive multiplicative upper Banach density contain a variety of additive combinatorial patterns. We show in the following lemma that the same is not true for sets that are only assumed to be multiplicatively \crich{}.

We begin with two auxiliary lemmas. In what follows, given $x_1, \ldots, x_r \in \N$ and non-empty $\al \subseteq \{1, \ldots, r\}$, we denote by $x_\al$ the product $\prod_{i \in \al} x_i$. For a matrix $\matM = (\matm_{i,j}) \in \N^{r\times n}$, $\matm_{\al,j}$ denotes the product $\prod_{i\in \al} \matm_{i,j}$.

\begin{lemma}\label{lem:prodofapsisnotap}
Let $r \in \N$, and for each $i \in \{1, \ldots, r\}$, let $x_i, y_i, z_i \in \N$ satisfy $x_i +z_i = 2y_i$ and $x_i < y_i < z_i$. For all $\al \subseteq \{1, \ldots, r\}$ with $|\al| \geq 2$, $x_\al + z_\al \neq 2 y_\al$.
\end{lemma}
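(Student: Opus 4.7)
The plan is to reduce the statement to a transparent binomial-expansion inequality by parametrizing each 3-term AP by its midpoint and common difference. Specifically, for each $i$, set $a_i = y_i - x_i = z_i - y_i$, which is a positive integer since $x_i < y_i < z_i$ and $x_i + z_i = 2 y_i$. Then $x_i = y_i - a_i$ and $z_i = y_i + a_i$, so the products $x_\alpha$ and $z_\alpha$ become products of binomials that I can expand using the distributive law.

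Next I would expand
\[
x_\alpha = \prod_{i \in \alpha}(y_i - a_i), \qquad z_\alpha = \prod_{i \in \alpha}(y_i + a_i)
\]
as sums over subsets $S \subseteq \alpha$, so that
\[
z_\alpha = \sum_{S \subseteq \alpha} \Big(\prod_{i \in S} a_i\Big) \Big(\prod_{i \in \alpha \setminus S} y_i\Big), \qquad x_\alpha = \sum_{S \subseteq \alpha} (-1)^{|S|} \Big(\prod_{i \in S} a_i\Big) \Big(\prod_{i \in \alpha \setminus S} y_i\Big).
\]
Adding these two expansions, the odd-cardinality subsets cancel and the even ones double, giving
\[
x_\alpha + z_\alpha = 2 \sum_{\substack{S \subseteq \alpha \\ |S| \text{ even}}} \Big(\prod_{i \in S} a_i\Big) \Big(\prod_{i \in \alpha \setminus S} y_i\Big).
\]
The $S = \emptyset$ term is exactly $2 y_\alpha$. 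Because $|\alpha| \geq 2$, the collection of nonempty even-cardinality subsets of $\alpha$ is nonempty (any two-element subset qualifies), and each such term contributes a strictly positive integer since all $a_i, y_i > 0$. Therefore $x_\alpha + z_\alpha > 2 y_\alpha$, which in particular yields the desired inequality $x_\alpha + z_\alpha \neq 2 y_\alpha$.

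There is no real obstacle here — the whole argument is a one-line binomial identity once the substitution $x_i = y_i - a_i$, $z_i = y_i + a_i$ is made. The only point worth noting is the slight subtlety that one needs $|\alpha| \geq 2$ to guarantee the existence of a nonempty even-cardinality subset (the case $|\alpha| = 1$ gives the trivial equality $x_i + z_i = 2 y_i$, consistent with the hypothesis). A compact alternative framing — useful as a sanity check — is to normalize by $y_\alpha$ and set $t_i = z_i / y_i \in (1,2)$, reducing the claim to showing that $\prod_{i \in \alpha} t_i + \prod_{i \in \alpha}(2 - t_i) > 2$ whenever $|\alpha| \geq 2$, which is the same positivity statement after the substitution $t_i = 1 + a_i / y_i$.
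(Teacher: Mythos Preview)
Your proof is correct and, if anything, slightly cleaner than the paper's. Both arguments reach the stronger conclusion $x_\alpha + z_\alpha > 2y_\alpha$, but by different mechanisms. The paper writes $d_i = y_i - x_i = z_i - y_i$ and uses the telescoping identity
\[
b_1\cdots b_k - a_1\cdots a_k = \sum_{i=1}^k a_1\cdots a_{i-1}(b_i - a_i)b_{i+1}\cdots b_k
\]
to express both $y_\alpha - x_\alpha$ and $z_\alpha - y_\alpha$ as sums of $|\alpha|$ positive terms, then compares term by term (using $x_j < y_j < z_j$) to get $y_\alpha - x_\alpha < z_\alpha - y_\alpha$. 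Your approach instead substitutes $x_i = y_i - a_i$, $z_i = y_i + a_i$ and expands the products over subsets, so that the odd-parity terms cancel and the identity
\[
x_\alpha + z_\alpha - 2y_\alpha = 2\sum_{\substack{\emptyset \neq S \subseteq \alpha \\ |S|\ \text{even}}} \Big(\prod_{i\in S} a_i\Big)\Big(\prod_{i\in \alpha\setminus S} y_i\Big)
\]
makes the strict positivity immediate once $|\alpha|\ge 2$. Your argument isolates the role of the hypothesis $|\alpha|\ge 2$ more sharply (existence of a nonempty even subset) and avoids the asymmetry of the telescoping formula; the paper's argument has the minor advantage of not requiring the full subset expansion. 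Either way, the content is the same elementary inequality.
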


\begin{proof}
It is simple to verify by induction the following identity: for all $k \in \N$ and $a_1, \ldots, a_k, b_1, \ldots, b_k \in \R$,

\[b_1 \cdots b_k - a_1 \cdots a_k = \sum_{i=1}^k a_1 \cdots a_{i-1} \cdot (b_i - a_i) \cdot b_{i+1} \cdots b_k.\]
Let $\al = \{\al_1, \ldots, \al_k \} \subseteq \{1, \ldots, r\}$ with $|\al| = k \geq 2$. By the previous identity
\begin{align*}
y_\al - x_\al &= \sum_{i=1}^k x_{\al_1} \cdots x_{\al_{i-1}} \cdot d_{\al_i} \cdot y_{\al_{i+1}} \cdots y_{\al_{k}} \\
&< \sum_{i=1}^k y_{\al_1} \cdots y_{\al_{i-1}} \cdot d_{\al_i} \cdot z_{\al_{i+1}} \cdots z_{\al_{k}} = z_\al - y_\al,
\end{align*}
where $d_i = y_i-x_i = z_i - y_i > 0$. This shows that $x_\al + z_\al \neq 2 y_\al$.
\end{proof}

\begin{lemma}\label{lem:mulmatrixwithoutaps}
For all $n \in \N$, there exists $r \in \N$ such that for all $\matM \in \N^{r\times n}$, there exists a non-empty $\al \subseteq \{1,\ldots,r\}$ such that the set $\{\matm_{\al,j}\}_{j=1}^n$ does not contain a non-constant three term arithmetic progression.
\end{lemma}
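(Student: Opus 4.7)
My plan is to apply an iterated-pigeonhole (Ramsey) argument to the row types on each triple of columns, and then invoke Lemma \ref{lem:prodofapsisnotap} to eliminate the ``AP-type'' triples from the product. First, if some row $i_0 \in \{1, \ldots, r\}$ has the property that the set $\{\matm_{i_0, j}\}_{j=1}^n$ already contains no non-constant three-term arithmetic progression, take $\al = \{i_0\}$ and we are done. Hereafter assume that every row's value set contains such an AP.

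For each unordered triple $T = \{j_1 < j_2 < j_3\} \subseteq \{1, \ldots, n\}$ and each row $i$, I assign a ``type'' in a finite alphabet recording (a) the comparison pattern (strict inequalities and equalities) among $\matm_{i,j_1}, \matm_{i,j_2}, \matm_{i,j_3}$, and (b) the signs in $\{-,0,+\}$ of the three potential AP discrepancies $\matm_{i,j_1} + \matm_{i,j_3} - 2\matm_{i,j_2}$, $\matm_{i,j_2} + \matm_{i,j_3} - 2\matm_{i,j_1}$, and $\matm_{i,j_1} + \matm_{i,j_2} - 2\matm_{i,j_3}$. Since the number of types per triple is bounded independently of everything and the number of triples is $\binom{n}{3}$, iterated pigeonhole gives: for any prescribed $N$ and for $r$ sufficiently large in terms of $n$ and $N$, there is a subset $\al_0 \subseteq \{1, \ldots, r\}$ of size $N$ on which all rows share the same type on every triple.

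Next I take $\al \subseteq \al_0$ of size at least two. On each triple $T$ whose shared type records an exact non-constant AP with a consistent strict ordering (say $\matm_{i,j_1} < \matm_{i,j_2} < \matm_{i,j_3}$ and $\matm_{i,j_1} + \matm_{i,j_3} = 2\matm_{i,j_2}$ for every $i \in \al$), Lemma \ref{lem:prodofapsisnotap} applies directly: the ordering is preserved under multiplication, so $\matm_{\al, j_2}$ is the median of the product triple, and the lemma gives $\matm_{\al,j_1} + \matm_{\al,j_3} \ne 2\matm_{\al,j_2}$. Triples whose shared type forces two or more of the three row values to coincide contribute at most two distinct product values and so cannot carry a three-term AP. The consistently-strict sub-AP and (analogously) super-AP patterns are handled by the same term-wise inequality that drives the proof of Lemma \ref{lem:prodofapsisnotap}.

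The main obstacle is the class of triples whose shared type records three strictly distinct row values none of which satisfy an AP relation: the example $(1,5,7) \cdot (1,5,7) = (1,25,49)$ shows that the product of such non-AP rows can accidentally become an AP. To deal with these I would invoke an elementary Diophantine observation, namely that for fixed positive reals $a,b,c$ the function $k \mapsto a^k + c^k - 2b^k$ is real-analytic in $k$ and not identically zero unless $a = b = c$, so it has only finitely many positive-integer zeros; this bounds the ``bad'' cardinalities $|\al|$ on each such triple. A further refinement of the Ramsey classification to record also approximate column-wise ratios (so that within the refined class the product of rows behaves like an integer power of a single fixed row) makes this bound uniformly applicable, and choosing $N$ large enough to evade all the finitely many bad cardinalities across the $\binom{n}{3}$ triples simultaneously then furnishes the required $\al$.
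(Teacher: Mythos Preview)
Your argument has a genuine gap in the case analysis, and the proposed fix does not work.

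First, the case analysis is confused. For a triple of columns with a fixed strict ordering $a<b<c$ on row values, the three possibilities are exactly $a+c=2b$, $a+c<2b$, and $a+c>2b$; the last two together \emph{are} the ``three strictly distinct values, no AP'' case. So your ``sub-AP/super-AP'' bullet and your ``main obstacle'' bullet are talking about the same rows. The term-wise inequality underlying Lemma~\ref{lem:prodofapsisnotap} only goes one way: if every row has $y_i-x_i\le z_i-y_i$ (with $x_i<y_i<z_i$), then for $|\al|\ge 2$ one gets $y_\al-x_\al< z_\al-y_\al$, so the product is not an AP. In the opposite regime $y_i-x_i>z_i-y_i$ the multipliers in the telescoping sum go the wrong way, and your own example $(1,5,7)^2=(1,25,49)$ shows a product that is an exact AP. So the claim that both strict-inequality patterns are ``handled by the same term-wise inequality'' is false.

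Second, the proposed rescue does not produce an $r$ depending only on $n$. The zero set of $k\mapsto a^k+c^k-2b^k$ is finite for fixed $(a,b,c)$, but the rows in your homogeneous class need not be equal, so the relevant products are not integer powers of one fixed row; the function you would actually need to control depends on the whole matrix. ``Refining the Ramsey alphabet to record approximate column-wise ratios'' is not a finite coloring, and even within a narrow ratio window the exceptional set of $|\al|$ is not uniformly bounded in terms of $n$ alone.

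The paper avoids this entirely by an iterative elimination that uses Lemma~\ref{lem:prodofapsisnotap} \emph{only} on the exact-AP case. If some row already has no three-term AP across its $n$ entries, take $\al$ to be that singleton. Otherwise every row witnesses some system $E_{i,j,k}:\ x_i+x_k=2x_j,\ x_i<x_j<x_k$; pigeonhole on which $(i,j,k)$, pair up rows with the same witness, and replace them by their products. By Lemma~\ref{lem:prodofapsisnotap}, no nonempty product of these new rows satisfies $E_{i_1,j_1,k_1}$, so that triple is permanently eliminated. Repeat: at each stage either a row of the current matrix has no AP (giving the desired $\al$ in the original matrix), or you eliminate one more of the at most $n^3$ ordered triples. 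Taking $r$ large enough (roughly $r>(2n^3)^{n^3}$) guarantees termination.
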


\begin{proof}
Let $n \in \N$. For $i,j,k \in \{1, \ldots, n\}$ and a row vector $(x_1, \ldots, x_n)$ of indeterminates, consider the system
\[E_{i,j,k}(x_1, \ldots, x_n): \qquad \begin{cases} x_i + x_k = 2x_j & \\ x_i < x_j < x_k & \end{cases}.\]

By Lemma \ref{lem:prodofapsisnotap}, if $M_1,\ldots,M_r$ are row vectors of real numbers such that for all $\ell \in \{1, \ldots, r\}$, $E_{i,j,k}(M_\ell)$ holds, then for all $\al \subseteq \{1, \ldots, r\}$ with $|\al| \geq 2$, $E_{i,j,k}(M_{\al,1},\allowbreak \ldots,\allowbreak M_{\al,n})$ does not hold.

Let $r \in \N$ satisfy $r > 2^{n^3} n^3!$, and let $\matM \in \N^{r\times n}$. Given a non-empty $\al \subseteq \{1, \ldots, r\}$, the set $\{\matm_{\al,j}\}_{j=1}^n$ contains no non-constant three term arithmetic progressions if and only if for all $i,j,k \in \{1, \ldots, n\}$, $E_{i,j,k}(M_{\al,1},\ldots,M_{\al,n})$ does not hold.

Such a set $\al \subseteq \{1, \ldots, r\}$ can be found by induction in the following way. To each $\ell \in \{1, \ldots, r\}$, associate a triple $(i,j,k) \in \{1, \ldots, n\}^3$ such that $E_{i,j,k}(M_{\ell,1},\ldots, \allowbreak M_{\ell,n})$ holds; if no such association is possible for $\ell$, we are done by putting $\al = \{\ell\}$. By the pigeon-hole principle, there exist indices $\ell_1 < \ldots < \ell_{2r'}$, $r' = r \big/(2n^3)$, each associated to the same triple $(i_1,j_1,k_1)$. Consider the matrix $\matM' = (M_{i,j}') \in \N^{r' \times n}$ given by $M'_{i,j} = M_{\ell_i,j} M_{\ell_{i+r'},j}$. It follows by the previous remarks that for all non-empty $\al \subseteq \{1, \ldots, r'\}$, $E_{i_1,j_1,k_1}(M'_{\al,1},\allowbreak \ldots,\allowbreak M'_{\al,n})$ does not hold.

We repeat this procedure with the matrix $\matM'$: to each $\ell \in \{1, \ldots, r'\}$, associate a triple $(i,j,k) \in \{1, \ldots, n\}^3$ such that $E_{i,j,k}(M_{\ell,1}, \ldots, \allowbreak M_{\ell,n})$ holds. By the first step, no index $\ell$ is associated to $(i_1,j_1,k_1)$. If no such association is possible for $\ell$, we are done by putting $\al = \{\ell,\ell+r'\}$. Using the pigeonhole principle again to find a new triple $(i_2,j_2,k_2)$, we may pass in the same way as before to a matrix $\matM''$ with the property that for all non-empty $\al \subseteq \{1, \ldots, r''\}$, neither $E_{i_1,j_1,k_1}(M''_{\al,1},\allowbreak \ldots,\allowbreak M''_{\al,n})$ nor $E_{i_2,j_2,k_2}(M''_{\al,1},\allowbreak \ldots,\allowbreak M''_{\al,n})$ holds.

Because there are at most $n^3$ triples to be excluded, this process will terminate in at most $n^3$ steps, resulting in the desired subset $\al \subseteq \{1, \ldots, r\}$.
\end{proof}

\begin{lemma}
There exists a subset of $\N$ which is multiplicatively \crich{} but which has no non-constant three term arithmetic progressions; in particular, it is not additively \crich{}.
\end{lemma}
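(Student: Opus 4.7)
The plan is to construct $A$ as a union $\bigcup_{k \geq 1} s_k B_k$ of rapidly separated dilates of carefully chosen finite blocks $B_k \subseteq \N$, where each block services one ``challenge'' to multiplicative combinatorial richness and the rapidly growing dilations $s_k$ prevent any three-term arithmetic progression from forming across blocks.

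First, enumerate as $(n_k, \matM^{(k)})_{k \in \N}$ all pairs $(n, \matM)$ with $n \in \N$ and $\matM \in \N^{r(n) \times n}$, where $r(n)$ is the integer supplied by Lemma \ref{lem:mulmatrixwithoutaps}; this collection is countable. For each $k$, apply Lemma \ref{lem:mulmatrixwithoutaps} to $\matM^{(k)}$ to extract a non-empty $\al_k \subseteq \{1, \ldots, r(n_k)\}$ such that
\[B_k \defeq \big\{ \matm^{(k)}_{\al_k, j} \ \big| \ j \in \{1, \ldots, n_k\} \big\}\]
(with $\matm^{(k)}_{\al_k, j} = \prod_{i \in \al_k} \matm^{(k)}_{i,j}$) contains no non-constant three-term arithmetic progression. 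Then recursively choose dilations $s_k \in \N$ with $s_k > 2 N_{k-1}$, where $N_0 = 0$ and $N_{k-1} = \max \bigcup_{i<k} s_i B_i$, and set $A = \bigcup_{k \geq 1} s_k B_k$.

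That $A$ is multiplicatively \crich{} is immediate from the construction: for each $n$ the value $r(n)$ works, since given any $\matM \in \N^{r(n) \times n}$, if $(n_k, \matM^{(k)}) = (n, \matM)$ then $\al_k$ and $s = s_k$ witness $s \cdot \matm_{\al_k, j} \in s_k B_k \subseteq A$ for every $j$, which is the multiplicative analogue of the condition in Definition \ref{def:combrich}.

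The main task, and the main obstacle, is verifying that $A$ contains no non-constant three-term arithmetic progression, which we prove by induction on $k$ applied to $A_k \defeq \bigcup_{i \leq k} s_i B_i$ (the base $A_0 = \emptyset$ is trivial). Suppose $x_1 < x_2 < x_3$ in $A_k$ satisfy $x_1 + x_3 = 2 x_2$; we use that every element of $s_k B_k$ is at least $s_k > 2 N_{k-1}$ while every element of $A_{k-1}$ is at most $N_{k-1}$. If $x_1 \in s_k B_k$, then $x_2, x_3 > 2 N_{k-1}$ places all three in $s_k B_k$, yielding a non-constant three-term AP in $B_k$ and contradicting our choice of $B_k$. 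Hence $x_1 \in A_{k-1}$. If additionally $x_2 \in A_{k-1}$, then $x_3 = 2 x_2 - x_1 \leq 2 N_{k-1} < s_k$ forces $x_3 \in A_{k-1}$, contradicting the induction hypothesis. So $x_2 \in s_k B_k$, and then $x_3 > x_2 > N_{k-1}$ forces $x_3 \in s_k B_k$; writing $x_2 = s_k b_2$ and $x_3 = s_k b_3$ with $b_2 < b_3$ in $B_k$ gives $x_1 = s_k (2b_2 - b_3)$, so since $x_1 \geq 1$ the integer $2b_2 - b_3$ is at least $1$, whence $x_1 \geq s_k > 2 N_{k-1}$, contradicting $x_1 \leq N_{k-1}$. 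This completes the induction, so $A$ is free of non-constant three-term APs; since additively \crich{} sets in $\seminplus$ are AP-rich by Examples \ref{examples:combcubes} \ref{item:additiveexampleofcrsets}, $A$ is not additively \crich{}.
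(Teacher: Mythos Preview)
Your proof is correct and follows essentially the same approach as the paper's: enumerate all $(n,\matM)$ challenges, use Lemma \ref{lem:mulmatrixwithoutaps} to obtain AP-free product blocks, and assemble them into $A$ via sufficiently separated dilations. The paper leaves the choice of dilations and the cross-block no-AP verification as an ``easy to see'' remark, whereas you make the separation condition $s_k > 2N_{k-1}$ explicit and carry out the case analysis carefully; this is a welcome clarification rather than a different strategy.
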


\begin{proof}
Let $r: \N \to \N$, $n \mapsto r(n)$, be the function from Lemma \ref{lem:mulmatrixwithoutaps}. To each tuple $(n,\matM)$ with $n \in \N$ and $\matM \in \N^{r \times n}$, associate $x_{(n,\matM)} \in \N$ sufficiently large (to be described momentarily) and a non-empty $\al_{(n,\matM)} \subseteq \{1, \ldots, r\}$ using Lemma \ref{lem:mulmatrixwithoutaps} such that $A_{(n,\matM)} = \{\matm_{\al_{(n,\matM)},j}\}_{j=1}^n$ has no non-constant three term arithmetic progressions. The set
\begin{align}\label{eqn:defofmultcrichbutnoaps}A = \bigcup_{(n,\matM)} x_{(n,\matM)} A_{(n,\matM)}\end{align}
is multiplicatively \crich{} by construction. None of the constituent subsets $A_{(n,\matM)}$ contain non-constant three term arithmetic progressions. Since the union in (\ref{eqn:defofmultcrichbutnoaps}) is countable, it is easy to see that the $x_{(n,\matM)}$'s may be chosen in such a way that there are no three term arithmetic progressions with elements in different $A_{(n,\matM)}$'s. Therefore, the set $A$ contains no non-constant three term arithmetic progressions.
\end{proof}

\clearpage

\section{Index}\label{sec:index}

\vskip.5cm

\begin{tabularx}{\textwidth}{lll}\label{indextable}
Symbol & Location & Description \\
\hline \\

$\{ \cdot \}$, $\| \cdot \|$ & Ex. \ref{examples:homomorphisms} & Fractional part, distance to $\Z$ or $\Z^d$\\
$\one_A$ & Def. \ref{def:defofamenable} & Indicator function of the set $A$\\
$\star$ & Sec. \ref{sec:crsetsandhjtheorems} & Variable in a variable word\\

$A - n$, $A/n$, $Ax^{-1}$ & Sec. \ref{sec:defs} & Set operations\\
$A-A$ & Sec. \ref{sec:negexampleb} & Difference set\\
AP-, GP-rich & Sec. \ref{sec:intro} & Contains arbitrarily long arithmetic, geometric \\
& & progressions \\ 
$\beta S$ & Lem. \ref{lem:bigandbigimpliesbig} & Stone-\v{C}ech compactification of $S$\\
$\crichclass_\Endset$ & Def. \ref{def:combrichuptoe} & Class of sets combinatorially rich up to $\Endset$\\
$\overline{d}_{(F_N)}$, $\underline{d}_{(F_N)}$ & Lem. \ref{lem:multsyndimplieslowerdensity} & Upper, lower asymptotic density with respect to\\
& & the \folner{} sequence $(F_N)_N$\\
$d_{\semisplus}^*$ & Def. \ref{def:defofamenable} & Upper Banach density in $\semisplus$\\
$E(\beta S)$ & Lem. \ref{lem:bigandbigimpliesbig} & Set of idempotents in $(\beta S,\cdot)$ \\
$\multif$ & Def. \ref{def:multiset} & Multiset\\
$\mathbb{F}$, $\mathbb{F}_p$ & Sec. \ref{sec:syndeticinfields} & Finite field, and finite field with $p$ elements\\
$(F_N)_{N \in \N}$ & Def. \ref{def:sfcandfolner} & \folner{} sequence\\
$\text{FS}$, $\text{FP}$ & Def. \ref{def:ipstructure} & Finite sums and finite products\\

$\ip_r^*$, $\ip_0^*$ set & Def. \ref{def:ipstructure} & Set non-trivially intersecting all $\ip_r$, $\ip_0$ sets \\
$K(\beta S)$ & Lem. \ref{lem:bigandbigimpliesbig} & Smallest two-sided ideal in $(\beta S,\cdot)$\\
$\lambda$ & Def. \ref{def:defofamenable} & Translation invariant mean\\
$L \big / \Q$ & Sec. \ref{sec:apinsubsemigroups} & Finite field extension over $\Q$\\
$\matM$, $M_{i, j}$, $M_{\alpha, j}$ & Def. \ref{def:combrich} & Matrix, entry, and entry sum (or product)\\
$M_d(S)$ & Ex. \ref{ex:beginningexamplessectionfour} & $d$-by-$d$ matrices with entries from $S$\\
$\normlq$ &  Sec. \ref{sec:apinsubsemigroups} & Norm of the extension $L \big / \Q$\\
$\N_\Psi$ & Sec. \ref{sec:apinsubsemigroups} & Positive integers represented by norm form $\Psi$\\
$\nu_p$ & Ex. \ref{examples:homomorphisms} & $p$-adic valuation, $\nu_p(p^{e} p_2^{e_2} \cdots p_k^{e_k}) \defeq e$ \\
$\intringl$ & Sec. \ref{sec:apinsubsemigroups} & Ring of integers of $L$\\
$\subsets{S}$, $\finitesubsets{ S}$ & Sec. \ref{sec:defs} & Subsets and finite subsets of a set $S$\\
$\semisplus$, $\semirplus$ & Sec. \ref{sec:defs}  & Commutative semigroups \\
$\semistimes$,  $\semirtimes$ & Sec. \ref{sec:defs} & Semigroups (no commutativity assumed) \\
$\semisring$ & Def. \ref{def:semiring} & Semiring\\
$\syndetic$, $\ipclass$, ... & Sec. \ref{sec:defs} & Classes of largeness\\
$\syndetic\semisplus$ & Sec. \ref{sec:defs} & Class of syndetic subsets of $\semisplus$\\
$\syndetic^*$, $\ipclass^*$, ... & Def. \ref{def:dualclasses} & Dual classes \\
\sfc{}, \sfcplus{} & Sec. \ref{sec:banach} & Strong \folner{} conditions \\
$S^{r \times n}$ & Def. \ref{def:combrich} & $r$-by-$n$ matrices with entries from $S$\\
$\Omega$ & Ex. \ref{examples:homomorphisms} & $\Omega(p_1^{e_1}\cdots p_k^{e_k}) \defeq \sum_i e_i$ \\
$\arbclass$ & Sec. \ref{sec:defs} & Variable class of largeness\\
$x_\al$ & Def. \ref{def:ipstructure} & $x_\al \defeq \sum_{i \in \al} x_i$
\end{tabularx}

\clearpage

\bibliographystyle{alphanum}
\bibliography{pwsbib}

\end{document}